\titleformat{\section}{\normalfont\scshape\centering}{\thesection}{1em}{}
  \titleformat{\subsection}{\bfseries}{\thesubsection}{1em}{}
\newtheorem{theorem}{Theorem}[section]
\newtheorem{corollary}[theorem]{Corollary}
\newtheorem{lemma}[theorem]{Lemma}
\newtheorem{proposition}[theorem]{Proposition}
\theoremstyle{definition}
\newtheorem{definition}[theorem]{Definition}
\newtheorem{remark}[theorem]{Remark}
\numberwithin{equation}{section}
\newcommand{\Z}{\mathbb{Z}}
\newcommand{\C}{\mathbb{C}}
\newcommand{\R}{\mathbb{R}}
\newcommand{\ee}{\varepsilon}
\newcommand{\wt}{\widetilde}
\newcommand{\ve}{\mathbf}
\newcommand{\Mod}[1]{\ (\mathrm{mod}\ #1)}
\newcommand{\supp}{\textnormal{supp}}
\begin{document}

\begin{frontmatter}[classification=text]

\title{The Bombieri--Vinogradov Theorem for Nilsequences} 

\author[XS]{Xuancheng Shao\thanks{Supported by the NSF grant DMS-1802224.}}
\author[JT]{Joni Ter\"{a}v\"{a}inen\thanks{Supported by  a Titchmarsh Research Fellowship.}}

\begin{abstract}
We establish results of Bombieri--Vinogradov type for the von Mangoldt function $\Lambda(n)$ twisted by a nilsequence. In particular, we obtain Bombieri--Vinogradov type results for the von Mangoldt function twisted by any polynomial phase $e(P(n))$; the results obtained are as strong as the ones previously known in the case of linear exponential twists. We derive a number of applications of these results. Firstly, we show that the primes $p$ obeying a ``nil-Bohr set'' condition, such as $\|\alpha p^k\|<\varepsilon$, exhibit bounded gaps. Secondly, we show that the Chen primes are well-distributed in nil-Bohr sets, generalizing a result of Matom\"aki. Thirdly, we generalize the Green--Tao result on linear equations in the primes to primes belonging to an arithmetic progression to large modulus $q\leq x^{\theta}$, for almost all $q$. 
\end{abstract}
\end{frontmatter}

\section{Introduction}

The celebrated Bombieri--Vinogradov theorem states that
\begin{align*}
\sum_{d\leq x^{1/2-\varepsilon}}\max_{(c,d)=1}\Big|\sum_{\substack{n\leq x\\n\equiv c\Mod d}}\Lambda(n)-\frac{x}{\varphi(d)}\Big|\ll_{A,\varepsilon}\frac{x}{(\log x)^{A}},
\end{align*}
thus proving equidistribution of the von Mangoldt function in all residue classes to almost all moduli $d\leq x^{1/2-\varepsilon}$. The $x^{1/2-\varepsilon}$ threshold can further be improved to $x^{1/2}/(\log x)^{B}$ for suitable $B=B(A)$, but either with or without this improvement the conclusion can be stated as saying that the primes have \emph{level of distribution} $1/2$. 

It is natural to study whether other sequences related to the primes or various arithmetic functions also satisfy bounds of Bombieri--Vinogradov type.  Bombieri--Vinogradov type estimates for general $1$-bounded multiplicative functions were investigated in \cite{granville-shao, granville-shao2}, and there are a number of works on the level of distribution of the smooth numbers \cite{Fouvry-Tenenbaum, Harper} and of the $k$-fold divisor  functions $d_k(n)$ (e.g. \cite{divisor-FI, divisor-FI2, divisor-HB} for all moduli and~\cite{FKM-divisor, FR-divisor} for almost all moduli),  and the literature is swarming with many  other  interesting  examples  besides. 

Our object in this paper is to obtain level of distribution results for another natural class of functions, namely twists $\Lambda(n)e(P(n))$ of the von Mangoldt function by polynomial phases and, more generally, nilsequences. There has been previous work on the case of linear polynomials $P$; we recall these results later in this introduction. Let us first present the necessary definitions for stating our main theorems; in Section \ref{sec:apps} we present their applications. 

\subsection{Results for nilsequence twists}

In order to state our results for nilsequence twists, we need a few definitions. The results for polynomial phases will be deduced as special cases in Subsection \ref{subsec:poly}, and they do not require knowledge of nilsequences. For an in-depth discussion of nilsequences and their importance in additive combinatorics, see \cite{tao-higher}. 

\begin{definition}[Nilsequences] Let $G$ be a connected, simply-connected nilpotent Lie group, and let $\Gamma\leq G$ be a lattice. By a \emph{filtration} $G_{\bullet}=(G_i)_{i=0}^{\infty}$ on $G$, we mean an infinite sequence of subgroups of $G$ (which are also connected, simply-connected nilpotent Lie groups) such that
\begin{align*}
G=G_0=G_1\supset G_2\supset\cdots     
\end{align*}
and such that the commutators satisfy $[G_i,G_j]\subset G_{i+j}$, and with the additional properties that $\Gamma_i:=\Gamma\cap G_i$ is a lattice in $G_i$ for $i\geq 0$ and $G_{s+1}=\{\textnormal{id}\}$ for some $s$. 

The least such $s$ is called the \emph{degree} of $G_{\bullet}$.

A \emph{polynomial sequence} on $G$ (adapted to the filtration $G_{\bullet}$) is any sequence $g:\mathbb{Z}\to G$ satisfying the derivative condition
\begin{align*}
\partial_{h_1}\cdots \partial_{h_k}g(n) \in G_k     
\end{align*}
for all $k\geq 0$, $n\in \Z$ and all $h_1,\ldots, h_k\in \mathbb{Z}$, where $\partial_h g(n):=g(n+h)g(n)^{-1}$ is the discrete derivative with shift $h$.

Finally, if $\varphi:G/\Gamma\to \mathbb{C}$ is Lipschitz with respect to a natural metric on $G/\Gamma$ (induced by a Mal'cev basis; see definition below), we call a sequence of the form $n \mapsto \varphi(g(n)\Gamma)$ a \emph{nilsequence}.
\end{definition}

Since nilsequences are a vast class of functions, it is natural to restrict to those that have ``bounded complexity''. This is made precise in the following definition.

\begin{definition}[Bounded complexity nilsequences]\label{def:nil}
For a positive integer $s$ and real numbers $\Delta, K \geq 2$, we define $\Psi_s(\Delta, K)$ to be the class of all nilsequences $\psi \colon \Z \to \C$ of the form $\psi(n) = \varphi(g(n)\Gamma)$, where
\begin{enumerate}
\item $G/\Gamma$ is a nilmanifold of dimension at most $\Delta$, equipped with a filtration $G_{\bullet}$ of degree at most $s$ and a $K$-rational Mal'cev basis $\mathcal{X}$ (defined in \cite[Definition 2.1, Definition 2.4]{green-tao-nilmanifolds});
\item $g \colon \mathbb{Z} \to G$ is a polynomial sequence adapted to $G_{\bullet}$;
\item $\varphi \colon G/\Gamma \to \C$ is a Lipschitz function with $\|\varphi\|_{\operatorname{Lip}} \leq 1$, where the Lipschitz norm is defined as
\[ \|\varphi\|_{\operatorname{Lip}} = \|\varphi\|_{\infty} + \sup_{x,y \in G/\Gamma, x\neq y} \frac{|F(x) - F(y)|}{d_{\mathcal{X}}(x,y)}. \] 
The metric $d_{\mathcal{X}}$ appearing above is defined in \cite[Definition 2.2]{green-tao-nilmanifolds}) using the Mal'cev basis $\mathcal{X}$.
\end{enumerate}
\end{definition}

The important examples to keep in mind are the polynomial phase functions $\psi(n) = e(g(n))$, where $g$ is a polynomial of degree at most $s$. Here the relevant nilmanifold is $G/\Gamma=\mathbb{R}/\mathbb{Z}$ with the simple filtration $G_k=\mathbb{R}/\mathbb{Z}$ for $k\leq s$ and $G_k=\{0\}$ for $k>s$. We also note that any \emph{bracket polynomial}, such as $e(P_1(n) \lfloor P_2(n)\rfloor)$, with $P_1(x),P_2(x)\in \mathbb{R}[x]$, is essentially a nilsequence, in the sense that (by smoothing the fractional part function a bit, as in  \cite[p. 102]{tao-higher}) this function for $n\leq x$ be written as a linear combination with bounded coefficients of $\ll (\log x)^{A}$ nilsequences $\psi\in \Psi_{s}(\Delta,K)$ for some $s,\Delta, K\ll 1$, plus an error term that is $O((\log x)^{-A/10})$ in $\ell^1$ norm.

We make a technical remark on the function $\varphi$ appearing in Definition~\ref{def:nil}(3). When applying the machinery of nilsequences to problems in additive combinatorics, it is arguably more convenient to work with smooth functions $\varphi$ (with controlled smoothness norms) instead of Lipschitz functions, although the use of Lipschitz functions has by now become standard. All the results proved in this paper would remain true with this alternative definition of nilsequences.

In order to state our main theorems, we need the $W$-trick.  For $w \geq 2$, we write $\mathscr{P}(w) := \prod_{p \leq w}p$.

\begin{theorem}\label{thm:1/4}
 Let an integer $s\geq 1$, large real numbers $A, \Delta\geq 2$, and a small real number $\varepsilon\in (0,1/4)$ be given. Then for any $x \geq 2$, we have
\begin{align*}
\sum_{d\leq x^{1/4-\varepsilon}}\max_{(c,d)=1} \sup_{\psi \in \Psi_s(\Delta, \log x)} \Big|\sum_{\substack{n \leq x \\ n \equiv c\Mod{d}}} \Lambda(n) \psi(n) -  \frac{dW}{\varphi(dW)}\sum_{\substack{n \leq x \\(n,W)=1\\ n\equiv c\Mod{d}}} \psi(n) \Big|\ll_{s,A,\Delta,\ee} \frac{x}{(\log x)^A},
\end{align*}
where $W = \mathscr{P}((\log x)^C)$ for some constant $C = C(A,s,\Delta,\ee)$.
\end{theorem}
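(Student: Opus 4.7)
The plan is to emulate the classical proof of the Bombieri--Vinogradov theorem via Vaughan's identity, replacing the role of equidistribution of residue classes with the Green--Tao quantitative equidistribution theorem for nilsequences on arithmetic progressions (and its factorisation-theorem counterpart). Applying Vaughan's identity at parameters $U = V = x^{1/4}$ decomposes $\Lambda$ into a Type I part $\sum_{\substack{mk = n \\ m \leq x^{1/2}}} \alpha_m$ with divisor-bounded $\alpha_m$, a Type II part $\sum_{\substack{mk = n \\ m, k > x^{1/4}}} \alpha_m \beta_k$ with divisor-bounded $\alpha_m, \beta_k$, and error terms supported on $n \leq x^{1/2}$ that are negligible.

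For the Type I part, fix $m \leq x^{1/2}$ and $d \leq x^{1/4-\ee}$; the inner sum $\sum_{k \leq x/m,\ mk \equiv c\ (\mathrm{mod}\ d)} \psi(mk)$ averages the nilsequence $k \mapsto \psi(mk)$ over an arithmetic progression of modulus $\leq d$ and length $\gg x/m \geq x^{1/2}$. Applying the Green--Tao factorisation theorem to the reparametrised polynomial orbit writes it as a smooth $\times$ rational $\times$ equidistributed product: the equidistributed factor supplies a power-saving in $\log x$, while the rational factor either assembles into the claimed main term $\frac{dW}{\varphi(dW)} \sum \psi$ (via matching with the $W$-trick local factor) or has period large enough to still yield cancellation. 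The aggregate error, after summing trivially over $m \leq x^{1/2}$ and $d \leq x^{1/4-\ee}$, is absorbed by taking the Green--Tao saving parameter sufficiently large relative to $A$.

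For the Type II part, one first absorbs the outer $\max_c$ and an auxiliary unimodular sequence $\lambda_d$, then applies Cauchy--Schwarz in the variable $k$, reducing the task to bounding correlations of the form $\sum_k \psi(m_1 k) \overline{\psi(m_2 k)} \mathbf{1}_{\text{congruence conditions}}$, averaged over auxiliary parameters $(m_1, m_2, d_1, d_2)$. The inner sum over $k$ averages a polynomial orbit on the product nilmanifold $(G/\Gamma)^2$ along an arithmetic progression of modulus $\leq [d_1, d_2]$ and length $\asymp x^{1/4}$. Green--Tao equidistribution on $(G/\Gamma)^2$ supplies power-savings for generic parameters; the exceptional parameter configurations (essentially, those where the product orbit factorises rationally) yield, after diagonal ($m_1 = m_2$) extraction and $W$-trick matching, the expected main term together with a negligible remainder.

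The principal obstacle is the Type II step: one needs quantitative equidistribution on the product nilmanifold $(G/\Gamma)^2$, uniformly over the external parameters $(m_1, m_2, d_1, d_2)$, with $\log x$-savings strong enough to survive summation over $d \leq x^{1/4-\ee}$. The $1/4$ threshold is precisely the Cauchy--Schwarz breakeven point: the Type II inner variable $k$ has length just above $x^{1/4}$, leaving little margin over the modulus, so the equidistribution input must be pushed essentially to the limit of the Green--Tao theory. Verifying that the product polynomial sequence $k \mapsto (g(m_1 k), g(m_2 k))$ on $(G/\Gamma)^2$ remains irrational outside the major-arc cases that genuinely contribute to the main term forms the combinatorial heart of the proof.
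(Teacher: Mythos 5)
Your proposal has the right ingredients---Vaughan's identity, the Green--Tao factorization theorem, quantitative Leibman equidistribution, and a Cauchy--Schwarz step that lifts the Type~II correlation to the product nilmanifold $(G/\Gamma)^2$---but the order in which you deploy them is inverted relative to the paper, and this inversion creates real gaps that your sketch does not close.

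The paper performs the factorization theorem \emph{first}, at the level of the full sum $\sum_n f_q(n)\psi(n)$ with $f_q = \Lambda - \tfrac{qW}{\varphi(qW)}1_{(\cdot,W)=1}$ (Lemma~4.2). This reduces, after pigeonholing over the factorization data $(M,y,t,a')$, to two clean sub-cases: a ``rational'' case where the nilsequence is essentially constant and the estimate collapses to the classical Bombieri--Vinogradov inequality for $f_q$; and an ``equidistributed'' case, where the nilsequence $\psi'$ has zero mean and is totally $M^{-B}$-equidistributed, so that both $\sum\Lambda\psi'$ and $\sum 1_{(\cdot,W)=1}\psi'$ are shown separately to be small (Theorem~4.4 and Proposition~4.9). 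Only then does the paper invoke Vaughan's identity, and at that stage there is \emph{no main term at all} in either the Type~I or Type~II estimates. You instead apply Vaughan first and then propose to factorize the orbit $k\mapsto\psi(mk)$ \emph{inside} each Type~I sum. This has two concrete consequences you do not address. First, the factorization output $(M,y,t,a')$ now depends on both $d$ and $m$, not just on $d$; the polylog pigeonhole the paper uses to fix these parameters across $d\in\mathcal{B}$ becomes a pigeonhole over a set of size $\gg x^{3/4}$ (pairs $(d,m)$), and the error $\sum_{d,m}x/(dm)\cdot M^{-B}$ does not obviously telescope into a single well-controlled term. Second, your main-term extraction in the Type~I sum (``the rational factor either assembles into the claimed main term via matching with the $W$-trick local factor...'') is exactly the point where the paper needs to deviate from the standard Green--Tao $W$-trick: $W=\mathscr{P}((\log x)^C)$ is so large that the usual rescaling $n\mapsto Wn+1$ is unavailable, and the paper's comparison against $1_{(n,W)=1}$ (via the fundamental lemma of sieve theory, in the $(a',q')=1$ case of Section~4.2, and via the sieve majorant/minorant pair in Proposition~4.9) is a workaround your sketch does not supply.

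The Type~II step also inherits this problem. You say the exceptional parameter configurations ``yield, after diagonal ($m_1=m_2$) extraction and $W$-trick matching, the expected main term together with a negligible remainder.'' In the paper's post-reduction setup this is not what happens: the Type~II estimate (Proposition~6.3) is a pure minor-arc bound with no main term, because the nilsequence $\psi$ entering it is already $\delta^C$-equidistributed with $\int\varphi=0$. The diagonal $m_1=m_2$ (or $n_1=n_2$) terms are simply absorbed into the Cauchy--Schwarz loss---they are responsible for the $D\leq M^{1/2-\ee}$ ceiling you correctly identify as the $1/4$ barrier---not matched against a singular-series factor. Introducing a genuine main term here would require you to track how the Vaughan pieces of $\Lambda$ and the sieve decomposition of $1_{(n,W)=1}$ line up term-by-term after the bilinear Cauchy--Schwarz, which is a substantial new computation that the factorize-first organization is specifically designed to avoid. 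So your route is not obviously unworkable, but as written it is missing the $W$-trick substitute, the two-parameter pigeonhole, and the observation that after the equidistribution reduction there is no Type~II main term to extract---and without those, the estimate does not close.
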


We can increase the level of distribution to $1/3$ if the nilsequence $\psi$ is \emph{fixed} (does not depend on $d$).

\begin{theorem}\label{thm:1/3}
 Let an integer $s\geq 1$, large real numbers $A, \Delta\geq 2$, and a small real number $\varepsilon\in (0,1/3)$ be given. Then for any nilsequence $\psi \in \Psi_s(\Delta, \log x)$ and $x \geq 2$, we have
\begin{align*}
\sum_{d\leq x^{1/3-\varepsilon}}\max_{(c,d)=1} \Big|\sum_{\substack{n \leq x \\ n \equiv c\Mod{d}}} \Lambda(n) \psi(n) -\frac{dW}{\varphi(dW)}\sum_{\substack{n \leq x \\(n,W)=1\\ n\equiv c\Mod{d}}} \psi(n) \Big|\ll_{s,A,\Delta,\ee} \frac{x}{(\log x)^A},
\end{align*}
where $W = \mathscr{P}((\log x)^C)$ for some constant $C = C(A,s,\Delta,\ee)$.
\end{theorem}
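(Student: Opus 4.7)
The plan is to establish Theorem~\ref{thm:1/3} by a Type I / Type II decomposition of $\Lambda$ via Heath--Brown's identity, combined with quantitative equidistribution estimates for nilsequences in arithmetic progressions. The absence of the supremum over $\psi$ (compared with Theorem~\ref{thm:1/4}) is what enables the stronger level $1/3$: we may use equidistribution bounds for a fixed polynomial sequence $g$, rather than having to control the complexity of $g$ uniformly over $d$.

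First I would apply Heath--Brown's identity to write $\Lambda(n)\mathbf{1}_{n\leq x}$, up to negligible terms, as an $O(1)$-fold sum of convolutions $a_1\ast\cdots\ast a_k$ whose components are supported in dyadic scales. Each such convolution, plugged into the left-hand side of Theorem~\ref{thm:1/3}, can be grouped into two shapes: \emph{Type I} sums $\sum_{md \cdot \ell = n} \alpha(m)\psi(n)\mathbf{1}_{n\equiv c\Mod d}$ in which the distinguished variable $\ell$ ranges freely over a long interval (and $\alpha$ is supported in $[1,x^{1/3}]$), and \emph{Type II} sums in which two variables $m,n$ both range over dyadic intervals in $[x^{1/3},x^{2/3}]$ with $mn\asymp x$. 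After the $W$-trick, the expected main terms are $dW/\varphi(dW)$ times the unweighted progression sum of $\psi$.

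For Type I sums, the inner sum over the free variable is a sum of the fixed nilsequence $n\mapsto\psi(mn)$ along an arithmetic progression to modulus $d$; since $\psi\in\Psi_s(\Delta,\log x)$ is fixed, $n\mapsto\psi(mn)$ is still a nilsequence on the same nilmanifold, with polynomial sequence $n\mapsto g(mn)$. Applying the Green--Tao quantitative equidistribution theorem for polynomial orbits on nilmanifolds in arithmetic progressions (combined with the $W$-trick to remove major-arc obstructions coming from small primes), one obtains the predicted main term with a polylogarithmic power saving, provided the progression has length at least a small power of~$x$. This is comfortably satisfied for $d\leq x^{1/3-\varepsilon}$ and $m\leq x^{1/3}$, and summing the error over $m$ and $d$ trivially gives a total error $\ll x/(\log x)^A$.

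For Type II sums, the plan is a Cauchy--Schwarz / dispersion argument in the shorter variable, reducing matters to controlling $\sum_m \psi(mn_1)\overline{\psi(mn_2)}\mathbf{1}_{mn_1\equiv c\Mod d}\mathbf{1}_{mn_2\equiv c\Mod d}$ summed against a bounded kernel in $(n_1,n_2)$. The off-diagonal $n_1\neq n_2$ contribution is a sum of a nilsequence on the product nilmanifold $G/\Gamma\times G/\Gamma$, again of bounded complexity because $\psi$ is fixed, to which Green--Tao equidistribution once more applies; the diagonal $n_1=n_2$ contributes a manageable trivial bound. The main obstacle, and the source of the $1/3$ rather than $1/2$ level, lies here: one must exhibit saving in the bilinear sum uniformly in the modulus $d$, which forces a sieve-type inequality of Bombieri--Vinogradov shape for nilcharacters. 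Concretely, one iterates the Green--Tao factorization theorem so that for those $m$ (or $(n_1,n_2)$) where the shifted polynomial sequence is non-equidistributed modulo $d$, the corresponding horizontal frequencies give a nontrivial Diophantine constraint involving $d$; this constraint can only be met by few $d$ simultaneously, and a dyadic/large-sieve sum over $d\leq x^{1/3-\varepsilon}$ then closes the argument. Balancing the two steps against $MN\asymp x$ with $M,N\in[x^{1/3},x^{2/3}]$ yields the $1/3-\varepsilon$ level of distribution claimed.
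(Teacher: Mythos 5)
Your high-level strategy — split $\Lambda$ into Type I and Type II convolutions and use quantitative equidistribution of nilsequences — is indeed the engine of the paper's proof of the equidistributed case (Theorem~\ref{equidist-1/3}), and you correctly identify that a \emph{fixed} $\psi$ is what gives the level $1/3$ instead of $1/4$. Two minor points of divergence: the paper uses Vaughan's identity rather than Heath--Brown for $\Lambda$ (Heath--Brown type decompositions appear only for $d_k$ and $1_S$ in Lemma~\ref{le_HB}), and the closing step in the Type~II analysis is not a large sieve but a concrete multi-variable Diophantine argument (Lemmas~\ref{le_multileibman}, \ref{le_multileibman2} combined with the converse Leibman theorem) showing that the assumed equidistribution of $\psi$ would be violated.

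The genuine gap is structural and precedes the Type~I/II split: you cannot produce the main term $\frac{dW}{\varphi(dW)}\sum_{(n,W)=1,\, n\equiv c}\psi(n)$ inside the Type I analysis by ``applying the Green--Tao equidistribution theorem,'' because a general $\psi\in\Psi_s(\Delta,\log x)$ need not be equidistributed — the polynomial orbit may sit close to a rational nilrotation, and then the twisted sum concentrates on major arcs with no usable equidistribution bound. The paper's actual proof begins with a reduction (Section~\ref{sec:deduction2}, Lemma~\ref{lem:factor2}) using the factorization theorem for polynomial sequences: $\psi$ is decomposed into smooth, equidistributed, and $M$-rational parts, and after passing to a sub-progression one lands either in (i) the case $\int\varphi=0$, where there is \emph{no} main term and Theorem~\ref{equidist-1/3} applies to give a pure error bound, or (ii) the case where $\psi'$ is constant, which is the classical Bombieri--Vinogradov theorem. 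Your proposal invokes the factorization theorem only within the Type~II step, where it plays no such role; and it omits the companion estimate Proposition~\ref{prop:sieve2} for $n\mapsto 1_{(n,W)=1}$, which is needed precisely because the $W$ here is too large for the standard Green--Tao $W$-trick, so one must compare $\Lambda$ to a normalized version of $1_{(n,W)=1}$ rather than to~$1$. Without this preliminary reduction and the sieve-weight analogue, the proposal cannot correctly account for the main term.
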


We can further increase the level of distribution to $1/2$ if $c$ is fixed (does not depend on $d$) and the absolute value inside the $d$ sum is replaced by a well-factorable weight, defined in the following definition.

\begin{definition}[Well-factorable sequences] We say that a sequence $(\lambda_d)$ of real numbers is \emph{well-factorable} of level $D$ if $(\lambda_d)$ is supported on $d \in [1,D]$ and for any $1\leq R,S\leq D$ with $RS=D$ one can write $\lambda_d=\beta*\gamma(d) := \sum_{d=d_1d_2}\beta(d_1)\gamma(d_2)$ for some sequences $\beta(d), \gamma(d)$ of modulus at most $1$ and such that $\beta$ is supported on $[1,R]$ and $\gamma$ is supported on $[1,S]$.
\end{definition}

Well-factorable weights arise in many sieve problems due to the fact that the linear sieve weights (introduced by Iwaniec) are a bounded linear combination of well-factorable weights; see \cite[Lemma 12.16]{operadecribro}. Bombieri--Friedlander--Iwaniec~\cite{BFI} famously broke the $1/2$ barrier for the level of distribution in the Bombieri--Vinogradov inequality, provided that the absolute value signs are replaced by well-factorable weights and the residue class is fixed for all $q$. In this setting, we can also do better than Theorems~\ref{thm:1/4} and~\ref{thm:1/3}, despite not being able to break the $1/2$ barrier.

\begin{theorem}\label{thm:1/2}
 Let integers $s\geq 1$, $c \neq 0$, large real numbers $A, \Delta \geq 2$, and a small real number $\varepsilon\in (0,1/2)$ be given. Then for any nilsequence $\psi \in \Psi_s(\Delta, \log x)$ and any well-factorable sequence $(\lambda_d)$ of level $x^{1/2-\varepsilon}$ with $x \geq 2$,  we have
\begin{align*}
\Big|\sum_{\substack{d\leq x^{1/2-\varepsilon} \\ (d,c)=1}} \lambda_d \Big(\sum_{\substack{n \leq x\\ n \equiv c\Mod{d}}} \Lambda(n) \psi(n) -  \frac{dW}{\varphi(dW)}\sum_{\substack{n \leq x \\(n,W)=1\\ n\equiv c\Mod{d}}} \psi(n) \Big)\Big| \ll_{s,A,\Delta,\ee,c} \frac{x}{(\log x)^A},
\end{align*}
where $W = \mathscr{P}((\log x)^C)$ for some constant $C = C(A,s,\Delta,\ee)$.
\end{theorem}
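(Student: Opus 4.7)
The plan is to follow the template of the classical Bombieri--Vinogradov--Friedlander--Iwaniec argument, with each step carefully adapted to carry along the nilsequence twist $\psi$. As the starting point I would apply a Heath--Brown identity of sufficiently large level $k = k(A,s)$ to decompose $\Lambda(n)$ on $[1, x]$ as a bounded linear combination of Dirichlet convolutions in which each factor is either smoothly truncated to $[1, x^{1/k}]$ or is the indicator of a dyadic interval. A parallel decomposition of the ``model'' weight $\tfrac{dW}{\varphi(dW)}\mathbf{1}_{(n,W)=1}$ (using $\mathbf{1}_{(n,W)=1} = \sum_{e\mid (W,n)}\mu(e)$) reduces the problem to bounding, for each dyadic splitting, sums of Type I (one long smooth factor) and Type II (two factors of lengths in $[x^{\varepsilon},x^{1/2}]$) shape, each weighted by $\lambda_d\psi(n)$ and restricted by $n \equiv c \pmod d$.

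For the Type I sums, of the shape $\sum_d \lambda_d \sum_{m \sim M} a_m \sum_{mk \leq x,\, mk \equiv c \pmod d} \psi(mk)$ against the analogous model expression, I would exploit well-factorability: write $\lambda_d = \beta\ast\gamma$ with $R = x^{1/2-\varepsilon}/M$, absorb $\beta(d_1)$ into the coefficient $a_m$ (so that $m' := d_1 m$ runs up to $x^{1/2-\varepsilon}$), and then apply the Green--Tao quantitative Leibman theorem to the polynomial sequence $k \mapsto g(m'k)$ on the arithmetic progression $k \equiv c\overline{m'} \pmod{d_2}$. Subtracting the corresponding expression for the model weight yields savings of $(\log x)^{-A}$ uniformly in the remaining parameters.

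The Type II estimate is the main obstacle, and is where well-factorability plays its decisive role. After Linnik's dispersion method (Cauchy--Schwarz in the $k$-variable), one reduces to showing cancellation in off-diagonal sums of the shape
\[
\sum_{d_2 \sim S}\gamma(d_2)\sum_{k \neq k'}\beta_k\overline{\beta_{k'}}\sum_{\substack{m \sim M \\ mk \equiv mk' \equiv c \pmod{d_2}}}\psi(mk)\overline{\psi(mk')},
\]
where $d = d_1d_2$ and $R,S$ are chosen so that $RS = x^{1/2-\varepsilon}$. The crucial observation is that $m \mapsto \psi(mk)\overline{\psi(mk')}$ is itself a nilsequence on the product nilmanifold $(G\times G)/(\Gamma\times\Gamma)$, with polynomial sequence $n \mapsto (g(nk), g(nk'))$ and Lipschitz/complexity data bounded uniformly in $k,k'$. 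Applying the Green--Tao factorization theorem and the quantitative Leibman theorem to this product nilsequence along the AP modulo $d_2/(d_2,k-k')$ delivers strong equidistribution, except when the polynomial data is major-arc, that is, when $(k, k')$ lies in a sparse exceptional set cut out by low-complexity horizontal characters; this exceptional contribution can be controlled by a second Cauchy--Schwarz, while the diagonal $k = k'$ is absorbed into the main term by the $W$-trick. The technical heart of the argument is making the product nilmanifold equidistribution effective with polynomial-type dependence on the complexity of $\psi$, uniformly in $(k, k', d_2)$, and stable under all the absorbing steps of dispersion and well-factorable splitting.
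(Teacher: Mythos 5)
Your outline has the right skeleton (combinatorial identity, type I/II split, Linnik dispersion in the type II, product-nilsequence structure, and essential use of well-factorability to choose the bilinear split), and the observation that $m \mapsto \psi(mk)\overline{\psi(mk')}$ lives on $(G\times G)/(\Gamma\times\Gamma)$ is exactly the key structural point. But there is a genuine gap: you never perform the preliminary reduction to \emph{equidistributed} nilsequences, and the type II argument as stated cannot run without it. In the paper this reduction (Section~\ref{sec:equidistribution}, built on the factorization theorem, Theorem~\ref{thm:factor}) is done once, at the outset: a general $\psi\in\Psi_s(\Delta,\log x)$ is decomposed along a sub-progression into a smooth part (treated via Lipschitz continuity), a rational part (absorbed into the progression), and a constant-plus-equidistributed part; the constant produces either the stated $W$-tricked main term or a contradiction to classical Bombieri--Vinogradov, and everything else reduces to Theorem~\ref{equidist-1/2} and Proposition~\ref{prop:sieve2} where $\psi$ is totally $\eta$-equidistributed with $\eta\leq(\log x)^{-O_A(1)}$. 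You instead propose to push the factorization inside the dispersion, applying it to each product sequence $(g(nk),g(nk'))$. That introduces sub-nilmanifold data varying with $(k,k')$, and more importantly it leaves unexplained how you bound the ``exceptional'' pairs: if the original $\psi$ is not already known to be strongly equidistributed, then the set of $(k,k')$ for which the product sequence is major-arc need not be sparse at all (e.g.\ for rational $\psi$), so the ``second Cauchy--Schwarz'' you invoke has nothing to grip.

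Relatedly, the paper's type II proof (Proposition~\ref{typeII-lambda-general}) has a quite specific shape you do not reproduce: after dispersion it does not try to \emph{bound} the exceptional contribution but derives a \emph{contradiction}. If the off-diagonal sum were large, then for $\gg \delta^{O(1)}V^2N^2$ tuples the quantitative Leibman theorem furnishes nontrivial horizontal characters; pigeonholing reduces to a single character, and then the purely elementary multi-variable Weyl recurrence (Lemmas~\ref{le_multileibman} and~\ref{le_multileibman2}, not just single-variable Leibman) extracts Diophantine information on the coefficients of $\chi_1\circ g$ that contradicts the assumed total $\delta^C$-equidistribution of $g$. This multi-variable Diophantine step, working simultaneously in the variables $n_1,n_2,u,v_1,v_2$, is where the heavy lifting happens, and it crucially exploits the specific choice $U=x^{1-\varepsilon}/M$, $V=Mx^{-1/2}$ (so that $UV^2\leq Mx^{-\varepsilon}$ makes the inner $m$-sum long enough). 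Your proposal does not specify the split $RS=x^{1/2-\varepsilon}$ and does not identify this recurrence lemma, which is essential and not a routine consequence of the one-variable Leibman theorem.

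A smaller but real problem: you propose to expand $\mathbf{1}_{(n,W)=1}=\sum_{e\mid(W,n)}\mu(e)$ and treat the model term as a type I sum. With $W=\mathscr{P}((\log x)^C)$ for $C>1$ the modulus $W$ exceeds $x$, and the divisors $e\mid(W,n)$ range well beyond any acceptable type I length. The paper instead truncates via the linear sieve and the fundamental lemma (Proposition~\ref{prop:sieve2}), replacing $\mathbf{1}_{(n,W)=1}$ by $\sum_{\ell\mid W,\,\ell\leq x^{0.01}}\lambda_\ell^{\pm}$ up to an acceptable error; the Möbius formula alone is not enough.
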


\begin{remark}
In Theorems~\ref{thm:1/4},~\ref{thm:1/3}, and~\ref{thm:1/2}, in order to get an arbitrary power of log saving in the error term, it is necessary to perform a ``W-trick" to overcome the fact that functions such as $\Lambda(n)e(\frac{an^s}{q})$ are not equidistributed in all residue classes when $q\leq (\log x)^C$. Since this leads to the choice $W = \mathscr{P}((\log x)^C)$ which is rather large, we are unable to perform the same W-trick as in~\cite{green-tao-linear} which compares the function $n \mapsto \frac{\varphi(W)}{W}\Lambda(Wn+1)$ to $1$. We have adapted an alternative approach which, roughly speaking, compares $\Lambda$ to (a normalized version of) the function $n\mapsto 1_{(n,W)=1}$.
\end{remark}

\begin{remark}\label{rem:mobius}
One can also obtain similar results for the M\"{o}bius function, without the need for the $W$-trick and with no main term. Specifically, with the notations and assumptions of Theorem \ref{thm:1/4}, we have
\begin{align*}
\sum_{d\leq x^{1/4-\varepsilon}}\max_{(c,d)=1} \sup_{\psi \in \Psi_s(\Delta, \log x)} \Big|\sum_{\substack{n \leq x \\ n \equiv c\Mod{d}}} \mu(n) \psi(n)\Big|\ll_{s,A,\Delta,\ee} \frac{x}{(\log x)^A}.
\end{align*}
With the notations and assumptions of Theorem \ref{thm:1/3}, we have
\begin{align*}
\sum_{d\leq x^{1/3-\varepsilon}}\max_{(c,d)=1} \Big|\sum_{\substack{n \leq x \\ n \equiv c\Mod{d}}} \mu(n) \psi(n) \Big|\ll_{s,A,\Delta,\ee} \frac{x}{(\log x)^A}.
\end{align*}
 With the notations and assumptions of Theorem \ref{thm:1/2}, we have
\begin{align*}
\Big|\sum_{\substack{d\leq x^{1/2-\varepsilon} \\ (d,c)=1}} \lambda_d \Big(\sum_{\substack{n \leq x\\ n \equiv c\Mod{d}}} \mu(n) \psi(n)\Big)\Big| \ll_{s,A,\Delta,\ee,c} \frac{x}{(\log x)^A}.
\end{align*}
These statements can be proved with arguments almost identical to Theorems~\ref{thm:1/4},~\ref{thm:1/3} and~\ref{thm:1/2}, using the fact that the M\"{o}bius function obeys an identity analogous to Vaughan's identity for $\Lambda$.
\end{remark}

In fact, our results above also apply to a number of other multiplicative functions, such as the divisor functions $d_k(n)$ (for all values of $k$, including non-integer and complex values) and the indicator function $1_{S}(n)$ of sums of two squares. For these functions, however, one would have to modify the main term sum involving $(n,W)=1$ in Theorems \ref{thm:1/4}, \ref{thm:1/3} and \ref{thm:1/2}. For simplicity, and to reduce repetition in the arguments, we only state Bombieri--Vinogradov type theorems for these functions in the equidistributed (``minor arc'') case where there is no main term; see Theorem \ref{thm_divisor}.

\subsection{Results for polynomial phase twists}\label{subsec:poly}

Since polynomial phases of the form $e(P(n))$ are examples of nilsequences of bounded complexity, Theorems \ref{thm:1/4}, \ref{thm:1/3} and \ref{thm:1/2} immediately imply as special cases Bombieri--Vinogradov type estimates for $\Lambda(n)e(P(n))$. We state these below, since they are of independent interest and since they will be utilized in deriving some of the applications of our results. 

\begin{corollary}\label{cor:1/4}
 Let an integer $s\geq 1$, a large real number $A\geq 2$, and a small real number $\varepsilon\in (0,1/4)$ be given. Then for any $x \geq 2$, we have
\begin{align*}
\sum_{d\leq x^{1/4-\varepsilon}}\max_{(c,d)=1} \sup_{\deg(P)\leq s} \Big|\sum_{\substack{n \leq x \\ n \equiv c\Mod{d}}} \Lambda(n) e(P(n)) - \frac{dW}{\varphi(dW)} \sum_{\substack{n \leq x \\(n,W)=1\\ n\equiv c\Mod{d}}} e(P(n)) \Big|\ll_{s,A,\varepsilon} \frac{x}{(\log x)^A},
\end{align*}
where $W = \mathscr{P}((\log x)^C)$ for some constant $C = C(A,s,\ee)$.
\end{corollary}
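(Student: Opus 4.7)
My plan is to deduce Corollary \ref{cor:1/4} as an essentially immediate specialization of Theorem \ref{thm:1/4}. The only step of substance is to verify that every polynomial phase $n \mapsto e(P(n))$ with $\deg P \leq s$ lies, up to a harmless universal constant depending only on $s$, in the nilsequence class $\Psi_s(2, \log x)$.

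To this end I realize $e(P(n))$ on the abelian nilmanifold $G/\Gamma = \R/\Z$, with $G = \R$, $\Gamma = \Z$, and the degree-$s$ filtration $G_0 = G_1 = \cdots = G_s = \R$, $G_{s+1} = \{0\}$. The standard Mal'cev basis $\mathcal{X} = \{1\}$ of the Lie algebra $\R$ is $K$-rational for every $K \geq 2$, and induces the usual metric on $\R/\Z$. The polynomial sequence $g(n) = P(n)$ is adapted to $G_\bullet$: the $(s+1)$-th discrete derivative of any polynomial of degree at most $s$ vanishes identically, while all derivatives of order $\leq s$ automatically land in $G_k = \R$. The function $\varphi(t) = e(t)$ on $\R/\Z$ satisfies $\|\varphi\|_{\operatorname{Lip}} \leq 1 + 2\pi =: C$, so $\psi_P := e(P(\cdot))/C$ belongs to $\Psi_s(2, \log x)$ (here $\Delta = 2$ is taken merely to satisfy the hypothesis $\Delta \geq 2$, even though $G/\Gamma$ has dimension $1$). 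Crucially, the coefficients of $P$ play no role whatsoever in any of the complexity parameters $s$, $\Delta$, $K$.

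With this in hand, for each fixed $d \leq x^{1/4-\varepsilon}$ and each residue $c$ coprime to $d$, the supremum over polynomials $P$ of degree at most $s$ of the quantity inside the absolute value in Corollary \ref{cor:1/4} is at most $C$ times the corresponding supremum over $\psi \in \Psi_s(2, \log x)$ appearing in Theorem \ref{thm:1/4}, because the main term on the right rescales by the same factor $C$ as the $\Lambda$-twisted sum on the left. Summing over $d$, taking the max over $c$, and absorbing the factor of $C$ into the implicit constant then yields the bound.

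There is no genuine obstacle here: the only point requiring care is the normalization by $C$ to enforce the constraint $\|\varphi\|_{\operatorname{Lip}} \leq 1$, together with the observation that a polynomial phase of degree $s$ is the simplest possible example of an $s$-step nilsequence with fixed complexity. All the substantive work is already contained in the proof of Theorem \ref{thm:1/4}.
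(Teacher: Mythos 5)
Your proposal is correct and is essentially the paper's intended argument: the paper explicitly states in Subsection 1.2 that Corollaries 1.5--1.7 are immediate special cases of Theorems 1.4--1.6, pointing to the realization of $e(P(n))$ on $\R/\Z$ with the degree-$s$ filtration exactly as you describe. The only point you spell out beyond the paper's remark is the harmless normalization by the Lipschitz constant of $t\mapsto e(t)$, which is correct and correctly observed to rescale the main term by the same factor.
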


\begin{corollary}\label{cor:1/3}
 Let an integer $s\geq 1$, a large real number $A \geq 2$, and a small real number $\varepsilon\in (0,1/3)$ be given. Then for any polynomial $P(x)\in \mathbb{R}[x]$ of degree $s$ and $x \geq 2$, we have
\begin{align*}
\sum_{d\leq x^{1/3-\varepsilon}}\max_{(c,d)=1} \Big|\sum_{\substack{n \leq x \\ n \equiv c\Mod{d}}} \Lambda(n) e(P(n)) -\frac{dW}{\varphi(dW)}\sum_{\substack{n \leq x \\(n,W)=1\\ n\equiv c\Mod{d}}} e(P(n)) \Big|\ll_{s,A,\varepsilon} \frac{x}{(\log x)^A},
\end{align*}
where $W = \mathscr{P}((\log x)^C)$ for some constant $C = C(A,s,\ee)$.
\end{corollary}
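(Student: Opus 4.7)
The plan is to deduce Corollary~\ref{cor:1/3} directly from Theorem~\ref{thm:1/3} by realizing the polynomial phase $n \mapsto e(P(n))$ as a bounded-complexity nilsequence inside the class $\Psi_s(\Delta,\log x)$ for an absolute $\Delta$.

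Fix $P(x) \in \mathbb{R}[x]$ of degree at most $s$. Take the Lie group $G = \mathbb{R}$ with lattice $\Gamma = \mathbb{Z}$, so that $G/\Gamma = \mathbb{R}/\mathbb{Z}$, equipped with the abelian filtration $G_i = \mathbb{R}$ for $0 \leq i \leq s$ and $G_i = \{0\}$ for $i > s$. The Mal'cev basis is the single element $1 \in \mathbb{R}$, which is trivially $K$-rational for every $K \geq 2$, and in particular for $K = \log x$. Define the polynomial sequence $g(n) := P(n) \in G$; the derivative condition $\partial_{h_1}\cdots\partial_{h_k} g(n) \in G_k$ is automatic for $k \leq s$, and for $k = s+1$ it holds because the $(s+1)$-fold iterated discrete derivative of a polynomial of degree $\leq s$ vanishes. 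Finally, the function $\varphi(x) := c_0\, e(x)$ has Lipschitz norm at most $1$ (with respect to the metric induced by the above Mal'cev basis) for a suitable absolute constant $c_0 > 0$, so that $\psi(n) := \varphi(g(n)\Gamma) = c_0\, e(P(n))$ belongs to $\Psi_s(2,\log x)$.

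Applying Theorem~\ref{thm:1/3} to this $\psi$ and dividing through by $c_0$ now yields precisely the estimate of Corollary~\ref{cor:1/3}, with $c_0^{-1}$ absorbed into the implied constant (which is permitted to depend on $s$, $A$, and $\varepsilon$). No step here poses a genuine obstacle: the entire argument is a bookkeeping check that $e(P(n))$ literally satisfies the definition of a degree-$s$ nilsequence of bounded complexity, with parameters $\Delta$ and Mal'cev rationality uniform in $P$. The same reduction derives Corollary~\ref{cor:1/4} from Theorem~\ref{thm:1/4}: the supremum over polynomials of degree $\leq s$ is dominated by the supremum over $\psi \in \Psi_s(\Delta,\log x)$ already present in that statement.
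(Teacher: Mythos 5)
Your proposal is correct and matches the paper's own approach: the authors explicitly note (in the discussion following Definition~\ref{def:nil} and at the start of Subsection~\ref{subsec:poly}) that polynomial phases $e(P(n))$ are nilsequences on $G/\Gamma = \R/\Z$ with the filtration $G_k = \R/\Z$ for $k \leq s$ and $G_k = \{0\}$ for $k > s$, and they state that Corollaries~\ref{cor:1/4}--\ref{cor:1/2} follow immediately as special cases of Theorems~\ref{thm:1/4}--\ref{thm:1/2}. Your write-up simply makes explicit the bookkeeping (choice of filtration, trivial rationality of the Mal'cev basis, and the absolute normalization constant $c_0$ to bring $\|\varphi\|_{\operatorname{Lip}} \leq 1$) that the paper leaves implicit.
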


\begin{corollary}\label{cor:1/2}
 Let integers $s\geq 1$, $c \neq 0$, a large real number $A\geq 2$, and a small real number $\varepsilon\in (0,1/2)$ be given. Then for any polynomial $P(x)\in \mathbb{R}[x]$ of degree $s$ and any well-factorable sequence $(\lambda_d)$ of level $x^{1/2-\varepsilon}$ with $x \geq 2$,  we have
\begin{align*}
\Big|\sum_{\substack{d\leq x^{1/2-\varepsilon} \\ (d,c)=1}} \lambda_d \Big(\sum_{\substack{n \leq x \\ n \equiv c\Mod{d}}} \Lambda(n) e(P(n)) - \frac{dW}{\varphi(dW)}\sum_{\substack{n \leq x \\(n,W)=1\\ n\equiv c\Mod{d}}} e(P(n)) \Big)\Big| \ll_{s,A,\varepsilon,c} \frac{x}{(\log x)^A},
\end{align*}
where $W = \mathscr{P}((\log x)^C)$ for some constant $C = C(A,s,\ee)$.
\end{corollary}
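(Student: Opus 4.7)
The plan is to derive Corollary~\ref{cor:1/2} as an immediate specialization of Theorem~\ref{thm:1/2}, by realizing the polynomial phase $n \mapsto e(P(n))$, up to normalization by an absolute constant, as a nilsequence in $\Psi_s(\Delta, \log x)$ for some $\Delta = \Delta(s) = 1$. Since $P$ is fixed in the statement, there is no supremum over $P$ to contend with, and we fall exactly into the ``single nilsequence, fixed residue class, well-factorable weight'' format of Theorem~\ref{thm:1/2}.

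The required construction is the one already sketched after Definition~\ref{def:nil}. Take $G = \R$, $\Gamma = \Z$, so that $G/\Gamma = \R/\Z$ has dimension $1$. Equip $G$ with the filtration $G_0 = G_1 = \cdots = G_s = \R$ and $G_i = \{0\}$ for $i > s$; this is a valid filtration of degree at most $s$, since the commutator conditions are automatic ($G$ is abelian) and $\Gamma \cap G_i$ is a lattice in $G_i$ for every $i$. A Mal'cev basis is provided by the single element $1 \in \R$, which is $K$-rational for any $K \geq 2$, in particular for $K = \log x$ (once $x$ is large enough; for bounded $x$ the whole estimate is trivial). The sequence $g(n) = P(n)$ is a polynomial sequence adapted to $G_{\bullet}$ because the $(s+1)$-st discrete derivative of a polynomial of degree at most $s$ vanishes identically, while the derivative conditions in lower degrees are vacuous. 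Finally, let $\varphi(x) = e(x)$; then $\|\varphi\|_\infty = 1$ and the Lipschitz constant with respect to the standard metric on $\R/\Z$ induced by $\mathcal{X}$ is at most $2\pi$, so $\|\varphi\|_{\operatorname{Lip}} \leq C_0 := 1 + 2\pi$.

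Consequently the nilsequence $\tilde{\psi}(n) := C_0^{-1}\varphi(g(n)\Gamma) = C_0^{-1} e(P(n))$ belongs to $\Psi_s(1, \log x)$. Applying Theorem~\ref{thm:1/2} to $\tilde\psi$, and then multiplying the resulting inequality through by the absolute constant $C_0$, which is harmlessly absorbed into the implicit constant in the notation $\ll_{s,A,\ee,c}$, yields the bound claimed in Corollary~\ref{cor:1/2}. There is no genuine obstacle in this deduction: the corollary is a clean specialization of Theorem~\ref{thm:1/2}, the only verification being the embedding of $e(P(\cdot))$, after normalization, into the class $\Psi_s$ of bounded-complexity nilsequences as above.
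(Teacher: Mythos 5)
Your proposal is correct and coincides exactly with the paper's (implicit) argument: the authors note right after Definition~\ref{def:nil} that polynomial phases $e(P(n))$ are nilsequences on $\R/\Z$ with the filtration $G_i=\R$ for $i\leq s$ and $G_i=\{0\}$ for $i>s$, and state in Subsection~\ref{subsec:poly} that the corollaries are immediate specializations of the theorems. Your explicit bookkeeping of the Lipschitz normalization $C_0 = 1+2\pi$ and its absorption into the implied constant is the only detail the paper leaves tacit.
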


\begin{remark}
We could also obtain analogous results for bracket polynomial phases, so for example $e(P_1(n)\lfloor P_2(n)\rfloor)$, where $P_1,P_2$ are polynomials. One simply needs the fact that these functions are well-approximable by nilsequences of bounded complexity, a property that was noted above. We leave the details to the interested reader.
\end{remark}

Previous results related to our main theorems are as follows:
\begin{itemize}
    \item For the M\"obius function, it was established in \cite{shao} for $Q< x^{1/2}$ that
    \begin{align}\label{eq22}
    \max_{(c,d)=1} \sup_{\psi \in \Psi_s(\Delta, K)} \Big|\sum_{\substack{n \leq x \\ n \equiv c\Mod{d}}} \mu(n) \psi(n)\Big|\ll_{s,A,\Delta,K} \frac{x\log \log x}{Q(\log(x/Q^2))}
    \end{align}
    for almost all $d\in [Q,2Q]$, in the sense that the number of exceptional $d$ is $\ll_A Q(\log x)^{-A}$. Although this result is applicable for $d\leq x^{1/2-\varepsilon}$, it saves a factor of $(\log \log x)/(\log x)$ at best. The proof relies crucially on almost all numbers having prime factors in various suitable ranges, and hence it does not work for the case of primes, i.e. the von Mangoldt function.
 
    \item 
    For linear exponentials (that is, $s=1$), Theorem \ref{thm:1/3} was proved by Todorova and Tolev \cite{todorova-tolev}, and for quadratic phase functions (which is a special case of the $s=2$ case) by Tolev \cite{tolev-bombieri}.
    \item It was shown by Matom\"aki \cite{matomaki-bombieri}, improving on work of Mikawa \cite{mikawa}, that the well-factorable level of distribution estimate given by Theorem \ref{thm:1/2} holds for  $s=1$.
 \end{itemize}

\subsection{Notation}

We use the usual asymptotic notation $\ll$, $\gg$, $O(\cdot)$, $o(\cdot)$, $\asymp$. Dependence of these symbols on parameters is indicated whenever such a dependence occurs (so, for example, $o_{w;x\to \infty}(1)$ is a quantity depending on $w$ and $x$ and tending to $0$ as $x\to \infty$). 

We use $\Lambda$ to denote the von Mangoldt function, $\mu$ to denote the M\"obius function, $\varphi$ to denote the Euler phi function, $d_k$ to denote the $k$-fold divisor function (with $d(n):=d_2(n)$) and $(a,b)$ to denote the greatest common divisor of $a$ and $b$. We also let $\mathscr{P}(w):=\prod_{p\leq w}p$.

Let $\|\cdot\|_{U^k(\mathbb{Z}/N\Z)}$ stand for the usual Gowers norm over the cyclic group $\mathbb{Z}/N\Z$. Given a function $f:\Z\to \mathbb{C}$ supported on $[N] := \{1,2,\cdots,N\}$, we define its Gowers norm $\|f\|_{U^k[N]}$ over the interval $[N]$ as
\begin{align*}
\|f\|_{U^k[N]}:=\frac{\|f\cdot 1_{[N]}\|_{U^k(\mathbb{Z}/N'\Z)}}{\|1_{[N]}\|_{U^k(\mathbb{Z}/N'\Z)}},    
\end{align*}
where $N'>2N$ (say $N'=2N+1$ for concreteness) and $f\cdot 1_{[N]}$ and $1_{[N]}$ are extended to $\mathbb{Z}/N'\Z$ in the natural way. 

If $A$ is any nonempty, finite set and $f:A\to \mathbb{C}$ is a function, we use the averaging notation
\begin{align*}
\mathbb{E}_{a\in A}f(a):=\frac{1}{|A|}\sum_{a\in A}f(a).    
\end{align*}

For $x \in \R$, we use $\|x\|$ to denote the distance from $x$ to its nearest integer.

%
%

\section{Applications}\label{sec:apps}

We now present several applications of our Bombieri--Vinogradov type theorems to problems related to Diophantine properties of the primes, as well as to additive combinatorics. 

\subsection{Bounded gaps between primes in Bohr sets}

Our first application generalizes the celebrated result of Zhang \cite{zhang}, Maynard \cite{maynard} and Tao (unpublished) on bounded gaps between primes. Subsequent to these works, a number of interesting subsets of the primes have also been shown to exhibit bounded gaps. See \cite{alweiss-luo}, \cite{maynard-compositio} for primes in short intervals, \cite{thorner}, \cite{maynard-compositio} for work on Chebotarev sets, and \cite{baker-zhao}, \cite{smith} for work on primes in Beatty sequences $(\lfloor \alpha n+\beta\rfloor)_{n\geq 1}$. 

As a consequence of our main theorems, we are able to add to this list that the primes lying in a \emph{nil-Bohr set} exhibit bounded gaps. This generalizes the result of Baker and Zhao \cite{baker-zhao} mentioned above, which corresponds to classical Bohr sets of a special form.

Nil-Bohr sets were introduced by Host and Kra in \cite{host-kra-nilbohr} and are a natural generalization of classical Bohr sets to the setting of higher order Fourier analysis. For the convenience of the reader, we first define classical Bohr sets and then nil-Bohr sets.

\begin{definition}[Bohr sets]
Let $r\geq 0$ and $\rho>0$. Then for any real numbers $\alpha_1,\ldots, \alpha_r\in \mathbb{R}$ the set
\begin{align*}
B=\{n\in \mathbb{Z}:\,\, \|\alpha_i n\|<\rho\,\, \forall 1\leq i\leq r\}    
\end{align*}
is called a (classical) \emph{Bohr set} (of rank $r$), where $\|x\|$ denotes the distance from $x$ to its nearest integer.
\end{definition}

\begin{definition}[nil-Bohr sets]
Let $U$ be an open subset of $\mathbb{C}$, and let $\psi:G/\Gamma\to \mathbb{C}$ be a nilsequence defined on some nilmanifold. Then the set
\begin{align*}
B=\{n\in \mathbb{Z}:\,\, \psi(n)\in U\}    
\end{align*}
is called a \emph{nil-Bohr set}. 
\end{definition}

Note that any Bohr set is also a nil-Bohr set. In fact, a Bohr set of any rank $r$ can be represented as a nil-Bohr set, by taking $G/\Gamma = (\R/\Z)^r$ and $\psi, U$ appropriately.
Moreover, any polynomial Bohr set of the form 
\begin{align*}
\{n\in \mathbb{Z}:\,\,\|Q(n)\|< \rho\}    
\end{align*}
is a nil-Bohr set for any polynomial $Q(x)$ and any $\rho\in (0,1/2)$; take $\psi(n)=e(Q(n))$ as the nilsequence in the definition, and take $U$ to be $\{z\in \mathbb{C}\setminus\{0\}:\,\, \arg(z)\in (-2\pi\rho,2\pi \rho)\}$.
Any nil-Bohr set that is ``irrational'' in a suitable sense (see Remark \ref{rem:irrat}) contains infinitely many primes. This is however not a trivial fact; if $Q(x)$ is a polynomial with irrational leading coefficient, then proving the asymptotic
\begin{align}\label{eq23}
|\{p\leq x:\,\, \|Q(p)\|<\rho\}|=(2\rho+o(1))\frac{x}{\log x}    
\end{align}
is related to Weyl's equidistribution criterion, exponential sum estimates in the Waring--Goldbach problem, and, in the case of general nil-Bohr sets, to the Green--Tao result \cite{green-tao-mobius} on M\"obius orthogonality with nilsequences. Also note that nil-Bohr sets (as well as classical Bohr sets) can be rather irregular: in the asymptotic formula \eqref{eq23} it is not possible to specify the $o(1)$ term without imposing a restriction on how fast the denominators of the continued fraction convergents of the leading coefficient of $Q(x)$ grow.

In what follows, we say that a set $S\subset \mathbb{N}$ has \emph{bounded gaps} if there exists $C>0$ such that the inequality $0<s_1-s_2\leq C$ has infinitely many solutions with $s_1,s_2\in S$.

\begin{theorem}[Bounded gaps between primes in nil-Bohr sets]\label{thm:bdd}  Let $Q(x)\in \mathbb{R}[x]$ be a polynomial with at least one irrational coefficient which is not the constant term. Let $\rho\in (0,1/2)$, and form the nil-Bohr set
\begin{align*}
B=\{n\in \mathbb{Z}:\,\, \|Q(n)\|< \rho\}.    
\end{align*}
Then the subset of the primes $\mathbb{P}\cap B$ has bounded gaps.
\end{theorem}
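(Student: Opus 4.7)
The plan is to deduce Theorem~\ref{thm:bdd} by running the Maynard--Tao bounded-gaps argument for primes restricted to $B$, with Corollary~\ref{cor:1/3} as the crucial arithmetic input. Fix a large integer $k$, an admissible $k$-tuple $\mathcal{H} = \{h_1,\ldots,h_k\}$, and a residue $v_0\bmod W$ with $(v_0+h_i, W) = 1$ for every $i$, where $W = \mathscr{P}((\log x)^C)$. With Maynard sieve weights
\[
w_n = \Big(\sum_{\substack{d_i \mid n+h_i\ \forall i \\ (d_1\cdots d_k, W)=1}} \lambda_{d_1,\ldots, d_k}\Big)^2
\]
supported on $d_1\cdots d_k \leq R := x^{(1/3-\varepsilon)/2}$ with $(\lambda_{d_1,\ldots,d_k})$ of the Maynard shape~\cite{maynard}, form
\[
S_1 = \sum_{\substack{x < n \leq 2x \\ n \equiv v_0\bmod W}} w_n \prod_{i=1}^k 1_B(n+h_i), \qquad S_2 = \sum_{\substack{x < n \leq 2x \\ n \equiv v_0\bmod W}} w_n \sum_{i=1}^k \Lambda(n+h_i) 1_B(n+h_i).
\]
If $S_2 > m(\log 3x)\, S_1$ for some integer $m \geq 1$ and all large $x$, then some $n \in (x,2x]$ must have at least $m+1$ indices $i$ with $n+h_i$ a prime in $B$, yielding bounded gaps in $\mathbb{P}\cap B$.

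The next step is to smooth $1_B$ via Fourier approximation. Sandwich $1_{(-\rho,\rho)}$ modulo $1$ between smooth majorants/minorants $\chi^{\pm}(t) = \sum_{|j|\leq K} \widehat{\chi^{\pm}}(j)\, e(jt)$ with $\widehat{\chi^{\pm}}(0) = 2\rho + O(1/K)$ and $L^1$-error $O(1/K)$, and expand $\prod_i \chi^{\pm}(Q(n+h_i))$. Each resulting term carries a polynomial phase $e(P_{\vec{j}}(n))$ with $P_{\vec{j}}(x) := \sum_i j_i Q(x+h_i)$ of degree $\leq \deg Q$. The vectors $\vec{j} \in [-K,K]^k$ for which $P_{\vec{j}}$ has no non-constant irrational coefficient form a sublattice $\Lambda_{\mathcal{H}}\subset \mathbb{Z}^k$; these ``diagonal'' modes yield the main terms of $S_1, S_2$ via the standard Maynard calculation. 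For every $\vec{j} \notin \Lambda_{\mathcal{H}}$, the polynomial $P_{\vec{j}}$ has an irrational coefficient of degree $\geq 1$, so Corollary~\ref{cor:1/3} applied to $\Lambda(m) e(P_{\vec{j}}(m-h_i))$ saves an arbitrary power of $\log x$ after summing over the sieve moduli $q \leq R^2 W < x^{1/3-\varepsilon}$. The analogous off-diagonal contribution to $S_1$ is handled by standard Weyl-type bounds for $\sum_n e(P_{\vec{j}}(n))$ in APs, which provide polynomial-in-$x$ savings and are therefore more than sufficient.

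Collecting the diagonal-mode main terms through the Maynard sieve computation yields
\[
\frac{S_2}{S_1\log 3x} = \delta(B,\mathcal{H})\cdot M_k(F) + o(1)\qquad (x\to\infty),
\]
where $\delta(B,\mathcal{H}) > 0$ measures the density of $\bigcap_i(B-h_i)$ along the polynomial trajectory $n\mapsto(Q(n+h_1),\ldots,Q(n+h_k))$ (strict positivity is a consequence of the irrationality hypothesis on $Q$ together with quantitative Weyl equidistribution), and $M_k(F)$ is the Maynard sieve ratio. Since $M_k(F)\to\infty$ as $k\to\infty$~\cite{maynard}, choosing $k$ large enough forces this ratio above any prescribed $m$, and Theorem~\ref{thm:bdd} follows. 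The principal obstacle I anticipate is the uniform treatment of the non-diagonal Fourier modes: one must fix $K = K(\rho, \deg Q, \varepsilon)$ \emph{before} letting $x \to \infty$, so that the $O(K^k)$ error contributions from distinct $\vec{j} \notin \Lambda_{\mathcal{H}}$ can each be absorbed into the $O((\log x)^{-A})$ bound of Corollary~\ref{cor:1/3} with a correspondingly large $A$. This in turn requires tracking how the implicit constants in Corollary~\ref{cor:1/3} depend on $P_{\vec{j}}$, which is uniform since these polynomials range over a bounded family of polynomials of bounded degree.
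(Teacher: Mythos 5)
Your proposal follows the same broad strategy as the paper: a Maynard-type sieve weighted by a Fourier approximation to $\prod_i 1_B(n+h_i)$, with the nilsequence Bombieri--Vinogradov estimates providing the level-of-distribution input. However, there is a genuine gap in how you treat the ``minor arc'' part of the argument.

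The crucial issue is that Corollary~\ref{cor:1/3} does \emph{not} assert that $\sum_{n\leq x,\, n\equiv c\,(d)} \Lambda(n) e(P(n))$ is small whenever $P$ has an irrational non-constant coefficient. It only controls the \emph{discrepancy} between that sum and the main term $\frac{dW}{\varphi(dW)}\sum_{n\leq x,\,(n,W)=1,\, n\equiv c\,(d)} e(P(n))$. To conclude that the off-diagonal contribution to $S_2$ is negligible, one must separately show that these main-term sums $\sum e(P_{\vec{j}}(n))$ are themselves small; and your parallel claim that for $S_1$ ``standard Weyl-type bounds \dots provide polynomial-in-$x$ savings'' is simply false uniformly in $x$. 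If the leading coefficient $\alpha$ of $Q$ is extremely well approximable by rationals of small denominator at scale $x$ (say $|\alpha - a/q| \ll x^{-10s}$ with $q \ll \log x$), then $\sum_{n\leq x} e(j\alpha n^s)$ has essentially no cancellation, and neither Weyl's inequality nor Corollary~\ref{cor:1/3} saves you anything. This is exactly why, as noted after equation~\eqref{eq23} in the paper, one cannot even write down an unconditional asymptotic $|\{p\leq x: \|Q(p)\|<\rho\}| = (2\rho+o(1))\pi(x)$ without restrictions on the convergents of $\alpha$.

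The paper handles this in two places that your proposal lacks. First, it restricts $x$ to the sparse sequence $x = q^2$ as $q$ ranges over denominators of continued-fraction convergents of the leading coefficient of $Q$ (after a preliminary $W$-trick to ensure all non-constant coefficients of the reduced $Q$ are irrational); this is what makes the Weyl bounds and the total equidistribution of the phases $e(P_{\vec{j}}(n))$ effective (Lemma~\ref{le_equidist_poly}). Second, and more subtly, the positivity of what you call $\delta(B,\mathcal{H})$ cannot be taken for granted for an arbitrarily fixed admissible tuple $\mathcal{H}$: the paper's Lemma~\ref{le_maynard2} shows that a good tuple $(h_1,\ldots,h_k)$ exists with $\sum_n g(Q(n+h_1))\cdots g(Q(n+h_k)) \geq 0.99(2\rho)^k x$ only after choosing the width $H$ of the tuple as a function of the Diophantine data of $Q$ (taking $H = (q')^2$ for a large convergent denominator $q'$), so that the ``off-diagonal'' exponential sums over $h\in[1,H]$ genuinely cancel. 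Your appeal to ``quantitative Weyl equidistribution'' for strict positivity hides precisely this dependence, which is essential; without it, for a generic admissible tuple the alleged density $\delta(B,\mathcal{H})$ could be arbitrarily small or the error terms could dominate. These points need to be made explicit and quantitative for the argument to close.
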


\begin{remark}\label{rem:irrat}
It is only for the sake of simplicity that we restrict to nil-Bohr sets of this form; likely similar arguments could be made to work for any nil-Bohr set $B$ that 
satisfies the following two natural properties: (i) $B$ is \emph{irrational} in the sense that, in the notation of Definition \ref{def:equi}, for some constant $c>0$ and infinitely many $x\in \mathbb{N}$ we have $\psi\in \Psi_s(\Delta,K;x^{-c}, x)$ with $s,\Delta, K$ fixed. (ii) $B$ is \emph{dense} in the sense that $|B\cap [1,x]|\gg x$. We leave the details of this generalization to the interested reader.
\end{remark}

\subsection{Chen primes in Bohr sets}

Our next application involves Chen primes, which are primes $p$ such that $p+2\in P_2$, where $P_2$ is the set of positive integers with at most two prime factors. We write
\begin{align*}
\mathcal{P}_{\textnormal{Chen}}=\{p\in \mathbb{P}:\,\, p+2\in P_2\}.    
\end{align*}
A celebrated result of Chen \cite{chen} shows that $\mathcal{P}_{\textnormal{Chen}}$ is infinite. It is moreover a sparse subset of the primes, since it can be shown to satisfy $|\mathcal{P}_{\textnormal{Chen}}\cap[1,x]|\asymp x/(\log x)^2$. 

It was shown by Matom\"aki \cite{matomaki-bombieri} that the Chen primes are well-distributed in classical Bohr sets, meaning that there exists $\theta>0$ such that
\begin{align*}
\|\alpha p\|<p^{-\theta}    
\end{align*}
has infinitely many solutions in Chen primes $p$ for any fixed irrational $\alpha$.

We generalize this by proving that the Chen primes are well-distributed in more general nil-Bohr sets. 

\begin{theorem}[Chen primes in nil-Bohr sets]\label{thm:chen} Let $Q(x)\in \mathbb{R}[x]$ be a polynomial of degree $s\geq 1$ such that $Q$ has at least one irrational coefficient which is not the constant term. Then for some constant $\theta_s>0$ (independent of $Q$) there are infinitely many solutions to
\begin{align*}
\|Q(p)\|< p^{-\theta_s}, \quad p\in \mathcal{P}_{\textnormal{Chen}}.    
\end{align*}
\end{theorem}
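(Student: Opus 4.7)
My plan is to follow Matom\"aki's strategy \cite{matomaki-bombieri} from the linear case, replacing her well-factorable Bombieri--Vinogradov estimate for linear exponentials with Corollary~\ref{cor:1/2} for polynomial phases of degree $s$. First, I would smooth the hard indicator $1_{\|Q(n)\|<\rho}$ to a minorant $\phi \colon \mathbb{R}/\mathbb{Z} \to [0,1]$ supported in $\{y : \|y\| < \rho\}$ with $\widehat{\phi}(0) \gg \rho$ and rapid Fourier decay $|\widehat{\phi}(k)| \ll_M \rho (1+|k|\rho)^{-M}$ for all $M$, and expand
\[
\sum_{\substack{p \leq x \\ p \in \mathcal{P}_{\textnormal{Chen}}}} \phi(Q(p)) = \widehat{\phi}(0) \cdot |\mathcal{P}_{\textnormal{Chen}} \cap [1,x]| + \sum_{0<|k|\leq K} \widehat{\phi}(k) \sum_{\substack{p \leq x \\ p \in \mathcal{P}_{\textnormal{Chen}}}} e(kQ(p)) + O(x^{-10}),
\]
where $K \asymp \rho^{-1}(\log x)^C$. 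To prove the theorem it suffices to show the right-hand side is positive for infinitely many $x$ with $\rho = x^{-\theta_s}$.

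The first term dominates the main contribution: by Chen's classical theorem $|\mathcal{P}_{\textnormal{Chen}} \cap [1,x]| \gg x/(\log x)^2$, yielding a positive lower bound of order $\rho x/(\log x)^2$. For each nonzero $k$ in the Fourier expansion, I would run Chen's sieve on $\mathcal{A} = \{p+2 : p \leq x\}$ with the twist $e(kQ(p))$ built into the sifted sum. The Iwaniec linear sieve together with Chen's switching trick reduces the $k$-th frequency to bounding well-factorable sums of the form
\[
T(k) = \sum_{d \leq x^{1/2-\varepsilon}} \lambda_d \sum_{\substack{n \leq x \\ n \equiv -2 \Mod{d}}} \Lambda(n)\, e(kQ(n-2)),
\]
where $(\lambda_d)$ is well-factorable of level $x^{1/2-\varepsilon}$; Corollary~\ref{cor:1/2} applied with the polynomial $P(n) = kQ(n-2)$ of degree $s$ delivers $|T(k)| \ll_A x/(\log x)^A$, with the implied constant uniform in $k$.

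The main obstacle is the uniformity in $k$: naively summing this bound over $|k| \leq K = x^{\theta_s + o(1)}$ produces $O(Kx/(\log x)^A)$, which overwhelms the $\rho x/(\log x)^2$ main term as soon as $\theta_s > 0$. I plan to split $\sum_{0<|k|\leq K}$ into a minor-arc and a major-arc part relative to the non-constant coefficients of $Q$: on the minor arcs, where $k$ times an irrational coefficient of $Q$ is not well-approximable by a rational of small denominator, Weyl's inequality combined with a Vaughan-type identity for $\Lambda$ yields a power-saving bound $|T(k)| \ll x^{1-\eta_s}$ for some $\eta_s = \eta_s(s) > 0$, which absorbs the factor $K$ provided $\theta_s < \eta_s$; on the major arcs, the phase $e(kQ(n))$ factors (after a suitable change of variables) as a Dirichlet-character-type rational piece times a polynomial phase with smaller leading coefficient, so Corollary~\ref{cor:1/2} applies with a much smaller effective truncation length and the major-arc $k$-sum converges. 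The most delicate point will be carrying out the Vaughan decomposition alongside the Iwaniec linear sieve weights so that the Type~I and Type~II ranges remain compatible both with the well-factorable structure demanded by Corollary~\ref{cor:1/2} and with the range in which Weyl's inequality produces a usable power saving; once this bifurcation is set up cleanly, the theorem follows with $\theta_s$ a fixed positive quantity depending only on $s = \deg Q$.
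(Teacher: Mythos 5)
You correctly identify the main obstacle -- the uniformity of the well-factorable Bombieri--Vinogradov bound in the Fourier frequency $k$, and the fact that the $\log$-power saving from Corollary~\ref{cor:1/2} alone cannot absorb a sum over $|k| \leq K = x^{\theta_s + o(1)}$ terms -- but the proposed fix does not work as stated, and the idea that actually resolves the difficulty is absent. The paper does not perform a major-arc/minor-arc dichotomy in $k$. Instead it restricts attention to a carefully chosen infinite sequence of values of $x$: by Dirichlet's approximation theorem one finds infinitely many coprime pairs $(a,q)$ with $|\alpha_j - a/q| \leq 1/q^2$ (where $\alpha_j$ is an irrational non-constant coefficient of $Q$), and one then only considers $x = q^2$. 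For such $x$, the standard estimate $\|\ell \alpha_j\| \geq 1/(2q) = 1/(2x^{1/2})$ holds for all $1 \leq \ell \leq q/2 = x^{1/2}/2$, which in particular covers the entire Fourier truncation range $0 < |j| \leq M = x^{3\theta_s}$. Consequently \emph{every} non-zero frequency in the truncated expansion is automatically minor arc, each $e(jQ(n))$ is totally $x^{-c_s}$-equidistributed as a nilsequence, and the type I/II estimates of Sections~\ref{sec:typeI}--\ref{sec:typeII} (which underlie Corollary~\ref{cor:1/2}) give the required power-saving bound $\ll x^{1-c_{s,\varepsilon}}$ uniformly in $j$; choosing $\theta_s$ small enough makes this beat the trivial $M$ factor.

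Your proposed major-arc treatment would genuinely fail for polynomials whose irrational coefficient is a Liouville-type number: for such $\alpha_j$ and a generic large $x$, it can happen that $k\alpha_j$ is extremely close to a rational with tiny denominator for \emph{all} $0 < |k| \leq K$ simultaneously, so the major arcs are not a small exceptional set but the whole range, and the asserted ``much smaller effective truncation length'' gives nothing. There is also no clean way to separate out these major-arc frequencies as a convergent secondary main term without, in effect, re-proving equidistribution of Chen primes in Bohr sets to a worse level -- which is circular. The Dirichlet approximation restriction on $x$ is precisely what sidesteps this issue and is what you need to add. One further structural difference, though not a gap: the paper does not run Chen's sieve once per frequency $k$; it instead applies a single weighted version of Chen's sieve (Proposition~\ref{prop_chen}, from \cite{matomaki-shao}) directly to the weight $\omega_n = g(Q(n))$, and only afterwards Fourier-expands $g$ inside each of the sieve's hypotheses (i)--(iii). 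This avoids having to re-derive the Iwaniec well-factorable decomposition and the switching trick in the twisted setting, which is where your ``most delicate point'' would otherwise sit.
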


\begin{remark}
As in the case of Theorem \ref{thm:bdd}, it should be possible to generalize our proof method to showing that for any irrational and dense nil-Bohr set $B$ we have infinitely many solutions to $p\in \mathcal{P}_{\textnormal{Chen}}\cap B$. We leave the details to the interested reader.
\end{remark}

\subsection{Linear equations in  primes in arithmetic progressions}

The next application is a generalization of the groundbreaking result proved by Green--Tao \cite{green-tao-linear,green-tao-mobius}  and Green--Tao--Ziegler \cite{gtz} that one can asymptotically count the number of solutions to any linear system of equations in the primes (of finite complexity, which excludes for instance counting twin primes or solutions to the binary Goldbach problem). It is natural to consider this problem for subsets of the primes as well, in particular for primes belonging to an arithmetic progression. We show that in the rather large range $q\leq x^{\theta}$, for suitable $\theta>0$, we still obtain asymptotics for linear equations in primes restricted to a congruence class $a\Mod q$, for almost all choices of $q$.

In what follows, we set
\begin{align}\label{equ100}
\Lambda_{a,q}(n):=\frac{\varphi(q)}{q}\Lambda(qn+a),    
\end{align}
which is a normalized version of the von Mangoldt function restricted to the arithmetic progression $a\Mod q$. Extend $\Lambda_{a,q}(n)$ to integers $n\leq 0$ by setting it to be zero at them. Further, for a system $\Psi(n)=(L_1(\mathbf{n}),\ldots, L_t(\mathbf{n}))$ of affine linear forms in $d$ variables we define its \emph{size} at scale $N$ to be
\begin{align*}
\|\Psi\|_N:=\sum_{i=1}^t \sum_{j=1}^d |L_i(e_j)-L_i(0)|+\sum_{i=1}^t \Big|\frac{L_i(0)}{N}\Big|,
\end{align*}
where $e_1,\ldots, e_d$ are the standard basis vectors in $\mathbb{Z}^d$.

\begin{theorem}[Linear equations in primes in arithmetic progressions to large moduli]\label{thm:app3}
Let $\varepsilon>0$ and $A,t,d,M\geq 1$ be given. Let $x\geq 10$ and $Q\leq x^{1/3-\varepsilon}$. Then for all but $\ll_{\varepsilon,A,t,d,M}Q/(\log x)^{A}$ choices of $1\leq q\leq Q$ the following holds. For every $\ve{a} \in (\Z/q\Z)^d$ and every finite complexity tuple $\Psi=(L_1(\mathbf{n}),\ldots, L_t(\mathbf{n}))$ of non-constant affine-linear forms in $d$ variables of size $\|\Psi\|_{x}\leq M$ we have
\begin{align}\label{eq21}
\sum_{\mathbf{n}\in [1,x]^d}\Lambda(L_1(q\mathbf{n}+\ve{a}))\cdots \Lambda(L_t(q\mathbf{n}+\ve{a}))=\beta_{\infty}\prod_{p}\beta_{p,a,q}+o_{t,d,M}(x^{d}),     
\end{align}
where the product on the right-hand side is convergent, $\beta_{p,a,q}\geq 0$, $0\leq \beta_{\infty}\ll x^{d}$, and the local factors $\beta_{p,a,q}$ are given by
\begin{align*}
\beta_{p,a,q}:=\mathbb{E}_{\ve{n}\in (\mathbb{Z}/p\mathbb{Z})^d}\prod_{i=1}^t \Lambda_{\mathbb{Z}/p\mathbb{Z}}(L_i(q\ve{n}+\ve{a})),    
\end{align*}
and $\Lambda_{\mathbb{Z}/p\mathbb{Z}}(b)=\frac{p}{p-1}1_{(n,p)=1}$, and $\beta_{\infty}=\textnormal{vol}_d([1,x]^d\cap \Psi^{-1}(\mathbb{R}_{+}^t))$.
\end{theorem}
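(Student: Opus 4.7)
The plan is to adapt the Green--Tao strategy \cite{green-tao-linear} for counting linear patterns in the primes, but with $\Lambda$ replaced by its normalized restriction $\Lambda_{A_i,q}$ to an arithmetic progression, and with Gowers uniformity verified only for $q$ outside a small exceptional set. The level-of-distribution input will be Theorem \ref{thm:1/4}, rather than Theorem \ref{thm:1/3}, since the nilsequence supplied by the inverse theorem will necessarily depend on $q$.

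Writing $L_i(q\mathbf{n}+\mathbf{a}) = qL_i^{\mathrm{lin}}(\mathbf{n}) + A_i$ with $A_i := L_i(\mathbf{a}) \in \Z$, and noting $\Lambda(qN+A_i) = (q/\varphi(q))\Lambda_{A_i,q}(N)$, the main sum equals $(q/\varphi(q))^t$ times
\[ S(q) := \sum_{\mathbf{n} \in [1,x]^d} \prod_{i=1}^t \Lambda_{A_i,q}(L_i^{\mathrm{lin}}(\mathbf{n})). \]
Next introduce the sieve model $\nu_{A,q}(n) := (Wq/\varphi(Wq))\,1_{(qn+A,W)=1}$ with $W = \mathscr{P}((\log x)^C)$ as in Theorem \ref{thm:1/4}. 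A standard singular-series computation (as in \cite{green-tao-linear}) shows that $\widetilde S(q)$, defined as $S(q)$ with each $\Lambda_{A_i,q}$ replaced by $\nu_{A_i,q}$, equals $(\varphi(q)/q)^t\,\beta_\infty\,\prod_p \beta_{p,a,q} + o(x^d)$ uniformly in $q \leq Q$; so it suffices to show $S(q) - \widetilde S(q) = o(x^d)$ outside an exceptional set of $\ll Q/(\log x)^A$ values.

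By the generalized von Neumann theorem of \cite{green-tao-linear}, the difference $|S(q) - \widetilde S(q)|$ is controlled by the Gowers norms $\|\Lambda_{A_i,q} - \nu_{A_i,q}\|_{U^{s+1}[Cx]}$, with $s$ the complexity of $\Psi$. The inverse theorem for Gowers norms (\cite{gtz}) then produces, for each $q$ at which this Gowers norm exceeds $\eta$, some nilsequence $\tilde\psi_q \in \Psi_s(\Delta,K)$ on the $n$-scale against which $\Lambda_{A_i,q} - \nu_{A_i,q}$ has correlation $\geq \eta' x$. Undoing the substitution $m = qn+A_i$ restates this as a correlation at scale $X \asymp x^{4/3-\varepsilon}$ on the $m$-scale, summed over the AP $m \equiv A_i \Mod{q}$, $m \leq X$, against the nilsequence $\Psi_q(m) := \tilde\psi_q((m-A_i)/q)$. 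Since the affine substitution $m \mapsto (m-A_i)/q$ preserves the filtration degree and only rescales Mal'cev coordinates by bounded powers of $q \leq X$, $\Psi_q$ lies in a uniformly bounded complexity class $\Psi_s(\Delta',\log X)$ depending only on $s, \eta, \Psi$ and not on $q$. The original level $q \leq x^{1/3-\varepsilon}$ translates to $q \leq X^{1/4 - O(\varepsilon)}$ on the $X$-scale, so Theorem \ref{thm:1/4}---crucially with $\sup_\psi$ \emph{inside} the absolute value, accommodating the $q$-dependence of $\Psi_q$---bounds
\[ \sum_{q \leq X^{1/4-\varepsilon'}} \max_{(c,q)=1}\sup_{\psi \in \Psi_s(\Delta',\log X)}\Big|\sum_{\substack{m \leq X \\ m \equiv c \Mod{q}}}\Lambda(m)\psi(m) - (\text{main term})\Big| \ll \frac{X}{(\log X)^{A'}}. \]
Markov's inequality then isolates an exceptional set of size $\ll Q/(\log x)^A$.

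The main obstacle will be verifying that the substitution $n \mapsto (m-A_i)/q$ indeed carries nilsequences of complexity $(\Delta,\log x)$ on $[1,Cx]$ to nilsequences of complexity $(\Delta',\log X)$ on $[1,X]$, uniformly in $q \leq Q$: the polynomial sequence $\Z \to G$ underlying $\tilde\psi_q$ must be extended canonically to $\R \to G$ and the reparameterization checked to remain adapted to the original filtration, with Mal'cev coordinates whose log-magnitude is still bounded by $\log X$. A secondary technicality is the $\nu$-pseudorandomness hypothesis of the generalized von Neumann theorem, which follows from a standard sieve computation uniformly in $q$, together with reconciling the $W$ in the sieve model with the $W$ appearing in Theorem \ref{thm:1/4}.
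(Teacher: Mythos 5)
Your overall strategy is the same as the paper's: transfer via the generalized von Neumann theorem and the relative inverse theorem of \cite{gtz}, plug in a pseudorandom majorant, and control the resulting nilsequence correlation with Theorem~\ref{thm:1/4} (correctly recognizing that $\sup_\psi$ must sit \emph{inside} the $d$-sum because the dual nilsequence coming out of the inverse theorem depends on $q$, and correctly translating $q \leq x^{1/3-\varepsilon}$ to a $1/4$ level of distribution at the $m$-scale $X \asymp qx$). Where you diverge from the paper is only in bookkeeping: you compare $\Lambda_{A_i,q}$ to the sieve model $\nu_{A_i,q}$ directly and defer the singular-series computation to $\widetilde S(q)$, while the paper first passes to residue classes modulo $\mathscr{P}(w)$ so that $\mathscr{P}(w)\mid q$, proves the Gowers-norm bound against the model $\wt\Lambda_{a,q}$, and then shows $\wt\Lambda_{a,q}$ is itself a pseudorandom measure (hence $o(1)$-close to $1$ in Gowers norm).

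The gap that matters is the one you wave away as a ``secondary technicality.'' You assert that the $\nu$-pseudorandomness hypothesis ``follows from a standard sieve computation uniformly in $q$.'' It does not. The Goldston--Y\i ld\i r\i m computation in the style of \cite[Appendix D]{green-tao-linear} produces Euler products with factors attached to the primes dividing $q$, and both the tail of the $\kappa$-correlation estimate and the $e^{O(X)}$ error there only remain $o(1)$ when $\sum_{p \mid q} p^{-1/2}$ is small; a generic $q$ with $\Omega(q) \gg \log\log x$ ruins this uniformity. The paper handles this by restricting to $q$ with $\Omega(q) \leq C \log\log x$ (discarding an acceptable $\ll Q(\log x)^{-C/2}$ exceptional set) before attempting to build the majorant at all. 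Without that restriction your verification of the pseudorandomness hypothesis breaks, and with it so does the application of the generalized von Neumann theorem and the relative inverse theorem; both need the majorant. Closely related, your ``standard singular-series computation'' giving $\widetilde S(q)$ inherits the same non-uniformity and is much cleaner after the reduction to $\mathscr{P}(w)\mid q$, which you skip. By contrast, the obstacle you flag as ``main'' — that the reparameterization $m \mapsto (m-A_i)/q$ preserves the bounded-complexity class — is genuinely harmless: Theorem~\ref{thm:1/4} is stated for a nilsequence in $n$ and the paper only ever uses the elementary observation that $\psi(qn+a)$ lies in the same $\Psi_s(\Delta,K)$ as $\psi$, so there is no Mal'cev-coordinate bookkeeping to worry about.
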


We also obtain a similar theorem (without the main term) with the M\"obius function in place of the von Mangoldt function; see Proposition \ref{app3-mobius}. Note that the condition $Q \leq x^{1/3-\ee}$ actually corresponds to a $1/4$ level of distribution, since  the primes being counted are of size $\approx Qx$.

\begin{remark}
Theorem \ref{thm:app3} can be viewed as generalizing a result of Bienvenu~\cite{bienvenu} on linear equations in primes in the Siegel--Walfisz regime, corresponding to $q\ll_A (\log x)^{A}$ in \eqref{eq21}. As in the Siegel--Walfisz theorem, there are no exceptional moduli $q$ in \cite[Theorem 1.3]{bienvenu}. The same applies to our Theorem~\ref{thm:app3} as well, since if we are considering moduli of size $\leq Q=(\log x)^{A}$, the theorem gives $\ll_{A} (\log x)^{-2A}$ exceptional moduli, and this quantity is clearly less than $1$ for $x$ large. 
\end{remark}

In the course of proving Theorem~\ref{thm:app3}, we obtain Gowers uniformity of primes in almost all arithmetic progressions.

\begin{theorem}[Gowers uniformity of primes in arithmetic progressions to large moduli]
\label{thm:app3'}
Let $\ee > 0$ and $A,k \geq 1$ be given. Let $x \geq 10$, $Q \leq x^{1/3-\ee}$, and $w \geq 1$. Then for all but $\ll_{\ee,A,k} Q/(\log x)^A$ choices of $1 \leq q \leq Q$ with $\mathscr{P}(w) \mid q$ the following holds. For every $(a,q)=1$, the function $\Lambda_{a,q}$ defined by \eqref{equ100}
satisfies the  Gowers norm bound
\[ \|\Lambda_{a,q}-1\|_{U^k[x]}=o_{w\to \infty}(1)+o_{w;x\to \infty}(1). \]
\end{theorem}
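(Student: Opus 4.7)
The plan is to combine the Green--Tao--Ziegler inverse theorem for the Gowers $U^k[N]$ norm with Theorem~\ref{thm:1/4}, whose crucial feature is the supremum over nilsequences $\psi \in \Psi_s(\Delta, \log x)$; this lets us absorb the $q$-dependence of the nilsequences produced by the inverse theorem when we sum over $q$. Fix $\eta > 0$ and call a modulus $q \leq Q$ with $\mathscr{P}(w) \mid q$ \emph{bad} if $\|\Lambda_{a,q} - 1\|_{U^k[x]} > \eta$ for some $(a,q) = 1$. It suffices to show that the number of bad $q$ is $\ll_\eta Q/(\log x)^{A+1}$, and then to let $\eta \to 0$ slowly as $w,x\to\infty$ in the standard way. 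For each bad $q$, the inverse theorem (applied to $\Lambda_{a,q}$ via a pseudo-random enveloping majorant, in the spirit of \cite{green-tao-linear}) produces a residue $a_q$ with $(a_q, q) = 1$ and a nilsequence $\psi_q \in \Psi_{k-1}(\Delta_0, K_0)$, with $\Delta_0, K_0$ depending only on $k$ and $\eta$, such that
\[ \left| \sum_{n \leq x} (\Lambda_{a_q, q}(n) - 1) \psi_q(n) \right| \geq c(k, \eta) \, x. \]

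Next I would make the change of variables $m = qn + a_q$ to rewrite the left-hand side as
\[ \left| \frac{\varphi(q)}{q} \sum_{\substack{m \leq X \\ m \equiv a_q \Mod{q}}} \Lambda(m) \widetilde{\psi}_q(m) - \sum_{\substack{m \leq X \\ m \equiv a_q \Mod{q}}} \widetilde{\psi}_q(m) \right|, \]
where $X = qx + a_q$ and $\widetilde{\psi}_q(m) := \psi_q((m-a_q)/q)$. Pre-composing the polynomial sequence underlying $\psi_q$ (viewed as a polynomial map $\mathbb{R}\to G$) with the affine map $m \mapsto (m-a_q)/q$ preserves its degree, filtration, and Mal'cev basis, so $\widetilde{\psi}_q$ belongs to $\Psi_{k-1}(\Delta_0, K_0)$ as a genuine nilsequence on $\mathbb{Z}$. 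Setting $X_0 := Qx \leq x^{4/3-\varepsilon}$, the range $q \leq Q = x^{1/3-\varepsilon}$ satisfies $Q \leq X_0^{1/4 - \varepsilon'}$ for some $\varepsilon' > 0$, so Theorem~\ref{thm:1/4} applied at scale $X_0$ gives, for any $A'$,
\[ \sum_{q \leq Q} \max_{(c,q)=1} \sup_{\psi \in \Psi_{k-1}(\Delta_0, \log X_0)} \left| \sum_{\substack{m \leq X_0 \\ m \equiv c \Mod{q}}} \Lambda(m) \psi(m) - \frac{qW}{\varphi(qW)} \sum_{\substack{m \leq X_0 \\ (m,W)=1 \\ m \equiv c \Mod{q}}} \psi(m) \right| \ll \frac{X_0}{(\log x)^{A'}}. \]

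The final step is a main term analysis: I would verify that multiplying Theorem~\ref{thm:1/4}'s $W$-tricked main term by $\varphi(q)/q$ (coming from the definition of $\Lambda_{a,q}$) reproduces, up to an error of size $(o_{w\to\infty}(1)+o_{w;x\to\infty}(1))\cdot x$, the simple main term $\sum_{m\equiv a_q\Mod q}\widetilde{\psi}_q(m)=\sum_{n\leq x}\psi_q(n)$ arising from the $1$ in $\Lambda_{a_q,q}-1$. Since $\mathscr{P}(w) \mid q$ and $(a_q, q) = 1$, the restriction $(m, \mathscr{P}(w)) = 1$ is automatic; what remains is coprimality to primes in $(w, (\log x)^C]$, a periodic condition in $(m - a_q)/q$ of period dividing $W/(q,W)\leq \mathscr{P}((\log x)^C)$, which can be handled by the quantitative Leibman equidistribution theorem for $\psi_q$ on short arithmetic progressions, combined with the Green--Tao factorization theorem to reduce to the equidistributed case. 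Each bad $q$ then contributes at least $(c(k,\eta)-o(1))x$ to the sum above, so the number of bad $q$ is $\ll Q/(\log x)^{A'}$, completing the proof upon choosing $A' = A+1$. The main obstacle, in my view, is precisely this final main term comparison, where one must track the $W$-trick, the arithmetic progression, and nilsequence equidistribution uniformly in $q$ — especially in the regime $q\geq W=\mathscr{P}((\log x)^C)$, where the $W$-trick becomes entangled with the AP modulus and the polynomial sequence defining $\psi_q$ may fail to be equidistributed on the relevant scale.
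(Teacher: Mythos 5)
Your proposal follows the same high-level strategy as the paper (apply the relative inverse theorem to produce a correlating nilsequence, absorb it via the supremum in Theorem~\ref{thm:1/4}), and the reduction to scale $X_0=Qx$ with $Q\leq X_0^{1/4-\varepsilon'}$ is correct. Where it diverges from the paper is at the organizational level: the paper never compares $\Lambda_{a,q}$ directly to $1$, but rather to the sieve-theoretic intermediate function $\wt{\Lambda}_{a,q}(n)=\frac{\varphi(q)V}{\varphi(qV)}1_{(qn+a,V)=1}$, proving two separate Gowers-norm estimates $\|\Lambda_{a,q}-\wt{\Lambda}_{a,q}\|_{U^k[x]}=o(1)$ and $\|\wt{\Lambda}_{a,q}-1\|_{U^k[x]}=o(1)$. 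The first comes from \eqref{eq31} (your Theorem~\ref{thm:1/4} application) plus the relative inverse theorem; the second comes from \emph{establishing that $\wt{\Lambda}_{a,q}$ is itself a pseudorandom measure} (via the Goldston--Y{\i}ld{\i}r{\i}m machinery with the truncated divisor sum $\Lambda_{\chi,R,2}^{(V)}$) and then applying \cite[Lemma 5.2]{green-tao-APs}. Your proposal fuses these two steps, which means your ``main term analysis'' is exactly the statement that $\wt{\Lambda}_{a,q}-1$ has no correlation with the nilsequence $\psi_q$; this is where the gap lies.

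Concretely, the tools you name for the main term comparison --- quantitative Leibman on short progressions plus the factorization theorem --- are not adequate by themselves. After factorization the equidistributed part of $\psi_q$ is only $M^{-A}$-equidistributed for some $M\ll_{k,\eta}1$, so Leibman controls residues to moduli at most $M^{O(1)}=O_{k,\eta}(1)$, whereas the periodic factor $1_{(qn+a,V)=1}$ has period $V/(q,V)$, which is of size $\mathscr{P}((\log x)^C)/\mathscr{P}(w)$ --- superpolynomial in $\log x$ and wildly out of reach of any equidistribution statement at polylogarithmic quality. Moreover $\wt{\Lambda}_{a,q}$ is not $O(1)$-bounded: it takes values $\asymp\frac{\varphi(q)V}{\varphi(qV)}\asymp\frac{\log\log x}{\log w}$, so even the equidistributed part of $\psi_q$ cannot simply be paired with it via Cauchy--Schwarz. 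To make the comparison work you need to replace the coprimality indicator by linear sieve weights $\lambda^\pm_\ell$ supported on $\ell\leq x^{0.01}$ --- this is precisely what Propositions~\ref{prop:sieve1} and \ref{prop:sieve2} do, converting the problem into a type I sum --- or alternatively invoke the pseudorandomness of $\wt{\Lambda}_{a,q}$ as the paper does. You correctly flag this comparison as the main obstacle, but the tools you propose do not overcome it without bringing in the sieve decomposition as an additional ingredient.

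A secondary point: the relative inverse theorem requires a pseudorandom majorant for $\Lambda_{a,q}$; this majorant exists only outside an exceptional set of $q$ of size $\ll Q(\log x)^{-A}$ (condition (i) of the paper), so those exceptional moduli must be discarded before the inverse theorem is invoked. Your write-up should record this.
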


We conclude with an immediate corollary of Theorem~\ref{thm:app3} to long arithmetic progressions in the primes.

\begin{corollary}[Green--Tao theorem for primes in arithmetic progressions to large moduli] Let $x\geq 10$, and let $k\geq 1$, $A\geq 1$ and $\varepsilon>0$ be given. Also let $Q\leq x^{1/4-\varepsilon}$. Then for all but $\ll_{\varepsilon, A,k}Q/(\log x)^{A}$ moduli $q\leq Q$, each of the sets $\mathbb{P}\cap (q\mathbb{Z}+a)$ with $(a,q)=1$ contains a nontrivial $k$-term arithmetic progression all of whose elements are $\leq x$. 
\end{corollary}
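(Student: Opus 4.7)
The plan is to reduce this corollary directly to Theorem~\ref{thm:app3} applied to the system of affine-linear forms $\Psi = (L_1, \ldots, L_k)$ with $L_i(n_1, n_2) := n_1 + (i-1)n_2$, which parametrises $k$-term arithmetic progressions. This is a finite-complexity system of size $O(k)$ uniformly in scale. First I would rescale by applying Theorem~\ref{thm:app3} with the parameter $x$ replaced by $x' := \lfloor x/(kQ) \rfloor$, taking $d=2$, $t=k$, $M = O(k)$, and some $\varepsilon' > 0$ chosen so that the hypothesis $Q \leq x^{1/4-\varepsilon}$ of the corollary implies the hypothesis $Q \leq (x')^{1/3-\varepsilon'}$ required by Theorem~\ref{thm:app3} (a short calculation: if $Q \leq (x')^{1/3-\varepsilon'}$, then $Q^{1+1/(1/3-\varepsilon')} \leq x/k$, i.e.\ roughly $Q \leq x^{1/4-O(\varepsilon')}$). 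This rescaling guarantees that for $\mathbf{n} \in [1,x']^2$ the quantities $L_i(q\mathbf{n}+\mathbf{a})$ are all at most $x$, and the resulting exceptional set of $q \leq Q$ has size $\ll_{\varepsilon,A,k} Q/(\log x')^A \ll_{\varepsilon,A,k} Q/(\log x)^A$ as required.

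For each non-exceptional $q$ and each $a$ with $(a,q)=1$, I would take $\mathbf{a} = (a,0) \in (\Z/q\Z)^2$, so that $L_i(q\mathbf{n}+\mathbf{a}) = q(n_1+(i-1)n_2)+a \equiv a \pmod{q}$ for every $i$. Theorem~\ref{thm:app3} then yields
\[ \sum_{\mathbf{n} \in [1,x']^2}\prod_{i=1}^k \Lambda\bigl(L_i(q\mathbf{n}+\mathbf{a})\bigr) = \beta_\infty \prod_p \beta_{p,a,q} + o\bigl((x')^2\bigr). \]
The next task is to show that the main term is of order $\asymp_k (x')^2$, in particular strictly positive. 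The archimedean factor is trivially $\beta_\infty = (x')^2$ since $L_i \geq 1$ on $[1,x']^2$. For the local factors, an elementary count modulo $p$ suffices: when $p \mid q$ one has $L_i(q\mathbf{n}+\mathbf{a}) \equiv a \pmod p$ and hence $\beta_{p,a,q} = (p/(p-1))^k$ by $(a,p)=1$; when $p \nmid q$, already the diagonal contribution $n_2 \equiv 0 \pmod p$ gives $p-1$ good values of $n_1$, so $\beta_{p,a,q} > 0$, and $\beta_{p,a,q} = 1 + O_k(1/p)$ for $p > k$, ensuring convergence of the product to a positive quantity bounded below in terms of $k$ alone.

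Consequently the main term dominates the error for $x$ sufficiently large (depending on $\varepsilon, A, k$), and the sum on the left-hand side is strictly positive. A standard trivial bound removes the prime-power contribution: the number of $\mathbf{n} \in [1,x']^2$ for which some $L_i(q\mathbf{n}+\mathbf{a})$ is a proper prime power $\leq x$ is $O((x')^2 x^{-1/2} \log x)$, which when multiplied by $(\log x)^{k-1}$ remains $o((x')^2)$. Hence there exists $\mathbf{n} \in [1,x']^2$ with every $L_i(q\mathbf{n}+\mathbf{a})$ prime. This gives a $k$-term arithmetic progression $qn_1+a,\, q(n_1+n_2)+a,\, \ldots,\, q(n_1+(k-1)n_2)+a$ lying in $\mathbb{P} \cap (q\Z+a)$, with all entries $\leq x$ and with common difference $qn_2 \geq q > 0$, so genuinely nontrivial.

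The main obstacle is essentially bookkeeping: one must calibrate the rescaling $x \mapsto x'$ correctly to translate the $1/4$-threshold of the corollary into the $1/3$-threshold of Theorem~\ref{thm:app3}, and one must verify the positivity of the singular series $\prod_p \beta_{p,a,q}$ uniformly in the pair $(a,q)$ with $(a,q)=1$. Both are routine once the $k$-AP system has been chosen, and no new analytic input is required beyond Theorem~\ref{thm:app3}.
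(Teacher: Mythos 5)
Your approach is exactly the one the paper intends (the paper labels this an ``immediate corollary'' of Theorem~\ref{thm:app3} and gives no further detail): apply Theorem~\ref{thm:app3} at the rescaled parameter $x' \asymp x/(kQ)$ with $d=2$, $t=k$, the $k$-AP system $L_i(n_1,n_2)=n_1+(i-1)n_2$ and $\ve{a}=(a,0)$, check that $\beta_\infty$ and the local factors are bounded below, and discard prime powers. The calibration from the $1/4$-threshold to the $1/3$-threshold and the use of $\ve{a}=(a,0)$ to force the shifted forms into the progression $a\Mod q$ are both exactly right, as is the observation that for $p\mid q$ one has $\beta_{p,a,q}=(p/(p-1))^k$ and for $p\nmid q$ the local count reduces by a linear change of variables to the unshifted $k$-AP singular factor. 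Two small slips, neither affecting correctness: first, $\beta_{p,a,q}=1+O_k(1/p)$ does \emph{not} by itself ensure convergence of the product to a positive number (one could have $\beta_p=1-C_k/p$), and the correct statement, obtained from the inclusion--exclusion count of the $k$ lines through the origin mod $p$, is $\beta_{p,a,q}=1+O_k(1/p^2)$ for $p>k$; together with $\beta_p>0$ for every $p$ this gives the uniform lower bound you want. Second, the stated prime-power count $O((x')^2 x^{-1/2}\log x)$ is off by roughly a factor of $Q$: each fixed $i$ and each proper prime power $v\le x$ determines a line in $\mathbf{n}$-space with $O(x')$ points, so the right bound is $O(k\,x'\sqrt{x})$; this is still $o((x')^2/(\log x)^{k})$ precisely because $Q\leq x^{1/4-\ee}$ forces $x'\gg x^{3/4}/k\gg\sqrt{x}(\log x)^{k+1}$, so the conclusion stands.
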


\subsection{Future work}

In ongoing work with P.-Y. Bienvenu, we develop a transference principle for arbitrary systems of linear equations (not necessarily translation-invariant) in the spirit of~\cite{green-tao-APs, green-tao-linear}, generalizing the transference principle of \cite{matomaki-shao}. This new transference principle requires as a key input that the sparse set we want to transfer is ``well--distributed'' in nil-Bohr sets, which using our Bombieri--Vinogradov theorems (Theorems \ref{thm:1/4}, \ref{thm:1/3}, \ref{thm:1/2}) we can verify for many subsets of the primes of interest. In particular, we will be able to generalize the Green--Tao linear equations in primes result to Chen primes. 

\section{Quantitative equidistribution of polynomial sequences}

To understand the main idea behind the proof of our main results (Theorems~\ref{thm:1/4},~\ref{thm:1/3} and~\ref{thm:1/2}), it is not essential to know the precise definitions around nilsequences,  as long as one is willing to accept certain results about nilsequences as black boxes, many of which can be found in~\cite{green-tao-nilmanifolds}.  The readers are thus encouraged to consider the special case when the nilsequence $\psi(n)$ is a polynomial phase function (Corollaries~\ref{cor:1/4},~\ref{cor:1/3} and~\ref{cor:1/2}).
The purpose of this section is to collect definitions and results about equidistribution of polynomial sequences that are used in our proofs. These can be thought of as generalizations of major and minor arc analysis to treat exponential sums, and of Weyl's inequality.

\begin{definition}[Equidistribution]
Let $G/\Gamma$ be a nilmanifold equipped with a Mal'cev basis $\mathcal{X}$. Let $N \geq 1$ and $\delta > 0$.
\begin{enumerate}
\item A finite sequence $(g(n)\Gamma)_{1 \leq n \leq N}$ is said to be \emph{$\delta$-equidistributed}  if we have
\[ \left| \frac{1}{N}\sum_{n=1}^N F(g(n)\Gamma) - \int_{G/\Gamma} F \right| \leq \delta \|F\|_{\operatorname{Lip}} \]
for all Lipschitz functions $F \colon G/\Gamma \rightarrow \C$. Here the integral is with respect to the unique Haar measure on $G/\Gamma$.
\item A finite sequence $(g(n)\Gamma)_{1 \leq n \leq N}$ is said to be \emph{totally $\delta$-equidistributed}  if we have
\[ \left| \frac{1}{|P|}\sum_{n\in P} F(g(n)\Gamma) - \int_{G/\Gamma} F \right| \leq \delta \|F\|_{\operatorname{Lip}} \]
for all Lipschitz functions $F \colon G/\Gamma \rightarrow \C$ and all arithmetic progressions $P \subset [N]$ of length at least $\delta N$.
\end{enumerate}
\end{definition}

\begin{definition}[Horizontal characters]
Let $G/\Gamma$ be an $m$-dimensional nilmanifold equipped with a Mal'cev basis $\mathcal{X}$. A \emph{horizontal character} is a homomorphism $\eta \colon G \rightarrow \R/\Z$ which annihilates $\Gamma$. It is called \emph{nontrivial} if it is not identically zero. Any horizontal character can be written as $\eta(x) = k \cdot \psi(x)$ for some $k \in \Z^m$, where $\psi \colon G \rightarrow \R^m$ is the Mal'cev coordinate map of $G$ (see~\cite[Definition 2.1]{green-tao-nilmanifolds}). The \emph{modulus} of $\eta$ is defined by $|\eta| := |k|$.
\end{definition}

If $G/\Gamma$ is equipped with a filtration of degree $s$, $\eta$ is a horizontal character on $G/\Gamma$, and $g$ is a polynomial sequence on $G$, then $\eta\circ g: \Z \rightarrow \R/\Z$ is a genuine polynomial of degree at most $s$. This follows from Mal'cev coordinates of polynomial sequences; see~\cite[Lemma 6.7]{green-tao-nilmanifolds}, for example.

\begin{definition}[Smoothness norms]\label{def:smoothness-norm}
Let $g \colon \Z \rightarrow \R/\Z$ be a (genuine) polynomial of degree $s$, written uniquely as
\[ g(n) = \alpha_0 + \alpha_1\binom{n}{1} + \cdots + \alpha_s \binom{n}{s}. \]
For any $N \geq 1$ we define the \emph{smoothness norm}
\[ \|g\|_{C^{\infty}(N)} := \sup_{1 \leq j \leq s} N^j \|\alpha_j\|_{\R/\Z}. \]
\end{definition}

One can easily relate the smoothness norm with the traditional coefficients of a polynomial (see~\cite[Lemma 3.2]{green-tao-mobius}).

\begin{lemma}\label{lem:smoothness-norm}
Let $g \colon \Z \rightarrow \R/\Z$ be a polynomial $g(n) = \beta_0 + \beta_1n + \cdots + \beta_sn^s$, and let $N \geq 1$. There exists a positive integer $q = O_s(1)$ such that
\[ \|q\beta_j\|_{\R/\Z} \ll_s N^{-j} \|g\|_{C^{\infty}(N)} \]
for each $1 \leq j \leq s$. In the other direction, we have
\[ \|g\|_{C^{\infty}(N)} \ll_s \sup_{1 \leq j \leq s} N^j \|\beta_j\|_{\R/\Z}. \]
\end{lemma}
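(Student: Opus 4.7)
The plan is to exploit the change of basis between the monomial basis $\{1,n,n^2,\ldots,n^s\}$ and the binomial basis $\{1,\binom{n}{1},\ldots,\binom{n}{s}\}$, under which the $\alpha_j$'s (appearing in the definition of $\|g\|_{C^\infty(N)}$) and the $\beta_j$'s are related by explicit rational-coefficient linear transformations. Concretely, if one writes $\binom{n}{j} = \frac{1}{j!}\sum_{k\leq j} s(j,k)\, n^k$ and $n^k = \sum_{j\leq k} S(k,j)\, j!\, \binom{n}{j}$ with $s(\cdot,\cdot)$ and $S(\cdot,\cdot)$ the Stirling numbers of the first and second kind, then comparing coefficients in $g(n) = \sum_j \alpha_j\binom{n}{j} = \sum_k \beta_k n^k$ yields the explicit formulas
\[ \beta_k = \sum_{j=k}^{s} \frac{s(j,k)}{j!}\,\alpha_j, \qquad \alpha_j = \sum_{k=j}^{s} j!\, S(k,j)\,\beta_k. \]

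For the first inequality, set $q := s!$, which is a positive integer of size $O_s(1)$. Then for each $k$, the coefficient $q\cdot s(j,k)/j!$ appearing in $q\beta_k = \sum_{j\geq k} \alpha_j \cdot q\cdot s(j,k)/j!$ is an integer, of size $O_s(1)$. Passing to $\R/\Z$ and applying the triangle inequality, I then get
\[ \|q\beta_k\|_{\R/\Z} \ll_s \sum_{j=k}^{s} \|\alpha_j\|_{\R/\Z} \leq \sum_{j=k}^{s} N^{-j}\|g\|_{C^\infty(N)} \ll_s N^{-k}\|g\|_{C^\infty(N)}, \]
where in the last step I used $N\geq 1$ so that $N^{-j}\leq N^{-k}$ for $j\geq k$. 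This gives the first claim.

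For the second inequality, starting from $\alpha_j = \sum_{k\geq j} j!\, S(k,j)\,\beta_k$, the coefficients $j!\, S(k,j)$ are integers of size $O_s(1)$, so passing mod $1$ gives $\|\alpha_j\|_{\R/\Z} \ll_s \sum_{k=j}^s \|\beta_k\|_{\R/\Z}$. Multiplying by $N^j$ and using $N\geq 1$ (so $N^j \leq N^k$ for $k\geq j$) yields $N^j\|\alpha_j\|_{\R/\Z} \ll_s \sup_{1\leq k\leq s} N^k\|\beta_k\|_{\R/\Z}$, and taking the supremum over $j$ completes the proof.

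There isn't really a serious obstacle here; the only subtle point is making sure the integrality of the $q\cdot s(j,k)/j!$ is used at the right moment so that one can drop these coefficients modulo $1$, and checking that the monotonicity $N^{-j}\leq N^{-k}$ (resp.\ $N^j\leq N^k$) is applied in the correct direction given the range $j\geq k$ (resp.\ $k\geq j$) of the summation.
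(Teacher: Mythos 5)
Your proof is correct. The paper itself does not reproduce a proof of this lemma but simply cites \cite[Lemma 3.2]{green-tao-mobius}; your argument is essentially the same standard change-of-basis argument that appears there, making the Stirling-number structure of the transition between the binomial and monomial bases explicit. The two pivotal points you flag — that $s!\,s(j,k)/j!$ and $j!\,S(k,j)$ are integers of size $O_s(1)$ (so one may pass to $\R/\Z$ termwise and pick up only bounded constants), and that the monotonicity of $N^{\pm j}$ is applied in the correct direction relative to the range of the index sum — are exactly the right places to be careful, and you have handled both correctly.
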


We now relate equidistribution of a polynomial sequence with smoothness norms.

\begin{theorem}[Quantitative Leibman theorem]\label{thm:quant-leibman}
Let $s, \Delta \geq 1$, $0 < \delta < 1/2$ and $N \geq 1$. Let $G/\Gamma$ be a nilmanifold of dimension $\Delta$, equipped with a filtration $G_{\bullet}$ of degree $s$ and a $1/\delta$-rational Mal'cev basis. Let $g \colon \Z \rightarrow G$ be a polynomial sequence adapted to $G_{\bullet}$. If $(g(n)\Gamma)_{1\leq n \leq N}$ is not totally $\delta$-equidistributed, then there is a nontrivial horizontal character $\eta \colon G \rightarrow \R/\Z$ with $|\eta| \ll \delta^{-O_{s,\Delta}(1)}$ such that
\[ \|\eta \circ g\|_{C^{\infty}(N)} \ll \delta^{-O_{s,\Delta}(1)}. \]
\end{theorem}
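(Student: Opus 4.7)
The plan is to prove the theorem by induction on the degree $s$ of the filtration. For the base case $s=1$, the group $G$ is abelian, $G/\Gamma$ is a torus of dimension $\Delta$, and $g(n)=\alpha+\beta n$ is a linear sequence. A standard Fourier decomposition on the torus together with the $s=1$ case of Weyl's inequality produces a nontrivial $k\in\mathbb{Z}^{\Delta}$ with $|k|\ll\delta^{-O(1)}$ such that $\|k\cdot\beta\|_{\mathbb{R}/\mathbb{Z}}\ll\delta^{-O(1)}N^{-1}$, which translates directly into the required horizontal character.

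For the inductive step, assume the result for degree $s-1$. First, failure of \emph{total} $\delta$-equidistribution on $[N]$ produces an AP $P$ of length $\gg\delta N$ where $F$ fails ordinary equidistribution; after reparametrizing $g$ as $g(a+qn)$ on $[N']$ with $N'\gg\delta N$, I would apply a \emph{vertical} Fourier decomposition along the central subgroup $G_s$. Any Lipschitz $F$ expands as $F=\sum_{\xi}F_{\xi}$, where $\xi$ ranges over characters of $G_s/\Gamma_s$ and $F_{\xi}(zx)=e(\xi(z))F_{\xi}(x)$ for $z\in G_s$, and one may truncate to $|\xi|\ll\delta^{-O(1)}$ at acceptable cost. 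Pigeonholing yields a single $\xi$ for which the average of $F_{\xi}(g(n)\Gamma)$ is $\gg\delta^{O(1)}$. If $\xi=0$, then $F_0$ descends to $G/(G_s\Gamma)$, a nilmanifold of strictly smaller degree, and the projected polynomial sequence falls under the inductive hypothesis, producing a horizontal character on $G/G_s$ that pulls back to a horizontal character on $G$ annihilating $G_s$ with the required bounds. If $\xi\neq 0$, I would apply a van der Corput argument: by Cauchy--Schwarz, for $\gg\delta^{O(1)}N'$ shifts $h$ the quantity
\[
\mathbb{E}_n F_{\xi}(g(n+h)\Gamma)\overline{F_{\xi}(g(n)\Gamma)}
\]
has magnitude $\gg\delta^{O(1)}$. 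The $\xi$-equivariance makes this product invariant under the diagonal action of $G_s$, so it descends to a polynomial sequence on the product nilmanifold $(G\times G)/\mathrm{diag}(G_s)(\Gamma\times\Gamma)$, whose natural product filtration has degree $s-1$ since the top step is quotiented out. Applying the inductive hypothesis for each good $h$ produces a horizontal character of bounded modulus; pigeonholing over the finitely many such characters identifies a single $\chi$ whose pull-back along $(g(n+h),g(n))$ has small $C^{\infty}(N')$-norm for many $h$. This family of smoothness bounds on derivative polynomials is then combined using Lemma~\ref{lem:smoothness-norm}, passing between smoothness of $\partial_h(\eta\circ g)$ and of $\eta\circ g$, to extract a single horizontal character $\eta$ on $G$ with $\|\eta\circ g\|_{C^{\infty}(N)}\ll\delta^{-O_{s,\Delta}(1)}$.

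The main technical obstacle is tracking the polynomial loss in $\delta$ through each reduction (vertical Fourier, descent, van der Corput, pigeonholing over shifts and characters) while keeping the exponent bounded by a quantity depending only on $s$ and $\Delta$. A second subtlety lies in the final step of the van der Corput case: the inductive hypothesis gives only a character on the product quotient for each good $h$, and one must argue that these characters can be chosen uniformly in $h$ and then lifted to a single character on $G$ encoding the top-degree information of $g$. This requires a careful bilinear averaging over the shift parameter to convert smoothness of differences back into smoothness of $g$ itself, and is where the most delicate counting of parameters is needed.
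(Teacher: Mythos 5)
Your proposal attempts a from-scratch proof, whereas the paper's proof is much more modest: it simply cites the Green--Tao quantitative Leibman theorem (\cite[Theorem~2.9]{green-tao-nilmanifolds}) for the version where $(g(n)\Gamma)_{n\leq N}$ fails to be $\delta$-equidistributed (rather than \emph{totally} $\delta$-equidistributed), and then proves only the elementary reduction from the total version to the ordinary one. That reduction consists of your first step (restrict to an arithmetic progression $P = \{qm+a\}$ of length $\geq \delta N$, set $g'(m) := g(qm+a)$, invoke the cited theorem), followed by a purely algebraic computation: write $\eta\circ g(n)=\sum_i\alpha_i n^i$, deduce from $\|\eta\circ g'\|_{C^\infty(|P|)}\ll\delta^{-O(1)}$ that $\bigl\|q^j\sum_{i\geq j}\alpha_i\binom{i}{j}a^{i-j}\bigr\|\ll\delta^{-O(1)}N^{-j}$, invert this triangular system by downward induction to get $\|(aq)^ss!\alpha_j\|\ll\delta^{-O(1)}N^{-j}$, and conclude $\|(aq)^ss!\eta\circ g\|_{C^\infty(N)}\ll\delta^{-O(1)}$ while absorbing the factor $(aq)^ss!\ll\delta^{-O(1)}$ into the character. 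This norm-conversion step --- undoing the AP reparametrization via Lemma~\ref{lem:smoothness-norm} --- is where all the actual work in the paper's proof lies, and your sketch never performs it: your invocation of Lemma~\ref{lem:smoothness-norm} is in the van der Corput step, concerned with $\partial_h(\eta\circ g)$, not with converting a $C^\infty(|P|)$ bound on $\eta\circ g'$ into a $C^\infty(N)$ bound on $\eta\circ g$.

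As for your from-scratch re-proof of the ordinary version (vertical Fourier decomposition along $G_s$, van der Corput, induction on degree), this is indeed the broad skeleton of the Green--Tao argument, but one step is wrong as stated. You claim that $(G\times G)/\mathrm{diag}(G_s)(\Gamma\times\Gamma)$, with the product filtration, has degree $s-1$ ``since the top step is quotiented out.'' It does not: $(G_s\times G_s)/\mathrm{diag}(G_s)\cong G_s$ is still nontrivial, so the degree-$s$ part survives and the inductive hypothesis does not apply. The genuine source of degree drop in van der Corput arguments is that the discrete derivative $\partial_h g$ is a polynomial sequence adapted to the \emph{shifted} filtration $(G_{i+1})_i$, which has degree $s-1$; this is a statement about the polynomial sequence under the map $(n)\mapsto(\partial_h g(n), g(n))$, not about the quotient manifold carrying a smaller filtration. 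You would also need to make the horizontal characters produced for each good $h$ uniform in $h$ and to lift the resulting information from $\partial_h(\eta\circ g)$ back to $\eta\circ g$; Green and Tao handle all this via a more elaborate double induction on degree and dimension (see their Section~7), and compressing it to a paragraph elides where the real content lies.
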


\begin{proof}
This is proved in~\cite[Theorem 2.9]{green-tao-nilmanifolds} under the marginally stronger hypothesis that $(g(n))_{1 \leq n \leq N}$ is not $\delta$-equidistributed.  Our version can be deduced from~\cite[Theorem 2.9]{green-tao-nilmanifolds} using the same arguments as those from~\cite[Theorem 5.2]{frantzikinakis-host}, as follows. Let $F \colon G/\Gamma \rightarrow \C$ be a Lipschitz function and let $P \subset [N]$ be an arithmetic progression of length $|P| \geq \delta N$, such that
\[ \left| \frac{1}{|P|} \sum_{n \in P} F(g(n)\Gamma) - \int_{G/\Gamma} F \right| > \delta \|F\|_{\operatorname{Lip}}. \]
Let $q$ be the common difference of $P$ (so that $q \leq \delta^{-1}$), and write $P = \{qm+a \colon 1 \leq m \leq |P|\}$ for some $|a| \leq q$. Consider the polynomial sequence $g': \Z \rightarrow G$ defined by
\[ g'(m) = g(qm+a). \]
It follows that $(g'(m))_{1 \leq m \leq |P|}$ is not $\delta$-equidistributed in $G/\Gamma$, and hence, by~\cite[Theorem 2.9]{green-tao-nilmanifolds}, there exists a nontrivial horizontal character $\eta : G\rightarrow \R/\Z$ with $|\eta| \ll \delta^{-O_{s,\Delta}(1)}$ such that
\[ \|\eta \circ g'\|_{C^{\infty}(|P|)} \ll \delta^{-O_{s,\Delta}(1)}. \]
If we write
\[ \eta \circ g(n) = \sum_{i=0}^s \alpha_i n^i, \]
then
\[ \eta \circ g'(m) = \sum_{i=0}^s \alpha_i (qm+a)^i = \sum_{j=0}^s \left(\sum_{i\geq j} \alpha_i \binom{i}{j} a^{i-j}\right) q^jm^j \]
In view of the bound for the smoothness norm of $\eta\circ g'$ and Lemma~\ref{lem:smoothness-norm}, it follows that
\[ \left\| q^j \sum_{i \geq j} \alpha_i \binom{i}{j} a^{i-j} \right\|_{\R/\Z} \ll \delta^{-O_{s,\Delta}(1)} |P|^{-j} \ll \delta^{-O_{s,\Delta}(1)} N^{-j} \]
for each $1 \leq j \leq s$. A simple induction argument then shows that
\[ \| (aq)^ss! \alpha_j \|_{\R/\Z} \ll \delta^{-O_{s,\Delta}(1)} N^{-j} \]
for each $1 \leq j \leq s$.
It follows that (again using Lemma~\ref{lem:smoothness-norm})
\[ \|(aq)^ss!\eta \circ g\|_{C^{\infty}(N)} \ll \delta^{-O_{s,\Delta}(1)}. \]
This completes the proof since $\|(aq)^ss!\eta\| = (aq)^ss! \|\eta\| \ll \delta^{-O_{s,\Delta}(1)}$.
\end{proof}

We have the following converse to the quantitative Leibman theorem (see~\cite[Lemma 5.3]{frantzikinakis-host}).

\begin{lemma}\label{lem:converse-leibman}
Let $s, \Delta \geq 1$. Let $G/\Gamma$ be a nilmanifold of dimension $\Delta$, equipped with a filtration $G_{\bullet}$ of degree $s$. Let $g : \Z \rightarrow G$ be a polynomial sequence adapted to $G_{\bullet}$. Suppose that $N \geq CD$ for some sufficiently large constant $C = C(s,\Delta)$. If $\|\eta\circ g\|_{C^{\infty}(N)} \leq D$ for some nontrivial horizontal character $\eta : G \rightarrow \R/\Z$ with $\|\eta\| \leq D$, then $(g(n))_{1 \leq n \leq N}$ is not totally $cD^{-2}$-equidistributed in $G/\Gamma$, where $c = c(s,\Delta)> 0$ is a positive constant.
\end{lemma}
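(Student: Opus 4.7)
The plan is to exhibit an explicit Lipschitz function and arithmetic progression that witness the failure of total $cD^{-2}$-equidistribution. The natural candidate is $F(x):=e(\eta(x))$, viewed as a function on $G/\Gamma$; this is well-defined because $\eta$ annihilates $\Gamma$, and $\int_{G/\Gamma}F=0$ since $F$ descends to the horizontal torus $G/(G_2\Gamma)$ as a nontrivial additive character. Writing $\eta=k\cdot\psi$ in Mal'cev coordinates with $|k|=|\eta|\leq D$ and using the Lipschitz equivalence of the Mal'cev coordinate chart with the metric $d_{\mathcal{X}}$ on a fundamental domain gives $\|F\|_{\operatorname{Lip}}\ll_{s,\Delta}D$.

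Next I would choose $P$ to be a short initial interval on which $\eta\circ g$ has very small total variation. By the smoothness hypothesis we may write
\[ \eta\circ g(n) \equiv \alpha_0 + \sum_{j=1}^{s}\alpha_j\binom{n}{j} \pmod 1 \]
with $\|\alpha_j\|_{\R/\Z}\leq DN^{-j}$ for $1\leq j\leq s$; choose representatives $\tilde\alpha_j\in\R$ with $|\tilde\alpha_j|\leq DN^{-j}$. The elementary bound $|\binom{n}{j}-\binom{n_0}{j}|\leq|n-n_0|\cdot N^{j-1}/(j-1)!$ valid for $n,n_0\in[0,N]$ then yields
\[ \|\eta\circ g(n)-\eta\circ g(n_0)\|_{\R/\Z}\ll_s\frac{D\,|n-n_0|}{N}. \]
Choosing $P=\{1,2,\ldots,M\}$ with $M=\lfloor c_1N/D\rfloor$ for a sufficiently small $c_1=c_1(s)>0$ forces $\|\eta\circ g(n)-\eta\circ g(1)\|_{\R/\Z}\leq 1/10$ for all $n\in P$. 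Multiplying the sum by the unit scalar $e(-\eta\circ g(1))$ places each summand within angle $\pi/5$ of the positive real axis, so $\bigl|\mathbb{E}_{n\in P}F(g(n)\Gamma)\bigr|\geq\cos(\pi/5)>1/2$.

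Combining the two inputs and recalling $\int_{G/\Gamma}F=0$, we obtain a deviation of at least $1/2\geq c'D^{-1}\geq c'D^{-2}\|F\|_{\operatorname{Lip}}$ for some $c'=c'(s,\Delta)>0$. Since nontrivial horizontal characters satisfy $|\eta|\geq 1$, and hence $D\geq 1$, we also have $|P|=M\geq c_1N/(2D)\geq c'D^{-2}N$ after possibly shrinking $c'$, provided the constant $C=C(s,\Delta)$ in the hypothesis $N\geq CD$ is chosen large enough that $M\geq 1$. This exhibits the required failure of total $c'D^{-2}$-equidistribution.

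I expect the only technically delicate step to be the Lipschitz bound $\|F\|_{\operatorname{Lip}}\ll_{s,\Delta}|\eta|$, which reduces to a quantitative comparison between the Euclidean norm on Mal'cev coordinates and $d_{\mathcal{X}}$; this is standard from the setup in~\cite{green-tao-nilmanifolds} but requires a little bookkeeping with the basis. The remainder is a direct computation, mirroring the argument for~\cite[Lemma~5.3]{frantzikinakis-host}.
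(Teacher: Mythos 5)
Your proof is correct. The paper does not actually prove this lemma---it simply cites~\cite[Lemma 5.3]{frantzikinakis-host}---and your argument reconstructs exactly the standard proof of that cited result: test against the character $F=e(\eta(\cdot))$ (mean zero, Lipschitz norm $\ll_{s,\Delta}D$), observe that the smoothness bound forces $\eta\circ g$ to be nearly constant on the initial segment $P=\{1,\dots,\lfloor c_1N/D\rfloor\}$, and then compare both $|P|/N\gg D^{-1}\geq D^{-2}$ (using $D\geq\|\eta\|\geq 1$) and the deviation $\geq 1/2\gg D^{-2}\|F\|_{\operatorname{Lip}}$ against the definition of total equidistribution. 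The one point worth flagging is that the Lipschitz comparison between Mal'cev coordinates and $d_{\mathcal{X}}$, and hence the constant $c$, in principle also depends on the rationality of the Mal'cev basis; this dependence is suppressed in the lemma statement as given in the paper (and likewise in your write-up), but it is harmless in all the applications since there the rationality parameter is controlled by a power of $D$.
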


Finally, we will need the following factorization theorem for polynomial sequences~\cite[Theorem 1.19]{green-tao-nilmanifolds}, which decomposes an arbitrary polynomial sequence into a smooth part, an equidistributed part, and a rational part.

\begin{theorem}[Factorization theorem]\label{thm:factor}
Let $s$ be a positive integer and let $\Delta, M_0 \geq 2$.  Let $G/\Gamma$ be a nilmanifold of dimension at most $\Delta$, equipped with a filtration $G_{\bullet}$ of degree at most $s$ and an $M_0$-rational Mal'cev basis $\mathcal{X}$. Let $g : \Z \rightarrow G$ be  a polynomial sequence adapted to $G_{\bullet}$.

For any $A >0$, we may find $M \in [M_0, M_0^{O_{A,s,\Delta}(1)}]$,
a sub-nilmanifold $G'/\Gamma'$ of $G/\Gamma$,a Mal'cev basis $\mathcal{X}'$ for $G'/\Gamma'$ in which each element is an $M$-rational combination of the elements of $\mathcal{X}$, and a decomposition $g = \epsilon g'\gamma$ into polynomial sequences $\epsilon, g',\gamma$ with the following properties:
\begin{enumerate}
\item $\epsilon : \Z \rightarrow G$ is $(M,N)$-smooth in the sense that $d_{\mathcal{X}}(\epsilon(n), \operatorname{id}) \leq M$ and $d_{\mathcal{X}}(\epsilon(n), \epsilon(n-1)) \leq M/N$ for each $1 \leq n \leq N$;
\item $g' \colon \Z \rightarrow G'$ takes values in $G'$, and $\{g'(n)\Gamma'\}_{1 \leq n \leq N}$ is totally $M^{-A}$-equidistributed in $G'/\Gamma'$;
\item $\gamma : \Z \rightarrow G$ is $M$-rational in the sense that for each $n \in\Z$, $\gamma(n)^r \in \Gamma$ for some $1 \leq r \leq M$. Moreover, $\gamma$ is periodic with period at most $M$.
\end{enumerate}

\end{theorem}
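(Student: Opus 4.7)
The plan is to prove this by induction on the dimension $\Delta$ of the nilmanifold $G/\Gamma$, with the base case $\Delta = 0$ being trivial (take $\epsilon = g' = \gamma = \operatorname{id}$). For the inductive step I would first choose a sufficiently large auxiliary parameter $A' = A'(A,s,\Delta)$. If $(g(n)\Gamma)_{1 \leq n \leq N}$ is already totally $M_0^{-A'}$-equidistributed, then the desired decomposition is trivial: set $G' = G$, $\Gamma' = \Gamma$, $\mathcal{X}' = \mathcal{X}$, $\epsilon = \gamma = \operatorname{id}$, $g' = g$, and $M = M_0$.

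Otherwise, Theorem~\ref{thm:quant-leibman} produces a nontrivial horizontal character $\eta \colon G \to \R/\Z$ with $|\eta| \leq M_1$ and $\|\eta \circ g\|_{C^{\infty}(N)} \leq M_1$, where $M_1 = M_0^{O_{s,\Delta}(A')}$. Writing $\eta\circ g(n) = \sum_{j=0}^{s}\alpha_j \binom{n}{j}$, the smoothness bound says $\|\alpha_j\|_{\R/\Z} \leq M_1 N^{-j}$ for each $1 \leq j \leq s$. I would decompose each $\alpha_j$ modulo $\Z$ into a rational part $\alpha_j^{\mathrm{rat}}$ with denominator $\ll M_1$ and a smoothly small real part $\alpha_j^{\mathrm{sm}}$ of size $\ll M_1 N^{-j}$. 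The crucial step is to lift this scalar additive splitting to a multiplicative factorization $g = \epsilon_0 \, g^{(1)} \, \gamma_0$ on $G$ in which $\epsilon_0$ is $(M_2, N)$-smooth, $\gamma_0$ is $M_2$-rational and periodic with period $\ll M_2$ (where $M_2 = M_1^{O_{s,\Delta}(1)}$), and $\eta \circ g^{(1)} \equiv 0$, so that $g^{(1)}$ takes values in the codimension-one closed connected subgroup $G^{(1)} := \ker(\eta)$. This lifting is carried out in Mal'cev coordinates via the Baker--Campbell--Hausdorff formula, and the filtration condition $[G_i,G_j]\subset G_{i+j}$ guarantees that $g^{(1)}$ is again polynomial with respect to the induced filtration $G^{(1)}_\bullet := (G_i \cap G^{(1)})_{i \geq 0}$.

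Having passed to the codimension-one subgroup $G^{(1)}$, I would construct an $M_2$-rational Mal'cev basis $\mathcal{X}^{(1)}$ of $G^{(1)}/\Gamma^{(1)}$ whose vectors are $M_2$-rational combinations of $\mathcal{X}$, by applying standard linear algebra over $\Z$ to the kernel of $\eta$ in the Mal'cev coordinate system. Invoking the inductive hypothesis on $g^{(1)}$ with $M_2$ in place of $M_0$ and the same $A$ yields a decomposition $g^{(1)} = \epsilon^{(1)} g' \gamma^{(1)}$ with all required properties; setting $\epsilon := \epsilon_0 \epsilon^{(1)}$ and $\gamma := \gamma^{(1)} \gamma_0$ then produces the desired factorization $g = \epsilon g' \gamma$, using that products of smooth sequences remain smooth and that products of rational sequences remain rational with period bounded by the lcm.

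The main obstacle is the quantitative bookkeeping. One must choose $A'$ large enough at the outer stage so that the losses at each subsequent inductive stage accumulate to still leave the final parameter $M$ in $[M_0, M_0^{O_{A,s,\Delta}(1)}]$ after iterating up to $\Delta$ times. In addition, one has to verify carefully that (i) the lifted factors $\epsilon_0$ and $\gamma_0$ preserve the polynomial property and satisfy the asserted smoothness and rationality bounds in the original Mal'cev basis $\mathcal{X}$; (ii) the Mal'cev basis $\mathcal{X}^{(1)}$ of $G^{(1)}$ can be chosen with the required rationality despite being extracted from $\ker(\eta)$ via Gaussian elimination; and (iii) the composed factors $\epsilon_0 \epsilon^{(1)}$ and $\gamma^{(1)} \gamma_0$ satisfy the smoothness and rationality conditions simultaneously when viewed back in $G$. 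Each of these is routine in principle but carries a nontrivial technical burden.
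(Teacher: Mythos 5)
The paper does not prove Theorem~\ref{thm:factor}; it is quoted verbatim as \cite[Theorem~1.19]{green-tao-nilmanifolds} and used as a black box, with the only new work in this paper being the deduction of Theorem~\ref{thm:quant-leibman} (the totally equidistributed variant of the Leibman theorem) from \cite[Theorem~2.9]{green-tao-nilmanifolds}. So there is no ``paper's own proof'' to compare against; the relevant benchmark is the argument in \cite[\S 9]{green-tao-nilmanifolds}, and your sketch is in fact a condensed version of precisely that argument: iterate on dimension, detect failure of equidistribution via the quantitative Leibman theorem to extract a horizontal character $\eta$ with small $\|\eta\circ g\|_{C^\infty(N)}$, split the Taylor coefficients of $\eta\circ g$ into a slowly varying piece and a highly rational piece, lift that scalar splitting to a group factorization $g=\epsilon_0 g^{(1)}\gamma_0$ with $g^{(1)}$ confined to a proper rational subgroup, and recurse.

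One genuine imprecision worth flagging: you set $G^{(1)}:=\ker(\eta)$, but for a nontrivial horizontal character $\eta\colon G\to\R/\Z$ this kernel is \emph{disconnected} (it is $\widetilde{\eta}^{-1}(\Z)$, a countable union of cosets), so it is not a connected Lie subgroup and cannot carry the induced Mal'cev structure you then invoke. What you want is $G^{(1)}=\ker(\widetilde{\eta})$, where $\widetilde{\eta}\colon G\to\R$ is the unique lift of $\eta$ through the universal cover (which exists since $G$ is connected and simply connected). This forces a corresponding change in the factorization step: one must arrange $\widetilde{\eta}\circ g^{(1)}\equiv 0$ as a real polynomial, not merely $\eta\circ g^{(1)}\equiv 0$ mod $\Z$, which is exactly what the split into a smooth real part plus a bounded-denominator rational part (absorbed into $\epsilon_0$ and $\gamma_0$ respectively) accomplishes. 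Beyond that, the points you list as ``routine in principle but nontrivial'' -- in particular producing an $M$-rational Mal'cev basis of $\ker(\widetilde{\eta})$ adapted to the filtration $(G_i\cap G^{(1)})_i$, verifying the lattice/filtration axioms after intersection, and controlling how the bound $M$ degrades through at most $\Delta$ iterations -- are indeed where the bulk of the Green--Tao proof lives, and your sketch does not carry out that bookkeeping; it correctly identifies it but leaves it as a burden. As an outline of the strategy it is right; as a self-contained proof it is not.
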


\section{Bombieri--Vinogradov theorems for equidistributed nilsequences}\label{sec:equidistribution}

In this section we reduce our problem to studying those nilsequences $\psi$ that are ``equidistributed". In the case when $\psi(n) = e(\alpha n^s)$, this corresponds to $\alpha$ lying in ``minor arcs". 

\begin{definition}[Equidistributed nilsequences]\label{def:equi}
Recall the definition of $\Psi_s(\Delta,K)$ in Definition~\ref{def:nil}. For $\eta \in (0,1)$ and $x \geq 2$, we define $\Psi_s(\Delta, K; \eta, x)$ to be the class of those nilsequences $\psi \in \Psi_s(\Delta, K)$ of the form $\psi(n) = \varphi(g(n)\Gamma)$ that obey the following additional conditions:
\begin{enumerate}
\item the finite sequence $(g(n)\Gamma)_{1 \leq n \leq 10x}$ is totally $\eta$-equidistributed in $G/\Gamma$ (defined in~\cite[Definition 1.2]{green-tao-nilmanifolds});
\item the Lipschitz function $\varphi$ satisfies $\int_{G/\Gamma} \varphi = 0$ (Here the integral is with respect to the unique Haar measure on $G/\Gamma$).
\end{enumerate}
We will loosely call those nilsequences in $\Psi_s(\Delta, K; \eta, x)$ $\eta$-\emph{equidistributed}.
\end{definition}

\subsection{Statements of results for equidistributed nilsequences}

The following theorems show that our main theorems hold for $\eta$-equidistributed nilsequences when $0<\eta\leq (\log x)^{-O_A(1)}$. Moreover, the error term for such nilsequences is even power-saving if $\eta=x^{-\delta}$ for some constant $\delta>0$.

\begin{theorem}\label{equidist-1/4}
Let an integer $s\geq 1$, a large real number $\Delta \geq 2$, and a small real number $\varepsilon\in (0,1/4)$ be given. There exists a constant $\kappa = \kappa(s,\Delta,\ee) > 0$, such that for $x \geq 2$, $\eta>0$ we have
\[ \sum_{d \leq x^{1/4-\ee}} \max_{(c,d)=1} \sup_{\psi \in \Psi_s(\Delta, \eta^{-\kappa};\eta,x/d)} \Big| \sum_{\substack{n\leq x \\ n \equiv c\Mod{d}}} \Lambda(n) \psi((n-c)/d) \Big| \ll \eta^{\kappa} x (\log x)^2. \]
\end{theorem}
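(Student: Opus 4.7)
The plan is to follow the classical Vaughan--identity approach to Bombieri--Vinogradov, exploiting the $\eta$-equidistribution of each $\psi_d$ to produce the power saving $\eta^\kappa$. First I would dyadically decompose $d \sim D_0$ with $D_0 \leq x^{1/4-\ee}$, fix the optimizing residues $c_d$, nilsequences $\psi_d$, and unit weights $\epsilon_d$ for each $d$, and reduce matters to bounding
\[ T = \sum_{d \sim D_0} \epsilon_d \sum_{\substack{n \leq x \\ n \equiv c_d \Mod{d}}} \Lambda(n)\,\psi_d\!\left(\tfrac{n-c_d}{d}\right). \]
Vaughan's identity with parameters $U = V = x^{1/4}$ then decomposes $\Lambda$ into a bounded number of bilinear pieces $\sum_{ab=n}\alpha(a)\beta(b)$ of two kinds: Type~I, where one of $\alpha,\beta$ is the constant (or $\log$) sequence and the other is supported on $[1, x^{1/2}]$; and Type~II, where both are supported on dyadic intervals $[K, 2K]$, $[L, 2L]$ with $x^{1/4} \leq K \leq L \leq x^{3/4}$ and $KL \asymp x$.

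For the Type~I pieces, I would write $n = ab$ and note that for given $a$ with $g = (a,d) \mid c_d$, the inner sum over $b$ is a sum of $\psi_d$ along an arithmetic progression inside $[1, 10x/d]$ of length $\asymp xg/(ad)$. Provided this length is at least $\eta \cdot (10x/d)$---an inequality that fails only on a short range of $a$ handled by a trivial bound---total $\eta$-equidistribution of $\psi_d$ (Definition~\ref{def:equi}(1) combined with the vanishing integral in Definition~\ref{def:equi}(2)) produces a cancellation of size $\eta$ per progression. Summing over $a$, $d$, and the external variables, and using the standard $\sum_a |\alpha(a)|^2 \ll x^{1/2}(\log x)^{O(1)}$ bound, contributes $\ll \eta^\kappa x(\log x)^{O(1)}$ to $T$.

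For the Type~II pieces, after taking $K \leq L$ without loss of generality, I would apply the Cauchy--Schwarz inequality in the variables $(d, k)$, losing a factor $(D_0 K)^{1/2}$, and expand the square to reduce matters to an off-diagonal estimate for products $\psi_d(\cdot)\,\overline{\psi_{d'}(\cdot)}$ evaluated on shifted arguments. Realizing this product as a nilsequence on the product nilmanifold $(G\times G)/(\Gamma\times\Gamma)$, the quantitative Leibman theorem (Theorem~\ref{thm:quant-leibman}) furnishes a saving $\ll \eta^{c'}$ on the inner sum for all but a small exceptional set of pairs $(d, d')$ on which the product polynomial sequence fails equidistribution. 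Since $(D_0 K)^{1/2} \leq x^{3/8 - \ee/2}$ (using $K \leq x^{1/2}$), the Cauchy--Schwarz loss is comfortably absorbed by the cancellation, and the Type~II contribution to $T$ is also $\ll \eta^\kappa x (\log x)^{O(1)}$.

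The main obstacle will be the Type~II bilinear estimate, specifically the verification that for all but a small fraction of pairs $(d, d')$, the combined polynomial sequence $(g_d, g_{d'})$ on $G\times G$ (evaluated on the relevant shifted arguments) is equidistributed up to a loss of a small power of $\eta$. The usual way to establish this is to combine the factorization theorem (Theorem~\ref{thm:factor}) with the converse Leibman lemma (Lemma~\ref{lem:converse-leibman}) to convert any potential failure of equidistribution into a horizontal character identity of bounded modulus on $G\times G$, and then count the admissible pairs $(d, d')$. Once this bilinear estimate is secured, combining the Type~I and Type~II bounds and summing over the $O(\log x)$ dyadic ranges of $D_0$ yields the claimed bound $\ll \eta^\kappa x(\log x)^2$.
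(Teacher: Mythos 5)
Your Type I estimate has a genuine gap. You propose to apply the total $\eta$-equidistribution of $\psi_d$ directly to the inner $b$-sum for each fixed $a$; that sum runs over a sub-progression of $[1,10x/d]$ of length $\asymp x/(ad)$, i.e.\ a $1/a$-proportion of the whole range (note $(a,d)=1$ is forced by $(c_d,d)=1$). Total $\eta$-equidistribution therefore yields a saving only when $a \ll 1/\eta$. But in your decomposition $a$ runs up to $UV = x^{1/2}$, and the complementary range $1/\eta < a \leq x^{1/2}$ is not short: its trivial contribution is $\asymp x \sum_{d \sim D_0} d^{-1} \sum_{1/\eta < a \leq x^{1/2}} |\alpha(a)|/a \asymp x(\log x)^{O(1)}$, with no $\eta$-power saving. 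The paper circumvents this by arguing in the contrapositive (Proposition~\ref{typeI}): it collects the bad set $T$ of pairs $(d,m)$ for which the inner sum fails to save a factor $\delta$, applies the quantitative Leibman theorem (Theorem~\ref{thm:quant-leibman}) to each such $(d,m)$, pigeonholes a horizontal character $\chi$, and then uses the Weyl-type recurrence lemma (Lemma~\ref{lem:recur}) \emph{across the many values of $m$ with $d$ fixed} to convert $|T| \gg \delta DM$ into a Diophantine condition on $\chi \circ g_d$ itself, contradicting the total equidistribution of $g_d$ via Lemma~\ref{lem:converse-leibman}. This counting-over-$m$ step is what lets the argument handle $m$ up to $x^{1/2}$; it cannot be replaced by the per-$a$ equidistribution bound you describe.

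The Type II plan (Cauchy--Schwarz, Leibman, then a Diophantine count of exceptional parameters) is in the right spirit and matches the mechanism underlying \cite[Lemma~3.3]{shao} and Proposition~\ref{type II-1/4}, but the accounting is internally inconsistent: Cauchy--Schwarz in the pair $(d,k)$ produces products $\psi_d\,\overline{\psi_d}$ with a \emph{single} $d$ and the constraint $l_1 \equiv l_2\ (\mathrm{mod}\ d)$, not $\psi_d\overline{\psi_{d'}}$; whereas Cauchy--Schwarz in $k$ alone leaves an inner $k$-sum of length $\asymp K/[d,d'] \asymp K/D_0^{2}$, which can drop below $1$ when $K$ is near $x^{1/4}$ (as your Vaughan cutoff $U=V=x^{1/4}$ allows), so Leibman cannot be invoked. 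The paper avoids this by taking Vaughan with cutoff $x^{1/3}$ (Lemma~\ref{le_vaughan}), so that the Cauchy--Schwarzed variable $m$ lies in $[x^{1/2},x^{2/3}]$ and the inner sum has length $\geq M/D^2 \geq x^{\varepsilon}$ under the hypothesis $D\leq M^{1/2-\varepsilon}$ of Proposition~\ref{type II-1/4}; you should mirror that choice. Finally, the tool for the exceptional-set count is not the factorization theorem (Theorem~\ref{thm:factor}) --- that is used only in Section~\ref{sec:equidistribution} to reduce the general case to the equidistributed one --- but rather the multi-variable recurrence lemmas in the spirit of Lemma~\ref{le_multileibman}.
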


\begin{theorem}\label{equidist-1/3}
Let an integer $s\geq 1$, a large real number $\Delta\geq 2$, and a small real number $\varepsilon\in (0,1/3)$ be given. There exists a constant $\kappa = \kappa(s,\Delta,\ee) > 0$, such that for any $x \geq 2$, $\eta>0$ and any nilsequence $\psi \in \Psi_s(\Delta, \eta^{-\kappa};\eta,x)$  we have
\[
\sum_{d\leq x^{1/3-\varepsilon}}\max_{(c,d)=1} \Big|\sum_{\substack{n \leq x \\ n \equiv c\Mod{d}}} \Lambda(n) \psi(n)\Big|\ll\eta^{\kappa} x (\log x)^2.
\]
\end{theorem}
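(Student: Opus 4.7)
The plan is to apply Vaughan's identity to $\Lambda$ and then leverage the $\eta$-equidistribution of $\psi$ through direct estimates on arithmetic progressions for Type I sums and a bilinear Cauchy--Schwarz plus product-nilmanifold argument for Type II sums. I would first apply Vaughan's identity at level $U = V = x^{1/3}$, decomposing $\Lambda \cdot 1_{[1,x]}$ as a bounded linear combination of $O(\log x)$ Type I convolutions $\alpha \ast 1$ with $\alpha$ supported in $[1, x^{2/3}]$, and Type II convolutions $\alpha \ast \beta$ with $\alpha, \beta$ supported in dyadic intervals $[M, 2M]$ and $[N, 2N]$ satisfying $MN \asymp x$ and $x^{1/3} \leq M \leq N \leq x^{2/3}$, with all coefficients bounded by $(\log x)^{O(1)}$.

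For the Type I contribution, fixing $d \leq x^{1/3-\ee}$ and $m \leq x^{2/3}$, the inner sum $\sum_{n \leq x/m,\, mn \equiv c\,(d)} \psi(mn)$ is a sum of $\psi$ along an AP in $[1,10x]$ of common difference $[m,d] \leq md \leq x^{1-\ee}$ and length $L = x/[m,d] \geq x^{\ee}$. When $L \geq 10\eta x$, total $\eta$-equidistribution of $\psi$ combined with the vanishing mean $\int_{G/\Gamma} \varphi = 0$ yields $|\sum \psi| \ll \eta L$; summing and using $\sum_{d \leq D}\sum_{m \leq x^{2/3}} (m,d)/(md) \ll \log^{O(1)} x$ then gives an $\eta x \log^{O(1)} x$ bound. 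In the complementary short-AP regime $[m,d] > x/(10\eta)$, one applies the factorization theorem (Theorem~\ref{thm:factor}) to $g$ at rationality level $\eta^{-\kappa_0}$, reducing to a polynomial sequence on a lower-dimensional sub-nilmanifold, and proceeds by induction on $\dim G$ (the base $\dim G = 0$ being trivial).

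For the Type II contribution, I would apply Cauchy--Schwarz over $(d, m)$ (after replacing $\max_c$ by the optimal $c_{d,m}$) and expand the square, reducing to control of
\[
\sum_{d \leq D} \sum_{m \asymp M} \sum_{\substack{n_1, n_2 \asymp N \\ m(n_1 - n_2) \equiv 0 \,(d)}} \beta(n_1) \overline{\beta(n_2)} \psi(m n_1) \overline{\psi(m n_2)}.
\]
The diagonal $n_1 = n_2$ contributes $O(x \log^{O(1)} x)$. For $n_1 \neq n_2$, the sequence $m \mapsto \psi(m n_1) \overline{\psi(m n_2)}$ is a nilsequence on the product nilmanifold $(G \times G)/(\Gamma \times \Gamma)$ driven by $m \mapsto (g(m n_1), g(m n_2))$; the quantitative Leibman theorem (Theorem~\ref{thm:quant-leibman}) provides a dichotomy: either the inner $m$-sum saves a factor of $\eta^{\kappa}$, or there is a non-trivial horizontal character $(\eta_1, \eta_2)$ on $G \times G$ of modulus $\ll \eta^{-O(1)}$ satisfying $\|\eta_1 \circ g(m n_1) + \eta_2 \circ g(m n_2)\|_{C^{\infty}(M)} \ll \eta^{-O(1)}$. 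Total $\eta$-equidistribution of the original $g$, together with Lemmas~\ref{lem:smoothness-norm} and~\ref{lem:converse-leibman}, then confines $(n_1, n_2)$ to an exceptional set of size $O(\eta^{\kappa} N^2)$, on which the trivial bound suffices.

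The main obstacle is the horizontal-character analysis on the product nilmanifold: one must expand $\eta_1 \circ g(m n_1) + \eta_2 \circ g(m n_2)$ as a polynomial in $m$ with $(n_1, n_2)$-dependent coefficients, translate the smoothness-norm obstruction into arithmetic constraints on these coefficients via Lemma~\ref{lem:smoothness-norm}, and count exceptional pairs --- all while carefully propagating the parameters $s, \Delta, \ee$ through the product construction to obtain a uniform $\kappa = \kappa(s, \Delta, \ee) > 0$. The level $x^{1/3 - \ee}$ reflects the natural balance in the Type II Cauchy--Schwarz: with $M \geq x^{1/3}$ and $MN \asymp x$, the off-diagonal cancellation suffices to overcome the $DM$ factor introduced by Cauchy--Schwarz precisely when $D \leq x^{1/3 - \ee}$.
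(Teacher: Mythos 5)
Your overall framework (Vaughan's identity, Type I and Type II estimates, Cauchy--Schwarz and quantitative Leibman, Diophantine analysis to close) is the correct one, and your Type II sketch is essentially the paper's approach: the Cauchy--Schwarz over the pair $(d,m)$ is exactly what Proposition~\ref{type II-1/3} becomes after being deduced from Proposition~\ref{typeII-lambda-general} with $V=1$, and you correctly identify the horizontal-character/Diophantine analysis on $G\times G$ as the crux. You leave that step as ``the main obstacle''; the paper resolves it with the multi-variable Weyl recurrence Lemma~\ref{le_multileibman2} applied to a carefully chosen polynomial in the auxiliary variables, which is the genuine hard work of the proof.

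The Type I argument, however, contains a real gap. Your case split --- direct total-$\eta$-equidistribution when $L=x/[m,d]\geq 10\eta x$, and factorization plus induction when $[m,d]>x/(10\eta)$ --- is not exhaustive, and the middle regime is in fact where essentially all the mass lives. The condition $L\geq 10\eta x$ forces $[m,d]\leq 1/(10\eta)$, which for the relevant $\eta$ (say $\eta\asymp(\log x)^{-A}$) means $[m,d]$ polylogarithmic; meanwhile your ``short AP'' regime $[m,d]>x/(10\eta)$ is near-vacuous since $[m,d]\leq md\leq x^{1-\ee}<x/(10\eta)$. The dominant range, with $[m,d]$ of polynomial size in $[x^{\ee},x^{1-\ee}]$, receives no estimate: the AP is far too short to see the total $\eta$-equidistribution of $g$ over $[1,10x]$, and ``factorize and induct on dimension'' is not a complete substitute. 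The paper's Proposition~\ref{typeI2} handles exactly this middle regime by assuming the bound fails for many $(d,m)$, applying the quantitative Leibman theorem (Theorem~\ref{thm:quant-leibman}) to each dilated sequence $n\mapsto g(mn+b_{d,m})$ of length $\asymp x/(md)$, pigeonholing to a common horizontal character, and then running a Weyl-type Diophantine argument (Lemma~\ref{lem:recur}) in $m$ and in $d$ successively to contradict the total equidistribution of $g$ via Lemma~\ref{lem:converse-leibman}. Structurally this is the same kind of argument you sketch for the Type II sums, and it is needed for Type I as well; you cannot shortcut it with a direct AP estimate.

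A smaller point: after applying Vaughan at level $x^{1/3}$, convolutions with the shorter variable in $[x^{1/3},x^{2/3}]$ should be classified as Type II (with one of the two coefficient sequences equal to $\mu$ or $\Lambda$), since the Type I estimate only controls $m\leq x^{1/2}$. The paper's Lemma~\ref{le_vaughan} packages this standard reclassification so that Type I has $m\ll x^{1/3}$ and Type II has both variables in $[x^{1/3},x^{2/3}]$.
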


\begin{theorem}\label{equidist-1/2}
Let integers $s\geq 1$, $c \neq 0$, a large real number $\Delta\geq 2$, and a small real number $\varepsilon\in (0,1/2)$ be given. There exists a constant $\kappa = \kappa(s,\Delta,\varepsilon) > 0$, such that for any well-factorable sequence $(\lambda_d)$ of level $x^{1/2-\varepsilon}$ with $x \geq 2$ and any nilsequence $\psi \in \Psi_s(\Delta, \eta^{-\kappa};\eta,x)$ with $\eta > 0$, we have 
\[
\Big|\sum_{\substack{d\leq x^{1/2-\varepsilon} \\ (d,c)=1}} \lambda_d \sum_{\substack{n \leq x \\ n \equiv c\Mod{d}}} \Lambda(n) \psi(n) \Big| \ll \eta^{\kappa} x (\log x)^2.
\]
\end{theorem}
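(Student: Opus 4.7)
The plan is to combine Vaughan's identity with a Bombieri--Friedlander--Iwaniec style exploitation of the well-factorability of $\lambda_d$. I would begin with Vaughan's identity at parameters $U = V = x^{1/4}$, which decomposes $\Lambda(n)\mathbf{1}_{n > x^{1/4}}$ as $O(\log x)$ pieces, each of Type I form $\alpha * \mathbf{1}$ (with $\alpha$ supported on $[1, x^{1/2}]$ and divisor-bounded) or of Type II form $\alpha * \beta$ (with $\alpha$ supported on $m \sim M$, $\beta$ on $n \sim N$, for dyadic $M, N \geq x^{1/4}$ and $MN \asymp x$). The contribution of $n \leq x^{1/4}$ to the full sum is bounded via the divisor bound on the moduli $d \mid n - c$, giving $O(x^{1/4}(\log x)^{O(1)})$, which is absorbed by $\eta^{\kappa} x (\log x)^2$ in the relevant range of $\eta$; when $\eta$ is so small that this trivial bound would fail, one uses $\int \varphi = 0$ together with direct equidistribution on short intervals to obtain additional cancellation.

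For the Type I contribution, after fixing $m$ and reducing the congruence $mn \equiv c \pmod d$ to the sub-progression $n \equiv c\overline{m} \pmod{d/(d,m)}$ of length at least $x/(md) \geq x^{\varepsilon}$, I would view $n \mapsto \psi(mn) = \varphi(g(mn)\Gamma)$ as a nilsequence with polynomial sequence $n \mapsto g(mn)$, which is still adapted to $G_\bullet$. An application of Theorem~\ref{thm:quant-leibman} together with the total $\eta$-equidistribution hypothesis on $g$ (inherited up to polynomial loss in $\eta^{-1}$ by composition with a bounded linear map and restriction to a sub-progression) yields a saving $\eta^{\kappa_1}$ on each inner sum. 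Summing trivially over $m \leq x^{1/2}$ and $d \leq D := x^{1/2-\varepsilon}$ with divisor-bounded weights gives an acceptable $\eta^{\kappa_1} x (\log x)^{O(1)}$.

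The Type II sums are the core of the argument. For each dyadic range $(M, N)$, I would use the factorization $\lambda_d = (\beta_1 * \gamma_1)(d)$ with $\beta_1$ supported on $[1, R]$ and $\gamma_1$ on $[1, S]$, $RS = D$, choosing the split $(R, S)$ to balance the subsequent Cauchy--Schwarz (typically $R$ close to $D\sqrt{M/x}$). Applying Cauchy--Schwarz to the $(m, d_1)$ variables and expanding the square produces a correlation sum over $(n, n', d_2, d_2')$ with inner $m$-sum of the form
$$
\sum_{d_1 \leq R,\; m \sim M} \mathbf{1}_{\substack{mn \equiv c \pmod{d_1 d_2} \\ mn' \equiv c \pmod{d_1 d_2'}}} \psi(mn)\overline{\psi(mn')}.
$$
The diagonal $n = n'$, $d_2 = d_2'$ contributes an acceptable $O(x^{1+o(1)}/R)$ after divisor counting, while the off-diagonal requires genuine cancellation in the $m$-sum of the \emph{tensor-product nilsequence} $m \mapsto \psi(mn)\overline{\psi(mn')}$, a nilsequence on $(G \times G)/(\Gamma \times \Gamma)$ driven by the polynomial sequence $m \mapsto (g(mn), g(mn'))$ of degree $s$ and dimension $2\Delta$.

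The principal obstacle is to verify that, for all but a sparse set of off-diagonal $(n, n')$, this tensor-product polynomial sequence is totally $\eta^{O(1)}$-equidistributed on the relevant sub-progression in $m$, which would permit Theorem~\ref{thm:quant-leibman} to yield the desired cancellation. I would argue by contradiction via the factorization theorem (Theorem~\ref{thm:factor}): a failure of total equidistribution of $m \mapsto (g(mn), g(mn'))$ in $G \times G$ produces a nontrivial horizontal character $(\eta_1, \eta_2)$ of bounded modulus whose composition with the sequence has small smoothness norm. Via Lemma~\ref{lem:smoothness-norm} and Lemma~\ref{lem:converse-leibman}, such a character would force either a nontrivial horizontal character on $G$ witnessing the failure of total $\eta$-equidistribution of the original $g$ (contradicting the hypothesis), or else a Diophantine constraint restricting $(n, n')$ to a sparse exceptional set whose contribution is controlled directly. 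Optimizing $(R, S)$ across dyadic Type II ranges $(M, N)$ and collecting contributions yields the target bound $\eta^{\kappa} x (\log x)^2$.
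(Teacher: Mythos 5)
Your high-level strategy is the same as the paper's: Vaughan's identity, a type I estimate handled by equidistribution on sub-progressions, and a BFI-style Cauchy--Schwarz on the type II sum exploiting the factorization $\lambda_d = \beta_1*\gamma_1$ with the $\beta_1$-variable absorbed in the square. But two concrete issues prevent this from being a proof. First, your choice of split $R \approx D\sqrt{M/x}$ does not respect the structural constraint that the inner $m$-sum (at fixed $d_1, d_2, d_2', n, n'$) must be long enough to run an equidistribution argument. That sum has length $\asymp M/(RS^2)$, so one needs $RS^2 \leq M x^{-\varepsilon}$; with $RS = D = x^{1/2-\varepsilon}$ and your $R$, this gives $RS^2 = x^{1-\varepsilon}/M^{1/2}$, which exceeds $M$ for all $M < x^{2/3}$ — in that range the inner sum is shorter than $1$ and the argument collapses. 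The paper is forced to take $R$ as large as possible, namely $R = U = x^{1-\varepsilon}/M$ (so that $MR \leq x^{1-\varepsilon}$ controls the diagonal), which then makes $S = V = Mx^{-1/2}$ and $RS^2 = Mx^{-\varepsilon}$, and both constraints are simultaneously tight. One also has to separate off the terms with $(d_2,d_2')$ large, which you have not done, and it is Theorem~\ref{thm:quant-leibman} (not Theorem~\ref{thm:factor}, which you cite) that produces the horizontal character from a failure of equidistribution.

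Second, and more seriously, the crux of the argument — passing from the Diophantine constraint on the many tuples $(n, n', d_1, d_2, d_2')$ back to a violation of the equidistribution of $g$ — is exactly the part you have left as a black box. The coefficients of the polynomial $\chi\circ h$ on the $m$-line depend on $(n,n')$ through products like $d_1 d_2 d_2' n$ and $d_1 d_2 d_2' n'$, which are not linear in the free parameters; one cannot simply pigeonhole and apply the one-variable Weyl recurrence. The paper handles this by (i) rewriting $n_2 = n_1 + u\ell z$ and $v_i = \ell v_i'$ (using the automatic congruence $n_1 \equiv n_2 \pmod{u(v_1,v_2)}$ and the bounded gcd $\ell$), turning the leading coefficient into a five-variable polynomial $Q(n_1,z,u,v_1',v_2')$; (ii) observing that precisely one monomial of $Q$ has $(n_1,z)$-degree $(s,0)$ and precisely one has $(0,s)$, giving separate access to $\alpha_s+\beta_s$ and $\beta_s$; and (iii) invoking a multivariate Weyl-type recurrence lemma (Lemmas~\ref{le_multileibman}--\ref{le_multileibman2}, proved specifically for this purpose) that extracts rational approximations to these coefficients, then propagating down by induction on the degree. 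Your sketch — ``either a character on $G$ or a sparse exceptional set controlled directly'' — asserts the conclusion of this step but supplies none of its content; without the change of variables and the multivariate recurrence lemma, there is no route from the recurrence of $\chi\circ h$ for many tuples to a contradiction with the total $\eta$-equidistribution of $g$.
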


\begin{remark}\label{rem:mobius2}
Theorems \ref{equidist-1/4}, \ref{equidist-1/3}, \ref{equidist-1/2} hold equally well with $\Lambda$ replaced by $\mu$, since the proofs rely on type I and II estimates and $\mu$ satisfies a similar Vaughan identity as $\Lambda$ does. 
\end{remark}

As mentioned earlier, our proof methods also apply to other arithmetic functions than $\Lambda$ and $\mu$, in particular to the functions $d_k$ and $1_S$ (and other sequences that satisfy an identity of Heath-Brown's type). 

\begin{theorem}\label{thm_divisor}
Theorems \ref{equidist-1/4}, \ref{equidist-1/3} and \ref{equidist-1/2} continue to hold with $\Lambda$ replaced by the $k$-fold divisor function $d_k$ with $k\in \mathbb{C}$ fixed, or $\Lambda$ replaced by $1_{S}$, where $S$ is the set of natural numbers expressible as the sum of two squares. 
\end{theorem}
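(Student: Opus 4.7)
The plan is to follow the same strategy that, as noted in Remark~\ref{rem:mobius2}, was used to establish the $\mu$ analogues of Theorems~\ref{equidist-1/4}--\ref{equidist-1/2}. Inspecting those proofs, they extract the arithmetic information about $\Lambda$ (or $\mu$) through a single input: a Vaughan-type decomposition of $\sum_{\substack{n\leq x\\ n\equiv c\Mod{d}}} \Lambda(n)\psi(n)$ into a bounded number of Type~I sums of the shape $\sum_{m\sim M}\alpha_m \sum_{n \sim N,\ mn\equiv c\Mod{d}}\psi(mn)$ and Type~II sums $\sum_{m \sim M}\sum_{n \sim N,\ mn\equiv c\Mod{d}}\alpha_m\beta_n\psi(mn)$, where $\alpha_m,\beta_n$ are $1$-bounded and $M,N$ lie in ranges dictated by the target level of distribution. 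The subsequent bilinear estimates for $\psi$, carried out using the factorization theorem (Theorem~\ref{thm:factor}) and the quantitative Leibman theorem (Theorem~\ref{thm:quant-leibman}), are entirely agnostic to the arithmetic function being weighted. Hence it suffices to exhibit, for each of $d_k$ ($k \in \C$) and $1_S$, a combinatorial identity producing Type~I/II sums in the same parameter ranges as in the $\Lambda$ case.

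For $d_k$ with positive integer $k$, the tautological identity $d_k = 1 \ast 1 \ast \cdots \ast 1$ ($k$ factors) already gives a $k$-fold convolution structure, so a dyadic decomposition of each factor followed by pigeonholing produces a bounded number of Type~I or Type~II sums with coefficients given by indicator functions of dyadic intervals. For general $k \in \C$, we invoke a Heath--Brown-type identity: by expressing $d_k$ via manipulation of the Dirichlet series $\zeta(s)^k$ (as developed in~\cite{divisor-HB, divisor-FI2} and their references), we decompose $d_k(n) 1_{n \leq x}$ into a bounded linear combination of weighted convolutions in which each factor is either short (of size $\leq x^{1/K}$) or carries a mild logarithmic weight, again yielding Type~I/II sums in the required ranges.

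For $1_S$, we exploit the multiplicative structure: one has $1_S(p^a) = 1$ for $p = 2$ or $p \equiv 1 \Mod{4}$, and $1_S(p^a) = 1_{2 \mid a}$ for $p \equiv 3 \Mod{4}$. Writing each $n \in S$ uniquely as $n = m^2 \ell$ with $\ell$ squarefree and composed only of primes in $\{2\} \cup \{p : p \equiv 1 \Mod{4}\}$ yields a decomposition $1_S = a \ast b$ with $a,b$ $1$-bounded multiplicative. Further decomposing using the connection to the Dirichlet character $\chi_{-4}$ (via the fact that a squarefree integer $\ell$ with only prime factors $\equiv 1 \Mod{4}$ or equal to $2$ can be detected by a character sum involving $\chi_{-4}$), or alternatively by applying a Heath--Brown identity directly to the multiplicative function $1_S$, produces Type~I/II sums of the required shape.

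The main obstacle is verifying that the factor-size constraints in the chosen combinatorial identities for $d_k$ and $1_S$ are compatible with the $1/4$, $1/3$, and $1/2$ thresholds of Theorems~\ref{equidist-1/4},~\ref{equidist-1/3}, and~\ref{equidist-1/2} respectively; the tightest case is the $1/2$ threshold, where only a narrow Type~II range is admissible. Such factor ranges for $d_k$ and for $1_S$ at moduli of size close to $\sqrt{x}$ have been worked out in the literature on their level of distribution, and we import those identities directly. Once the combinatorial identity is in place, the remaining nilsequence-theoretic analysis is a verbatim repetition of the $\Lambda$ argument, yielding the same bound $\ll \eta^{\kappa} x (\log x)^2$.
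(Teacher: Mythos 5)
Your high-level strategy — reduce to Type~I and Type~II bilinear sums via a combinatorial identity, then apply the nilsequence-agnostic type estimates — is the right one and is exactly what the paper does. However, there is a genuine gap, and also a structural difference worth noting.

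The paper does not use case-by-case decompositions; it proves a single unified lemma (Lemma~\ref{le_HB}) for any multiplicative $f$ with $|f(n)|\leq d_\kappa(n)$ whose values $f(p)$ are eventually periodic in $p$ modulo some $Q$. This covers $d_k$ for all $k\in\C$ simultaneously (take $Q=1$, since $d_k(p)=k$ is constant) and $1_S$ (take $Q=4$). The decomposition factors $n$ into its $w$-smooth part, a bounded number of ``large'' prime factors, and a controlled number of ``medium'' primes, uses character orthogonality to detect the arithmetic progression condition on the large primes, and then applies Heath--Brown's identity to the von Mangoldt weights attached to those primes. Your proposal of manipulating the Dirichlet series $\zeta(s)^k$ for complex $k$, or using a $\chi_{-4}$ character sum for $1_S$, points in a similar direction but is vaguer; the cited references \cite{divisor-HB,divisor-FI2} treat integer divisor functions and don't directly give what you need for complex $k$.

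The concrete gap is the error term. Lemma~\ref{le_HB} produces not an exact decomposition but one with an error $E(n)$ supported on integers with a repeated large prime factor or a large $w$-smooth divisor, together with a $(\log x)^{-A}d_\kappa(n)$ tail. For integer $k$ your tautological identity $d_k=1*\cdots*1$ is exact, so this issue does not arise there, but for complex $k$ and for $1_S$ it does: any workable identity will shed such error terms. The paper spends a nontrivial part of the proof of Theorem~\ref{thm_divisor} establishing that
\[
\sum_{d\leq x^{1/2-\varepsilon}}\max_{(c,d)=1}\sum_{\substack{n\leq x\\ n\equiv c\Mod d}}|E(n)|\ll_B x(\log x)^{-B},
\]
which requires Shiu's theorem to bound divisor sums in progressions, a moment bound on smooth numbers, and a \emph{bilinear} Bombieri--Vinogradov estimate \cite[Theorem~17.4]{iwaniec-kowalski} to handle the contribution of a large smooth divisor. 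None of this is ``a verbatim repetition of the $\Lambda$ argument'' — it is classical (no nilsequences), but it is a genuine additional step you cannot skip. Also a minor point: the dyadic decomposition of a $k$-fold convolution produces $(\log x)^{O(1)}$ pieces, not a bounded number; this is harmless for the claimed bound $\eta^\kappa x(\log x)^{O(1)}$, but the claim ``a bounded number'' is not accurate as stated.

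Finally, your worry that the $1/2$ threshold forces a narrower admissible Type~II range is not the real constraint here: all three theorems use the same Type~II support $[x^{1/3},x^{2/3}]$ coming from Vaughan's or Heath--Brown's identity; the thresholds $1/4$, $1/3$, $1/2$ differ because of whether the nilsequence or residue class is allowed to vary with $d$ and whether well-factorable weights are available, not because of the identity's factor ranges.
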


In our deductions of Theorems~\ref{thm:1/4},~\ref{thm:1/3}, and~\ref{thm:1/2} from Propositions~\ref{equidist-1/4},~\ref{equidist-1/3}, and~\ref{equidist-1/2}, we also need the analogues of Propositions~\ref{equidist-1/4},~\ref{equidist-1/3}, and~\ref{equidist-1/2} with $\Lambda$ replaced by the function $n \mapsto 1_{(n,W)=1}$. Proposition~\ref{prop:sieve1} below will be used in the deduction of Theorem~\ref{thm:1/4}, and Proposition~\ref{prop:sieve2} below will be used in the deductions of Theorems~\ref{thm:1/3} and~\ref{thm:1/2}.

\begin{proposition}\label{prop:sieve1}
Let an integer $s\geq 1$, large real numbers $A, \Delta \geq 2$, and a small real number $\varepsilon\in (0,1/2)$ be given. There exists a constant $\kappa = \kappa(s,\Delta,\ee) > 0$, such that for $x \geq 2$, $\eta>0$ we have
\[ \sum_{d \leq x^{1/2-\ee}} \max_{(c,d)=1} \sup_{\psi \in \Psi_s(\Delta, \eta^{-\kappa};\eta,x/d)} \Big| \sum_{\substack{n\leq x \\ n \equiv c\Mod{d}\\ (n,W)=1}} \psi((n-c)/d) \Big| \ll \eta^{\kappa} x \log x + O_A(x(\log x)^{-A}) ,
\]
where $W = \mathscr{P}(w)$ for some $w \leq \exp(\sqrt{\log x})$.
\end{proposition}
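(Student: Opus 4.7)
The plan is to expand the coprimality condition via Möbius inversion and reduce the inner sum to a Type-I sum of $\psi$-values over arithmetic progressions, after which the total $\eta$-equidistribution of $\psi$ can be applied. After the change of variable $m = (n-c)/d$ and noting that $(c,d)=1$ forces $(e,d)=1$ in any nonzero contribution to the identity $1_{(c+dm,W)=1} = \sum_{e\mid (c+dm,W)}\mu(e)$, the inner sum rewrites as
\[
I(d,c,\psi) = \sum_{\substack{e \mid W \\ (e,d)=1}} \mu(e) \sum_{m \in \mathcal{A}_e} \psi(m),
\]
where $\mathcal{A}_e$ is the arithmetic progression inside $[1, x/d]$ of common difference $e$ consisting of those $m$ with $dm \equiv -c \Mod{e}$, and has length $x/(de) + O(1)$.

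Next I would truncate the Möbius sum at level $D = x^{\ee/4}$ using the Fundamental Lemma of the Brun sieve, producing weights $\lambda^\pm(e)$ supported on $e\mid W$, $e\leq D$, with $|\lambda^\pm(e)|\leq 1$, satisfying $\sum_{e\mid n}\lambda^-(e)\leq 1_{(n,W)=1}\leq \sum_{e\mid n}\lambda^+(e)$, with sieve discrepancy $O((x/d)\cdot e^{-s})$, where $s = \log D/\log w$. Since $w\leq \exp(\sqrt{\log x})$ forces $\log w \leq \sqrt{\log x}$, we have $s\geq (\ee/4)\sqrt{\log x}$ and $e^{-s}$ is super-polynomial in $\log x$. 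Using $|\psi|\leq 1$, the nonnegative difference $\sum_{e\mid c+dm}\lambda^+(e) - 1_{(c+dm,W)=1}$ controls the error of replacing $1_{(n,W)=1}$ by the sieve majorant, and this error is absorbed into the $O_A(x(\log x)^{-A})$ term after summation over $d\leq x^{1/2-\ee}$.

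For the main sieve sum, I would split the $e$-range at $E = 1/(10\eta)$. For $e\leq E$, the AP $\mathcal{A}_e$ has length $\geq 10\eta\cdot x/d$, so total $\eta$-equidistribution of $\psi$ combined with $\int_{G/\Gamma}\varphi = 0$ gives $|\sum_{m\in \mathcal{A}_e}\psi(m)|\ll \eta\cdot x/(de)$. Crucially $\sum_{e\leq E}1/e\ll \log E = \log(1/\eta)$, \emph{independent of $w$}, so the contribution of this range is $\ll \eta\log(1/\eta)\cdot x/d\ll \eta^{1/2}\cdot x/d$. For $E < e\leq D$, equidistribution is unavailable, and I would use the trivial bound $|\sum\psi|\leq |\mathcal{A}_e|$ combined with a Rankin-type estimate on $\sum_{E<e\leq D,\,e\mid W}1/e$, exploiting the Brun sieve's restriction on $\omega(e)\ll s$, to obtain a contribution of $\ll \eta^{\kappa}\cdot x/d$ for some $\kappa = \kappa(\ee)>0$. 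Summing over $d\leq x^{1/2-\ee}$ via $\sum_d 1/d\ll \log x$ then yields the claimed bound.

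The main obstacle is the intermediate range $E<e\leq D$, where neither the equidistribution savings nor the Möbius cancellation is directly available. Balancing the sieve level $D$ against the range $w\leq\exp(\sqrt{\log x})$, the equidistribution threshold $E=1/(10\eta)$, and the combinatorial structure of the Brun sieve is the technical core; in particular, handling moderately small $\eta$ together with large $w$ requires fine parameter choices so that the $\eta^\kappa x\log x$ main term and the $O_A(x(\log x)^{-A})$ error term can coexist.
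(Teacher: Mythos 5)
Your reduction to a sieve-weighted sum and the treatment of the short divisors $e\leq E=1/(10\eta)$ via total $\eta$-equidistribution are both correct. But there is a genuine gap in the intermediate range $E<e\leq D$, and the gap cannot be closed by the strategy you describe.

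The problem is that once you pass to the trivial bound $|\sum_{m\in\mathcal{A}_e}\psi(m)|\leq|\mathcal{A}_e|\asymp x/(de)$ you have thrown away all sign cancellation in $e$, and you are left with bounding $\sum_{E<e\leq D,\,e\mid W}1/e$ by $\eta^\kappa$ (up to the $O_A(x(\log x)^{-A})$ slack, which only helps when $\eta^\kappa\leq(\log x)^{-A}$). In the regime that actually arises downstream --- the deduction of Theorem \ref{thm:1/4} feeds in $\eta=(\log x)^{-O_{A,s,\Delta,\varepsilon}(1)}$ and permits $w$ as large as $\exp(\sqrt{\log x})$ --- this sum is nowhere near $\eta^\kappa$. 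With $E$ polylogarithmic and $w=\exp(\sqrt{\log x})$, essentially every integer up to $E$ is already $w$-smooth, so $\sum_{e\leq D,\,e\mid W}1/e\asymp\prod_{p\leq w}(1+1/p)\asymp\log w\asymp\sqrt{\log x}$, while the portion from $e\leq E$ is only $\asymp\log E\asymp\log\log x$; the tail is of size $\asymp\sqrt{\log x}$, vastly larger than any power of $\eta$. A Rankin shift $\sigma$ cannot save this: with $E^{-\sigma}\prod_{p\leq w}(1+p^{\sigma-1})$ you need $\sigma\log E$ large, but taking $\sigma\gg1/\log w$ makes $p^{\sigma}$ bounded only up to $e^{O(1)}$ and the product already blows up to $(\log w)^{O(1)}$, and in this regime $\log E/\log w\to 0$. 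The Brun-sieve support condition $\omega(e)\ll s$ with $s=\log D/\log w\asymp\sqrt{\log x}$ is too weak to help either, since generic $e\leq D$ already have $\omega(e)\ll\log x/\log\log x$, and restricting by $\omega(e)$ does not change the order of magnitude of the $1/e$ sum.

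The paper's proof avoids this entirely by never taking absolute values in the divisor variable. It applies the linear-sieve upper bound to the nonnegative function $\psi+1$ and the lower bound to $1$, uses the fundamental lemma to cancel the two matching main terms, and is left with the single \emph{signed} sum $\sum_{\ell\leq D}\lambda_\ell^{+}1_{\ell\mid W}\sum_{\substack{n\leq x,\, n\equiv c(d)\\ \ell\mid n}}\psi((n-c)/d)$. This is then handed wholesale to the type~I estimate (Proposition \ref{typeI}), which bounds $\sum_d\max_c\sup_\psi|\sum_{m\leq D,\ n}a_m\psi(\cdot)|$ for an \emph{arbitrary} bounded coefficient sequence $a_m$ by exploiting the quantitative Leibman theorem and polynomial recurrence; the saving there comes from the equidistribution of the nilsequence across the full bilinear range, not from the magnitude of the coefficient sum. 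That is the idea you are missing: the intermediate divisors must be treated together with the inner sum as a bilinear (type~I) object, not separated and bounded trivially. If you insist on a more elementary route, you must retain the cancellation in $e$ --- for instance by using the already-proved Proposition \ref{typeI} exactly as the paper does after the sieve-weight manipulation with $\psi+1$ and $1$.
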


\begin{proposition}\label{prop:sieve2}
Let an integer $s\geq 1$, large real numbers $A, \Delta\geq 2$, and a small real number $\varepsilon\in (0,1/2)$ be given. There exists a constant $\kappa = \kappa(s,\Delta,\ee) > 0$, such that for any $x \geq 2$, $\eta>0$ and any nilsequence $\psi \in \Psi_s(\Delta, \eta^{-\kappa};\eta,x)$  we have
\[
\sum_{d\leq x^{1/2-\varepsilon}}\max_{(c,d)=1} \Big|\sum_{\substack{n \leq x \\ n \equiv c\Mod{d}\\ (n,W)=1}} \psi(n)\Big|\ll\eta^{\kappa} x \log x + O_A(x(\log x)^{-A}),
\]
where $W = \mathscr{P}(w)$ for some $w \leq \exp(\sqrt{\log x})$.
\end{proposition}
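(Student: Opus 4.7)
Plan. The strategy combines sieve theory with the quantitative equidistribution theory of polynomial sequences (Theorems~\ref{thm:quant-leibman} and~\ref{thm:factor}) to reduce the problem to bounding sums of $\psi$ along arithmetic progressions of moduli at most about $x^{1/2-\ee/2}$.

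First I would apply the Rosser--Iwaniec combinatorial sieve (or any other form of the fundamental lemma) to $1_{(n,W)=1}$ at level $E=\exp(A'\sqrt{\log x}\,\log\log x)$ for a suitable $A'=A'(A)$, producing upper-bound weights $\lambda^+_e$ supported on $e\mid W$, $e\leq E$, with $|\lambda^+_e|\leq 1$. With $s=\log E/\log w \geq A'\log\log x$ (using the hypothesis $w\leq \exp(\sqrt{\log x})$), the fundamental lemma shows that the sieve error contributes $O_A(x(\log x)^{-A})$ to the sum. Since for $(c,d)=1$ only values of $e$ coprime to $d$ contribute, it then suffices, by the triangle inequality, to bound
\[
\mathcal{M} \;:=\; \sum_{d\leq x^{1/2-\ee}}\sum_{\substack{e\leq E\\ e\mid W_d}}\max_{c}\Big|\sum_{\substack{n\leq x\\ n\equiv c^*\pmod{de}}}\psi(n)\Big|,
\]
where $W_d := \prod_{p\mid W,\, p\nmid d}p$ and $c^*=c^*(c,d,e)$ is determined by CRT.

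For each inner sum, I would analyze the shifted polynomial sequence $\wt g(m):=g(Dm+c^*)$ on $[1,x/D]$ with $D=de$. By the Quantitative Leibman Theorem~\ref{thm:quant-leibman}, if the inner sum exceeds $\eta^{\kappa}\,x/D$, then there is a non-trivial horizontal character $\chi^*$ on $G$ with $|\chi^*|\ll \eta^{-\kappa C}$ and $\|\chi^*\circ\wt g\|_{C^\infty(x/D)}\ll \eta^{-\kappa C}$, for some $C=O_{s,\Delta}(1)$. Expanding $\chi^*\circ g(n)=\sum_i\alpha_i n^i$ and arguing as in the proof of Theorem~\ref{thm:quant-leibman} (via the binomial identity and Lemma~\ref{lem:smoothness-norm}), one obtains a horizontal character $\chi':=(c^* D)^s s!\,\chi^*$ satisfying $|\chi'|\ll D^{2s}\eta^{-\kappa C}$ and $\|\chi'\circ g\|_{C^\infty(10x)}\ll D^{s}\eta^{-\kappa C}$. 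Choosing $\kappa=\kappa(s,\Delta,\ee)$ small enough, the converse Leibman bound (Lemma~\ref{lem:converse-leibman}) combined with the total $\eta$-equidistribution hypothesis for $g$ produces a contradiction whenever $D\leq \eta^{-c_0}$ for a constant $c_0=c_0(s,\Delta)>0$. In this range the inner sum is therefore $\ll \eta^{\kappa}\,x/D$, and the contribution of $D\leq \eta^{-c_0}$ to $\mathcal{M}$ is at most
\[
\eta^{\kappa}\,x\sum_{D\leq \eta^{-c_0}}\tau(D)/D \;\ll\; \eta^{\kappa}\,x(\log(1/\eta))^{2} \;\ll\; \eta^{\kappa/2}\,x,
\]
which is absorbed into $\eta^{\kappa'}\,x\log x$ for a suitable $\kappa'>0$.

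The main obstacle lies in the range $D>\eta^{-c_0}$ (with $D\leq E\,x^{1/2-\ee}$), where the direct Leibman argument above no longer forces equidistribution of $\wt g$. Here I would invoke the Factorization Theorem~\ref{thm:factor} applied to $g$ with a large parameter $A_0=A_0(A,s,\Delta)$, decomposing $g=\epsilon g'\gamma$ with $\gamma$ rational of period at most $M=\eta^{-\kappa_1}$, $\epsilon$ smooth of Lipschitz norm $O(M)$, and $g'$ totally $M^{-A_0}$-equidistributed on a sub-nilmanifold $G'/\Gamma'$. Splitting the sum by the residue of $n$ modulo the period of $\gamma$, approximating $\epsilon(n)$ by a constant on subintervals of length $\ll x/M$, and then reapplying the Leibman-type reduction to the sequence $g'$ with the much stronger rate $M^{-A_0}$ in place of $\eta$, the allowable range of $D$ extends to cover all $D\leq E\,x^{1/2-\ee}$, with the residual contribution folded into the $O_A(x(\log x)^{-A})$ error. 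The delicate point is to preserve uniformity in $d$ and $c$ through the factorization and sieve truncations so that everything assembles into the desired estimate.
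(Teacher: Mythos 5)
Your proof takes a genuinely different route after the initial sieve step, but it contains a serious gap that the paper's argument is specifically designed to avoid.

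The sieve reduction is fine in spirit (though be careful: since $\psi$ is not nonnegative you need the trick of applying the upper-bound sieve to $\psi+1$ and the lower-bound sieve to $1$, then subtracting, to justify replacing $1_{(n,W)=1}$ by $\sum_{e}\lambda^+_e 1_{e\mid n}$ inside the absolute value; the paper does exactly this). Where the argument goes wrong is in what follows. You aim to bound each term $|\sum_{n\leq x,\,n\equiv c^*\pmod{D}}\psi(n)|$ individually by $\eta^{\kappa}x/D$, for $D=de$, and you correctly observe that the direct Leibman argument only yields this for $D\leq\eta^{-c_0}$. But the claim $|\sum_{n\equiv c^*\pmod D}\psi(n)|\ll\eta^{\kappa}x/D$ is in fact \emph{false} for general $D$ in the remaining range: already for $\psi(n)=e(\alpha n)$ with $\alpha$ irrational but well-approximated by some $a/q$ with $q$ of size between $\eta^{-c_0}$ and $x^{1/2-\ee}$, the sum along $n\equiv c^*\pmod{q}$ has no cancellation at all. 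Total $\eta$-equidistribution of $(g(n))_{n\leq x}$ controls progressions of length $\geq\eta x$, which says nothing about a single progression of length $x/D\ll \eta x$. The true statement is only an estimate on average over $d$; this is precisely what the paper's type I estimate (Proposition~\ref{typeI2}) encodes, by showing that the set of bad pairs $(d,m)$ is sparse and exploiting that sparsity via pigeonholing on a shared horizontal character.

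The factorization-theorem sketch you offer for the range $D>\eta^{-c_0}$ does not close this gap. Theorem~\ref{thm:factor} improves the equidistribution parameter only to $M^{-A_0}$ with $M\in[M_0,M_0^{O_{A_0}(1)}]$, i.e.\ by a bounded power of $\eta^{-1}$. Since in the application $\eta$ may be only $(\log x)^{-O_A(1)}$, this can never reach the range $D\asymp x^{1/2-\ee}$, which is a genuine power of $x$. More structurally, no amount of single-scale factorization can prove a pointwise-in-$D$ estimate that is false; the averaging over the modulus is irreducible. This is why the paper, instead of analyzing each modulus on its own, sieves at level $x^{0.01}$ (comfortably within the $m$-range allowed by Proposition~\ref{typeI2}) and then immediately changes variables $n=\ell m$ to recognize the remaining expression as a type~I sum, to which the already-established Proposition~\ref{typeI2} applies \emph{with the $d$-average built in}. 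Replacing your second half with that reduction would complete the proof; as written, the argument does not.
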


The proofs of Theorems~\ref{equidist-1/4}--\ref{thm_divisor} and of Propositions~\ref{prop:sieve1} and~\ref{prop:sieve2} will be given in Section~\ref{sec:combine}, using the type I and type II estimates established in Sections~\ref{sec:typeI} and~\ref{sec:typeII}.

\subsection{Proof that Theorem \ref{equidist-1/4} implies Theorem \ref{thm:1/4}}\label{sec:deduction1}

We now show that these equidistributed cases for $\Lambda$ imply the general cases of our main theorems.  The proof of the analogous results for the M\"{o}bius function as mentioned in Remark~\ref{rem:mobius}  is completely similar, using Remark \ref{rem:mobius2} (and in fact easier due to not needing the $W$-trick). 

We start with the deduction of Theorem~\ref{thm:1/4} from Theorem~\ref{equidist-1/4}.
We may assume that $x$ is sufficiently large in terms of $A,s,\Delta,\ee$.
Let $1\leq Q\leq x^{1/4-\varepsilon}$. For $q \in [Q,2Q]$, let
\begin{align*}
f_q(n):=\Lambda(n)-\frac{qW}{\varphi(qW)}1_{(n,W)=1}.   
\end{align*}
Let $\mathcal{B}$ be the set of $q\in [Q,2Q]$ such that
\begin{align*}
\max_{(a,q)=1} \sup_{\psi\in \Psi_{s}(\Delta,\log x)} \Big|\sum_{\substack{n\leq x\\n\equiv a \Mod{q}}}f_q(n)\psi(n)\Big| \geq \frac{x}{q (\log x)^{2A}}. 
\end{align*}
We may assume that $|\mathcal{B}|\geq Q(\log x)^{-2A}$ (for some choice of $Q$), since otherwise the claim of Theorem \ref{thm:1/4} follows from the triangle inequality. For each $q \in \mathcal{B}$, we may pick some $(a,q)=1$ and some $\psi \in \Psi_s(\Delta, \log x)$ such that
\begin{align}\label{eq41}
\Big|\sum_{\substack{n\leq x\\n\equiv a \Mod q}}f_q(n)\psi((n-a)/q)\Big|\geq \frac{x}{q (\log x)^{2A}}   ,  
\end{align}
where we used the simple observation that if $\psi \in \Psi_s(\Delta,\log x)$ then the dilation $\psi'(n)=\psi(qn+a)$ is also in $\Psi_s(\Delta,\log x)$.

\begin{lemma}\label{lem:factor}
Let $\mathcal{B}$ and $f_q$ be defined as above.
For $q \in \mathcal{B}$ and any $B \geq 2$, we may find 
\begin{itemize}
	\item $M \in [(\log x)^{3A}, (\log x)^{O_{A,B,s,\Delta}(1)}]$;
    \item $y \in [x (\log x)^{-O_{A,B,s,\Delta}(1)}, x]$;
    \item $q' = tq$ for some positive integer $t \leq M$;
        \item a residue class $a'\Mod{q'}$ with $0 \leq a' < q'$ and $a'\equiv a\Mod{q}$,
\end{itemize}
such that at least one of the following two conclusions hold:
\begin{enumerate}
\item For some $\psi' \in \Psi_s(\Delta, M^{O_{s,\Delta}(1)}; M^{-B}, y/q')$ we have
\[  \Big|\sum_{\substack{n\leq y\\n\equiv a'\Mod {q'}}}f_q(n)\psi'((n-a')/q')\Big|\gg \frac{x}{q'} (\log x)^{-O_{A,B,s,\Delta}(1)}. \]
\item We have
\[ \Big|\sum_{\substack{n \leq y \\ n \equiv a'\Mod {q'}}} f_q(n) \Big| \gg \frac{x}{q'} (\log x)^{-O_{A,B,s,\Delta}(1)}. \]
\end{enumerate}
\end{lemma}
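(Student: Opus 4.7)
Setting $N := \lfloor (x-a)/q \rfloor \asymp x/q$ and writing $\psi(m) = \varphi(g(m)\Gamma)$, the hypothesis~\eqref{eq41} reads
\[ \Bigl|\sum_{m \le N} f_q(qm+a)\,\varphi(g(m)\Gamma)\Bigr| \ge \frac{x}{q(\log x)^{2A}}. \]
The plan is to apply the factorization theorem (Theorem~\ref{thm:factor}) to decompose $g$, then combine pigeonhole on the induced partition with a mean-versus-oscillation split of the Lipschitz function on $G/\Gamma$, producing either case~(1) or case~(2) of the lemma. Specifically, I would apply Theorem~\ref{thm:factor} at scale $10N$ with initial rationality $M_0 = (\log x)^{3A}$ and accuracy parameter $B' = B'(A,B,s,\Delta)$ to be chosen sufficiently large. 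This yields $M \in [(\log x)^{3A}, (\log x)^{O_{A,B,s,\Delta}(1)}]$, a sub-nilmanifold $G'/\Gamma' \subseteq G/\Gamma$ with dimension $\le \Delta$ and $M^{O(1)}$-rational Mal'cev basis, and a factorization $g = \epsilon g' \gamma$ in which $\epsilon$ is $(M,10N)$-smooth, $(g'(m)\Gamma')_{1 \le m \le 10N}$ is totally $M^{-B'}$-equidistributed, and $\gamma$ is $M$-rational with period $t \le M$.

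Next I would partition $[1,N]$ into blocks $P_{j,r} := \{m \equiv r \Mod{t} : (j-1)L < m \le jL\}$ with $L := \lceil NM^{-B-2}\rceil$, so that $\gamma(m)\equiv\gamma_r$ is constant and $\epsilon(m)$ varies by at most $LM/N \le M^{-B-1}$ in the Mal'cev metric on each block. The number of blocks is $\le M^{B+3}$, so pigeonhole selects a block $P_{j,r}$ on which the corresponding portion of the sum is at least $x/(q'(\log x)^{O_{A,B,s,\Delta}(1)})$, where $q' := tq$. Setting $a' := qr + a$ (so that $0 \le a' < q'$ and $a' \equiv a \Mod{q}$) and $\epsilon_0 := \epsilon((j-1)L)$, replacing $\epsilon(m)$ by $\epsilon_0$ costs a Lipschitz error $\ll M^{-B-1}(x/q)\log x$, absorbable into the lower bound. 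After the change of variables $n = q'm' + a'$, the block sum becomes $\sum_{n \in I_j \cap (a'+q'\Z)} f_q(n)\,F(g^{**}(m')\Gamma')$ with $I_j := ((j-1)qL+a, jqL+a]$, $F(x) := \varphi(\epsilon_0 x \gamma_r \Gamma)$, and $g^{**}(m') := g'(tm'+r)$. To match the form required by the lemma, I would express this block sum as the difference of two sums of the desired shape at $y = jqL+a$ and $y = (j-1)qL+a$, and replace $y$ by whichever endpoint gives the larger contribution; the resulting $y$ lies in $[x(\log x)^{-O(1)}, x]$.

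Finally I would write $F = (F - \bar F) + \bar F$ with $\bar F := \int F$. If $(F - \bar F)$ accounts for at least half of the lower bound, normalization by $\|F - \bar F\|_{\operatorname{Lip}} = O(M^{O(1)})$ produces a nilsequence $\psi' \in \Psi_s(\Delta, M^{O_{s,\Delta}(1)}; M^{-B}, y/q')$ witnessing conclusion~(1); the required total $M^{-B}$-equidistribution of $(g^{**}(m')\Gamma'')_{m' \le 10y/q'}$ then follows from the total $M^{-B'}$-equidistribution of $g'$ on $[1,10N]$, once $B'$ is chosen large enough in terms of $B$ and one accounts for the reparametrization $m \mapsto tm' + r$ and the conjugation by the $M$-rational element $\gamma_r$ (so that the lattice $\Gamma''$, defined as a finite-index subgroup of $\gamma_r^{-1}\Gamma'\gamma_r \cap G'$, is commensurable with $\Gamma'$ with $M^{O(1)}$-rational Mal'cev basis). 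Otherwise the constant part $\bar F \sum f_q(n)$ dominates, and since $|\bar F| \le \|\varphi\|_{\infty} \le 1$, conclusion~(2) follows. I expect the main obstacle to be precisely this last piece of bookkeeping: tracking how the reparametrization, the conjugation by $\gamma_r$, and the approximation $\epsilon \approx \epsilon_0$ interact with the rationality of the Mal'cev basis and the total-equidistribution scale, in order to certify that the resulting $\psi'$ genuinely sits in $\Psi_s(\Delta, M^{O_{s,\Delta}(1)}; M^{-B}, y/q')$. This is analogous to standard arguments in~\cite{green-tao-nilmanifolds} and should be importable with only minor modifications.
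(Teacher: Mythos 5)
Your proposal follows essentially the same route as the paper: apply the factorization theorem at $M_0 = (\log x)^{3A}$, partition the range into short blocks and residue classes mod the period $t$ of $\gamma$, pigeonhole to a block, freeze $\epsilon$ at a single point at the cost of a negligible Lipschitz error, recast the block as a nilsequence on a conjugated sub-nilmanifold, pass to a difference of endpoint sums, and finally split off the mean $\bar F = \int F$ so that the oscillating part gives conclusion~(1) and the constant part gives conclusion~(2). The paper uses coarser blocks of length $\asymp (x/q)/M^2$ (hence only $O(M^2 t)$ pieces, with per-block variation $\ll M^{-1} \le (\log x)^{-3A}$), while you choose finer blocks of length $L \asymp N M^{-B-2}$; both work.

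One small arithmetic slip: you state the cost of replacing $\epsilon(m)$ by $\epsilon_0$ as $\ll M^{-B-1}(x/q)\log x$ and claim it is absorbable, but with the pigeonholed block lower bound $\gg x/(q' (\log x)^{2A} M^{B+3})$ this claimed cost is actually \emph{larger}, not smaller (the ratio is $\asymp t M^2 (\log x)^{2A+1} \gg 1$). The correct cost is only over the selected block of $\asymp L/t$ terms, giving $\ll M^{-B-1}\cdot(L/t)\cdot\log x \asymp \frac{x\log x}{q t M^{2B+3}}$, which is indeed negligible compared to the block lower bound since $M \ge (\log x)^{3A}$. So the conclusion you want holds, but not via the bound you wrote. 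Finally, you define $F(g^{**}(m')\Gamma')$ via $F(x) := \varphi(\epsilon_0 x \gamma_r \Gamma)$ before making $F$ well-defined on a quotient — as you yourself note at the end, one should first conjugate $g'$ by $\gamma_r$ and pass to $\tilde G/\tilde\Gamma$ (as in the paper), which you correctly flag as the quantitative nil-linear algebra bookkeeping from \cite{green-tao-mobius}.
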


\begin{proof}
This will be deduced from the factorization theorem for polynomial sequences (Theorem~\ref{thm:factor}), in a similar way as in~\cite[Lemma 2.4]{shao}. Roughly speaking, the nilsequence $n \mapsto \psi((n-a)/q)$ becomes equidistributed after passing to a sub-progression of the form $\{n \in I \colon n \equiv a'\Mod{q'}\}$. On this sub-progression, we can write $\psi = \psi' + c_0$ for some equidistributed $\psi'$ as in conclusion (1), where $c_0$ is the average of $\psi$ on the sub-progression. 

We now turn to the details.
Write $\psi(n) = \varphi(g(n)\Gamma)$ for some Lipschitz function $\varphi$ on $G/\Gamma$ and some polynomial sequence $g$. Let $D$ be a constant sufficiently large in terms of $B, s,\Delta$. By Theorem~\ref{thm:factor} applied with $M_0 = (\log x)^{3A}$, we may find $M \in [(\log x)^{3A}, (\log x)^{O_{A,D,s,\Delta}(1)}]$, a sub-nilmanifold $G'/\Gamma'$ of $G/\Gamma$, a Mal'cev basis $\mathcal{X}'$ for $G'/\Gamma'$ in which each element is an $M$-rational combination of the elements of $\mathcal{X}$, and a decomposition $g = \epsilon g'\gamma$ into polynomial sequences $\epsilon, g', \gamma$ with the following properties:
\begin{enumerate}
\item $\epsilon$ is $(M, x/q)$-smooth;
\item $g'$ takes values in $G'$, and $\{g'(n)\}_{1 \leq n \leq x/q}$ is totally $M^{-D}$-equidistributed in $G'/\Gamma'$;
\item $\gamma$ is $M$-rational and is periodic with period $t \leq M$.
\end{enumerate}
After a change of variables $n = qm+a$, we may write~\eqref{eq41} as
\[ \Big| \sum_{m \leq x/q} f_q(qm+a) \varphi(g(m)\Gamma) \Big| \geq \frac{ x}{q} (\log x)^{-2A}.   \]
Dividing $[0, x/q]$ into $O(M^2)$ intervals of equal length and then further dividing them into residue classes modulo $t$, we may find an interval $I \subset [0, x]$ with $|I| \asymp x/M^2$ and some residue class $b\Mod t$, such that
\begin{equation}\label{eq:factor1} 
\Big| \sum_{\substack{ m \equiv b\Mod t\\ qm+a \in I}} f_q(qm+a) \varphi(\epsilon(m) g'(m) \gamma(m) \Gamma) \Big| \gg \frac{|I|}{qt}(\log x)^{-2A}.
\end{equation}
Pick any $m_0$ counted in the sum (i.e. $m_0 \equiv b\Mod t$ and $qm_0+a \in I$). Note that 
\begin{align*}  
& \Big| \sum_{\substack{ m \equiv b\Mod t\\ qm+a \in I}} f_q(qm+a) \left(\varphi(\epsilon(m) g'(m) \gamma(m) \Gamma) - \varphi(\epsilon(m_0) g'(m) \gamma(m) \Gamma)\right) \Big|  \\
\ll & (\log x) \sum_{\substack{ m \equiv b\Mod t\\ qm+a \in I}} \left|\varphi(\epsilon(m) g'(m) \gamma(m) \Gamma) - \varphi(\epsilon(m_0) g'(m) \gamma(m) \Gamma)\right| \\
\ll & (\log x) \sum_{\substack{ m \equiv b\Mod t\\ qm+a \in I}} d_{\mathcal{X}} (\epsilon(m)g'(m)\gamma(m), \epsilon(m_0)g'(m)\gamma(m)) \\
= & (\log x) \sum_{\substack{ m \equiv b\Mod t\\ qm+a \in I}} d_{\mathcal{X}}(\epsilon(m), \epsilon(m_0)),
\end{align*}
where we used the right invariance of $d_{\mathcal{X}}$. By the smoothness property of of $\epsilon$, each term $d_{\mathcal{X}}(\epsilon(m), \epsilon(m_0))$ is $\ll M^{-1} \ll (\log x)^{-3A}$. Hence the expression above is $\ll \tfrac{|I|}{qt} (\log x)^{-3A+1}$,
which is negligible compared with~\eqref{eq:factor1}, so we can replace $\epsilon(m)$ in~\eqref{eq:factor1} by $\epsilon(m_0)$. Moreover, we may clearly replace $\gamma(m)$ in~\eqref{eq:factor1} by $\gamma(m_0)$, since $\gamma$ has period $t$. Hence, after a change of variables $m = t\ell + b$,~\eqref{eq:factor1} becomes
\[ \Big| \sum_{\ell\colon qt\ell+qb+a \in I} f_q(qt\ell+qb+a) \varphi(\epsilon(m_0) g'(t\ell+b) \gamma(m_0)\Gamma) \Big| \gg \frac{|I|}{qt} (\log x)^{-2A}. \]
By writing the interval $I$ as a difference of two intervals $[1,y_2]\setminus [1,y_1]$, we have for either $y=y_1$ or $y=y_2$,
\[ \Big| \sum_{\ell\colon qt\ell+qb+a \leq y} f_q(qt\ell+qb+a) \varphi(\epsilon(m_0) g'(t\ell+b) \gamma(m_0)\Gamma) \Big|  \gg \frac{x}{qt}(\log x)^{-O_{A,B,s,\Delta}(1)}. \]
Note that for this to hold, we necessarily have $y \geq x(\log x)^{-O_{A,B,s,\Delta}(1)}$.
Now let $\wt{g}$ be the polynomial sequence defined by
\[ \wt{g}(\ell) = \gamma(m_0)^{-1} g'(t\ell +b) \gamma(m_0), \]
taking values in $\wt{G} = \gamma(m_0)^{-1}G'\gamma(m_0)$, and let $\varphi'$ be the Lipschitz function on $\wt{G}/\wt{\Gamma}$ (where $\wt{\Gamma} = \wt{G} \cap \Gamma$) defined by
\[ \varphi'(x) = \varphi(\epsilon(m_0) \gamma(m_0) x). \]
Then we have
\[ \Big| \sum_{\ell\colon qt\ell +qb+a \leq y} f_q(qt\ell + qb+a) \varphi'(\wt{g}(\ell)\wt{\Gamma}) \Big| \gg \frac{x}{qt} (\log x)^{-O_{A,B,s,\Delta}(1)}. \]
Setting $q' = qt$ and $a' = qb+a$, the inequality above can be rewritten as
\[ \Big| \sum_{\substack{n \leq y \\ n \equiv a'\Mod{q'}}} f_q(n) \varphi'(\wt{g}((n-a')/q')\wt{\Gamma}) \Big| \gg \frac{x}{q'} (\log x)^{-O_{A,B,s,\Delta}(1)}. \]
Let $c_0 = \int\varphi'$ and $\wt{\varphi} = \varphi' - c_0$, so that $\int\wt{\varphi} = 0$. Then either conclusion (2) holds, in which case we are done, or else we have
\[ \Big| \sum_{\substack{n \leq y \\ n \equiv a'\Mod{q'}}} f_q(n) \wt{\varphi}(\wt{g}((n-a')/q')\wt{\Gamma})  \Big| \gg \frac{x}{q'} (\log x)^{-O_{A,B,s,\Delta}(1)}. \]
Using some standard ``quantitative nil-linear algebra" arguments (see the claim at the end of~\cite[Section 2]{green-tao-mobius}), one can verify the following properties:
\begin{itemize}
\item $\wt{G}/\wt{\Gamma}$ is a nilmanifold of dimension at most $\Delta$, equipped with a filtration $\wt{\mathcal{X}}$ of degree at most $s$ and a $M^{O_{s,\Delta}(1)}$-rational Mal'cev basis;
\item $\|\wt{\varphi}\|_{\operatorname{Lip}(\wt{\mathcal{X}})} \leq M^{O_{s,\Delta}(1)} \leq (\log x)^{O_{A,D,s,\Delta}(1)}$;
\item $\wt{g}$ is a polynomial sequence on $\wt{G}$ adapted to $\wt{\mathcal{X}}$, and $\{\wt{g}(\ell)\}_{1 \leq \ell \leq y/q'}$ is totally $M^{-cD+O(1)}$-equidistributed in $\wt{G}/\wt{\Gamma}$ for some small constant $c = c(s,\Delta)>0$.
\end{itemize}
By taking $D$ large enough in terms of $B,s,\Delta$, we may ensure that $M^{-cD+O(1)} \leq M^{-B}$. Hence conclusion (1) follows by taking $\psi'(\ell) = \wt{\varphi}(\wt{g}(\ell)\wt{\Gamma}) / \|\wt{\varphi}\|_{\operatorname{Lip}(\wt{\mathcal{X}})}$.
\end{proof}

We now apply Lemma~\ref{lem:factor}, with $B$ a constant sufficiently large in terms of $A,s,\Delta,\ee$. 
Note that the quantities $M, y, t$ produced in Lemma~\ref{lem:factor} all depend on $q$. However, each of them can be chosen from at most $(\log x)^{O_{A,s,\Delta,\ee}}$ possibilities: For $t$ this is clear since $t \leq M$; For $y$, we may require in the proof that it is of the form $y = \lfloor kx/M^2\rfloor$ for some positive integer $k \leq M^2$; For $M$, we may require that it is of the form $M = (\log x)^k$ for a positive integer $k = O_{A,s,\Delta,\ee}(1)$. Hence, after restricting to a subset $\mathcal{B}'$ of $\mathcal{B}$ of size $|\mathcal{B}'| \gg Q(\log x)^{-O_{A,s,\Delta,\ee}(1)}$, we may find
\begin{itemize}
	\item $M \in [(\log x)^{3A}, (\log x)^{O_{A,s,\Delta,\ee}(1)}]$,
    \item $y \in [x(\log x)^{-O_{A,s,\Delta,\ee}(1)}, x]$;
    \item $q' = \ell q$ for some positive integer $\ell \leq M$,
  \end{itemize}
such that at least one of the following two conclusions hold:
\begin{itemize}
\item For each $q \in \mathcal{B}'$ we have
\begin{equation}\label{eq42} 
\max_{\substack{a'\Mod{q'} \\ (a',q)=1}} \sup_{\psi' \in \Psi_s(\Delta, M^{O_{s,\Delta}(1)}; M^{-B}; y/q')} \Big|\sum_{\substack{n\leq y\\n\equiv a'\Mod {q'}}}f_q(n)\psi'((n-a')/q')\Big|\gg \frac{x}{q'} (\log x)^{-O_{A,s,\Delta,\ee}(1)}. 
\end{equation}
\item For each $q \in \mathcal{B}'$ we have
\begin{equation}\label{eq43}
\max_{\substack{a'\Mod{q'} \\ (a',q)=1}}  \Big|\sum_{\substack{n\leq y\\n\equiv a'\Mod {q'}}}f_q(n)\Big|\gg \frac{x}{q'} (\log x)^{-O_{A,s,\Delta,\ee}(1)}. 
\end{equation}
\end{itemize}

{\textbf{First assume that~\eqref{eq43} holds.}} We will show that this is a contradiction to the classical Bombieri--Vinogradov inequality.
By the definition of $f_q(n)$, we have
\begin{equation}\label{eq:sumfq}
\sum_{\substack{n \leq y \\ n \equiv a'\Mod{q'}}} f_q(n) = \sum_{\substack{n \leq y \\ n \equiv a'\Mod{q'}}} \Lambda(n) - \frac{qW}{\varphi(qW)}\sum_{\substack{n \leq y \\ (n,W) = 1 \\ n \equiv a'\Mod{q'}}} 1.
\end{equation}

\subsubsection*{Case $(a',q')>1$:}
First consider the case when $(a',q') > 1$. Since $a' \equiv a\Mod{q}$ and $(a,q)=1$, we must have $(a',q') = (a',\ell) \leq \ell \leq (\log x)^{O_{A,s,\Delta,\ee}(1)}$. Hence we can ensure that $(a',q') \mid W^{\infty}$ by choosing $C$ in the definition of $W$ large enough in terms of $A,s,\Delta,\ee$, and thus the second sum on the right hand side of~\eqref{eq:sumfq} is empty, so~\eqref{eq:sumfq} is $O(\log x)$ in this case, contradicting~\eqref{eq43}.

\subsubsection*{Case $(a',q')=1$:} Now consider the case when $(a',q')=1$. Let $D$ be a constant sufficiently large in terms of $A,s,\Delta,\ee$. By the classical Bombieri--Vinogradov theorem, we have
\[ \sum_{\substack{n \leq y \\ n \equiv a'\Mod{q'}}} \Lambda(n) = \frac{y}{\varphi(q')} \left(1 + O( (\log x)^{-D}) \right) \]
for almost all $q \in \mathcal{B}'$. Let $\mathcal{B}''$ be the set of $q \in \mathcal{B}'$ satisfying the estimate above, so that $|\mathcal{B}''| \gg Q(\log x)^{-O_{A,s,\Delta,\ee}(1)}$. By the fundamental lemma of sieve theory~\cite[Lemma 6.3]{iwaniec-kowalski}, the second term on the right hand side of~\eqref{eq:sumfq} is
\[ \frac{qW}{\varphi(qW)}\sum_{\substack{n \leq y \\ (n,W) = 1 \\ n \equiv a'\Mod{q'}}} 1  =  \left(1 + O( (\log x)^{-D}) \right) \frac{qW}{\varphi(qW)} \cdot \frac{y}{q'} \prod_{p \mid W, p\nmid q'} \left(1-p^{-1}\right). \]
Since $qW/\varphi(qW) = q'W/\varphi(q'W)$ by the fact that $\ell\mid W^{\infty}$, the main term above can be computed to be
\[  \frac{qW}{\varphi(qW)} \cdot \frac{y}{q'} \prod_{p \mid W, p\nmid q'} \left(1-p^{-1}\right) =  \frac{q'W}{\varphi(q'W)} \cdot \frac{y}{q'} \cdot\frac{\varphi(q'W)}{W\varphi(q')} = \frac{y}{\varphi(q')}. \]
Combining the above, we see that~\eqref{eq:sumfq} is $\ll \tfrac{y}{\varphi(q')} (\log x)^{-D}$, contradicting~\eqref{eq43} once $D$ is large enough.

\vspace{0.5cm}

{\textbf{Now assume that~\eqref{eq42} holds.}} We will show that this is a contradiction to either Theorem~\ref{equidist-1/4} or Proposition~\ref{prop:sieve1}. By the definition of $f_q(n)$, either
\[
\max_{\substack{a'\Mod{q'} \\ (a',q)=1}} \sup_{\psi'\in \Psi_s(\Delta, M^{O_{s,\Delta}(1)};M^{-B},y/q')}\Big|\sum_{\substack{n\leq y\\n\equiv a'\Mod {q'}}}\Lambda(n)\psi'((n-a')/q')\Big|\gg \frac{x}{q'} (\log x)^{-O_{A,s,\Delta,\ee}(1)},
\]
or
\[
\max_{\substack{a'\Mod{q'} \\ (a',q)=1}} \sup_{\psi'\in \Psi_s(\Delta, M^{O_{s,\Delta}(1)};M^{-B},y/q')}\Big|\sum_{\substack{n\leq y\\n\equiv a'\Mod {q'} \\ (n,W)=1}} \psi'((n-a')/q')\Big|\gg \frac{x}{q'} (\log x)^{-O_{A,s,\Delta,\ee}(1)} 
\]
holds for $q \in \mathcal{B}'$. In both cases we must have $(a',q')=1$, since otherwise the sum over $y$ is $\ll \log x$. 
Since $y \geq x (\log x)^{-O_{A,s,\Delta,\ee}(1)}$, we have
\[ q' = \ell q \leq x^{1/4-\ee} (\log x)^{O_{A,s,\Delta,\ee}(1)} \leq y^{1/4-\ee/2}. \]
It follows that either
\begin{equation}\label{eq25a} 
\sum_{q' \leq y^{1/4-\ee/2}} \max_{(a',q')=1} \sup_{\psi'\in \Psi_s(\Delta, M^{O_{s,\Delta}(1)};M^{-B},y/q')}\Big|\sum_{\substack{n\leq y\\n\equiv a'\Mod {q'}}}\Lambda(n)\psi'\left(\frac{n-a'}{q'}\right)\Big| \gg x (\log x)^{-O_{A,s,\Delta,\ee}(1)}, \end{equation}
or
\begin{equation}\label{eq25b}
\sum_{q' \leq y^{1/4-\ee/2}} \max_{(a',q')=1} \sup_{\psi'\in \Psi_s(\Delta, M^{O_{s,\Delta}(1)};M^{-B},y/q')}\Big|\sum_{\substack{n\leq y\\n\equiv a'\Mod {q'}\\ (n,W)=1}}\psi'\left(\frac{n-a'}{q'}\right)\Big| \gg x (\log x)^{-O_{A,s,\Delta,\ee}(1)}. \end{equation}
On the other hand, Theorem~\ref{equidist-1/4} implies that the left-hand side of~\eqref{eq25a} is $\ll M^{-\kappa B} x(\log x)^2 \ll x (\log x)^{-\kappa B+2}$ for some $\kappa = \kappa(s,\Delta,\ee) > 0$. This is a contradiction to~\eqref{eq25a} once $B$ is large enough. Similarly, Proposition~\ref{prop:sieve1} implies that the left-hand side of~\eqref{eq25b} is $\ll M^{-\kappa B}x (\log x)^2 + O_D(x(\log x)^{-D})$ for any $D \geq 2$. This is a contradiction to~\eqref{eq25b} once $B$ and $D$ are large enough.

\subsection{Proof that Theorems~\ref{equidist-1/3} and~\ref{equidist-1/2} imply Theorems~\ref{thm:1/3} and~\ref{thm:1/2}}\label{sec:deduction2}

In the deductions of Theorems~\ref{thm:1/3} and~\ref{thm:1/2}, we have a fixed nilsequence $\psi$ and we will apply the factorization theorem to reduce to the case when it is equidistributed at scale $x$. In comparison, we had nilsequences $\psi_q$ varying with $q$ in Section~\ref{sec:deduction1} and we applied the factorization theorem to reduce to the case when each $\psi_q$ is equidistributed at scale $x/q$. Apart from this difference, the rest of the arguments are very much the same, so we will be brief with the arguments in this section.

We focus on the deduction of Theorem~\ref{thm:1/3} from Theorem~\ref{equidist-1/3}, since the deduction of Theorem~\ref{thm:1/2} from Theorem~\ref{equidist-1/2} is analogous.
We may assume that $x$ is sufficiently large in terms of $A,s,\Delta,\ee$.
Let $1\leq Q\leq x^{1/3-\varepsilon}$. For $q \in [Q,2Q]$, let $f_q$ be defined as before.
Let $\mathcal{B}$ be the set of $q\in [Q,2Q]$ such that
\begin{align*}
\max_{(a,q)=1} \Big|\sum_{\substack{n\leq x\\n\equiv a \Mod{q}}}f_q(n)\psi(n)\Big| \geq \frac{x}{q (\log x)^{2A}}. 
\end{align*}
We may assume that $|\mathcal{B}|\geq Q(\log x)^{-2A}$ (for some choice of $Q$), since otherwise the claim of Theorem~\ref{thm:1/3} follows from the triangle inequality.

\begin{lemma}\label{lem:factor2}
Let $\mathcal{B}$ and $f_q$ be defined as above.
For $q \in \mathcal{B}$ and any $B \geq 2$, we may find 
\begin{itemize}
	\item $M \in [(\log x)^{3A}, (\log x)^{O_{A,B,s,\Delta}(1)}]$;
    \item $y \in [x (\log x)^{-O_{A,B,s,\Delta}(1)}, x]$;
    \item $q' = tq$ for some positive integer $t \leq M$;
        \item a residue class $a'\Mod{q'}$ with $0 \leq a' < q'$ and $a'\equiv a\Mod{q}$,
\end{itemize}
such that at least one of the following two conclusions hold:
\begin{enumerate}
\item For some $\psi' \in \Psi_s(\Delta, M^{O_{s,\Delta}(1)}; M^{-B}, y)$ independent of $q$ we have
\[  \Big|\sum_{\substack{n\leq y\\n\equiv a'\Mod {q'}}}f_q(n)\psi'(n)\Big|\gg \frac{x}{q'} (\log x)^{-O_{A,B,s,\Delta}(1)}. \]
\item We have
\[ \Big|\sum_{\substack{n \leq y \\ n \equiv a'\Mod {q'}}} f_q(n) \Big| \gg \frac{x}{q'} (\log x)^{-O_{A,B,s,\Delta}(1)}. \]
\end{enumerate}
\end{lemma}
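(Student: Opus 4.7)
The plan is to follow the structure of Lemma~\ref{lem:factor}, with the key difference that Theorem~\ref{thm:factor} is applied to the fixed nilsequence $\psi$ just once, rather than separately to each member of a $q$-indexed family. The resulting auxiliary data must then be shared across many $q\in\mathcal{B}$, which I will arrange by an additional pigeonholing step; this is what ultimately makes the output $\psi'$ independent of $q$.

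Concretely, I would write $\psi(n)=\varphi(g(n)\Gamma)$ and apply Theorem~\ref{thm:factor} at scale $x$ with $M_0=(\log x)^{3A}$ and parameter $D=D(B,s,\Delta)$ sufficiently large. This yields $M\in[(\log x)^{3A},(\log x)^{O_{A,B,s,\Delta}(1)}]$, a sub-nilmanifold $G'/\Gamma'$ of $G/\Gamma$, and a decomposition $g=\epsilon g'\gamma$ such that $\epsilon$ is $(M,x)$-smooth, $(g'(n)\Gamma')_{1\leq n\leq x}$ is totally $M^{-D}$-equidistributed in $G'/\Gamma'$, and $\gamma$ is $M$-rational of period $t_0\leq M$.

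For each $q\in\mathcal{B}$ I would fix $a=a(q)$ coprime to $q$ attaining the defining bound, and partition $[0,x]$ into $M^2$ equal intervals $I_1,\ldots,I_{M^2}$ together with residue classes $b\Mod{t_0}$. On a slice $I_k\cap\{n\equiv b\Mod{t_0}\}$, the smoothness of $\epsilon$ permits replacing $\epsilon(n)$ by $\epsilon(n_0)$ at error $O(M^{-1})$ per summand (with $n_0$ the left endpoint of $I_k$), while $\gamma(n)=\gamma(b)$ exactly. Pigeonholing first over the $\leq M^3$ pairs $(I,b)$ for each $q$, and then over $q\in\mathcal{B}$ itself, locates a common pair $(I,b)$ and a subset $\mathcal{B}'\subseteq\mathcal{B}$ with $|\mathcal{B}'|\gg Q(\log x)^{-O_{A,B,s,\Delta}(1)}$ such that, for every $q\in\mathcal{B}'$,
\[ \Big|\sum_{\substack{n\in I,\ n\equiv a\Mod{q}\\ n\equiv b\Mod{t_0}}} f_q(n)\,\varphi\bigl(\epsilon(n_0)\,g'(n)\,\gamma(b)\,\Gamma\bigr)\Big|\gg \frac{x}{qt_0}(\log x)^{-O_{A,B,s,\Delta}(1)}. \]

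Combining the congruences $n\equiv a\Mod{q}$ and $n\equiv b\Mod{t_0}$ via CRT (discarding the $q$'s for which the pair is inconsistent) into a single class $a'\Mod{q'}$ with $q'=qt$ for some positive integer $t\leq t_0\leq M$, and decomposing $I=[1,y_2]\setminus[1,y_1]$ so that for one of $y\in\{y_1,y_2\}$ the tail sum over $n\leq y$ retains the mass, the estimate takes the shape
\[ \Big|\sum_{\substack{n\leq y\\ n\equiv a'\Mod{q'}}} f_q(n)\,\Phi(n)\Big|\gg \frac{x}{q'}(\log x)^{-O_{A,B,s,\Delta}(1)}, \]
with $y\in[x(\log x)^{-O_{A,B,s,\Delta}(1)},x]$ and the crucial feature that $\Phi(n)=\varphi\bigl(\epsilon(n_0)\,g'(n)\,\gamma(b)\,\Gamma\bigr)$ is independent of $q$. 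Splitting $\Phi=\Phi_0+c_0$ with $c_0=\int_{G/\Gamma}\Phi$, conclusion (2) follows immediately if $c_0$ carries the dominant contribution; otherwise the quantitative nil-linear algebra arguments at the end of \cite[Section~2]{green-tao-mobius} repackage $\Phi_0/\|\Phi_0\|_{\operatorname{Lip}}$ as a nilsequence $\psi'\in\Psi_s(\Delta,M^{O_{s,\Delta}(1)};M^{-B},y)$, giving conclusion (1). The main obstacle I anticipate is verifying that $\psi'$ is genuinely $q$-independent; this is precisely what the pigeonhole step over $(I,b)$ buys us, since once $I$ and $b$ are fixed, none of the data $G',\Gamma',g',\varphi,\epsilon(n_0),\gamma(b)$ defining $\psi'$ depends on $q$.
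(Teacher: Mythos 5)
Your proposal follows essentially the same route the paper intends: the paper omits this proof, stating only that the deduction from Theorem~\ref{thm:factor} is analogous to that of Lemma~\ref{lem:factor}, and your argument carries out exactly that analogy, with the single genuine adaptation — applying the factorization theorem once, at scale $x$, to the fixed $\psi$ — handled correctly. You then arrange for $\psi'$ to be $q$-independent by an extra pigeonholing over the slice data $(I,b)$ and over $q\in\mathcal{B}$; this is a sensible choice, though it yields the conclusion only for $q$ in a subset $\mathcal{B}'\subseteq\mathcal{B}$, which is a mild restatement of the lemma. (The paper instead defers that pigeonholing to the deduction following the lemma, where $M,y,\ell$ are restricted to $(\log x)^{O(1)}$ possibilities; since $\psi'$ depends on $q$ only through the slice, and there are $O(M^{O(1)})$ slices, the two organizations are interchangeable for the application.) Two small slips to tidy: $c_0$ should be the integral of the associated Lipschitz function over the conjugated sub-nilmanifold $\widetilde{G}/\widetilde{\Gamma}$, not ``$\int_{G/\Gamma}\Phi$'' as written (since $\Phi$ is a function on $\mathbb{Z}$, that integral is not meaningful); and discarding $q$'s with the pair $(a\Mod q,\,b\Mod{t_0})$ inconsistent is unnecessary, since for such $q$ the slice sum is empty and the lower bound you derived for it could not have held.
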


We omit the proof of Lemma~\ref{lem:factor2}, since its deduction from the  factorization theorem for polynomial sequences (Theorem~\ref{thm:factor}) is completely analogous to that of Lemma~\ref{lem:factor}.

We now apply Lemma~\ref{lem:factor2}, with $B$ a constant sufficiently large in terms of $A,s,\Delta,\ee$. 
After restricting to a subset $\mathcal{B}'$ of $\mathcal{B}$ of size $|\mathcal{B}'| \gg Q(\log x)^{-O_{A,s,\Delta,\ee}(1)}$, we may find
\begin{itemize}
	\item $M \in [(\log x)^{3A}, (\log x)^{O_{A,s,\Delta,\ee}(1)}]$,
    \item $y \in [x(\log x)^{-O_{A,s,\Delta,\ee}(1)}, x]$;
    \item $q' = \ell q$ for some positive integer $\ell \leq M$,
  \end{itemize}
(with all of $M,y,\ell$ independent of $q$), such that at least one of the following two conclusions hold:
\begin{itemize}
\item For each $q \in \mathcal{B}'$ we have
\begin{equation}\label{eq42'} 
\max_{\substack{a'\Mod{q'} \\ (a',q)=1}}  \Big|\sum_{\substack{n\leq y\\n\equiv a'\Mod {q'}}}f_q(n)\psi'(n)\Big|\gg \frac{x}{q'} (\log x)^{-O_{A,s,\Delta,\ee}(1)}. 
\end{equation}
\item For each $q \in \mathcal{B}'$ we have
\begin{equation}\label{eq43'}
\max_{\substack{a'\Mod{q'} \\ (a',q)=1}}  \Big|\sum_{\substack{n\leq y\\n\equiv a'\Mod {q'}}}f_q(n)\Big|\gg \frac{x}{q'} (\log x)^{-O_{A,s,\Delta,\ee}(1)}. 
\end{equation}
\end{itemize}

\eqref{eq43'} is analyzed in the same way as~\eqref{eq43}; it leads to a contradiction to the classical Bombieri--Vinogradov inequality.  \eqref{eq42'} is analyzed in the same way as~\eqref{eq42}; it leads to a contradiction to either Theorem~\ref{equidist-1/3} or Proposition~\ref{prop:sieve2}.

\section{Type I estimates}\label{sec:typeI}

Our remaining task for proving the main theorems is proving the results for equidistributed nilsequences from Section \ref{sec:equidistribution}. By applying Vaughan's identity, our sum of interest
\[ \sum_{\substack{n \leq x \\ n \equiv c\pmod d}} \Lambda(n) \psi((n-c)/d) \]
can be decomposed in terms of type I sums of the form
\[ \sum_{\substack{mn\leq x \\ M\leq m \leq 2M \\ mn \equiv c\Mod{d}}} a_m  \psi((mn-c)/d) \ \ \text{or} \ \ \sum_{\substack{mn\leq x \\ M\leq m \leq 2M \\ mn \equiv c\Mod{d}}} a_m (\log n) \psi((mn-c)/d) \]
for some $M \leq x^{1/3}$, and type II sums of the form
\[ \sum_{\substack{mn\leq x \\ M\leq m \leq 2M \\ mn \equiv c\Mod{d}}} a_m  b_n \psi((mn-c)/d) \]
for some $x^{1/3} \leq M \leq x^{2/3}$.
We begin by analyzing the type I sums.  In our argument we will repeatedly use the following recurrence result, which is a standard consequence of Weyl's inequality (see~\cite[Lemma 4.5]{green-tao-nilmanifolds}).

\begin{lemma}\label{lem:recur}
Let $0 < \delta < 1/2$ and $0 < \ee < \delta/4$. Let $g(n)$ be a polynomial of degree $s$. Let $N \geq \delta^{-C}$ for some sufficiently large constant $C = C(s)$, and suppose that for at least $\delta N$ values of the integers $1 \leq n \leq N$ we have $\|g(n)\|_{\R/\Z} \leq \ee$. Then there exists a positive integer $q \ll \delta^{-O_s(1)}$ such that
\[ \|q\alpha_i\|_{\R/\Z} \ll \ee \delta^{-O_s(1)} N^{-i} \]
for each $0 \leq i \leq s$, where $\alpha_i$ is the coefficient of $n^i$ in $g(n)$.
\end{lemma}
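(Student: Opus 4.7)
The plan is to prove this by induction on the degree $s$, using Weyl differencing.

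The base case $s = 1$ is a sharp Dirichlet-type fact: if $\|\alpha_0 + \alpha_1 n\|_{\R/\Z} \leq \ee$ on $\delta N$ values of $n \in [N]$ with $\ee < \delta/4$, then there is a $q \ll \delta^{-O(1)}$ with $\|q\alpha_1\|_{\R/\Z} \ll \ee\delta^{-O(1)}/N$. The key observation is that the difference set $S - S$ still has size $\gg \delta N$ in $[-N, N]$ and satisfies $\|\alpha_1 m\|_{\R/\Z} \leq 2\ee$ on every element; since $\ee < \delta/4$ beats the Weyl-equidistribution heuristic of $\approx 2\ee N$ small values, $\alpha_1$ must be close to a rational of small denominator in the strong quantitative sense claimed (this is essentially the $s=1$ case of~\cite[Lemma~4.5]{green-tao-nilmanifolds}). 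The bound for $\alpha_0$ then follows by plugging a small element of $S$ (which exists by density) into the original inequality.

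For the inductive step I would apply Weyl differencing. With $S := \{n \in [N] : \|g(n)\|_{\R/\Z} \leq \ee\}$, a double-counting argument produces $\gg \delta^2 N$ values of $h \in [-N, N]$ for which the shifted set $\{n : n, n+h \in S\}$ has density $\gg \delta^2$ in $[N]$. For each such $h$, the discrete derivative $g_h(n) := g(n+h) - g(n)$ is a polynomial of degree $s-1$ in $n$ with leading coefficient exactly $s \alpha_s h$, and satisfies $\|g_h(n)\|_{\R/\Z} \leq 2\ee$ on the dense set. Applying the inductive hypothesis to $g_h$ and pigeonholing over the at most $\delta^{-O_s(1)}$ possible output moduli $q_h$, I extract a fixed $q^* \ll \delta^{-O_s(1)}$ valid for $\gg \delta^{O_s(1)} N$ values of $h$, with $\|q^* s \alpha_s h\|_{\R/\Z} \ll \ee \delta^{-O_s(1)} N^{-(s-1)}$. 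Applying the base case to this linear sequence in $h$ then upgrades the bound to a single $Q \ll \delta^{-O_s(1)}$ satisfying $\|Q \alpha_s\|_{\R/\Z} \ll \ee \delta^{-O_s(1)} N^{-s}$, which is the required estimate for the top coefficient.

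Having pinned down $\alpha_s$, I would reduce to a polynomial of degree $s - 1$ by subtracting off its rational approximation. Writing $Q \alpha_s = p + \theta$ with $|\theta| \ll \ee \delta^{-O_s(1)} N^{-s}$, on each residue class $n \equiv a \pmod Q$ the rational contribution $(p/Q) n^s \bmod 1$ is a constant $c_a$, while $(\theta/Q) n^s$ is uniformly $\ll \ee \delta^{-O_s(1)}/Q$ on $[1, N]$. Restricting $S$ to a residue class of density $\gg \delta/Q \gg \delta^{O_s(1)}$ and absorbing the constant $c_a$ into $\alpha_0$ reduces the problem to a polynomial of degree $s-1$ on an arithmetic progression of length $\asymp N/Q$, where the inductive hypothesis applies to control $\alpha_{s-1}, \ldots, \alpha_0$. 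The moduli accumulated across the induction combine multiplicatively into a single $q \ll \delta^{-O_s(1)}$.

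The main technical obstacle---and the step I would most carefully track---is keeping the $\ee$-dependence of the final bound \emph{linear}: a naive alternative that first converts the ``dense set of small values'' hypothesis into an exponential-sum lower bound via a Fej\'er kernel of width $\asymp \ee$ and then invokes a standard Weyl inequality for $\sum_n e(g(n))$ would insert an unwanted factor of $1/\ee$ into the resulting modulus, yielding a bound of quality $(\ee\delta)^{-O_s(1)}$ rather than $\ee \delta^{-O_s(1)}$. The inductive differencing scheme above threads the factor of $\ee$ cleanly through each differencing step and each application of the base case, which is essential for the downstream use of this lemma in the Type I and Type II estimates.
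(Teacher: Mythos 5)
The paper does not actually prove this lemma; it states it and cites \cite[Lemma 4.5]{green-tao-nilmanifolds}, with a short remark explaining how to extend the conclusion to $i=0$ and why the hypothesis $N\geq\delta^{-C}$ is needed. Your inductive Weyl-differencing scheme is the standard route to such recurrence statements (and presumably close to what Green--Tao do internally), and your emphasis on keeping the $\ee$-dependence linear is exactly the right concern --- a Fourier/Fej\'er argument would indeed leak a $\delta^{-O(1)}/\ee$ into the modulus, whereas differencing only doubles $\ee$ at each step. Your handling of the $i=0$ coefficient also matches the paper's remark.

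However, there is a genuine gap in the induction as you have written it: the hypothesis $\ee < \delta/4$ does not propagate. After one differencing step you pass from $(\delta,\ee)$ to roughly $(\delta^2, 2\ee)$, so to invoke the inductive hypothesis on $g_h$ you need $2\ee < \delta^2/4$, which is not implied by $\ee<\delta/4$; the same issue recurs in your ``subtract the rational approximation of $\alpha_s$'' reduction, where the effective $\ee$ picks up a further $\delta^{-O_s(1)}$ factor. To close the induction you need the statement in a dichotomy form (``either $\ee \gg \delta^{c_s}$, or there exists $q\ll\delta^{-O_s(1)}$ with $\|q\alpha_i\|\ll\ee\delta^{-O_s(1)}N^{-i}$'') and then observe that under the ambient hypothesis $N\geq\delta^{-C(s)}$ for $C(s)$ large, the regime $\ee\gg\delta^{c_s}$ combined with $\ee<\delta/4$ is incompatible with having $\gg\delta N$ small values unless the conclusion holds anyway --- this is exactly the role of the $N\geq\delta^{-C}$ hypothesis that the paper flags as ``necessary but not explicit in Green--Tao.'' Your base case is also cutting it close: $|S-S|\geq 2\delta N-1$ against the heuristic count $\approx 8\ee N$ gives a surplus only when $\ee<\delta/4$, which is the borderline; turning the resulting surplus into a denominator bound $q\ll\delta^{-O(1)}$ requires a quantitative three-distance/continued-fraction argument that you gesture at but do not supply, and the constants there are delicate enough that the dichotomy formulation is again the safer invariant to carry.
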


Note that the assumption $N \geq \delta^{-C}$ is necessary, even though this is not explicitly mentioned in~\cite[Lemma 4.5]{green-tao-nilmanifolds}. However, when $g(n)$ is a monomial $g(n) = \alpha_s n^s$, the conclusion of Lemma~\ref{lem:recur} holds for any $N \geq 1$. Note also that~\cite[Lemma 4.5]{green-tao-nilmanifolds} only gives the conclusion for $1 \leq i \leq s$, but the bounds for $\|q\alpha_i\|$ combined with our assumption that $\|g(n)\| \leq \ee$ for some $1 \leq n \leq N$ gives the conclusion for $i=0$.

Throughout the rest of the paper, we will use the notation
\begin{align}\label{eq49}
\|a\|_p:=\Big(\frac{1}{M}\sum_{m}|a_m|^p\Big)^{1/p},    
\end{align}
where $M\geq 1$ is the smallest element of the support of $\{a_m\}$. The following type I estimate suffices for Theorems~\ref{equidist-1/4}.

\begin{proposition}[type I estimate with supremum]\label{typeI}
Let $x \geq 2$ and $\varepsilon>0$. Let $1\leq M \leq x^{1/2}$ and $1\leq D\leq x^{1/2-\varepsilon}$.  Let $s \geq 1$, $\Delta \geq 2$, and $0 < \delta < 1/2$. Then  there exists a large constant $C = C(s,\Delta,\varepsilon)$ such that 
\[ \sum_{D\leq d\leq 2D} \max_{(c,d)=1} \sup_{\psi \in \Psi_s(\Delta, \delta^{-1}; \delta^C, x/D)} \Big| \sum_{\substack{mn\leq x \\ M\leq m \leq 2M \\ mn \equiv c\Mod{d}}} a_m  \psi((mn-c)/d) \Big| \ll \delta x \|a\|_2 \]
for any sequence $\{a_m\}$.
\end{proposition}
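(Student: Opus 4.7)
The plan, for each fixed $d \in [D, 2D]$, is to pick the maximizing data $c = c_d$ and $\psi_d \in \Psi_s(\Delta, \delta^{-1}; \delta^C, x/D)$ with $\psi_d(n) = \varphi_d(g_d(n)\Gamma_d)$, to show the resulting inner sum $|S_d|$ is $\ll \delta (x/D) \|a\|_2$, and then to sum trivially over $d$. For each $m \in [M, 2M]$ with $(m,d) = 1$, parametrize the residue class $n \equiv m^{-1}c \pmod d$ by $n = dt + n_0$, so that $(mn - c)/d = mt + b_{m,d,c}$ for an integer $b_{m,d,c}$ with $|b_{m,d,c}| \leq M$. This yields
\[ S_d = \sum_{\substack{M \leq m \leq 2M \\ (m,d)=1}} a_m\, T_{m,d}, \qquad T_{m,d} := \sum_{t=1}^{T_m} \psi_d(mt + b_{m,d,c}), \qquad T_m \asymp x/(mD), \]
with $\{mt + b_{m,d,c} : 1 \leq t \leq T_m\}$ an arithmetic progression inside $[1, 10x/D]$ of common difference $m$ and length $T_m$.

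When $M \leq (20\delta^C)^{-1}$, every such progression is long enough (length $\gtrsim \delta^C (x/D)$) for the total $\delta^C$-equidistribution of $\psi_d$ from Definition~\ref{def:equi} to apply directly---noting also $\|\varphi_d\|_{\operatorname{Lip}} \leq 1$ and $\int\varphi_d = 0$---giving $|T_{m,d}| \ll \delta^C T_m$ for every $m$, and consequently $\sum_d |S_d| \ll \delta^C x \|a\|_2$, which suffices provided $C \geq 1$. In the remaining case $M > (20\delta^C)^{-1}$, set $\eta = \delta^{\kappa_0}$ for a small $\kappa_0 > 0$ to be chosen, and call $m$ \emph{bad} for $d$ if $|T_{m,d}| > \eta T_m$. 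The goal is to show that the density of bad $m$ is $\ll \eta^{\kappa_1}$ for some $\kappa_1 = \kappa_1(s,\Delta) > 0$, whereupon splitting $|S_d|$ into good and bad contributions and using Cauchy--Schwarz on the bad side gives $|S_d| \ll (\eta + \eta^{\kappa_1/2})(x/D)\|a\|_2$, sufficient once $\kappa_0$ is chosen so that $\eta^{\min(1,\kappa_1/2)} \leq \delta$.

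To establish the density bound, for each bad $m$ apply Theorem~\ref{thm:quant-leibman} to the polynomial sequence $g'_m(t) = g_d(mt + b_{m,d,c})$ at scale $T_m$: it produces a nontrivial horizontal character $\chi_m$ on $G_d/\Gamma_d$ with $|\chi_m| \ll \eta^{-O_{s,\Delta}(1)}$ and $\|\chi_m \circ g'_m\|_{C^\infty(T_m)} \ll \eta^{-O_{s,\Delta}(1)}$. Pigeonhole over the $\ll \eta^{-O_{s,\Delta}(1)}$ possible characters furnishes a positive-density subset of bad $m$ sharing a common character $\chi = \chi_d$. Writing $\chi \circ g_d(u) = \sum_{i=0}^s \beta_i u^i$, the coefficient of $t^j$ in $\chi \circ g'_m(t)$ equals $m^j \gamma_j(b_{m,d,c})$ where $\gamma_j(b) = \sum_{i \geq j} \binom{i}{j} \beta_i b^{i-j}$, and Lemma~\ref{lem:smoothness-norm} converts the smoothness bound into
\[ \|q m^j \gamma_j(b_{m,d,c})\|_{\R/\Z} \ll \eta^{-O(1)} T_m^{-j}, \qquad 1 \leq j \leq s, \]
for a bounded $q = O_s(1)$ and for a positive-density set of $m \in [M, 2M]$. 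At $j = s$, where $\gamma_s = \beta_s$ is constant in $b$, Lemma~\ref{lem:recur} applies in the monomial case for any $M \geq 1$ and yields a bounded $q_s$ with $\|q_s q \beta_s\|_{\R/\Z} \ll \eta^{-O(1)}(D/x)^s$. A downward induction on $j$ follows: having bounded $\beta_s, \ldots, \beta_{j+1}$, subtract their contributions from $\gamma_j(b_{m,d,c})$ using $|b_{m,d,c}| \leq M$ and $MD \leq x^{1-\varepsilon}$ to keep the resulting errors well within the required $\R/\Z$-tolerance, then reapply Lemma~\ref{lem:recur} (with $M \geq \delta^{-C}/20$ in the present regime ensuring its non-monomial hypothesis is met). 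This produces a nontrivial horizontal character $\widetilde\chi = Q \chi$ with $|\widetilde\chi| \ll \eta^{-O(1)}$ and $\|\widetilde\chi \circ g_d\|_{C^\infty(x/D)} \ll \eta^{-O(1)}$, contradicting the total $\delta^C$-equidistribution of $\psi_d$ via Lemma~\ref{lem:converse-leibman} provided $C$ is chosen large enough in terms of $s, \Delta, \kappa_0$. The bad-$m$ density bound follows.

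The main obstacle will be the downward induction in the last paragraph: the quantity $b_{m,d,c}$ depends on $m$ nonlinearly and discontinuously through $m^{-1}c \bmod d$, so $\gamma_j(b_{m,d,c})$ is not a clean polynomial in $m$. One must carefully exploit the pointwise bound $|b_{m,d,c}| \leq M$---rather than averaging over $m$---to peel off the already-controlled higher coefficients of $\chi \circ g_d$ at each inductive step, with the constraint $MD \leq x^{1-\varepsilon}$ ensuring that the accumulated errors remain dominated by the $\R/\Z$-tolerance $\eta^{-O(1)} T_m^{-j}$.
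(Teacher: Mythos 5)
Your argument is correct and mirrors the paper's proof: parametrize the residue class so that the argument of $\psi_d$ becomes $mt+b_{m,d,c}$ with $|b_{m,d,c}|\ll M$, apply the quantitative Leibman theorem (Theorem~\ref{thm:quant-leibman}) at scale $\asymp x/(MD)$, pigeonhole over the $\ll\eta^{-O(1)}$ candidate horizontal characters, and then run a downward induction on the degree $j$, using the binomial relation between the coefficients of $\chi\circ g_d(mt+b)$ and those of $\chi\circ g_d$ together with the pointwise bound $|b_{m,d,c}|\ll M$ (rather than any arithmetic structure in $b$) to peel off the higher-order terms; the only organizational difference is that you bound $|S_d|$ for each $d$ separately rather than pigeonholing jointly over $(d,m)$, which is equally valid. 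One remark: your case split on $M$ is unnecessary, because at every inductive step---after subtracting the already-controlled contributions from $\beta_{j+1},\dots,\beta_s$ via the triangle inequality---the quantity to which Lemma~\ref{lem:recur} is applied is the pure monomial $m\mapsto Q\beta_j m^j$ (not a general polynomial in $m$), and as noted right after Lemma~\ref{lem:recur} its conclusion for monomials holds for all $M\geq 1$, which is precisely how the paper avoids any restriction on $M$.
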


By applying the triangle inequality and Cauchy--Schwarz, Proposition \ref{typeI} reduces to the following slightly more general bound (replacing $\delta$ by $\delta^2$ there). Note that, in~\eqref{eq53} below, the maximum is taken over all residue classes $c\pmod{d}$, not just those coprime with $d$.

\begin{proposition}
Let $x \geq 2$ and $\varepsilon>0$. Let $1\leq M \leq x^{1/2}$ and $1\leq D\leq x^{1/2-\varepsilon}$.  Let $s \geq 1$, $\Delta \geq 2$, and $0 < \delta < 1/2$. Then  there exists a large constant $C = C(s,\Delta,\varepsilon)$ such that 
\begin{align}\label{eq53}
\sum_{D\leq d\leq 2D} \max_{c} \sup_{\psi \in \Psi_s(\Delta, \delta^{-1}; \delta^C, x/D)} \sum_{\substack{M\leq m\leq 2M\\(m,d)=1}} \Big| \sum_{\substack{n\leq x/m \\ n \equiv cm^{-1}\Mod{d}}} \psi((mn-c)/d) \Big| \ll \delta x.
\end{align}
\end{proposition}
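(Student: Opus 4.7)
The plan is to pigeonhole the maximum and supremum: for each $d\sim D$, pick $c_d$ and $\psi_d=\varphi_d(g_d(\cdot)\Gamma_d)$ realizing (up to a factor of two) the inner maximum and supremum in~\eqref{eq53}. This reduces the left-hand side to $\sum_{d\sim D}\sum_{m\sim M,\,(m,d)=1}|T(d,m)|$, where $T(d,m)$ denotes the inner sum for these fixed choices. I would parametrize the inner sum by writing $n=n_0+kd$ with $n_0\in[0,d)$ and $n_0\equiv c_dm^{-1}\Mod{d}$, so that
\[ T(d,m)=\sum_{k=0}^{L-1}\varphi_d(g_d(j_0+mk)\Gamma_d), \]
with $j_0=(mn_0-c_d)/d\in[0,m)$ and $L=L(d,m)\asymp x/(mD)$. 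Since $\sum_{d,m}L\ll x$, the proposition reduces to proving $|T(d,m)|\ll\delta L$ uniformly.

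For each $(d,m)$, I would apply the quantitative Leibman theorem (Theorem~\ref{thm:quant-leibman}) to the polynomial sequence $k\mapsto g_d(j_0+mk)$ on $[1,L]$. Either the sequence is totally $\delta$-equidistributed (giving the target bound on $|T(d,m)|$ directly), or there is a nontrivial horizontal character $\eta$ on $G_d$ with $|\eta|\ll\delta^{-O_{s,\Delta}(1)}$ and $\|\eta\circ g_d(j_0+m\cdot)\|_{C^\infty(L)}\ll\delta^{-O_{s,\Delta}(1)}$. I would then mimic the induction in the proof of Theorem~\ref{thm:quant-leibman}---expanding $(j_0+mk)^i$ in powers of $k$ and iterating Lemma~\ref{lem:smoothness-norm}---to produce an integer $Q=O_s(1)$ and transport the obstruction to the horizontal character $\eta'=Q(j_0m)^s s!\,\eta$ on $G_d$, satisfying
\[ |\eta'|\ll(j_0m)^s\delta^{-O(1)}\ll M^{2s}\delta^{-O(1)},\qquad \|\eta'\circ g_d\|_{C^\infty(10x/D)}\ll\delta^{-O(1)}. \]
Combining this with the converse Leibman lemma (Lemma~\ref{lem:converse-leibman}) and the hypothesis that $\psi_d$ is $\delta^C$-equidistributed at scale $10x/D$, the existence of such $\eta'$ is ruled out provided $C=C(s,\Delta,\ee)$ is chosen large enough to absorb the $M^{2s}$ factor against $\delta^{-C/2}$.

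The main obstacle is precisely this $M^{2s}$ loss in the modulus of $\eta'$ arising from the shift $j_0+m\cdot$. Since $M$ can be as large as $x^{1/2}$, this amplification can reach $x^s$, while the equidistribution savings are only $\delta^{-C/2}$. The saving grace is the scale margin $D\leq x^{1/2-\ee}$, so $DM\leq x^{1-\ee}$: by choosing $C$ growing suitably in terms of $1/\ee$, the contradiction succeeds for $\delta$ below a threshold $\delta_0=\delta_0(x,\ee,s,C)$ of the form $x^{-c\ee}$. For $\delta>\delta_0$, one runs the argument with the reduced target $\tilde\delta=\delta_0\le\delta$ (at which the contradiction still triggers), concluding $|T(d,m)|\ll\tilde\delta L\ll\delta L$ since $\tilde\delta\le\delta$. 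Patching these two regimes together uniformly in $\delta\in(0,1/2)$ is the subtle step, but once done the bound $|T(d,m)|\ll\delta L$ holds across all regimes, and summing yields the claim.
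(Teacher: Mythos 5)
The reduction to the uniform bound $|T(d,m)|\ll\delta L$ is the fatal flaw: this bound is simply false in the allowed parameter ranges, so the rest of the argument cannot be salvaged by any choice of $C$ or by patching across $\delta$-regimes. The point is that for a fixed $d$, the restriction of $g_d$ to the progression $\{j_0+mk : k\leq L\}$ with step $m\asymp M$ can fail badly to be equidistributed even though $g_d$ is totally $\delta^C$-equidistributed on $[1,x/D]$. A concrete counterexample: take $s=1$, $\psi(n)=e(\alpha n)$ with $\alpha$ the golden ratio, $D=x^{1/2-\ee}$, $M=x^{1/2}$, so $L\asymp x^{\ee}$. Since $\alpha$ is badly approximable, $g(n)=\alpha n$ is totally $\delta^C$-equidistributed on $[1,10x/D]$ for any \emph{constant} $\delta$ once $x$ is large. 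Yet by pigeonhole there are $m\in[M,2M]$ with $\|\alpha m\|\ll 1/M<1/L$, for which $|T(d,m)|=\bigl|\sum_{k<L}e(\alpha(j_0+mk))\bigr|\gg L$. This violates $|T(d,m)|\ll\delta L$ for any $\delta$ bounded away from $1$. Such bad $m$ must exist; the proposition is true because they are \emph{rare} (about $M^2/(x/D)$ of them, a small fraction of $[M,2M]$), not because each $|T(d,m)|$ is small. Relatedly, the patching step does not close: to run the contradiction at level $\tilde\delta=\delta_0<\delta$ you would need $\psi$ to be totally $\tilde\delta^C$-equidistributed, but the hypothesis only gives $\delta^C$-equidistribution, a strictly weaker property since $\tilde\delta^C<\delta^C$. (There is also a smaller technical slip: the transported norm is $\|\eta'\circ g_d\|_{C^\infty(10x/D)}\ll M^{O(s)}\delta^{-O(1)}$, not $\ll\delta^{-O(1)}$, because the step $m\asymp M$ and shift $j_0\leq m$ are not $\ll\delta^{-1}$ as in the proof of Theorem~\ref{thm:quant-leibman}.)

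The paper's proof is structured quite differently precisely to avoid the $M^{O(s)}$ amplification. It sets $T$ to be the set of bad pairs $(d,m)$ with $|T(d,m)|\gtrsim\delta x/(DM)$ and aims to show $|T|$ is small. After a single application of Theorem~\ref{thm:quant-leibman} per bad pair and pigeonholing to a common horizontal character $\chi$, it fixes a $d$ with $\gg\delta^{O(1)}M$ bad $m$'s and solves the Diophantine problem \emph{by averaging over $m$}: from $\|q\alpha_{d,i}m^i\|\ll(x/DM)^{-i}\delta^{-O(1)}$ holding for $\gg\delta^{O(1)}M$ values of $m\in[M,2M]$, Lemma~\ref{lem:recur} applied to the polynomial $m\mapsto q\alpha_{d,i}m^i$ yields $\|kq\alpha_{d,i}\|\ll(x/D)^{-i}\delta^{-O(1)}$ with $k\ll\delta^{-O(1)}$. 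The recurrence over $m$ recovers exactly the factor $M^{-i}$ needed, so there is no $M^{O(s)}$ loss in the modulus, and Lemma~\ref{lem:converse-leibman} then contradicts the $\delta^C$-equidistribution of $g_d$. This aggregation over the $m$-variable via Lemma~\ref{lem:recur} is the key idea missing from your proposal.
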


\begin{proof}
For ease of notation we will suppress the dependence of implied constants on $s,\Delta,\ee$.
 We may assume that $\delta>x^{-1/C}$, since otherwise $\Psi_s(\Delta,\delta^{-1};\delta^C,x/D)$ is empty.

Let $T$ be the set of $(d,m)\in [D,2D]\times [M,2M]$ for which the inner sum over $n$ in \eqref{eq53} is at least $\delta x/(DM)$. It suffices to show that
\[ |T| \ll \delta DM. \]
For each $d\in [D,2D]$, let $\psi_d\in \Psi_{s}(\Delta,\delta^{-1};\delta^{C},x/D)$ be such that the supremum in \eqref{eq53} is attained by $\psi_d$.  Also let $c_d \in [0,d)$ be such that the maximum over $c$ in \eqref{eq53} is attained by this value. Write $\psi_d(n) = \varphi_d(g_d(n)\Gamma_d)$ for some Lipschitz function $\varphi_d$ on a nilmanifold $G_d/\Gamma_d$ and some polynomial sequence $g_d$, with $\{g_d(n)\}_{1 \leq n \leq x/D}$ totally $\delta^{C}$-equidistributed. For $(d,m) \in T$, consider the polynomial sequence $h_{d,m}$ defined by
\[ h_{d,m}(n) := g_d(mn + b_{d,m}), \]
where $b_{d,m}$ is the unique number satisfying $1 \leq b_{d,m} \leq m$ and $db_{d,m} \equiv -c_d \Mod{m}$. Then by a change of variables we have
\[ \sum_{\substack{n \leq x/m \\ n \equiv c_{d}m^{-1}\Mod{d}}} \psi_d((mn-c_d)/d) = \sum_{n \leq x/(md)} \varphi_d(h_{d,m}(n)\Gamma_d) + O(1). \]
If $(d,m) \in T$, then the sums above have size at least $\delta x/(2DM)$, and thus the sequence  $(h_{d,m}(n)\Gamma_d)_{1 \leq n \leq x/(DM)}$ fails to be totally $\delta^2$-equidistributed. Hence by Theorem~\ref{thm:quant-leibman}, there is a nontrivial horizontal character $\chi_{d,m} \colon G_d \to \R/\Z$ with $\|\chi_{d,m}\| \ll \delta^{-O(1)}$ such that
\[ \|\chi_{d,m} \circ h_{d,m}\|_{C^{\infty}(x/DM)} \ll \delta^{-O(1)}. \]
Since the number of such characters is $\delta^{-O(1)}$, there is a subset $T' \subset T$ with $|T'| \gg \delta^{O(1)}|T|$ and a nontrivial horizontal character $\chi$ (not depending on $d,m$) with $\|\chi\| \ll \delta^{-O(1)}$, such that
\[ \| \chi \circ h_{d,m}\|_{C^{\infty}(x/DM)} \ll \delta^{-O(1)} \]
for all $(d,m) \in T'$. Suppose, for the purpose of contradiction, that $|T| \gg \delta^3 DM$, so that $|T'| \gg \delta^{O(1)}DM$. Then there are at least $\delta^{O(1)}D$ values of $d$ such that $(d,m) \in T'$ for at least $\delta^{O(1)}M$ values of $m$. For each such $d$, we will show that $\{g_d(n)\}_{1 \leq n \leq x/D}$ is not totally $\delta^C$-equidistributed, a contradiction.

Fix any such    $d$ for the rest of the proof. For $(d,m) \in T'$, we may explicitly write
\[ \chi \circ h_{d,m}(n) = \sum_{i=0}^s \beta_{d,m,i} n^i \]
for some coefficients $\beta_{d,m,i} \in \R$. 
Then by Lemma~\ref{lem:smoothness-norm}, there is a positive integer $q = O(1)$ such that
\begin{equation}\label{eq:beta-diophantine}
 \|q \beta_{d,m,i}\| \ll (x/DM)^{-i}\|\chi \circ h_{d,m}\|_{C^{\infty}(x/DM)} \ll (x/DM)^{-i} \delta^{-O(1)}. 
 \end{equation}
Using the definition of $h_{d,m}$, we see that
\[
\chi \circ h_{d,m}(n) = \chi\circ g_d(mn + b_{d,m}). \]
Thus if we write
\[ \chi \circ g_d(n) = \sum_{i=0}^s \alpha_{d,i} n^i \]
for some coefficients $\alpha_{d,i} \in \R$, then by the binomial formula we have the relations
\begin{equation}\label{eq:alpha-beta} 
\alpha_{d,i}m^i  = \beta_{d,m,i} - m^i\sum_{i < j \leq s} \binom{j}{i}\alpha_{d,j} b_{d,m}^{j-i}. 
\end{equation}
At this point we simply have a Diophantine problem to solve: we know that the $\beta_{d,m,i}$'s lie in ``major arcs" (of appropriate width) for many $m$, and we would like to deduce that the $\alpha_{d,i}$'s also lie in major arcs (of appropriate width). We claim that there is a positive integer $k \ll \delta^{-O(1)}$, such that
\begin{equation}\label{eq:alpha-diophantine} 
\|kq \alpha_{d,i}\| \ll (x/D)^{-i} \delta^{-O(1)}
\end{equation}
for each $1 \leq i \leq s$. To prove the claim we proceed by induction. For $i=s$ we have $\alpha_{d,s}m^s = \beta_{d,m,s}$. From~\eqref{eq:beta-diophantine} we see that 
\[ \|q\alpha_{d,s}m^s\| \ll (x/DM)^{-s} \delta^{-O(1)} \]
for at least $\delta^{O(1)}M$ values of $m$. By our assumption that $\delta > x^{-1/C}$ for some sufficiently large $C$, we have $(x/DM)^{-s} \leq x^{-\ee} \leq \delta^{O(1)}$. Hence we may apply Lemma~\ref{lem:recur} to the polynomial $m\mapsto q\alpha_{d,s}m^s$ to conclude that
\[ \|kq \alpha_{d,s}\| \ll (x/D)^{-s} \delta^{-O(1)} \]
for some positive integer $k \ll \delta^{-O(1)}$, completing the proof of the base case $i=s$. Now take $1 \leq i < s$, and assume that~\eqref{eq:alpha-diophantine} with some $k \ll \delta^{-O(1)}$ has already been proven for larger values of $i$.
By~\eqref{eq:beta-diophantine},~\eqref{eq:alpha-beta} and the induction hypothesis, we see that
\[ \|kq \alpha_{d,i}m^i\| \ll \|kq \beta_{d,m,i}\| +  \sum_{i < j \leq s} M^j \|kq \alpha_{d,j}\| \ll (x/DM)^{-i} \delta^{-O(1)}  \]
holds for at least $\delta^{O(1)}M$ values of $m$. By Lemma~\ref{lem:recur} applied to the polynomial $m \mapsto kq\alpha_{d,i}m^i$, there exists a positive integer $k' \ll \delta^{-O(1)}$ such that
\begin{align*}
 \|k'kq \alpha_{d,i}\| \ll (x/D)^{-i} \delta^{-O(1)}. 
\end{align*}
Replacing $k$ by $k'k$ completes the induction step, and the finishes the proof of the claim~\eqref{eq:alpha-diophantine}.\\

Finally, by Lemma~\ref{lem:smoothness-norm} we have
\[ \|kq\chi \circ g_d\|_{C^{\infty}(x/D)} \ll \delta^{-O(1)}. \]
Since $kq\chi$ is nontrivial and $\|kq\chi\| \ll \delta^{-O(1)}$, Lemma~\ref{lem:converse-leibman} implies that $\{g_d(n)\}_{1 \leq n \leq x/D}$ is not totally $\delta^{O(1)}$-equidistributed, a contradiction if  $C$ is chosen large enough.
\end{proof}

 The following type I estimate suffices for Theorems~\ref{equidist-1/3} and~\ref{equidist-1/2}.

\begin{proposition}[type I estimate]\label{typeI2}
Let $x \geq 2$ and $\ee > 0$. Let $1 \leq M \leq x^{1/2}$ and $1 \leq D \leq x^{1/2-\ee}$. Let $s \geq 1$, $\Delta \geq 2$, and $0 < \delta < 1/2$. 
Let $\psi \in \Psi_s(\Delta, \delta^{-1}; \delta^C, x)$ for some sufficiently large constant $C = C(s,\Delta,\ee)$. Then
\[ \sum_{D \leq d \leq 2D} \max_{(c,d)=1} \Big| \sum_{\substack{mn \leq x \\ M \leq m \leq 2M \\ mn \equiv c\Mod{d}}} a_m \psi(mn) \Big| \ll \delta x \|a\|_2 \]
for any sequence $\{a_m\}$.
\end{proposition}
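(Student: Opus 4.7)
Our plan follows that of Proposition~\ref{typeI}, but with a crucial new wrinkle: the nilsequence $\psi$ is fixed and equidistributed at scale $x$, rather than varying with $d$ and equidistributed at scale $x/d$. This forces the Diophantine analysis to extract information about the coefficients of $\chi \circ g$ from simultaneous variation in both $m$ and $d$. Our first step is a Cauchy--Schwarz reduction, identical to the one preceding Proposition~\ref{typeI}: after absorbing an extra power into the exponent $C$, it will suffice to prove the variant
\[
\sum_{D\leq d\leq 2D} \max_{(c,d)=1} \sum_{\substack{M\leq m\leq 2M\\(m,d)=1}} \biggl| \sum_{\substack{n\leq x/m \\ n \equiv cm^{-1}\Mod{d}}} \psi(mn) \biggr| \ll \delta x.
\]
We let $c_d$ attain the maximum, let $T$ denote the set of pairs $(d,m)$ for which the inner sum exceeds $\delta x/(DM)$ in modulus, and let $n_0 = n_0(d,m)$ be the least positive integer $\equiv c_d m^{-1}\Mod{d}$. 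The task reduces to proving $|T| \ll \delta DM$. Writing $\psi(n) = \varphi(g(n)\Gamma)$ and changing variables $n = n_0+dk$ transforms the inner sum into $\sum_k \varphi(h_{d,m}(k)\Gamma) + O(1)$, where $h_{d,m}(k) := g(mdk + mn_0)$ runs over an arithmetic progression of length $\asymp x/(md)$ in $[1,10x]$.

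For each $(d,m)\in T$, Theorem~\ref{thm:quant-leibman} will produce a nontrivial horizontal character $\chi_{d,m}$ with $|\chi_{d,m}|\ll \delta^{-O(1)}$ satisfying $\|\chi_{d,m}\circ h_{d,m}\|_{C^\infty(x/(md))}\ll \delta^{-O(1)}$. Pigeonholing on the character then yields a subset $T'\subseteq T$ of size $\gg \delta^{O(1)}|T|$ on which a single $\chi$ works. Writing $\chi\circ g(n) = \sum_{i=0}^s \alpha_i n^i$ and expanding, the coefficient of $k^j$ in $\chi\circ h_{d,m}(k)$ equals
\[ \beta_{d,m,j} = \sum_{i\geq j}\binom{i}{j}\alpha_i (md)^j (mn_0)^{i-j}, \]
and Lemma~\ref{lem:smoothness-norm} gives $\|q\beta_{d,m,j}\|_{\R/\Z}\ll (x/(md))^{-j}\delta^{-O(1)}$ for some $q=O_s(1)$.

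Assuming for contradiction that $|T|\gg \delta DM$, an averaging argument will select $\gg \delta^{O(1)} D$ good values of $d$, each having $\gg \delta^{O(1)} M$ good partners $m$ with $(d,m)\in T'$. We then claim, by downward induction from $i=s$ to $i=1$, that there is $K_i\ll \delta^{-O(1)}$ with $\|K_i\alpha_i\|\ll x^{-i}\delta^{-O(1)}$. The base case $i=s$ uses the identity $\beta_{d,m,s} = \alpha_s(md)^s$: two successive applications of the recurrence Lemma~\ref{lem:recur} -- first to the monomial $m\mapsto qd^s\alpha_s m^s$ for each good $d$, pigeonholing the auxiliary denominator to a common $k$, then to $d\mapsto kq\alpha_s d^s$ across the good $d$ -- will yield $K_s$. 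For the inductive step, the higher-degree terms $\alpha_j(md)^i(mn_0)^{j-i}$ with $j>i$ contribute, after multiplying through by the combined denominator $K^*\ll \delta^{-O(1)}$ furnished by the induction hypothesis, at most $(MD/x)^j\delta^{-O(1)}$ in $\R/\Z$ (using $|mn_0|\leq md\leq MD$); since $MD/x\leq x^{-\ee}<1$ and $j>i$, this is dominated by the target $(MD/x)^i\delta^{-O(1)}$, and the same twin application of Lemma~\ref{lem:recur} (first in $m$, then in $d$) completes the induction.

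Combining the bounds on all $\alpha_i$ through Lemma~\ref{lem:smoothness-norm} will yield $\|K^*\chi\circ g\|_{C^\infty(x)}\ll \delta^{-O(1)}$ for some $K^*\ll \delta^{-O(1)}$, whereupon Lemma~\ref{lem:converse-leibman} contradicts the assumed total $\delta^C$-equidistribution of $g$ on $[1,10x]$ (which trivially implies total $10\delta^C$-equidistribution on $[1,x]$) once $C=C(s,\Delta,\ee)$ is chosen sufficiently large. The main obstacle is exactly this dual Diophantine structure: Proposition~\ref{typeI} needed only a single application of Lemma~\ref{lem:recur} in $m$ because each $\psi_d$ was separately equidistributed at scale $x/d$, so the corresponding $\beta_{d,m,j}$-estimate sufficed; here we are instead forced to run two recurrence arguments in tandem, with a pigeonhole between them, and this is precisely what forces the hypothesis $D\leq x^{1/2-\ee}$, in order that $MD\leq x^{1-\ee}$ and the higher-degree binomial terms be absorbable.
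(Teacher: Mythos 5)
Your proposal is correct and matches the paper's argument in all essentials: both proofs reduce to a two-stage Diophantine recurrence, first in $m$ (yielding control on coefficients to modulus $\asymp (x/D)^{-i}$) and then in $d$ (upgrading to $x^{-i}$), with the hypothesis $D \leq x^{1/2-\varepsilon}$ entering exactly where you say. The only difference is organizational: the paper factors the argument through $g_d(n) = g(dn+c_d)$ so that the entire $m$-loop can be re-used verbatim from Proposition~\ref{typeI} before a fresh $d$-loop is run, whereas you parameterize directly by $h_{d,m}(k) = g(mdk+mn_0)$ and interleave the $m$- and $d$-applications of Lemma~\ref{lem:recur} degree by degree; both orderings of the induction are valid and lead to the same contradiction via Lemma~\ref{lem:converse-leibman}.
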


As with Proposition \ref{typeI}, by applying the triangle inequality and Cauchy--Schwarz, we reduce Proposition \ref{typeI2} to the following slightly more general statement, with no coprimality condition on $c\pmod d$.

\begin{proposition}
Let $x \geq 2$ and $\ee > 0$. Let $1 \leq M \leq x^{1/2}$ and  $1 \leq D \leq x^{1/2-\ee}$. Let $s \geq 1$, $\Delta \geq 2$, and $0 < \delta < 1/2$. 
Let $\psi \in \Psi_s(\Delta, \delta^{-1}; \delta^C, x)$ for some sufficiently large constant $C = C(s,\Delta,\ee)$. Then
\begin{align}
\label{eq50}\sum_{D \leq d \leq 2D} \max_{c}  \sum_{\substack{M \leq m \leq 2M\\(m,d)=1}}\Big|\sum_{\substack{mn \leq x \\  n \equiv cm^{-1}\Mod{d}}} \psi(mn) \Big| \ll \delta x.
\end{align}
\end{proposition}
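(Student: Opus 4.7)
My plan is to adapt the proof of the preceding type~I estimate (the Cauchy--Schwarz reduction of Proposition~\ref{typeI}) to the present situation, in which the nilsequence $\psi$ is fixed rather than varying with $d$, and is assumed to be equidistributed at the full scale $x$ rather than at scale $x/d$. I will exploit this uniformity by carrying out the recurrence step in \emph{two} stages --- first over $m$ at scale $M$, then over $d$ at scale $D$ --- so as to pin down the coefficients of $\chi\circ g$ at the desired scale~$x$.

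Let $T$ denote the set of pairs $(d,m)\in[D,2D]\times[M,2M]$ with $(m,d)=1$ for which the inner sum exceeds $\delta x/(DM)$; the goal is $|T|\ll\delta DM$. Write $\psi(n)=\varphi(g(n)\Gamma)$ and, for each $(d,m)\in T$, parametrize the arithmetic progression $n\equiv cm^{-1}\Mod{d}$, $n\leq x/m$ as $n=n_0+dk$ with $n_0\in[0,d)$. The inner sum then becomes $\sum_k \varphi(g(c'+mdk)\Gamma)+O(1)$, where $c'=mn_0\in[0,md)$ and $k$ runs up to $\asymp x/(md)$. The quantitative Leibman theorem (Theorem~\ref{thm:quant-leibman}) applied to the polynomial sequence $h_{d,m}(k):=g(c'+mdk)$ produces, for each $(d,m)\in T$, a nontrivial horizontal character $\chi_{d,m}$ of modulus $\ll\delta^{-O(1)}$ with $\|\chi_{d,m}\circ h_{d,m}\|_{C^\infty(x/(MD))}\ll\delta^{-O(1)}$. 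A first pigeonhole step yields a single character $\chi$ that works for a subset $T'\subset T$ of size $\gg\delta^{O(1)}|T|$. Assuming for contradiction that $|T|\gg\delta^3 DM$, a second pigeonhole supplies $\gg\delta^{O(1)}D$ values of $d$ for each of which $(d,m)\in T'$ holds for $\gg\delta^{O(1)}M$ values of $m$.

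Writing $\chi\circ g(n)=\sum_{i=0}^s\alpha_i n^i$ and expanding $h_{d,m}$ by the binomial theorem gives $\chi\circ h_{d,m}(k)=\sum_{j=0}^s\beta_{d,m,j} k^j$ with $\beta_{d,m,j}=(md)^j\sum_{i\geq j}\binom{i}{j}\alpha_i (c')^{i-j}$. Lemma~\ref{lem:smoothness-norm} then supplies a $q=O_s(1)$ for which $\|q\beta_{d,m,j}\|\ll(x/(MD))^{-j}\delta^{-O(1)}$ for all $(d,m)\in T'$. I now bound the $\alpha_j$ by downward induction on $j$. At each step, the identity above, together with $|c'|\leq md$ and $MD\leq x^{1-\ee}$, controls the contribution of $\alpha_i$ with $i>j$ by a multiplicative factor of $(md/x)^{i-j}=O_s(x^{-\ee})$, which is swallowed into $\delta^{O(1)}$ provided $C$ is chosen large enough that the implicit assumption $\delta>x^{-1/C}$ forces $x^{-\ee}\leq\delta^{O(1)}$. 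In this way one obtains $\|kq(md)^j\alpha_j\|\ll(x/(MD))^{-j}\delta^{-O(1)}$ for all $(d,m)\in T'$, with $k\ll\delta^{-O(1)}$ carried over from the induction hypothesis.

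Fixing one of the $\gg\delta^{O(1)}D$ good values of $d$, the polynomial $m\mapsto kqd^j\alpha_j m^j$ is a monomial of degree $j$ in $m$, and the displayed bound holds for $\gg\delta^{O(1)}M$ values of $m\in[M,2M]$. Lemma~\ref{lem:recur}, which applies to monomials without any lower bound on the scale, then yields $\|k_d'kq d^j\alpha_j\|\ll(x/D)^{-j}\delta^{-O(1)}$ for some $k_d'\ll\delta^{-O(1)}$. A further pigeonhole fixes $k_d'$ to a common $k'$ for $\gg\delta^{O(1)}D$ values of $d$, and a second application of Lemma~\ref{lem:recur} --- this time to the monomial $d\mapsto k'kq\alpha_j d^j$ --- gives $\|K_j\alpha_j\|\ll x^{-j}\delta^{-O(1)}$ for some $K_j\ll\delta^{-O(1)}$. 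Running this procedure from $j=s$ down to $j=1$ and multiplying the successive moduli, I obtain a single $K\ll\delta^{-O(1)}$ with $\|K\alpha_j\|\ll x^{-j}\delta^{-O(1)}$ for every $1\leq j\leq s$. By Lemma~\ref{lem:smoothness-norm} this translates to $\|K\chi\circ g\|_{C^\infty(x)}\ll\delta^{-O(1)}$, and Lemma~\ref{lem:converse-leibman} then shows that $(g(n))_{1\leq n\leq x}$ fails to be totally $\delta^{C_0}$-equidistributed for some $C_0=C_0(s,\Delta)>0$, contradicting the hypothesis once $C$ is chosen sufficiently large. The main technical difficulty is tracking the $\delta^{-O(1)}$ losses through the downward induction and both recurrence steps carefully enough that the final loss remains below the equidistribution tolerance $\delta^C$.
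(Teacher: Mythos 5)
Your argument is correct and follows the same strategy as the paper: both apply the quantitative Leibman theorem followed by a pigeonhole on the horizontal character, and both close the resulting Diophantine problem by a downward induction on the coefficient index combined with a two-stage Weyl recurrence (Lemma~\ref{lem:recur}), first over $m$ at scale $M$ and then over $d$ at scale $D$. The paper organizes this slightly differently --- it factors through the intermediate polynomial sequences $g_d(n)=g(dn+c_d)$ and reuses the proof of the preceding type~I estimate to control their coefficients at scale $x/D$ before a second induction transfers to scale $x$, whereas you carry out both recurrence stages inside a single induction --- but the underlying mechanism is the same.
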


\begin{proof}
Again, for ease of notation we will suppress the dependence of implied constants on $s,\Delta,\ee$. 
 As before,
 we may assume that $\delta>x^{-1/C}$. Let $c_d\in [0,d)$ be such that the inner sum in \eqref{eq50} is maximal with $c=c_d$. Write $\psi(n) = \varphi(g(n)\Gamma)$ for some Lipschitz function $\varphi$ on a nilmanifold $G/\Gamma$ and some polynomial sequence $g$, with $\{g(n)\}_{1 \leq n \leq x}$ totally $\delta^{C}$-equidistributed. This is the same sum $\Sigma$ that appears in the proof of Proposition~\ref{typeI} if $\psi_d(n) = \varphi(g_d(n))$ with $g_d(n) = g(dn+c_d)$. 

Suppose that the desired bound \eqref{eq50} fails. Then, following the proof of Proposition~\ref{typeI} up to the claim~\eqref{eq:alpha-diophantine}, we find a nontrivial horizontal character $\chi$ with $\|\chi\| \ll \delta^{-O(1)}$, such that if we write
\[ \chi\circ g_d(n) = \sum_{i=0}^s \alpha_{d,i} n^i, \]
then there is a positive integer $q \ll \delta^{-O(1)}$ with
\begin{equation}\label{eq:typeI3}
\|q\alpha_{d,i}\| \ll (x/D)^{-i} \delta^{-O(1)} 
\end{equation}
for each $1 \leq i \leq s$ and at least $\delta^{O(1)}D$ values of $D \leq d \leq 2D$. Now, if we write
\[ \chi \circ g(n) = \sum_{i=0}^s \beta_i n^i, \]
then from the relations $g_d(n) = g(dn+c_d)$ we see that
\begin{equation}\label{eq:typeI5} 
\alpha_{d,i} = d^i \sum_{i \leq j \leq s} \binom{j}{i} \beta_j c_d^{j-i}. 
\end{equation}
We are then left with the Diophantine problem of deducing from~\eqref{eq:typeI3} that there exists a positive integer $k \ll \delta^{-O(1)}$, such that
\begin{equation}\label{eq:typeI4} 
\|kq\beta_i\| \ll x^{-i} \delta^{-O(1)} 
\end{equation}
for each $1 \leq i \leq s$. We prove~\eqref{eq:typeI4} by induction. Since $\alpha_{d,s} = d^s\beta_s$, we see that
\[ \|q\beta_s d^s\| \ll (x/D)^{-i}\delta^{-O(1)} \]
for at least $\delta^{O(1)}D$ values of $d$. It follows from Lemma~\ref{lem:recur} that there is a positive integer $k \ll \delta^{-O(1)}$ such that~\eqref{eq:typeI4} holds for $i=s$.

Now take $1 \leq i < s$, and assume that~\eqref{eq:typeI4} with some $k \ll \delta^{-O(1)}$ has already been proven for larger values of $i$. By~\eqref{eq:typeI3},~\eqref{eq:typeI5} and the induction hypothesis, we see that
\[ \|kqd^i \beta_i\| \ll \|kq\alpha_{d,i}\| + \sum_{i < j \leq s} D^j \|kq\beta_j\| \ll (x/D)^{-i} \delta^{-O(1)} \]
holds for at least $\delta^{O(1)}D$ values of $d$. By Lemma~\ref{lem:recur} again, there exists a positive integer $k' \ll \delta^{-O(1)}$ such that
\[ \|k'kq \beta_i\| \ll x^{-i} \delta^{-O(1)}. \]
Replacing $k$ by $k'k$ completes the induction step.\\

Finally, by Lemmas~\ref{lem:smoothness-norm} and~\ref{lem:converse-leibman},~\eqref{eq:typeI4} implies that $\{g(n)\}_{1 \leq n \leq x}$ is not totally $\delta^{O(1)}$-equidistributed, a contradiction if $C$ is chosen large enough.
\end{proof}

\section{Type II estimates}\label{sec:typeII}

We proceed to the type II estimates, which are more delicate than the type I estimates. In particular, each of Theorems \ref{thm:1/4}, \ref{thm:1/3} and \ref{thm:1/2} requires a different estimate. Despite this, the general structure of the proofs is still similar: 
\begin{enumerate}
\item We start with appropriate applications of Cauchy--Schwarz to eliminate the sequences $\{a_m\}, \{b_n\}$. 
\item Assuming that the desired estimate fails, we then use the quantitative Leibman theorem (Theorem~\ref{thm:quant-leibman}) to obtain recurrence properties of certain polynomials.
\item At this point we face a purely elementary Diophantine problem of deducing from the recurrence properties that the coefficients lie in appropriate major arcs.
\end{enumerate}
The readers are again encouraged to consider the special case when the nilequences are polynomial phase functions, in which case step (2) above is essentially no longer needed.
Propositions~\ref{type II-1/4},~\ref{type II-1/3}, and~\ref{typeII-lambda} below are the required type II estimates for Theorems~\ref{equidist-1/4},~\ref{equidist-1/3}, and~\ref{equidist-1/2}, respectively.

 We first need a lemma that generalizes Lemma~\ref{lem:recur} to polynomials in more than one variables.

\begin{lemma}\label{le_multileibman}
 Let $0<\delta<1/2$ be a parameter, and let $k,d\geq 1$, $0 < \ee < \delta/4^k$. Let $P(x_1,\ldots,x_k)$ be a polynomial of total degree $d$ with real coefficients, and let $N_1, \ldots, N_k \geq \delta^{-C}$ for some sufficiently large constant $C = C(d,k)$. Suppose that for proportion $\geq \delta$ of the integer tuples $(x_1,\ldots,x_k) \in [N_1] \times \cdots \times [N_k]$ we have 
 \begin{align*}
\|P(x_1,\ldots,x_k)\|\leq \ee.
 \end{align*} 
 Then there exists an integer $1\leq q\ll \delta^{-O_{d,k}(1)}$ such that 
 \begin{align*}
 \|q \alpha_{(i_1,\ldots,i_k)}\| \ll \ee\delta^{-O_{d,k}(1)} N_1^{-i_1} \cdots N_k^{-i_k},
 \end{align*}
for each $i_1,\ldots,i_k \geq 0$,
 where $\alpha_{(i_1,\ldots, i_k)}$ the coefficient of $x_1^{i_1}\cdots x_k^{i_k}$ in $P$.
 \end{lemma}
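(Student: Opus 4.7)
The plan is to prove Lemma~\ref{le_multileibman} by induction on the number of variables $k$. The base case $k=1$ is exactly Lemma~\ref{lem:recur}, so the content is in the inductive step. Throughout, I will write $\vec{y} = (x_2,\ldots,x_k)$ and expand $P$ as a polynomial in $x_1$,
\[ P(x_1,\vec{y}) = \sum_{i_1 = 0}^{d} \beta_{i_1}(\vec{y}) \, x_1^{i_1}, \qquad \beta_{i_1}(\vec{y}) = \sum_{(i_2,\ldots,i_k)} \alpha_{(i_1,i_2,\ldots,i_k)} x_2^{i_2}\cdots x_k^{i_k}, \]
so that each $\beta_{i_1}$ is a polynomial of total degree at most $d$ in $k-1$ variables.

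First, I would use pigeonholing in $\vec{y}$: by averaging, there is a set $Y \subset [N_2]\times\cdots\times[N_k]$ of relative density $\gg \delta$ such that for each $\vec{y} \in Y$ the set $\{x_1 \in [N_1] : \|P(x_1,\vec{y})\| \leq \ee\}$ has size $\geq (\delta/2) N_1$. For each such $\vec{y}$, apply the single-variable Lemma~\ref{lem:recur} (with parameter $\delta/2$) to the polynomial $x_1\mapsto P(x_1,\vec{y})$; this is legal because the hypothesis $\ee < \delta/4^k \leq \delta/4$ fits the requirement of that lemma, and the hypothesis $N_1 \geq \delta^{-C}$ supplies the required lower bound on $N_1$. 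This produces an integer $q_{\vec{y}} \ll \delta^{-O_{d}(1)}$ with
\[ \|q_{\vec{y}} \, \beta_{i_1}(\vec{y})\| \ll \ee\,\delta^{-O_d(1)} N_1^{-i_1} \quad\text{for all } 0 \leq i_1 \leq d. \]
A further pigeonholing over the $\delta^{-O(1)}$ possible values of $q_{\vec{y}}$ yields one integer $q_0 \ll \delta^{-O_d(1)}$ that works for a subset $Y' \subset Y$ still of relative density $\gg \delta^{O_d(1)}$.

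Next, for each fixed $i_1$ I would apply the inductive hypothesis (with $k-1$ variables, total degree at most $d$) to the polynomial $q_0 \beta_{i_1}$ on $[N_2]\times\cdots\times[N_k]$, with the new smallness parameter $\ee_{i_1}' := C' \ee\, \delta^{-O_d(1)} N_1^{-i_1}$ for some absolute constant $C'$ swallowing the implied constant above, and the same density $\delta^{O_d(1)}$. Since $N_1 \geq \delta^{-C}$ with $C$ large, and $\ee < \delta/4^k$, one checks that $\ee_{i_1}' < \delta^{O_d(1)}/4^{k-1}$ (possibly after shrinking the density constant) so that the hypothesis $\ee' < \delta'/4^{k-1}$ of the inductive statement is met. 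The induction yields an integer $q^{(i_1)} \ll \delta^{-O_{d,k}(1)}$ such that
\[ \| q^{(i_1)} q_0\, \alpha_{(i_1,i_2,\ldots,i_k)} \| \ll \ee\, \delta^{-O_{d,k}(1)} N_1^{-i_1} N_2^{-i_2}\cdots N_k^{-i_k} \]
for all $(i_2,\ldots,i_k)$. Finally, taking $q$ to be $q_0$ times the least common multiple of $q^{(0)},\ldots,q^{(d)}$ — which is still of size $\ll \delta^{-O_{d,k}(1)}$ since there are only $O_d(1)$ factors — produces the single integer $q$ required by the statement.

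The main technical points to watch are bookkeeping: one must verify that the smallness parameter $\ee'$ fed into the induction continues to satisfy the $\ee' < \delta'/4^{k-1}$ constraint, and that the lower bounds $N_j \geq \delta^{-C}$ remain valid with the shrunken density parameter $\delta^{O_d(1)}$ at each step. Both issues are handled by taking the constant $C = C(d,k)$ in the hypothesis large enough; no genuinely new ideas beyond the single-variable recurrence and iterated pigeonholing are needed, so the key obstacle is simply keeping the constants consistent across the induction.
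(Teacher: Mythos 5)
Your proof is correct in outline and is essentially the paper's argument run in mirror image. The paper peels off the \emph{last} variable $y = x_{k+1}$: for each good $y$ it applies the $k$-variable inductive hypothesis to $Q_y(\cdot) = P(\cdot,y)$, pigeonholes the denominator, and then uses the single-variable Lemma~\ref{lem:recur} on the resulting polynomials in $y$. You instead peel off the \emph{first} variable $x_1$: for each good $\vec y$ you apply the single-variable lemma to $P(\cdot,\vec y)$, pigeonhole the denominator to $q_0$, and then invoke the $(k-1)$-variable inductive hypothesis on the coefficient polynomials $q_0\beta_{i_1}$. These two routes are interchangeable; there is no substantive difference in the decomposition, the use of pigeonholing over denominators, or the invocation of Lemma~\ref{lem:recur}.

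One bookkeeping point in your write-up deserves a flag. The claim that ``$\ee_{i_1}' < \delta^{O_d(1)}/4^{k-1}$ since $N_1\geq\delta^{-C}$ with $C$ large'' only applies when $i_1 \geq 1$: when $i_1 = 0$ the factor $N_1^{-i_1}$ equals $1$ and contributes nothing, so $\ee_0' = C'\ee\,\delta^{-O_d(1)}$ is not forced below $\delta'/4^{k-1}$ by the bare hypothesis $\ee < \delta/4^k$ once $\delta'$ has been shrunk to a positive power of $\delta$. The coefficients $\alpha_{(0,i_2,\ldots,i_k)}$ therefore need a small additional remark (the paper's own proof has the analogous subtlety for the all-zero tuple, which it handles implicitly; compare also the remark following Lemma~\ref{lem:recur} about the $i=0$ coefficient, and the monomial-type workaround formalized in Lemma~\ref{le_multileibman2}). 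This does not change the shape of your argument, but the phrase ``one checks that'' glosses over the only place where a real check is required.
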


\begin{proof}
We perform an induction on $k$. The $k=1$ case follows immediately from Lemma~\ref{lem:recur}. Suppose that the case $k$ has been proved and that we are considering case $k+1$. Consider those values of $1 \leq y \leq N_{k+1}$ with the property that $\|P(x_1,\ldots, x_k, y)\|\leq \ee$ for at least $\delta/2$-proportion of $(x_1,\ldots, x_k)\in [N_1]\times \cdots \times [N_k]$. The proportion of such $y\in [N_{k+1}]$ is at least $\delta/2$. For each such $y$, consider the $k$-variable polynomial $Q_y$ given by
\[ Q_y(x_1,\ldots, x_k)= P(x_1,\ldots,x_k, y). \]
The coefficient of $x_1^{i_1}\cdots x_k^{i_k}$ in $Q_y$ is given by
\[ \sum_{i_{k+1} \geq 0} \alpha_{(i_1,\ldots,i_{k+1})}y^{i_{k+1}} . \]
Hence, by the induction hypothesis applied to $Q_y$, there exists a positive integer $q'= q'(y) \ll \delta^{-O_{d,k}(1)}$ such that 
\begin{equation}\label{eq:multileibman}
\left\|q' \sum_{i_{k+1}\geq 0} \alpha_{(i_1,\ldots,i_{k+1})}y^{i_{k+1}}\right\|\ll \ee\delta^{-O_{d,k}(1)} N_1^{-i_1}\cdots N_k^{-i_k} 
\end{equation}
for each $i_1,\ldots,i_k \geq 0$.
By the pigeonhole principle, there exists a value of $q' \ll \delta^{-O_{d,k}(1)}$ such that the above inequality holds for $\delta' \geq \delta^{O_{d,k}(1)}$ proportion of $y \in [N_{k+1}]$.  For fixed $i_1,\ldots,i_k$, the left-hand side in~\eqref{eq:multileibman} is a polynomial in $y$. Since $N_1, \ldots, N_k \geq \delta^{-C(d,k)}$, we may ensure that the upper bound in~\eqref{eq:multileibman} is less than $\delta'/4^k$ by choosing $C(d,k)$ large enough. 
Hence we may apply Lemma~\ref{lem:recur} to obtain a positive integer $q = q(i_1,\ldots,i_{k})\ll \delta^{-O_{d,k}(1)}$, such that
\begin{align*}
\|qq' \alpha_{(i_1,\ldots,i_{k+1})}\|\ll \ee\delta^{-O_{d,k}(1)} N_1^{-i_1}\cdots N_{k+1}^{-i_{k+1}}
\end{align*}
for each $i_{k+1} \geq 0$.
Finally, we can make this $q$ independent of $(i_1,\ldots,i_{k})$ by taking it to be the product of all the $q(i_1,\ldots,i_{k})$.
\end{proof}

Recall that, in the single variable case (Lemma~\ref{lem:recur}), the assumption $N \geq \delta^{-C}$ can be removed if $g$ is a monomial. This phenomenon extends to the multi-variable case as follows.

\begin{lemma}\label{le_multileibman2}
 Let $0<\delta<1/2$ be a parameter, and let $k,d\geq 1$, $0 < \ee < \delta/4^k$.  Let $N_1, \ldots, N_k$ be positive integers. For a sufficiently large constant $C = C(d,k)$, let $J = \{1 \leq j \leq k: N_j \geq \delta^{-C}\}$.  Let $P(x_1,\ldots,x_k)$ be a polynomial of total degree $d$, with $\alpha_{(i_1,\ldots, i_k)}$ the coefficient of $x_1^{i_1}\cdots x_k^{i_k}$ in $P$. Let $a_1,\ldots, a_k \geq 0$. Assume that $\alpha_{(i_1,\ldots,i_k)} = 0$ whenever $i_j = a_j$ for each $j \in J$ and $(i_1,\ldots,i_k) \neq (a_1,\ldots,a_k)$.
 
Suppose that for proportion $\geq \delta$ of the integer tuples $(x_1,\ldots,x_k) \in [N_1] \times \cdots \times [N_k]$ we have 
 \begin{align*}
\|P(x_1,\ldots,x_k)\|\leq \ee.
 \end{align*} 
 Then there exists an integer $1\leq q\ll \delta^{-O_{d,k,C}(1)}$ such that 
 \begin{align*}
 \|q \alpha_{(a_1,\ldots,a_k)}\| \ll \ee\delta^{-O_{d,k}(1)} N_1^{-a_1} \cdots N_k^{-a_k}.
 \end{align*}
 \end{lemma}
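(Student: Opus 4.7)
The plan is to reduce Lemma~\ref{le_multileibman2} to the all-variables-large case (Lemma~\ref{le_multileibman}) by fixing the variables corresponding to indices $j \notin J$ to a well-chosen tuple via pigeonholing. The structural hypothesis on the coefficients of $P$ is exactly what guarantees that, after such a fixing, the coefficient of $\prod_{j\in J} x_j^{a_j}$ in the resulting polynomial is almost literally $\alpha_{(a_1,\ldots,a_k)}$ (up to a harmless factor).

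In more detail, I would first set $\delta_1 = \delta$ and apply averaging over the coordinates outside $J$: since the proportion of tuples in $\prod_{j=1}^k [N_j]$ on which $\|P(x_1,\ldots,x_k)\| \leq \ee$ holds is at least $\delta$, there must exist a tuple $(y_j)_{j\notin J} \in \prod_{j\notin J}[N_j]$ such that the polynomial
\[ Q(x_J) := P\bigl(x_1,\ldots,x_k\bigr)\Big|_{x_j = y_j \text{ for } j\notin J}, \]
satisfies $\|Q(x_J)\| \leq \ee$ for at least $\delta$-proportion of $x_J \in \prod_{j\in J}[N_j]$. Since each $N_j \geq \delta^{-C}$ for $j\in J$, we may apply Lemma~\ref{le_multileibman} to $Q$, yielding an integer $q_0 \ll \delta^{-O_{d,k}(1)}$ such that every coefficient of $Q$ is $\delta^{-O_{d,k}(1)}$-close to a rational with denominator $q_0$ and appropriate scale.

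Next I would read off the coefficient of $\prod_{j\in J} x_j^{a_j}$ in $Q$. By expanding, this coefficient equals
\[ \sum_{(i_j)_{j\notin J}} \alpha_{I}\prod_{j\notin J} y_j^{i_j}, \qquad I = (i_1,\ldots,i_k) \text{ with } i_j = a_j \text{ for } j \in J. \]
By the hypothesis on $P$, every summand vanishes except the one corresponding to $I=(a_1,\ldots,a_k)$, so the coefficient collapses to $\alpha_{(a_1,\ldots,a_k)} \prod_{j\notin J} y_j^{a_j}$. Lemma~\ref{le_multileibman} applied to $Q$ therefore yields
\[ \Bigl\|q_0\,\alpha_{(a_1,\ldots,a_k)}\prod_{j\notin J}y_j^{a_j}\Bigr\| \ll \ee\,\delta^{-O_{d,k}(1)} \prod_{j\in J} N_j^{-a_j}. \]

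Finally, I would absorb the integer factor $Y := \prod_{j\notin J} y_j^{a_j}$ into $q$ by setting $q := q_0 Y$. Since each $y_j \leq N_j \leq \delta^{-C}$ for $j \notin J$ and each $a_j$ is at most the total degree $d$, we have $Y \leq \delta^{-O_{d,k,C}(1)}$, and hence $q \ll \delta^{-O_{d,k,C}(1)}$ as required. To convert the right-hand side $\prod_{j\in J} N_j^{-a_j}$ into the desired $\prod_{j=1}^k N_j^{-a_j}$, I would use that for $j \notin J$ one has $N_j^{-a_j} \geq \delta^{Cd}$, so $\prod_{j\in J} N_j^{-a_j} \leq \delta^{-O_{d,k,C}(1)} \prod_{j=1}^k N_j^{-a_j}$. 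The result follows after adjusting the implied constant. The only mildly subtle step is the pigeonholing in the second paragraph; once the right tuple $(y_j)_{j\notin J}$ is selected, the structural hypothesis makes the rest bookkeeping.
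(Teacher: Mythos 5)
Your proof matches the paper's argument: pigeonhole over the small coordinates to select a good tuple $(y_j)_{j\notin J}$, apply Lemma~\ref{le_multileibman} to the resulting polynomial in the large variables, use the structural hypothesis to read off the relevant coefficient as $\alpha_{(a_1,\ldots,a_k)}\prod_{j\notin J}y_j^{a_j}$, and then absorb that integer factor into $q$. The only thing the paper does that you omit is to dispose of the degenerate case $J=\emptyset$ separately (where $Q$ has no variables and Lemma~\ref{le_multileibman} cannot literally be invoked), but since $P$ is then a single monomial this is an immediate triviality.
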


In short, we can still obtain Diophantine information on the coefficient $\alpha_{(a_1,\ldots,a_k)}$, provided that $x_1^{a_1}\cdots x_k^{a_k}$ is the only term in $P$  whose $x_j$-degree is equal to $a_j$ for each $j \in J$. 

\begin{proof}
If $J = \emptyset$, then $P$ is the monomial $P(x_1,\ldots,x_k) = \alpha_{(a_1,\ldots,a_k)}x_1^{a_1} \cdots x_k^{a_k}$, and the conclusion holds trivially.

Now suppose that $J$ is nonempty.
By the pigeonhole principle, there exists $x_j'$ for each $j \notin J$, such that for proportion $\geq \delta$ of the integer tuples $(x_j')_{j \in J} \in \prod_{j \in J} [N_j]$ we have $\|P(x_1',\ldots,x_k')\| \leq \ee$. By fixing these choices of $x_j'$ for $j \notin J$, we may view $P$ as a polynomial in the  variables $(x_j)_{j \in J}$ and apply Lemma~\ref{le_multileibman}. Our assumptions imply that  the coefficient of $\prod_{j\in J} x_j^{a_j}$ is $\alpha_{(a_1,\ldots,a_k)} \prod_{j \notin J} (x_j')^{a_j}$. Hence there exists a positive integer $q \ll \delta^{-O_{d,k}(1)}$ such that
\[ \left\|q \alpha_{(a_1,\ldots,a_k)} \prod_{j \notin J} (x_j')^{a_j} \right\| \ll \epsilon \delta^{-O_{d,k}(1)} \prod_{j \in J} N_j^{-a_j}. \]
The conclusion follows after replacing $q$ by $q \prod_{j \notin J} x_j^{a_j}$, since $x_j \leq N_j \leq \delta^{-C}$ for $j \notin J$.
\end{proof}

We are now ready to state and prove our type II estimates. Recall our notation $\|a\|_p$ from \eqref{eq49}.

\begin{proposition}[type II estimate with supremum]\label{type II-1/4}
Let $x \geq 2$ and $M \in [x^{1/4}, x^{3/4}]$. Let $1\leq D\leq M^{1/2-\ee}$ for some small $\ee > 0$. Let $s \geq 1$, $\Delta \geq 2$, and $0 < \delta < (\log x)^{-1}$. Then for a large constant $C = C(s,\Delta, \ee)$ we have
\[ \sum_{D\leq d\leq 2D} \max_{(c,d)=1} \sup_{\psi \in \Psi_s(\Delta, \delta^{-1}; \delta^C, x/D)} \Big| \sum_{\substack{x\leq mn \leq 2x \\ M\leq m\leq 2M \\ mn \equiv c\Mod{d}}} a_m b_n \psi((mn-c)/d) \Big| \ll \delta x \|a\|_2 \|b\|_4, \]
for any sequences $\{a_m\}, \{b_n\}$.
\end{proposition}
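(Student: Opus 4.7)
I follow the template of the proofs of Propositions~\ref{typeI} and~\ref{typeI2} but use two Cauchy--Schwarz applications in place of one, reducing matters to a fourth-moment bound on a kernel built from products of nilsequences. For each $d\in[D,2D]$ choose $c_d$, $\psi_d=\varphi_d(g_d(\cdot)\Gamma_d)$ realizing the sup and max, and a unimodular $\epsilon_d$ so that the left-hand side equals $S=\sum_{d,m,n}\epsilon_d a_m b_n 1_{mn\equiv c_d\Mod{d}}\psi_d((mn-c_d)/d)$. Cauchy--Schwarz in $m$ gives $|S|^2\leq M\|a\|_2^2\sum_m|W_m|^2=M\|a\|_2^2\sum_{n_1,n_2}b_{n_1}\overline{b_{n_2}}K(n_1,n_2)$, where
\[
K(n_1,n_2)=\sum_{m,d_1,d_2}\epsilon_{d_1}\overline{\epsilon_{d_2}}\, 1_{mn_1\equiv c_{d_1}\Mod{d_1}}\, 1_{mn_2\equiv c_{d_2}\Mod{d_2}}\, \psi_{d_1}\bigl((mn_1-c_{d_1})/d_1\bigr)\,\overline{\psi_{d_2}\bigl((mn_2-c_{d_2})/d_2\bigr)}.
\]
To access $\|b\|_4$, I bound $|b_{n_1}\overline{b_{n_2}}|\leq \tfrac12(|b_{n_1}|^2+|b_{n_2}|^2)$ and exploit the symmetry $K(n_1,n_2)=\overline{K(n_2,n_1)}$ (visible after renaming $d_1\leftrightarrow d_2$) to reduce to $\sum_{n_1}|b_{n_1}|^2\sum_{n_2}|K(n_1,n_2)|$, then apply Cauchy--Schwarz in $n_1$ (weighted by $|b_{n_1}|^2$) followed by Cauchy--Schwarz in $n_2$ to produce the factor $N\|b\|_4^2(\sum_{n_1,n_2}|K|^2)^{1/2}$. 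The problem thus reduces to the fourth-moment bound $\sum_{n_1,n_2}|K(n_1,n_2)|^2\leq \delta^4 x^2$.

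Expanding $|K|^2$ yields a sum over $8$-tuples $(m_1,m_2,n_1,n_2,d_1,d_2,d_3,d_4)$ subject to four congruences, weighted by the product nilsequence $\psi_{d_1}\otimes\overline{\psi_{d_2}}\otimes\overline{\psi_{d_3}}\otimes\psi_{d_4}$ on $\prod_{i=1}^{4}G_{d_i}/\Gamma_{d_i}$. For each fixed 4-tuple of moduli, CRT reparametrises $m_1,m_2$ by $\ell_1,\ell_2$ of length $\sim M/L$ with $L=\mathrm{lcm}(d_1,d_2)\sim D^2$ generically, and the inner sum becomes a Birkhoff-type average of a bounded-complexity nilsequence along a polynomial sequence in $(\ell_1,\ell_2,n_1,n_2)$. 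If the fourth-moment bound fails, then after pigeonholing on the $d_i$'s, the multivariable version of Theorem~\ref{thm:quant-leibman} produces a nontrivial horizontal character $\chi=\chi_1\oplus\chi_2\oplus\chi_3\oplus\chi_4$ on the product group, of modulus $\ll\delta^{-O(1)}$ and independent of the 4-tuple, such that $\chi\circ g$ has small smoothness norm on the box of scales $\sim(M/L,M/L,N,N)$ for a positive proportion of 4-tuples. Assuming $\chi_1$ is nontrivial, the monomial $\ell_1^s n_1^s$ of $\chi\circ g$ receives contributions only from the $\chi_1$-term and equals $(L/d_1)^s\alpha_{d_1,s}$, where $\alpha_{d_1,s}$ is the top coefficient of $\chi_1\circ g_{d_1}$; the smoothness-norm bound then translates, after pigeonholing over $d_2,d_3,d_4$, into $\|(x/d_1)^s\alpha_{d_1,s}\|\ll\delta^{-O(1)}$ for many $d_1\in[D,2D]$. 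An induction on lower $\ell_1^jn_1^k$ monomials (with $1\leq j\leq k\leq s$) receiving contributions only from $\chi_1$, supplemented by Lemma~\ref{le_multileibman2} to handle the dependence on the varying parameters $c_{d_1}/d_1$ and $m_{1,0}/d_1$, yields analogous bounds $\|q(x/d_1)^i\alpha_{d_1,i}\|\ll\delta^{-O(1)}$ for each $1\leq i\leq s$ with some $q=O_{s,\Delta,\ee}(1)$. Lemma~\ref{lem:converse-leibman} then contradicts the total $\delta^C$-equidistribution of $g_{d_1}$ on $[1,10x/d_1]$, provided $C=C(s,\Delta,\ee)$ is chosen sufficiently large.

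The main obstacle is this last Diophantine step: the coefficients of $\chi_1\circ g_{d_1}((mn_1-c_{d_1})/d_1)$ mix among the $\alpha_{d_1,i}$'s through powers of $c_{d_1}/d_1$ and $m_{1,0}/d_1$, so extracting clean bounds on each individual $\alpha_{d_1,i}$ requires carefully identifying which monomials $\ell_1^jn_1^k$ receive contributions from $\chi_1$ alone (as opposed to $\chi_2$ or $\chi_3$) and inducting from the top downwards, much as in the Diophantine arguments for Propositions~\ref{typeI} and~\ref{typeI2} but with many more free parameters. The hypothesis $D\leq M^{1/2-\ee}$ is used precisely through the inequality $L\sim D^2\leq M^{1-2\ee}$: this ensures that the scale $M/L$ of $\ell_1$ is at least $\delta^{-C}$, satisfying the length assumption in Lemma~\ref{le_multileibman2} and allowing the leading-monomial extraction to go through.
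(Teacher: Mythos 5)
Your opening Cauchy--Schwarz manipulations match the paper's: after two applications you arrive at the intermediate inequality $|S|^2\ll MN\|a\|_2^2\|b\|_4^2\bigl(\sum_{n_1,n_2}|K(n_1,n_2)|^2\bigr)^{1/2}$. But at this exact point the paper makes a crucial simplification that you miss: it applies the \emph{trivial bound} $|K(n_1,n_2)|\ll M$ (valid since $D^2\leq M^{1-2\ee}$ controls the number of pairs $(d_1,d_2)$ dividing $mn_1-c_{d_1}$, $mn_2-c_{d_2}$) to replace $\sum|K|^2$ by $M\sum|K|$. This reduces the whole proposition to a \emph{first-moment} bound $\sum_{n_1,n_2}|K(n_1,n_2)|\leq\delta^4 N^2M$, which is a five-variable sum $(m,n_1,n_2,d_1,d_2)$ involving a product of only two nilsequences — and the paper then simply cites~\cite[Lemma~3.3]{shao} for exactly this bound.

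You instead stay with the fourth moment $\sum|K|^2$, expanding it into an eight-variable sum $(m_1,m_2,n_1,n_2,d_1,\dots,d_4)$ with a fourfold product of nilsequences. This route is substantially harder and your sketch of the hard step has genuine gaps. First, there is no ``multivariable version of Theorem~\ref{thm:quant-leibman}'' in the paper; one has to apply the one-variable Leibman theorem for each fixed choice of the outer parameters and then pigeonhole, as the paper does in Proposition~\ref{typeII-lambda}. Second, the CRT reparametrisation $m_1=[d_1,d_2]\ell_1+b_1$ has $b_1$ determined by modular inverses of $n_1,n_2$ modulo $d_1,d_2$, so the resulting sequence is \emph{not} a polynomial in $(n_1,n_2)$; your claimed extraction of the monomials $\ell_1^s n_1^s$ etc.\ therefore does not directly separate out $\alpha_{d_1,s}$ without the careful top-down subtraction scheme that handles the non-polynomial $b$'s (compare the binomial-expansion argument after~\eqref{eq:alpha-beta} in Proposition~\ref{typeI}). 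Third, the statement ``$L=[d_1,d_2]\sim D^2$ generically'' glosses over the degenerate pairs with $(d_1,d_2)$ large; the paper's type~II arguments handle this with an explicit case split on $(v_1,v_2)\lessgtr\delta^{-2}$ (Proposition~\ref{typeII-lambda}), and you would need an analogous split on all $\binom{4}{2}$ gcds of your four moduli. You have correctly identified why $D\leq M^{1/2-\ee}$ is needed (to guarantee $M/[d_1,d_2]\geq\delta^{-C}$), but that is the only piece that survives cleanly into the eventual argument. I would recommend inserting the trivial-bound step, which both removes half your variables and exposes the fact that the remaining work is precisely the cited lemma from~\cite{shao}.
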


\begin{proof}
As in the proof of the type I estimates, we may assume that $\delta>x^{-1/C}$, since otherwise $\Psi_s(\Delta, \delta^{-1}; \delta^C, x/D)$ is empty. We need to bound the sum 
\[ \Sigma := \sum_{D\leq d\leq 2D}  \sum_{\substack{x\leq mn\leq 2x \\ M\leq m\leq 2M \\ mn \equiv c_d\Mod{d}}} a_m b_n \psi_d((mn-c_d)/d) \]
for any choice of $1 \leq c_d \leq d$ with $(c_d,d)=1$ and $\psi_d \in \Psi_s(\Delta,\delta^{-1}; \delta^C, x/D)$. This can be rewritten as
\[ \Sigma = \sum_{\substack{x\leq mn\leq 2x \\ M\leq m\leq 2M}} a_m b_n F(mn), \]
where the function $F$ is defined by
\[ F(k) := \sum_{\substack{D\leq d\leq 2D \\  k \equiv c_d\Mod{d}}} \psi_d((k-c_d)/d) \]
for $x \leq k \leq 2x$.
By the Cauchy--Schwarz inequality, we deduce
\[ \Sigma^2 \ll M \|a\|_2^2 \sum_{N/2\leq n_1,n_2\leq 2N}|b_{n_1}||b_{n_2}| \Big| \sum_{\substack{M\leq m\leq 2M \\ x/n_1\leq m\leq 2x/n_1 \\ x/n_2\leq m\leq 2x/n_2}} F(mn_1) \overline{F(mn_2)} \Big|, \]
where $N = x/M$. By Cauchy--Schwarz, this is further
\begin{align*}
\ll MN\|a\|_2^2 \|b\|_4^2 \Big(\sum_{N/2\leq n_1,n_2\leq 2N} \Big| \sum_{\substack{M\leq m\leq 2M \\ x/n_1\leq m\leq 2x/n_1  \\ x/n_2\leq m\leq 2x/n_2}} F(mn_1) \overline{F(mn_2)} \Big|^2 \Big)^{1/2} .  
\end{align*}
Using the trivial bound $O(M)$ for the inner sum over $m$, we get
\[ \Sigma^2 \ll M^{1/2} x \|a\|_2^2 \|b\|_4^2 \Big(\sum_{N/2\leq n_1,n_2\leq 2N} \Big| \sum_{\substack{M\leq m\leq 2M \\ x/n_1\leq m\leq 2x/n_1  \\ x/n_2\leq m\leq 2x/n_2}} F(mn_1) \overline{F(mn_2)} \Big| \Big)^{1/2}. \]
If the desired bound $\Sigma \ll \delta x \|a\|_2 \|b\|_4$ fails, then
\begin{equation}\label{eq:shao}
\sum_{N/2\leq n_1,n_2\leq 2N} \Big| \sum_{\substack{M\leq m\leq 2M \\ x/n_1\leq m\leq 2x/n_1  \\ x/n_2\leq m\leq 2x/n_2}} F(mn_1) \overline{F(mn_2)} \Big|  \geq \delta^4 N^2M. 
\end{equation}
We are now in a position to apply~\cite[Lemma 3.3]{shao}. For it to be applicable, we need the following two conditions:
\[ D^2 \leq \delta^C M, \ \ N^{-c} < \delta^4 < (\log D)^{-1} \] 
for some sufficiently large constant $C = C(s,\Delta)$ and sufficiently small constant $c = c(s,\Delta) > 0$. Both conditions are satisfied by our assumption on $\delta$. Hence~\cite[Lemma 3.3]{shao} implies that $\psi_d \notin \Psi_s(\Delta, \delta^{-1}; \delta^{O_{s,\Delta}(1)}, x/D)$ for some $d$, a contradiction.
\end{proof}

\begin{remark}
We briefly mention here how the sum~\eqref{eq:shao} is treated in~\cite[Lemma 3.3]{shao}, and where the key condition $D^2 \leq \delta^CM$ comes from. After expanding out using the definition of $F$, the sum over $m$ can be written as
\[ \sum_m F(mn_1) \overline{F(mn_2)} = \sum_{d_1, d_2} \sum_{m: mn_i \equiv c_{d_i}\Mod{d_i}} \psi_{d_1}((mn_1-c_{d_1})/d_1) \overline{\psi_{d_2}((mn_2-c_{d_2})/d_2)}. \]
The inner sum above is of length $\asymp M/D^2$ (assuming that $(d_1,d_2)=1$), and can be expressed in the form (after a change of variables)
\[ \sum_{m \in J(d_1,d_2,n_1,n_2)} \psi_{d_1,d_2,n_1,n_2}(m) \]
for an appropriate interval $J(d_1,d_2,n_1,n_2)$ and nilsequence $\psi_{d_1,d_2,n_1,n_2}$. One can apply the quantitative Leibman theorem to obtain recurrence property for the underlying polynomial sequences involved in $\psi_{d_1,d_2,n_1,n_2}$, for many $(d_1,d_2,n_1,n_2)$, and then proceed to solve the Diophantine problem that shows up. Since the length of this sum is $M/D^2$, these recurrence properties are only meaningful when $M/D^2 \geq \delta^{-C}$ for some large constant $C$. We also remark here that the original statement in~\cite[Lemma 3.3]{shao} has an error: the assumption $10Q^2 \leq L$ there should be $Q^2 \leq \delta^CL$ for some sufficiently large constant $C$ in terms of $s,d$. This is needed in the proof in order for Lemma~\ref{lem:recur} to be applicable.
\end{remark}

We shall then state and prove a type II estimate with well-factorable weights.

\begin{proposition}[Well-factorable type II estimate]\label{typeII-lambda}
Let $x \geq 2$, let $c \neq 0$ be a fixed integer, and let $M \in [x^{1/2}, x^{3/4}]$. Let $\ee > 0$ be a small constant, and let $(\lambda_d)$ be a well-factorable sequence of level $x^{1/2-\ee}$.  Let $s \geq 1$, $\Delta \geq 2$, $0 < \delta < 1/2$. Then for a large constant $C = C(s,\Delta,\ee)$ and $\psi \in \Psi_s(\Delta,\delta^{-1};\delta^C,x)$, we have
\begin{equation}\label{eq-wtIIb} 
\Big|\sum_{\substack{d \leq x^{1/2-\ee} \\ (d,c)=1}}  \lambda_d \sum_{\substack{x\leq mn\leq 2x \\ M\leq m\leq 2M \\ mn \equiv c\Mod{d}}} a_m b_n \psi(mn)\Big|  \ll \delta x \|a\|_2 \|b\|_4 (\log x)^3,
\end{equation}
for any sequences $\{a_m\}, \{b_n\}$.
\end{proposition}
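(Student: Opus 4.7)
My plan is to follow the general template of the proof of Proposition~\ref{type II-1/4}, but to exploit the well-factorable structure of $(\lambda_d)$ in order to extend the range of $d$ from $M^{1/2-\ee}$ to $x^{1/2-\ee}$, along the lines of Matom\"aki's treatment of the linear-exponential case~\cite{matomaki-bombieri}.

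First, I would use the well-factorability to write $\lambda_d = (\beta*\gamma)(d)$ with $\beta$ supported on $[1,R]$, $\gamma$ supported on $[1,S]$, and $RS = x^{1/2-\ee}$. The pair $(R, S)$ will be chosen based on $M$, with $S$ slightly below $M^{1/2-\ee/2}$ so that the modulus $d_2$ alone sits in the type II regime from Proposition~\ref{type II-1/4}, and $R = x^{1/2-\ee}/S$ a fixed positive power of $x$. By a M\"obius inversion one may further restrict to $(d_1, d_2) = 1$. Setting $F(k) := \sum_{d_1 d_2 \mid k - c} \beta(d_1)\gamma(d_2)$ with $d_1, d_2$ in the above ranges, the main sum takes the compact form $\Sigma = \sum_{m,n} a_m b_n \psi(mn) F(mn)$.

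I would then apply Cauchy--Schwarz in $m$ to eliminate $a_m$ (producing a factor $M\|a\|_2^2$) and then in the pair $(n, n')$ to convert $\|b\|_2$ to $\|b\|_4$, exactly as in the proof of Proposition~\ref{type II-1/4}. Expanding $F(mn)\overline{F(mn')}$ through its defining sum over $(d_1, d_1', d_2, d_2')$ and using CRT, the problem reduces to bounding
\[ \sum_{\substack{n, n' \\ d_1, d_1', d_2, d_2'}} \beta(d_1)\overline{\beta(d_1')}\gamma(d_2)\overline{\gamma(d_2')} \Bigl| \sum_{m \in I} \psi(mn)\overline{\psi(mn')} \Bigr|, \]
where $I = I_{n,n',d_1,d_1',d_2,d_2'}$ is an arithmetic progression of common difference $\mathrm{lcm}(d_1d_2, d_1'd_2')$ and length $\asymp M/\mathrm{lcm}(d_1d_2, d_1'd_2')$.

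The main obstacle is controlling the inner nilsequence correlation on the off-diagonal configurations (those for which the $m$-range has length at least $\delta^{-C}$). For such tuples, assuming the desired bound fails, I would invoke Theorem~\ref{thm:quant-leibman} on the polynomial sequence $m \mapsto g(mn)g(mn')^{-1}$ to extract, on a $\gg \delta^{O(1)}$-proportion of tuples, a nontrivial horizontal character $\chi$ with $\|\chi\| \ll \delta^{-O(1)}$ and a smoothness-norm bound on $\chi\circ g(mn) - \chi\circ g(mn')$. Writing $\chi\circ g(n) = \sum_{i=0}^s \beta_i n^i$, the recurrence becomes a polynomial Diophantine condition in the six free variables $n, n', d_1, d_1', d_2, d_2'$; invoking the multi-variable Weyl recurrence (Lemmas~\ref{le_multileibman} and~\ref{le_multileibman2}) by induction on the degree then yields $\|kq\beta_i\| \ll x^{-i}\delta^{-O(1)}$ for each $1 \leq i \leq s$ with $k, q \ll \delta^{-O(1)}$. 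This contradicts the $\delta^C$-equidistribution hypothesis on $\psi$ via Lemmas~\ref{lem:smoothness-norm} and~\ref{lem:converse-leibman}. The crucial advantage over Proposition~\ref{type II-1/4} is that the freedom to pair $d_1$ with $n$ (forming an effective variable of length $\asymp RN$) lifts the scale on which the Weyl recurrence operates, and this is precisely what enables the level $x^{1/2-\ee}$ rather than $M^{1/2-\ee}$; the $(\log x)^3$ loss in the bound accumulates naturally from dyadic decompositions over $d_1$ and $d_2$ and a divisor-sum estimate in the M\"obius inversion step.
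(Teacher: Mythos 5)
Your proposed Cauchy--Schwarz step is where the plan breaks down. You suggest Cauchy--Schwarz in $m$ alone, then expanding $F(mn)\overline{F(mn')}$ over $(d_1,d_1',d_2,d_2')$, which yields an inner $m$-sum in an arithmetic progression of common difference $\operatorname{lcm}(d_1d_2,\,d_1'd_2')$. Since $d_1d_2$ and $d_1'd_2'$ can both be of size $\asymp x^{1/2-\ee}$ and are generically almost coprime, this common difference is typically $\gg x^{1-2\ee}$, which \emph{exceeds} $M \le x^{3/4}$; the $m$-sum is then empty or has bounded length, so there is no room to run the Leibman/Weyl machinery at all. The tuples on which the $m$-range actually has length $\ge \delta^{-C}$ form a thin set (those with $\gcd(d_1d_2, d_1'd_2') \gtrsim x^{1-3\ee}/M$), and the complementary mass cannot be dismissed. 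This is precisely the obstruction that produces the $M^{1/2-\ee}$ level barrier in Proposition~\ref{type II-1/4}; your outline reproduces that barrier rather than beating it. The vague remark about ``pairing $d_1$ with $n$ to form a variable of length $RN$'' does not resolve this, because in your setup $d_1$ and $d_1'$ are already doubled inside the same $m$-congruence.

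The paper resolves this with a structural move you are missing: it does \emph{not} square both factors of the well-factorable weight. Writing $\lambda_d = \sum_{d=d_1d_2}\gamma_{d_1}\theta_{d_2}$ with $d_1 \le U := x^{1-\ee}/M$ and $d_2 \le V := Mx^{-1/2}$, the paper first applies the triangle inequality over the large factor $u = d_1$ (the sign of $\gamma_{d_1}$ and the $m$-dependent residue class modulo $u$ are absorbed by a $\max$ over $(c,u)=1$, which is exactly what Proposition~\ref{typeII-lambda-general} allows), and then applies Cauchy--Schwarz jointly over the pair $(u,m)$. This keeps $u$ unduplicated; only $v = d_2$ gets paired into $(v_1,v_2)$. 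The inner $m$-sum then has common difference $u[v_1,v_2] \ll UV^2 = Mx^{-\ee}$, so it always has length $\gg x^{\ee}$. The factorization parameters $U, V$ are chosen precisely to make $UV = x^{1/2-\ee}$ (full level) while $UV^2 \le Mx^{-\ee}$ (long $m$-sum) both hold simultaneously. Your proposed split $S \approx M^{1/2-\ee/2}$, $R = x^{1/2-\ee}/S$ violates the analogue $RS^2 \le Mx^{-\ee}$ (since $RS^2 = x^{1/2-\ee}S \gg M x^{-\ee}$ for $S \gg M^{1/2}x^{-1/2}$), so even granting the correct Cauchy--Schwarz structure, those parameters would not work. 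The rest of your sketch --- the multi-variable Weyl recurrence applied after a substitution $n_2 = n_1 + u\ell z$, followed by Lemmas~\ref{le_multileibman}, \ref{le_multileibman2}, \ref{lem:smoothness-norm}, \ref{lem:converse-leibman} --- does match the paper's Diophantine analysis (in five variables $(n_1,z,u,v_1',v_2')$, not six), but it cannot get off the ground without the asymmetric Cauchy--Schwarz step.
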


 By the well-factorability of $(\lambda_d)_{d \leq x^{1/2-\varepsilon}}$, there exist $(\gamma_d), (\theta_d)$ with $|\gamma_d|, |\theta_d|\leq 1$ such that $\lambda_d=\sum_{d=d_1d_2}\gamma_{d_1}\theta_{d_2}$, and with $(\gamma_d)$ and $(\theta_d)$ being supported on $[1,U]$ and $[1,V]$, respectively, where 
\begin{align}\label{eq12}
U=x^{1-\varepsilon}/M,\quad V=Mx^{-\frac{1}{2}}.   
\end{align}
Therefore, Proposition \ref{typeII-lambda} is an immediate consequence of the following more general statement.

\begin{proposition}\label{typeII-lambda-general}
Let $x \geq 2$, let $c' \neq 0$ be a fixed integer, and let $M \in [x^{1/2}, x^{3/4}]$. Let $\ee > 0$ be a small constant, and let $U,V\geq 1$ satisfy $U\leq x^{1-\varepsilon}/M$, $UV\leq x^{1/2-\varepsilon}$, and $UV^2\leq Mx^{-\varepsilon}$.  Let $s \geq 1$, $\Delta \geq 2$, $0 < \delta < 1/2$. Then for a large constant $C = C(s,\Delta,\ee)$ and $\psi \in \Psi_s(\Delta,\delta^{-1};\delta^C,x)$, we have
\begin{equation*}
\sum_{U\leq u\leq 2U}\sum_{\substack{V\leq v\leq 2V\\(v,c'u)=1}}  \max_{(c,u)=1}\Big|\sum_{\substack{x\leq mn\leq 2x \\ M\leq m\leq 2M \\ mn \equiv c\Mod{u}\\mn\equiv c'\Mod{v}}} a_m b_n \psi(mn)\Big|  \ll \delta x \|a\|_2 \|b\|_4 \log x,
\end{equation*}
for any sequences $\{a_m\}, \{b_n\}$.
\end{proposition}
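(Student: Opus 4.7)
The proof will follow the general paradigm of the preceding type II estimates (Propositions~\ref{type II-1/4} and~\ref{type II-1/3}), in the style of Matom\"aki~\cite{matomaki-bombieri} (who handled the case $s=1$), with extra bookkeeping required to exploit the well-factorable product structure $d=uv$. First, by choosing phases $\epsilon_{u,v} \in \C$ with $|\epsilon_{u,v}|=1$ and residues $c_{u,v} \pmod u$ with $(c_{u,v},u)=1$ attaining the maxima, I would convert the quantity into a signed bilinear form $\Sigma = \sum_{u,v,m,n} \epsilon_{u,v} a_m b_n \psi(mn) \mathbf{1}[\cdots]$. Since $(v,c'u)=1$ forces $(u,v)=1$, the two congruences $mn\equiv c_{u,v}\pmod u$ and $mn\equiv c'\pmod v$ combine by the Chinese remainder theorem into a single congruence modulo $uv$. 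I would then apply two Cauchy--Schwarz inequalities in the BFI style---one on the $m$-variable to bring in $\|a\|_2$, and a second on the outer index $(u_i,v_i,n_i)$ to bring in $\|b\|_2 \le \|b\|_4$---reducing the estimate to a mean-square bound of the form
\[
\Xi := \sum_{u_1,v_1,n_1,u_2,v_2,n_2} |T|^2 \ll \delta^{O(1)}\,\frac{M^2N^2}{(UV)^2}\,(\log x)^{O(1)},
\]
where $N=x/M$ and $T = \sum_m \psi(mn_1)\overline{\psi(mn_2)}\mathbf{1}[\cdots]$ is the inner $m$-sum, supported on an arithmetic progression of length $\asymp M/[u_1v_1,u_2v_2]$.

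Expanding $|T|^2$ via a second copy $m_2$ of the $m$-variable yields an eight-fold sum over $(u_i,v_i,n_i,m_i)_{i=1,2}$ of the Gowers $U^2$-type quadruple product $\psi(m_1n_1)\overline{\psi(m_1n_2)\psi(m_2n_1)}\psi(m_2n_2)$, restricted by the four congruences $m_in_j \equiv c_{u_j,v_j}\pmod{u_jv_j}$. The diagonal contributions (where various combinations of $u_i,v_i,n_i,m_i$ coincide) require delicate handling, and it is here that the three size hypotheses $U \le x^{1-\varepsilon}/M$, $UV\le x^{1/2-\varepsilon}$, and $UV^2\le Mx^{-\varepsilon}$ are used in concert to ensure the diagonal fits within the target. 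For the off-diagonal contribution, assuming the desired estimate fails, I would invoke the quantitative Leibman theorem (Theorem~\ref{thm:quant-leibman}) for the inner $m_1$-sum on the product nilmanifold $(G/\Gamma)^2$ to produce, after pigeonholing, a nontrivial horizontal character $\chi$ on $G/\Gamma$ with $\|\chi\|\ll\delta^{-O(1)}$ such that the polynomial $m_1\mapsto \chi\circ\bigl(g(m_1n_1)g(m_1n_2)^{-1}\bigr)$ has small smoothness norm for a dense set of tuples.

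Writing $\chi\circ g(n) = \sum_{i=1}^s \alpha_i n^i$, this smoothness-norm bound translates into the Diophantine condition
\[
\bigl\|q\,\alpha_i(n_1^i-n_2^i)\bigr\| \ll \bigl(M/[u_1v_1,u_2v_2]\bigr)^{-i}\,\delta^{-O(1)} \qquad (1\le i\le s)
\]
for a dense set of tuples $(u_1,v_1,u_2,v_2,n_1,n_2)$. The main technical obstacle is the ensuing multi-variable Diophantine problem: from these relations one must extract bounds of the form $\|q'\alpha_i\| \ll x^{-i}\delta^{-O(1)}$ for a single integer $q'\ll\delta^{-O(1)}$, which via Lemma~\ref{lem:converse-leibman} contradicts the total $\delta^C$-equidistribution of $\psi$. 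This is carried out by descending induction on $i=s,s-1,\ldots,1$, iteratively invoking Lemmas~\ref{le_multileibman} and~\ref{le_multileibman2}. At each step, the size constraints ensure that the variables whose range exceeds $\delta^{-C}$ already suffice to apply Lemma~\ref{le_multileibman}, while the remaining ``short'' variables enter only through monomial-isolated coefficients and can be handled via Lemma~\ref{le_multileibman2}. Chasing through the induction closes the argument.
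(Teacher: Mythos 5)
The overall architecture you describe --- CRT to fold the congruences, two Cauchy--Schwarz steps, Leibman on a product nilmanifold, pigeonhole on the character, and a descending Diophantine induction fed by Lemmas~\ref{le_multileibman} and~\ref{le_multileibman2} --- is indeed the paper's paradigm. But there is a structural error at the very first step that undermines everything downstream: you have doubled the variable $u$.

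In your $\Xi := \sum_{u_1,v_1,n_1,u_2,v_2,n_2} |T|^2$, the inner $m$-sum is supported on an arithmetic progression modulo $[u_1 v_1, u_2 v_2]$, which for generic (pairwise coprime) tuples is $\asymp U^2 V^2$. Since $UV \le x^{1/2-\varepsilon}$, we have $U^2V^2 \le x^{1-2\varepsilon}$, while $M$ can be as small as $x^{1/2}$; so the progression length $M/U^2V^2$ can be as small as $x^{-1/2+2\varepsilon}$, i.e.\ the inner sum is generically \emph{empty}, or has $O(1)$ terms. Your own stated Diophantine condition $\|q\alpha_i(n_1^i-n_2^i)\| \ll (M/[u_1v_1,u_2v_2])^{-i}\delta^{-O(1)}$ then has right-hand side $\gg (U^2V^2/M)^i \ge (Ux^{-\varepsilon})^i$, which is $\gg 1$ whenever $U > x^{\varepsilon}$, so the condition is vacuous exactly in the regime the proposition is designed to cover. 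The paper avoids this by applying the first Cauchy--Schwarz over the pair $(u,m)$ \emph{jointly}, which leaves a single $u$ in the resulting sum $\sum_u \sum_{v_1,v_2,n_1,n_2} |\cdots|^2$; the modulus of the inner $m$-sum is then $u[v_1,v_2] \ll UV^2 \le Mx^{-\varepsilon}$, so the $m$-range always has length $\ge x^{\varepsilon}$ and Leibman is meaningfully applicable. This is precisely the reason well-factorability is used: only one of $u,v$ can be doubled, and it must be $v$. Your version discards that structure.

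Two secondary issues. First, the paper must also introduce the constraint $n_1 \equiv n_2 \pmod{u(v_1,v_2)}$, split off the diagonal $n_1=n_2$ and the large-gcd cases $(v_1,v_2)>\delta^{-2}$ (the pieces $\Sigma_0, \Sigma_0', \Sigma_1$), and only then bound the genuine off-diagonal piece $\Sigma_2$ via equidistribution; your sketch folds this into a remark about "delicate diagonal handling," but these combinatorial reductions are where the hypotheses $U \le x^{1-\varepsilon}/M$, $UV\le x^{1/2-\varepsilon}$, $UV^2\le Mx^{-\varepsilon}$ are actually consumed. Second, your character analysis implicitly takes $\chi_2 = -\chi_1$ (you work with $\chi\circ(g(m n_1)g(m n_2)^{-1})$ on $G$), but the horizontal character produced by Theorem~\ref{thm:quant-leibman} on the product $G\times G$ has the form $(\chi_1,\chi_2)$ with $\chi_1,\chi_2$ independent; correspondingly the Diophantine condition involves two independent sets of coefficients $\alpha_j,\beta_j$ (the paper's $\gamma_j = \alpha_j(uv_1v_2 n_1/\ell)^j + \beta_j(uv_1v_2 n_2/\ell)^j + \cdots$), not just $\alpha_i(n_1^i-n_2^i)$. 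The correct Diophantine analysis isolates $\alpha_s+\beta_s$ and $\beta_s$ separately via the change of variables $n_2 = n_1 + u\ell z$, $v_i = \ell v_i'$, which your outline does not reproduce.
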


\begin{proof}
As in the proof of the type I estimates, we suppress the dependence of the implied constants on $s,\Delta,\ee$ in the proof, and we may assume that $\delta > x^{-1/C}$. We may also assume that $U\geq x^{\varepsilon/2}$, since otherwise $UV^2\leq Mx^{-\varepsilon}$ implies $UV\leq M^{1/2}x^{-\varepsilon/4},$ in which case the claim follows from Proposition \ref{type II-1/4}.

By replacing absolute values with an arbitrary unimodular sequence, the left-hand side of~\eqref{eq-wtIIb} can be written as
\begin{align}\label{eq3}
\sum_{U\leq u\leq 2U}\sum_{\substack{V\leq v\leq 2V\\(v,c'u)=1}}\theta_{u,v}\sum_{\substack{x\leq mn\leq 2x\\M\leq m\leq 2M\\mn\equiv c_u\Mod{u}\\mn\equiv c'\Mod{v}}}a_m b_n \psi(mn) :=\Sigma  
\end{align}
for some $|\theta_{u,v}|\leq 1$ and some $c_u$ coprime to $u$. By assumption, we have
\begin{align}\label{eq13}
UV^2\leq Mx^{-\varepsilon},\quad x^{\varepsilon/2}\leq U\leq x^{1-\varepsilon}/M. 
\end{align}
 By the triangle inequality, we have
\begin{align*}
\Sigma\ll  \sum_{U\leq u\leq 2U}\sum_{M\leq m\leq 2M}|a_m| \Big|\sum_{\substack{V\leq v\leq 2V\\(v,c'u)=1}}\theta_{u,v}\sum_{\substack{x/m\leq n\leq 2x/m\\mn\equiv c_u\Mod{u}\\mn\equiv c'\Mod{v}}}b_n \psi(mn)\Big|.  
\end{align*}
Let $N:=x/M$. By applying the Cauchy--Schwarz inequality and expanding, we get
\begin{align*}
\Sigma^2\ll  MU \|a\|_2^2 \sum_{U\leq u\leq 2U} \sum_{\substack{V\leq v_1,v_2\leq 2V\\(v_1,c'u)=(v_2,c'u)=1}}\theta_{u,v_1}\overline{\theta_{u,v_2}}\sum_{N/2\leq n_1,n_2\leq 2N}b_{n_1}\overline{b_{n_2}}\sum_{\substack{x\leq mn_1,mn_2\leq 2x\\M\leq m\leq 2M\\mn_i\equiv c_u \Mod{u}\\ mn_i\equiv c'\Mod{v_i}}} \psi(mn_1) \overline{\psi(mn_2)}. 
\end{align*}
Using the triangle inequality, the trivial inequality $|b_{n_1}b_{n_2}| \ll |b_{n_1}|^2 + |b_{n_2}|^2$, and symmetry, we have
\begin{align*}
\Sigma^2 \ll  MU \|a\|_2^2 \sum_{U\leq u\leq 2U} \sum_{\substack{ V\leq v_1,v_2\leq 2V\\(v_1,c'u)=(v_2,c'u)=1}}\sum_{\substack{N/2\leq n_1,n_2\leq 2N \\ n_1 \equiv n_2\Mod{u(v_1,v_2)}}}|b_{n_1}|^2\Big|\sum_{\substack{x \leq mn_1,mn_2\leq 2x \\M\leq m\leq 2M\\mn_i\equiv c_u \Mod{u}\\ mn_i\equiv c'\Mod{v_i}}} \psi(mn_1) \overline{\psi(mn_2)} \Big|,
\end{align*}
where we are free to add the constraint $n_1\equiv n_2 \Mod{u(v_1,v_2)}$ since otherwise the inner sum over $m$ is empty. 
Note that the inner sum over $m$ is bounded trivially by $O(M/u[v_1,v_2]) = O(M(v_1,v_2)/UV^2)$, since $u[v_1,v_2] \ll UV^2\leq  Mx^{-\ee}$ by~\eqref{eq13}.
Separating out the terms with $n_1=n_2$ and with $(v_1,v_2) > \delta^{-2}$, and applying the Cauchy--Schwarz inequality, we get
\[ \Sigma^2 \ll MU \|a\|_2^2 \left(\Sigma_0 +\Sigma_0' + \Sigma_1^{1/2} \Sigma_2^{1/2}\right), \]
where
\begin{align*} 
\Sigma_0 &= \sum_{U\leq u\leq 2U} \sum_{\substack{ V\leq v_1,v_2\leq 2V\\(v_1,c'u)=(v_2,c'u)=1}}\sum_{N/2\leq n\leq 2N} |b_n|^2 \sum_{\substack{x \leq mn\leq 2x \\M\leq m\leq 2M\\mn\equiv c_u \Mod{u}\\ mn\equiv c'\Mod{v_i}}} |\psi(mn)|^2, \\
\Sigma_0' &=  \sum_{U\leq u\leq 2U} \sum_{\substack{ V\leq v_1,v_2\leq 2V\\(v_1,c'u)=(v_2,c'u)=1\\ (v_1,v_2)>\delta^{-2}}}\sum_{\substack{N/2\leq n_1,n_2\leq 2N \\ n_1 \equiv n_2\Mod{u(v_1,v_2)}\\n_1 \neq n_2}}|b_{n_1}|^2\Big|\sum_{\substack{x \leq mn_1,mn_2\leq 2x \\M\leq m\leq 2M\\mn_i\equiv c_u \Mod{u}\\ mn_i\equiv c'\Mod{v_i}}} \psi(mn_1) \overline{\psi(mn_2)} \Big|
\\
\Sigma_1 &=  \sum_{U\leq u\leq 2U} \sum_{\substack{ V\leq v_1,v_2\leq 2V\\(v_1,c'u)=(v_2,c'u)=1\\ (v_1,v_2) \leq \delta^{-2}}}\sum_{\substack{N/2\leq n_1,n_2\leq 2N\\ n_1\equiv n_2\Mod{u(v_1,v_2)}\\ n_1 \neq n_2}} |b_{n_1} |^4, 
\\
 \Sigma_2 &=
  \sum_{U\leq u\leq 2U} \sum_{\substack{ V\leq v_1,v_2\leq 2V\\(v_1,c'u)=(v_2,c'u)=1\\ (v_1,v_2) \leq \delta^{-2}}}\sum_{\substack{N/2\leq n_1,n_2\leq 2N\\ n_1\equiv n_2\Mod{u(v_1,v_2)}\\ n_1 \neq n_2}}\Big|\sum_{\substack{x \leq mn_1,mn_2\leq 2x \\M\leq m\leq 2M\\mn_i\equiv c_u \Mod{u}\\ mn_i\equiv c'\Mod{v_i}}} \psi(mn_1) \overline{\psi(mn_2)} \Big|^2.
  \end{align*}
For $\Sigma_0$, using the trivial bound for the inner sum over $m$ we get
\[ \Sigma_0 \ll \frac{M}{UV^2} \sum_{U\leq u\leq 2U} \sum_{\substack{ V\leq v_1,v_2\leq 2V\\(v_1,c'u)=(v_2,c'u)=1}} (v_1,v_2) \sum_{N/2\leq n\leq 2N} |b_n|^2.  \]
Since
\[ \sum_{V \leq v_1,v_2 \leq 2V} (v_1,v_2) \leq \sum_{v_0 \leq 2V} \sum_{\substack{V \leq v_1,v_2 \leq 2V \\ v_0 \mid v_1, v_0 \mid v_2}} v_0 \ll \sum_{v_0 \leq 2V} \frac{V^2}{v_0} \ll V^2 \log x,  \]
we have
\[ \Sigma_0 \ll MN \|b\|_2^2 \log x = x \|b\|_2^2 \log x. \]
For $\Sigma_0'$,  using the trivial bound again for the inner sum over $m$, we see that the contribution to $\Sigma_0'$ from those terms with $(v_1,v_2) = v_0$ for a given $v_0 \leq 2V$ is
\[ \ll  U \left(\frac{V}{v_0}\right)^2 \left(\sum_{n_1} |b_{n_1}|^2\right) \frac{N}{Uv_0} \cdot \frac{Mv_0}{UV^2} = \frac{Nx}{Uv_0^2} \|b\|_2^2.  \]
Summing this over all $v_0 \geq\delta^{-2}$ gives
\[ \Sigma_0' \ll \delta^2 \frac{Nx}{U} \|b\|_2^2. \]
For $\Sigma_1$ we clearly have
\[ \Sigma_1 \ll UV^2 \left(\sum_n |b_n|^4\right) \frac{N}{U} = V^2N^2 \|b\|_4^4. \]
Thus it suffices to show that
\begin{align}\label{eq4} 
\Sigma_2 \ll \frac{\delta^4 x^2}{U^2V^2}. 
\end{align}
Indeed, once~\eqref{eq4} is established, we can combine it with our bounds for $\Sigma_0, \Sigma_0', \Sigma_1$ to get
\begin{align*} 
\Sigma^2 &\ll  MU \|a\|_2^2 \left(x \|b\|_2^2\log x + \delta^2\frac{Nx}{U} \|b\|_2^2  + VN\|b\|_4^2 \cdot \frac{\delta^2x}{UV}\right) \\
&\ll (MUx + \delta^2x^2) (\log x) \|a\|_2^2 \|b\|_4^2.  
\end{align*}
Since $MUx \leq x^{2-\ee} \leq \delta^2x^2$ by~\eqref{eq13} and our assumption on $\delta$, this gives the desired bound for $\Sigma$.

Now we turn to bounding $\Sigma_2$. 
Let $\Gamma$ be the set of $(n_1,n_2,u,v_1,v_2)$ with
\begin{align*} &U\leq u\leq 2U, \ \ V\leq v_1,v_2\leq 2V, \ \ N/2\leq n_1,n_2\leq 2N, \ \ n_1 \neq n_2, \\ 
&(u,c)=(v_1,c')=(v_2,c')=1,\ \ (v_1,v_2) \leq \delta^{-2}, \ \ n_1\equiv n_2 \Mod{u(v_1,v_2)} 
\end{align*}
such that
\begin{equation}\label{eq5-wtII}
\Big|\sum_{\substack{x\leq mn_1,mn_2\leq 2x\\M\leq m\leq 2M\\mn_i\equiv c_u \Mod{u}\\ mn_i \equiv c'\Mod{v_i}}} \psi(mn_1) \overline{\psi(mn_2)} \Big| \geq \frac{\delta^2 M}{UV^2}. 
\end{equation}
Suppose, for the purpose of contradiction, that~\eqref{eq4} fails. Then we must have
\[
|\Gamma| \geq \delta^{O(1)} V^2N^2.
\]
 For $1\leq \ell\leq \delta^{-2}$, let $\Gamma_{\ell}$ be the set of $(n_1,n_2,u,v_1,v_2)\in \Gamma$ such that $(v_1,v_2)=\ell$. By the pigeonhole principle we have 
 \begin{equation}\label{eq-Gamma-wtII}
 |\Gamma_{\ell}|\gg \delta^{O(1)}V^2N^2
 \end{equation}
  for some $\ell$. Fix such an $\ell$.

Let $\psi = \varphi \circ g$. For $(n_1,n_2,u,v_1,v_2) \in \Gamma_{\ell}$, note that the congruence condition in the sum in~\eqref{eq5-wtII} is equivalent to $m \equiv B\Mod{uv_1v_2/\ell}$ for some $B = B(n_1,n_2,u,v_1,v_2) \in [1, uv_1v_2/\ell]$. Let $h = h_{n_1,n_2,u,v_1,v_2}\colon \Z \to G \times G$ be the polynomial sequence defined by
\[ h(n) := (g(uv_1v_2/\ell\cdot n_1n+n_1B),  g(uv_1v_2/\ell\cdot n_2n + n_2B)), \]
and $\varphi \otimes \overline{\varphi} \colon G \times G \rightarrow \C$ be the function defined by
\[ \varphi \otimes \overline{\varphi}(x_1, x_2) := \varphi(x_1) \overline{\varphi(x_2)} \]
for $x_1,x_2 \in G$. After a change of variables,~\eqref{eq5-wtII} implies that
\[ \Big|\sum_{n \in I} (\varphi \otimes \overline{\varphi}) \circ h(n)\Big|  \gg \frac{\delta^2 M}{UV^2}, \]
where $I=I_{u,v_1,v_2,n_1,n_2}$ is an interval contained in $[1, 2
\ell M/UV^2]$ and $|I| \asymp \ell M/UV^2$. Since $\ell \leq \delta^{-2}$, we have
\[ \frac{\delta^2 M}{UV^2} \gg \delta^{O(1)} |I|. \]
Hence the polynomial sequence $(h(n))_{n \leq 2\ell M/UV^2}$ fails to be totally $\delta^{O(1)}$-equidistributed, and hence by Theorem~\ref{thm:quant-leibman}, there is a nontrivial horizontal character $\chi = \chi(n_1,n_2,u,v_1,v_2)$ on $G \times G$ with $\|\chi\| \ll \delta^{-O(1)}$, such that
\begin{equation}\label{eq6-wtII} 
\|\chi \circ h\|_{C^{\infty}(2\ell M/UV^2)} \ll \delta^{-O(1)}.
\end{equation}
After pigeonholing (which replaces $\Gamma_{\ell}$ by a subset of it, but the lower bound in \eqref{eq-Gamma-wtII} remains valid with potentially larger implied constants), we may assume that $\chi$ is independent of $n_1,n_2,u,v_1,v_2$. We can write $\chi = (\chi_1,\chi_2)$, where $\chi_1,\chi_2$ are horizontal characters on $G$ with $\|\chi_i\| \ll \delta^{-O(1)}$, at least one of which, say $\chi_1$, is nontrivial, so that $\chi(x_1,x_2) = \chi_1(x_1)+\chi_2(x_2)$ for $x_1,x_2 \in G$. If we write
\[ \chi_1\circ g(n) = \sum_{j=0}^{s}\alpha_j n^j, \ \ \chi_2 \circ g(n) = \sum_{j=0}^{s}\beta_j n^j, \]
then we can write
\[ 
\begin{split}
\chi \circ h(n) &= \sum_{j=0}^s \alpha_j (uv_1v_2/\ell\cdot n_1n+n_1B)^j + \sum_{j=0}^s \beta_j (uv_1v_2/\ell\cdot n_2n+n_2B)^j = \sum_{j=0}^s \gamma_j n^j \\
\end{split}
\]
for some coefficients  $\gamma_j = \gamma_j(n_1,n_2,u,v_1,v_2)$. In particular,  
\begin{align*}
\gamma_s(n_1,n_2,u,v_1,v_2) = \alpha_s (u v_1v_2 n_1/\ell)^s + \beta_s (uv_1v_2n_2/\ell)^s .   
\end{align*}
It follows from~\eqref{eq6-wtII} that
\begin{align}\label{eq28}
\|\gamma_j(n_1,n_2,u,v_1,v_2)\|\ll \frac{\delta^{-O(1)}}{|I|^j}    
\end{align}
for each $1 \leq j \leq s$ and $(n_1,n_2,u,v_1,v_2) \in \Gamma_{\ell}$. We are now left with the problem of deducing the Diophantine properties of the $\alpha_j$'s and $\beta_j$'s from those of the $\gamma_j$'s, which will eventually contradict the equidistribution assumption on $g$.

Consider the $5$-variable polynomial
\[ Q(n_1,z,u, v_1', v_2')= \gamma_s(n_1,n_1+u\ell z,u, \ell v_1', \ell v_2'). \]
By \eqref{eq28}, we have
\[
\|Q(n_1,z,u,v_1',v_2')\|\ll \frac{\delta^{-O(1)}}{|I|^s} \ll \delta^{-O(1)}\left(\frac{UV^2}{M}\right)^s
\]
for $\gg \delta^{O(1)}V^2N^2$ values of $(n_1,z,u,v_1',v_2')$, with $n_1 \asymp N$, $z\asymp N/(\ell U)$, $u \asymp U$, $v_1',v_2' \asymp  V/\ell$. Note that
\begin{itemize}
\item the sizes of $n_1,z$ are both $\geq \delta^{O(1)}x^{\ee/2} \geq \delta^{-O(1)}$ by~\eqref{eq13};
\item the term $(\alpha_s+\beta_s)\ell^s \cdot (uv_1'v_2'n_1)^s$ is the only one appearing in $Q(n_1,z,u,v_1',v_2')$ whose degrees in the $n_1, z$ variables are $s, 0$, respectively;
\item the term $\beta_s \ell^{2s} \cdot u^{2s}(v_1'v_2'z)^s$ is  the only one appearing in $Q(n_1,z,u,v_1',v_2')$ whose degrees in the $n_1, z$ variables are $0, s$, respectively.
\end{itemize}

Hence Lemma~\ref{le_multileibman2} is applicable to $Q$ and implies that there exists $1\leq q\ll \delta^{-O(1)}$ such that
\begin{align*}
\|q \ell^s (\alpha_s+\beta_s)\|&\ll \delta^{-O(1)}\left(\frac{UV^2}{M}\right)^s \frac{1}{(UV^2N)^s}
\ll \delta^{-O(1)}x^{-s},\\
\|q \ell^{2s} \beta_s\|&\ll \delta^{-O(1)}\left(\frac{UV^2}{M}\right)^s \frac{1}{(UV^2N)^s} \ll \delta^{-O(1)}x^{-s}.\\
\end{align*}
Hence $q_s = q\ell^{2s} \ll \delta^{-O(1)}$ has the property that
\[ \|q_s\alpha_s\|\ll \delta^{-O(1)}x^{-j},\quad \|q_s\beta_s\|\ll \delta^{-O(1)}x^{-j}.  \]
We then show by induction that for $1\leq j\leq s$ and some $1\leq q_j\ll \delta^{-O(1)}$ with $q_{j+1} \mid q_j$, we have
\begin{align}\label{eq27}
\|q_j\alpha_j\|\ll \delta^{-O(1)}x^{-j},\quad \|q_j\beta_j\|\ll \delta^{-O(1)}x^{-j}.    
\end{align}
The case $j=s$ has been proved, so we may assume that all the cases $s\geq j'>j$ have been proved and that we are considering case $j$. The coefficient $\gamma_j(n_1,n_2,u,v_1,v_2)$ can be explicitly written as
\begin{align*}
\gamma_j = & \alpha_j (uv_1v_2n_1/\ell)^j + \beta_j(uv_1v_2n_2/\ell)^j + \\
& \sum_{j < j' \leq s} \binom{j'}{j} \left(\alpha_{j'} (uv_1v_2n_1/\ell)^j (n_1B)^{j'-j} + \beta_{j'}(uv_1v_2n_2/\ell)^j (n_2B)^{j'-j} \right) \\
\end{align*}
Since \eqref{eq27} holds for all indices $j'>j$, one easily verifies that
\[ \left\| q_{j'}\alpha_{j'} (uv_1v_2n_1/\ell)^j (n_1B)^{j'-j}\right\| \ll \frac{\delta^{-O(1)}}{ |I|^{j'}}, \ \ \left\| q_{j'}\beta_{j'} (uv_1v_2n_2/\ell)^j (n_2B)^{j'-j}\right\| \ll \frac{\delta^{-O(1)}}{|I|^{j'}}.  \]
Defining $q_{j+1}=\prod_{j<j'\leq s}q_{j'}$, we see that the coefficient $\gamma_j(n_1,n_2,u,v_1,v_2)$ may be written in the form
\begin{align*}
\left\|q_{j+1} \gamma_j\right\| = \left\|q_{j+1}\alpha_j (u v_1v_2 n_1/\ell)^j + q_{j+1}\beta_j (uv_1v_2n_2/\ell)^j\right\| +O(\delta^{-O(1)}|I|^{-j-1}).   
\end{align*}
Thus by~\eqref{eq28} we have
\begin{align*}
\|q_{j+1} \alpha_j (u v_1v_2 n_1/\ell)^j + q_{j+1} \beta_j (uv_1v_2n_2/\ell)^j\|\ll \frac{\delta^{-O(1)}}{|I|^j}    
\end{align*}
for $(n_1,n_2,u,v_1,v_2) \in \Gamma_{\ell}$. Precisely as in the case $j=s$ (with $s$ in that argument now replaced by $j$), this implies \eqref{eq27} for $j$. Now, for $q = q_1\ll \delta^{-O(1)}$ we have for every $1\leq j\leq s$  that
\begin{align*}
\|q\alpha_j\|\ll \delta^{-O(1)}x^{-j},\quad \|q\beta_j\|\ll \delta^{-O(1)}x^{-j}.    
\end{align*}
Finally, by Lemma~\ref{lem:smoothness-norm} we have 
\[ \|q\chi_1\circ g\|_{C^{\infty}(x)}\ll \delta^{-O(1)}. \]
Since $q\chi_1$ is nontrivial and $\|q\chi_1\| \ll \delta^{-O(1)}$, Lemma~\ref{lem:converse-leibman} implies that $\{g(n)\}_{1 \leq n \leq x}$ is not totally $\delta^{O(1)}$-equidistributed, a contradiction if $C$ is chosen large enough.
\end{proof}

\begin{proposition}[type II estimate with a fixed nilsequence]\label{type II-1/3}
Let $x \geq 2$, let $\ee > 0$ be a small constant, let $M \in [x^{1/2}, x^{2/3+\varepsilon/2}]$.  Let $s \geq 1$, $\Delta \geq 2$, $0 < \delta < 1/2$. Then for a large constant $C = C(s,\Delta,\ee)$ and $\psi \in \Psi_s(\Delta,\delta^{-1};\delta^C,x)$, we have
\begin{equation}\label{eq-wtII} 
\sum_{d \leq x^{1/3-\ee}}  \max_{(c,d)=1}\Big|\sum_{\substack{x\leq mn\leq 2x \\ M\leq m\leq 2M \\ mn \equiv c\Mod{d}}} a_m b_n \psi(mn)\Big|  \ll \delta x \|a\|_2 \|b\|_4 (\log x)^3,
\end{equation}
for any sequences $\{a_m\}, \{b_n\}$.
\end{proposition}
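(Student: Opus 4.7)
The plan is to follow the two-Cauchy--Schwarz scheme of Propositions~\ref{type II-1/4} and~\ref{typeII-lambda-general}, adapted to a single fixed nilsequence $\psi$ and a single modulus variable $d\leq x^{1/3-\varepsilon}$. Compared with Proposition~\ref{type II-1/4} we are spared the supremum over nilsequences, but compared with Proposition~\ref{typeII-lambda-general} we lose the algebraic flexibility provided by the factorization $d=uv$, which is exactly why the allowed moduli drop from $x^{1/2-\varepsilon}$ to $x^{1/3-\varepsilon}$.

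Concretely, I would first dyadically localize $d\in[D,2D]$ with $D\leq x^{1/3-\varepsilon}$, replace the maximum over $c$ and the absolute value by a choice $c_d$ coprime to $d$ and a unimodular $\theta_d$, and set $F(k):=\sum_{d\in[D,2D]:\,k\equiv c_d\Mod d}\theta_d$, so that $\Sigma=\sum_{m,n}a_m b_n\psi(mn)F(mn)$. Applying Cauchy--Schwarz in $m$, then in $(n_1,n_2)$ (with $\|b\|_4^2$ absorbing $b$ via $\sum_{n_1,n_2}|b_{n_1}b_{n_2}|^2\leq N^2\|b\|_4^4$), and then the crude bound $|\sum_m F(mn_1)\overline{F(mn_2)}|\ll M(\log x)^{O(1)}$ on one factor of the resulting $\bigl(\sum|S|^2\bigr)^{1/2}$ --- exactly as in the proof of Proposition~\ref{type II-1/4} --- reduces the task, assuming the claimed bound fails, to producing a contradiction from
\[ \sum_{n_1,n_2\in[N/2,2N]}\Bigl|\sum_{d_1,d_2\in[D,2D]}\theta_{d_1}\overline{\theta_{d_2}}\sum_{m\in\mathcal{M}(d_1,d_2,n_1,n_2)}\psi(mn_1)\overline{\psi(mn_2)}\Bigr|\gg\delta^{O(1)}MN^2, \]
where $N=x/M$ and $\mathcal{M}(d_1,d_2,n_1,n_2)$ is the set of $m\in[M,2M]$ with $mn_i\equiv c_{d_i}\Mod{d_i}$ and $x\leq mn_i\leq 2x$, of size $\asymp M/[d_1,d_2]$. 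The $(\log x)^3$ on the right-hand side of the proposition will enter through the elementary estimate $\sum_{d_1,d_2\in[D,2D]}1/[d_1,d_2]\ll(\log x)^3$.

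I would then peel off the diagonal contribution $n_1=n_2$ and the large-gcd contribution $(d_1,d_2)>\delta^{-C}$ by trivial counting, using $|\mathcal{M}|\ll M/[d_1,d_2]$ and the compatibility constraint modulo $(d_1,d_2)$ on $(n_1,n_2)$. In the remaining main part with $n_1\neq n_2$ and $(d_1,d_2)\leq\delta^{-C}$, for each quadruple for which the inner sum exceeds $\delta^{O(1)}|\mathcal{M}|$ in absolute value, Theorem~\ref{thm:quant-leibman} applied to the polynomial sequence
\[ h_{d_1,d_2,n_1,n_2}(m):=\bigl(g(mn_1+B_1),\,g(mn_2+B_2)\bigr) \]
on $(G\times G)/(\Gamma\times\Gamma)$ (with $B_1,B_2$ determined by $c_{d_1},c_{d_2}$ and the residue class structure of $\mathcal{M}$) produces a nontrivial horizontal character $\chi=(\chi_1,\chi_2)$ with $\|\chi\|\ll\delta^{-O(1)}$ and $\|\chi\circ h\|_{C^{\infty}(|\mathcal{M}|)}\ll\delta^{-O(1)}$. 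A pigeonhole makes $\chi$ common to a constant fraction of such quadruples, allowing us to assume $\chi_1$ (say) is nontrivial.

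Writing $\chi_1\circ g(n)=\sum_j\alpha_j n^j$ and $\chi_2\circ g(n)=\sum_j\beta_j n^j$ and expanding $\chi\circ h_{d_1,d_2,n_1,n_2}(m)=\sum_j\gamma_j(d_1,d_2,n_1,n_2)m^j$, the top coefficient $\gamma_s$ is a two-monomial expression in $(n_1,n_2)$ in which $\alpha_s$ and $\beta_s$ appear separately (times a factor depending on $[d_1,d_2]$), which via Lemma~\ref{le_multileibman2} yields $\|q\alpha_s\|,\|q\beta_s\|\ll\delta^{-O(1)}x^{-s}$ for some $q\ll\delta^{-O(1)}$. An induction from $j=s$ downwards, using Lemma~\ref{le_multileibman} and the previously established Diophantine information on $\alpha_{j'},\beta_{j'}$ for $j'>j$ to handle the non-monomial lower-order terms in $\gamma_j$, extends this to $\|q\alpha_j\|,\|q\beta_j\|\ll\delta^{-O(1)}x^{-j}$ for each $1\leq j\leq s$. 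Combining Lemmas~\ref{lem:smoothness-norm} and~\ref{lem:converse-leibman} then contradicts the total $\delta^C$-equidistribution of $\{g(n)\}_{1\leq n\leq x}$, provided $C$ was chosen large enough. The main obstacle I anticipate is balancing the cut-offs: the threshold $\delta^{-C}$ on $(d_1,d_2)$ must be loose enough to keep $|\mathcal{M}|\geq\delta^{-C'}$ in the main part (so Theorem~\ref{thm:quant-leibman} applies non-vacuously), yet tight enough that the large-gcd contribution is absorbed by $\sum_{d_1,d_2}1/[d_1,d_2]\ll(\log x)^3$; the ranges $M\geq x^{1/2}$ and $D^2\leq x^{2/3-2\varepsilon}$ in the proposition are exactly what is needed to make this balance work.
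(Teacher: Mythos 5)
Your plan follows the two-Cauchy--Schwarz route used to prove Proposition~\ref{type II-1/4}: Cauchy--Schwarz in $m$ alone, then in $(n_1,n_2)$, with the modulus variable packaged into $F$. After expanding $F(mn_1)\overline{F(mn_2)}$ you are left with inner sums $\sum_{m\in\mathcal{M}(d_1,d_2,n_1,n_2)}\psi(mn_1)\overline{\psi(mn_2)}$ over progressions of common modulus $[d_1,d_2]$, of length $\asymp M/[d_1,d_2]$. This is precisely why Proposition~\ref{type II-1/4} requires $D\leq M^{1/2-\ee}$ (equivalently $D^2\leq\delta^C M$): it guarantees these inner progressions have length $\gg\delta^{-O(1)}$ so that Theorem~\ref{thm:quant-leibman} has something to act on. In the present proposition $M$ can be as small as $x^{1/2}$ while $D$ can be as large as $x^{1/3-\ee}$, so for a generic pair $(d_1,d_2)$ with bounded gcd one has $M/[d_1,d_2]\asymp M/D^2$, which can be as small as $x^{1/2}/x^{2/3-2\ee}=x^{-1/6+2\ee}<1$. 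The progressions $\mathcal{M}$ are then typically empty or singletons; the Leibman theorem is vacuous, and in fact even the trivial count of the singleton contributions is of order $D^2N^2$, which exceeds the target $\delta^{O(1)}MN^2$ whenever $D^2>M$. Your closing claim that the proposition's ranges are ``exactly what is needed to make this balance work'' is not correct: $D^2$ can exceed $M$ by a positive power of $x$, and the CS-in-$m$-only scheme caps out at level of distribution $M^{1/2}\approx x^{1/4}$, not $x^{1/3}$.

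The missing idea is the one that actually uses the hypothesis that $\psi$ is fixed: apply Cauchy--Schwarz \emph{simultaneously} in $d$ and $m$, with the pair $(d,m)$ on the outside. Squaring and expanding then leaves inner $m$-sums constrained modulo a \emph{single} $d$, of length $\asymp M/d\gg M/D\geq x^{1/6+\ee}$ for all $d\leq x^{1/3-\ee}$ --- always long. (This manoeuvre is only legitimate because $\psi$ does not depend on $d$; it is unavailable in Proposition~\ref{type II-1/4}.) This is exactly the $V=1$ specialization of Proposition~\ref{typeII-lambda-general}, and the paper's proof of Proposition~\ref{type II-1/3} is a one-line citation of that proposition, taking $\ee\to\ee/3$, $U\leq x^{1/3-\ee}$, $V=1$. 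If you want to reprove it directly, the Cauchy--Schwarz step to imitate is the one at the start of the proof of Proposition~\ref{typeII-lambda-general}, not the one in Proposition~\ref{type II-1/4}.
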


\begin{proof}
This follows directly from  Proposition~\ref{typeII-lambda-general} (with $\varepsilon$ replaced by $\varepsilon/3$ there), taking $U\leq x^{1/3-\varepsilon}$, $V=1$ there.
\end{proof}

\section{Combining the type I and type II estimates}\label{sec:combine}

The type I and II estimates in the previous sections have been tailored so that we will be able to shortly conclude the proofs of Theorems \ref{equidist-1/4}, \ref{equidist-1/3}, \ref{equidist-1/2} by appealing to Vaughan's identity, stated here for convenience.

\begin{lemma}[Vaughan's identity]\label{le_vaughan} Let $x\geq 2$ and $x^{2/3}< n\leq x$. Then we can write 
\begin{align*}
\Lambda(n)=\sum_{j\leq J}(\alpha_j*\delta_j(n)+\beta_j*\gamma_j(n)),
\end{align*}
where 
\begin{enumerate}[(i)]
    \item $J\ll (\log x)^{10}$;
    
    \item $|\alpha_j(n)|, |\beta_j(n)|, |\gamma_j(n)|\ll d_3(n)(\log x)$ for each $j\leq J$, and $\alpha_j, \beta_j,\gamma_j$ are each supported inside dyadic intervals;
    
    \item $\supp(\alpha_j)\subset [1,2x^{1/3}]$ and $\supp(\beta_j), \supp(\gamma_j)\subset [x^{1/3},x^{2/3}]$ for each $j\leq J$.
    
    \item $\delta_j(n)=1$ or $\delta_j(n)=\log n$.
\end{enumerate}
\end{lemma}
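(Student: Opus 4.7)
The plan is to apply the classical Vaughan identity with parameters $U = V = x^{1/3}$ and then dyadically decompose the resulting convolutions to fit the desired shapes. Recall that Vaughan's identity asserts: for any $n > V$,
\[
\Lambda(n) = (\mu_{\leq U} * \log)(n) - (\mu_{\leq U} * \Lambda_{\leq V} * 1)(n) + (\mu_{>U} * \Lambda_{>V} * 1)(n),
\]
where $\mu_{\leq U}(b) := \mu(b)1_{b \leq U}$ and analogously for $\mu_{>U}, \Lambda_{\leq V}, \Lambda_{>V}$. This follows from the Dirichlet series identity
\[
-\zeta'/\zeta = -F\zeta' - FG\zeta + G + (1-F\zeta)(-\zeta'/\zeta - G)
\]
with $F(s) = \sum_{b \leq U}\mu(b)b^{-s}$ and $G(s) = \sum_{c \leq V}\Lambda(c)c^{-s}$. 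Since $n > x^{2/3} > V$, the identity applies throughout our range.

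To extract the claimed decomposition, I would dyadically decompose each of the variables $b, c, d$ appearing in the three convolutions into ranges of the form $[N, 2N]$, refining at the boundary points $x^{1/3}$, $2x^{1/3}$, and $x^{2/3}$ so that no dyadic block straddles these boundaries. This produces $O((\log x)^3)$ pieces, each a two- or three-fold convolution of dyadic-supported factors. For each piece, I would combine two of the factors and classify the result. The first term $\mu_{\leq U}*\log$ is already in type I form with $\alpha_j(b) = \mu(b)1_{b \in [B, 2B]}$ ($B \leq x^{1/3}$) and $\delta_j = \log$. For the second term, combine $b, c$ into $m = bc$ and consider $f_M(m) := \sum_{bc = m,\, b \leq U,\, c \leq V} \mu(b)\Lambda(c)1_{m \in [M, 2M]}$: if $M \leq x^{1/3}$, this yields a type I piece with $\alpha_j = -f_M$ (supported in $[1, 2x^{1/3}]$) and $\delta_j = 1$; if $M \in (x^{1/3}, x^{2/3}]$, further dyadically split $d \in [D, 2D]$, obtaining $\beta_j = f_M$ and $\gamma_j = 1_{d \in [D, 2D]}$ as a type II piece. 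The third term is treated analogously, combining two of its three variables.

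The pointwise bound in~(ii) is immediate from $|\mu| \leq 1$ and $|\Lambda(c)| \leq \log x$, yielding $|f_M(m)| \leq d(m)\log x \leq d_3(m)\log x$ and similarly for the other pieces; the total count $J \ll (\log x)^3$ is well below the allowed $(\log x)^{10}$. The main obstacle is verifying the support conditions in~(iii) for every type II piece. When the combined dyadic scale $M$ is near $x^{1/3}$ or $x^{2/3}$, the complementary scale $D$ can in principle fall outside $[x^{1/3}, x^{2/3}]$; the resolution is that the symmetric choice $U = V = x^{1/3}$ together with the hypothesis $n > x^{2/3}$ (which gives $d = n/m > 1$ when $m \leq x^{2/3}$) allows each such offending boundary piece to be re-grouped as a type I piece, by combining the small complementary variable with one of the other factors into a block of size $\leq 2x^{1/3}$. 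This case analysis at the boundary dyadic scales is the one nontrivial bookkeeping step in the proof.
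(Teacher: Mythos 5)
Your overall approach---Vaughan's identity with $U=V=x^{1/3}$ followed by dyadic decomposition---is exactly the paper's (which cites Iwaniec--Kowalski, Proposition~13.4, with $y=z=x^{1/3}$). But the step you flag as "the one nontrivial bookkeeping step," the handling of the middle term $\mu_{\leq U}*\Lambda_{\leq V}*1$ when the complementary scale $D$ falls below $x^{1/3}$, is where your argument breaks down, and the claimed re-grouping does not work.

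Concretely, in the middle term $n = bc\ell$ with $b\leq x^{1/3}$, $c\leq x^{1/3}$ and $\ell$ free, the regime $m:=bc\in(2x^{1/3},x^{2/3}]$ with $\ell=n/m<x^{1/3}$ is genuinely present and is not a boundary effect affecting $O(1)$ dyadic scales. For example, take $n\asymp x^{5/6}$, $b\asymp c\asymp x^{1/3}$ and $\ell\asymp x^{1/6}$: then $bc\asymp x^{2/3}$ and $\ell\asymp x^{1/6}$. You propose to re-group by "combining the small complementary variable with one of the other factors into a block of size $\leq 2x^{1/3}$." This fails for two reasons. First, the sizes don't work: $b\ell=n/c\geq n/x^{1/3}>x^{1/3}$ and likewise $c\ell=n/b>x^{1/3}$ (and they can be as large as roughly $x^{1/2}$ or more), so the combined block is generically \emph{not} $\leq 2x^{1/3}$. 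Second, even if it were, the weights don't match what a type I piece requires: under the grouping $(b\ell,c)$ the small variable $c$ carries the weight $\Lambda(c)$, so it could serve as $\alpha_j$, but the remaining factor $b\ell$ carries the weight $\mu_{\leq U}*1$, which is neither $1$ nor $\log$, so it cannot serve as $\delta_j$. A symmetric obstruction applies to $(c\ell,b)$ and to any attempt to read this piece as type II, since $b$ and $c$ are both $\leq x^{1/3}$ and hence cannot support a $\beta_j$ or $\gamma_j$ required to live in $[x^{1/3},x^{2/3}]$. In short, for this range of $(m,\ell)$ none of the three two-block groupings of $\{b,c,\ell\}$ produces either a valid type~I or a valid type~II piece as defined by conditions~(iii)--(iv). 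The same issue does \emph{not} arise for the other two Vaughan terms (in $\mu_{\leq U}*\log$ the $\mu$-variable is $\leq x^{1/3}$; in $\mu_{>U}*\Lambda_{>V}*1$ the two large variables automatically fall in $(x^{1/3},x^{2/3})$ and one may pair either of them against the remaining two). So the gap is specific to the middle term, and writing it off as a re-groupable boundary case is not correct; if you carry out the case analysis you advertise, you will find it does not close.
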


\begin{proof}
This follows from the formulation of Vaughan's identity in \cite[Proposition 13.4]{iwaniec-kowalski}, taking $y=z=x^{1/3}$ there and splitting the variables into dyadic intervals. 
\end{proof}

\begin{proof}[Proof of Theorems \ref{equidist-1/4}, \ref{equidist-1/3}, \ref{equidist-1/2}] The bound $\ll \eta^{\kappa}x(\log x)^2$ in the statements of the theorems follows trivially from the Brun--Titchmarsh inequality if $\eta^{\kappa}>1/\log x$, so we may assume that $\eta\leq (\log x)^{-1/\kappa}$ for small enough $\kappa>0$. Then it suffices to prove a bound of the form $\ll \eta^{\kappa} x(\log x)^{O(1)}$ for the sums in Theorems \ref{equidist-1/4}, \ref{equidist-1/3}, \ref{equidist-1/2}.

Let $\varepsilon>0$ be small. Note that by the triangle inequality and crude estimation, it suffices to prove each proposition with the $n$ sum ranging over any dyadic interval $[x',2x']\subset [x^{1-\varepsilon/10},x]$ instead of $[1,x]$. Now, by Vaughan's identity, for $n\in [x',2x']$ we can write $\Lambda(n)$ as a sum of $\ll(\log x')^{10}$
type I convolutions $\alpha*\delta(n)$ with $\delta\in \{1,\log\}$ and type II convolutions $\beta*\gamma(n)$, where $\alpha,\beta,\gamma$ satisfy condition (2) of Lemma~\ref{le_vaughan} and  $\supp(\alpha)\subset [1,(x')^{1/3+\varepsilon}]$, $\supp(\beta), \supp(\gamma)\subset [(x')^{1/3},(x')^{2/3+\varepsilon}]$. By the triangle inequality, it suffices to prove each of Theorems \ref{equidist-1/4}, \ref{equidist-1/3}, \ref{equidist-1/2} with $\Lambda(n)$ replaced by one of these type I or type II convolutions. 

For the type I convolutions, we can readily apply Proposition~\ref{typeI} in the case of Theorem~\ref{equidist-1/4} and Proposition~\ref{typeI2} in the case of Propositions~\ref{equidist-1/3}, \ref{equidist-1/2} (possibly applying partial summation first to get rid of the $\log n$ factor in $\alpha*\log(n)$). For the type II convolutions, in turn, we can appeal to Proposition~\ref{type II-1/4} in the case of Theorem~\ref{equidist-1/4}, Proposition~\ref{type II-1/3} in the case of Theorem~\ref{equidist-1/3}, and Proposition~\ref{typeII-lambda} in the case of Theorem~\ref{equidist-1/2} (taking the sequence $a_m$ in these propositions to be the one of $\beta_m,\gamma_m$ whose support is contained in $[(x')^{1/2},x']$). This completes the proof.
\end{proof}

In view of the reduction in Section \ref{sec:equidistribution} to the equidistributed case, our main Theorems \ref{thm:1/4}, \ref{thm:1/3} and~\ref{thm:1/2} follow from Theorems \ref{equidist-1/4}, \ref{equidist-1/3}, \ref{equidist-1/2}, together with Propositions \ref{prop:sieve1}, \ref{prop:sieve2}. Hence, to complete the proofs of our main theorems, it now suffices to prove these two propositions.

\begin{proof}[Proof of Propositions \ref{prop:sieve1} and \ref{prop:sieve2}]

We prove Proposition~\ref{prop:sieve1} by replacing the condition $(n,W)=1$ by a standard sieve weight, and then using our type I estimate (Proposition~\ref{typeI}). The proof of Proposition~\ref{prop:sieve2} is completely analogous, using Proposition~\ref{typeI2} instead of Proposition~\ref{typeI}.

Let $D = x^{0.01}$, and let $(\lambda_{\ell}^{\pm})_{\ell \leq D}$ be upper and lower linear sieve weights, defined in \cite[Section 6.2]{iwaniec-kowalski}. Note that if $\psi \in \Psi_s(\Delta, \eta^{-\kappa}; \eta, x/d)$ then its real and imaginary parts lie in $\Psi_s(\Delta, \eta^{-\kappa}; \eta, x/d)$ as well. Thus, by splitting $\psi$ into its real and imaginary parts we may restrict attention to real-valued $\psi$.

Apply the upper bound sieve to the non-negative function $\psi+1$ to obtain
\[
\begin{split}
\sum_{\substack{n\leq x\\n\equiv c\Mod {d} \\ (n,W)=1}} (\psi((n-c)/d) + 1) & \leq  \sum_{\substack{n \leq x\\ n\equiv c\Mod{d}}} \left(\sum_{\ell \mid (n,W)} \lambda_\ell^+\right) (\psi((n-c)/d) + 1)\\
& = \sum_{\ell \mid W} \lambda_\ell^+ \sum_{\substack{n \leq x \\ n \equiv c\Mod{d} \\ \ell\mid n}} [\psi((n-c)/d) + 1].
\end{split}
\]
Apply the lower bound sieve to the constant function $1$ to obtain
\[ \sum_{\substack{n\leq x\\n\equiv c\Mod {d} \\ (n,W)=1}} 1 \geq  \sum_{\ell\mid W} \lambda_\ell^- \sum_{\substack{n \leq x \\ n \equiv c\Mod{d} \\ \ell\mid n}} 1. \]
Using the fundamental lemma of sieve theory~\cite[Lemma 6.3]{iwaniec-kowalski} (with $g(\ell)=\frac{1}{\ell}1_{(\ell,d)=1}$ and $s=(\log D)/(\log w)$ there), we get
\[ \sum_{\ell\mid W} \lambda_\ell^{\pm} \sum_{\substack{n \leq x \\ n \equiv c\Mod{d} \\ \ell\mid n}} 1 = \frac{x}{d} \prod_{\substack{p \mid W \\ p\nmid d}} \left(1 - \frac{1}{p}\right) + O_A\left(\frac{x}{d(\log x)^{2A}}\right). \]
 It follows that
\[ \sum_{\substack{n\leq x\\n\equiv c\Mod {d} \\ (n,W)=1}} \psi((n-c)/d) \leq \sum_{\ell \mid W} \lambda_\ell^+ \sum_{\substack{n \leq x \\ n \equiv c\Mod{d} \\ \ell\mid n}} \psi((n-c)/d) + O_A\left(\frac{x}{d(\log x)^{2A}}\right).
\]
 By applying the inequality above with $\psi$ replaced by $-\psi$, we deduce that
\[ \Big|\sum_{\substack{n\leq x\\n\equiv c\Mod {d} \\ (n,W)=1}} \psi((n-c)/d)\Big| \leq \Big|\sum_{\ell \mid W} \lambda_\ell^+ \sum_{\substack{n \leq x \\ n \equiv c\Mod{d} \\ \ell\mid n}} \psi((n-c)/d)\Big| + O_A\left(\frac{x}{d(\log x)^{2A}}\right).
\]
Thus it suffices to show that
\[
\sum_{d \leq x^{1/2-\ee}} \max_{(c,d)=1} \sup_{\psi \in \Psi_s(\Delta, \eta^{-\kappa};\eta,x/d)} \Big| \sum_{\ell \mid W} \lambda_\ell^+ \sum_{\substack{n \leq x \\ n \equiv c\Mod{d} \\ \ell\mid n}} \psi((n-c)/d)\Big| \ll \eta^{\kappa} x \log x. 
\]
 By a change of variables $n=\ell m$, we can rewrite the expression inside the absolute value sign in the form
\[ \sum_{\substack{\ell m \leq x \\ \ell m \equiv c\Mod{d}}} a_\ell \psi((\ell m-c)/d), \]
where $a_{\ell} = \lambda_{\ell}^+ 1_{\ell\mid W}$. Since $(\lambda_\ell^+)$ (and thus $(a_\ell)$) is supported on $\ell \leq D = x^{0.01}$, the desired estimate follows from our type I estimate (Proposition~\ref{typeI}) by  taking $\kappa = C^{-1}$, where $C = C(s,\Delta,\ee)$ is the constant from Proposition~\ref{typeI}.
\end{proof}

The proof of Theorem \ref{thm_divisor} proceeds similarly to the proofs of Theorems \ref{equidist-1/4}, \ref{equidist-1/3}, \ref{equidist-1/2}, except that we need an analogue of Heath-Brown's identity for the multiplicative functions $d_k(n)$ and $1_{S}(n)$. This follows from the following general lemma on decompositions of multiplicative functions.

\begin{lemma}[A Heath-Brown type decomposition for multiplicative functions]\label{le_HB} Let $\kappa,Q,n_0,$ $A\geq 1$ and $\varepsilon>0$ be fixed. Let  $f:\mathbb{N}\to \mathbb{C}$ be a multiplicative function satisfying $|f(n)|\leq d_{\kappa}(n)$ and such that the sequence $(f(p))_{p\geq n_0}$ is periodic with period $Q\in \mathbb{N}$. Then for $x\geq 2$ and  $1\leq n\leq x$ we can write
\begin{align*}
f(n)=\sum_{j\leq J}\alpha_{1,j}*\cdots *\alpha_{R,j}(n)+O(E(n)),   
\end{align*}
where
\begin{enumerate}[(i)]
\item $J\ll (\log x)^{O(1)}$ and $R\ll 1$;

\item All the $\alpha_{i,j}(n)$ are $\ll  d(n)^{O(1)}$ in absolute value;

\item  All the $\alpha_{i,j}(n)$ are supported on intervals of the form $I_{i,j}=[y,y']$ with $y<y'\leq 2y$, and if $y>x^{\varepsilon}$, then we have $\alpha_{i,j}(n)=\chi(n)1_{[y,y']}(n)$ or $\alpha_{i}(n)=\chi(n)(\log n)1_{[y,y']}(n)$, where $\chi$ is a Dirichlet character (depending on $i,j$) whose modulus divides $Q$;

\item We have $$E(n)\ll ((\log x)^{O(1)}(1_{\exists p>w:\, p^2\mid n}+1_{\exists m>x^{\varepsilon}:\,m\mid n,\, m\,\textnormal{is}\, w-\textnormal{smooth}})+(\log x)^{-A})d_{\kappa}(n),$$
where $w=x^{1/(\lceil 3/\varepsilon\rceil (\log \log x))}$.
\end{enumerate}
\end{lemma}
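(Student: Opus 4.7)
My plan is to first absorb the exceptional $n$ into the error term, then use the periodicity hypothesis together with Fourier analysis on $\Z/Q\Z$ to express $f(n)$ on the ``good'' $n$ as a convolution of bounded depth, and finally apply a Heath--Brown-type decomposition to each convolution factor so that its large dyadic pieces take the pure character form required by (iii).

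For the first two steps I would set $w = x^{1/(K\log\log x)}$ with $K=\lceil 3/\ee\rceil$, matching (iv). An $n\leq x$ having a squared prime factor exceeding $w$, or a $w$-smooth divisor larger than $x^\ee$, contributes at most $d_\kappa(n)$ in absolute value to $f(n)$ and is directly absorbed into $O(E(n))$. Every remaining good $n$ factors uniquely as $n=m\cdot q_1\cdots q_r$, where $m\leq x^\ee$ is the $w$-smooth part and $w<q_1<\cdots<q_r$ are the distinct primes exceeding $w$. Multiplicativity gives $f(n)=f(m)\prod_i f(q_i)$, and for $x$ large enough that $w\geq n_0$ the periodicity hypothesis yields $f(q_i)=g(q_i\bmod Q)$ for a fixed $g:\Z/Q\Z\to\C$ of supremum norm $\leq\kappa$. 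Fourier-expanding $g$ on $(\Z/Q\Z)^*$ as $g(a)=\sum_{\chi\bmod Q}c_\chi\chi(a)$ with $|c_\chi|\ll 1$ (the finitely many primes $q_i\mid Q$ being folded into the error), substituting, and grouping primes by their assigned character, the complete multiplicativity of Dirichlet characters collapses $\prod_i\chi_i(q_i)=\prod_\chi \chi(M_\chi)$, where $M_\chi$ is the product of primes colored by $\chi$. This yields the pointwise identity
\[
f(n)\,1_{\mathrm{good}}(n) \;=\; (f_{\mathrm{sm}}\ast h_{\chi_1}\ast\cdots\ast h_{\chi_s})(n),\quad s=|\hat Q|,
\]
where $f_{\mathrm{sm}}(m)=f(m)\,1_{m\leq x^\ee,\, m\text{ is }w\text{-smooth}}$ and $h_\chi(n)=c_\chi^{\omega(n)}\chi(n)\,1_{n\text{ squarefree and }w\text{-rough}}$, an $R_0:=1+s=O_Q(1)$-fold convolution. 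Dyadic splitting of each factor produces $J=(\log x)^{O(1)}$ summands, and pieces supported in $[1,x^\ee]$ already satisfy (ii) via $|c_\chi^{\omega(n)}|\leq\kappa^{\omega(n)}\leq d(n)^{O_\kappa(1)}$.

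The main obstacle is the large dyadic pieces of $h_\chi$: condition (iii) insists that on scales $y>x^\ee$ each factor equal $\chi(n)\,1_{[y,y']}(n)$ (possibly multiplied by $\log n$), whereas $h_\chi$ still carries both the weight $c_\chi^{\omega(n)}$ and the squarefree-rough indicator. To remove these I would apply Heath--Brown's identity to $h_\chi$ with cutoff $z=x^\ee$ and depth $K=\lceil 3/\ee\rceil$, exploiting the formal factorization of its Dirichlet series as $L(s,\chi)^{c_\chi}$ times an Euler product that converges absolutely for $\mathrm{Re}(s)>1/2$. The iteration rewrites $h_\chi$ as a sum of $(\log x)^{O(1)}$ convolutions of depth $O_\ee(1)$ in which each factor supported at scale $y>x^\ee$ has the prescribed pure character form (the $\log n$ option arising from the $\log$ naturally introduced by the Heath--Brown identity), while the squarefree-rough mask and the combinatorial weight $c_\chi^{\omega(\cdot)}$ are absorbed into small factors supported in $[1,x^\ee]$ and bounded pointwise by $d(n)^{O(1)}$. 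The delicate point is keeping the total depth bounded by $R=R(\ee,Q,\kappa)\ll 1$; this is forced by the single choice $K=\lceil 3/\ee\rceil$, since after $K$ Heath--Brown iterations the residual ``tail'' factor is already below $z=x^\ee$ and collapses into a small factor. Concatenating with $f_{\mathrm{sm}}$ and collecting terms completes the decomposition.
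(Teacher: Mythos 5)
Your opening moves match the paper's strategy: choose $w = x^{1/(\lceil 3/\ee\rceil\log\log x)}$, absorb $n$ with a squared prime factor exceeding $w$ or a large $w$-smooth divisor into the error, factor the good $n$ as (smooth part)$\times$(distinct rough primes), and expand $f$ at each rough prime into Dirichlet characters modulo $Q$. Your convolution identity $f\,1_{\mathrm{good}} = f_{\mathrm{sm}}\ast h_{\chi_1}\ast\cdots\ast h_{\chi_s}$ is correct (the squarefree indicators force the partition of the rough part into coprime blocks, and summing over colorings reproduces $\prod_i\sum_\chi c_\chi\chi(q_i)$).

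The gap is in the final step, where you invoke ``Heath--Brown's identity'' for $h_\chi$. The Dirichlet series of $h_\chi$ is essentially $L(s,\chi)^{c_\chi}$ with a \emph{complex} exponent $c_\chi$, and Heath--Brown's identity does not apply in that generality: the binomial-theorem mechanism that underlies it relies on writing $\zeta^{-1}$ or $-\zeta'/\zeta$ via the terminating expansion of $(1-(1-\zeta M))^K$, and there is no analogous terminating expansion for $L^{c}$ when $c$ is not an integer. You acknowledge the depth control is ``the delicate point,'' but the real obstruction is earlier: there is simply no off-the-shelf identity that turns $h_\chi$ into bounded-depth convolutions whose large-scale pieces are pure $\chi(n)$ or $\chi(n)\log n$.

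The paper sidesteps this by doing two things you omit. First, it splits the $w$-rough primes at an intermediate threshold $x^{1/C}$ with $C=\lceil 3/\ee\rceil$: primes in $(w,x^{1/C}]$ can be many (up to $C\log\log x$), but they are each $\leq x^{\ee/3}$, so they are \emph{grouped} into $O(1)$ subproducts of size in $[x^{1/C},x^{3/C}]\subset[1,x^\ee]$ — below the threshold where condition (iii) bites, so they need no character structure at all. Primes $>x^{1/C}$ number at most $C=O_\ee(1)$, so the character expansion applied only to them costs a bounded number of summands. Second, for each of these large primes, the paper replaces $1_{\mathbb{P}}(q)$ by $\Lambda(q)/\log y + O(\cdots)$ on short intervals, producing factors $\Lambda(m_j)\chi_j(m_j)$. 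Heath--Brown's identity then applies to $\Lambda$ in its standard form, and because $\chi_j$ is completely multiplicative, it distributes over the convolution to give precisely the pure $\chi$ and $\chi\log$ factors at large scales. Without the large/small split and the $\Lambda$-substitution, your $h_\chi$ carries both the $c_\chi^{\omega(n)}$ weight and an unbounded number of prime factors, and the required decomposition has not been constructed.
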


\begin{proof}
This is a minor modification of arguments in \cite[Section 5]{matomaki-teravainen}. For the sake of completeness, we provide the details.

We may assume that $x$ is large enough. In what follows, all constants in $O(\cdot)$ notation may depend on the fixed quantities $\kappa, Q, n_0,\varepsilon,A$. Let $C=\lceil 3/\varepsilon\rceil$, $K=C\log \log x$, and $w=x^{1/K}$. Then we may factorize
\begin{align*}
f(n)=\sum_{\substack{0\leq k\leq K\\0\leq l\leq C}}\frac{1}{k!\ell!}\sum_{\substack{n=mp_1\cdots p_kq_1\cdots q_{\ell}\\w<p_i\leq x^{1/C}\\q_i>x^{1/C}\\p'\mid m\implies p'\leq w}}f(p_1)\cdots f(p_k)f(q_1)\cdots f(q_{\ell})f(m)+O(d_{\kappa}(n)1_{\exists p>w:\,\, p^2\mid n}),    
\end{align*}
where the error term arises from those $n$ that have are divisible by the square of some prime $>w$. Next, by the orthogonality of characters, for any prime $q>\max\{n_0,Q\}$ we can write 
\begin{align}\label{e26}
f(q)=\frac{1}{\varphi(Q)}\sum_{\chi\pmod{Q}}\sum_{1\leq b\leq Q}a_b\overline{\chi(b)}\chi(p), 
\end{align}
where $a_b\in \mathbb{C}$ is such that $f(p)=a_b$ for all primes $p\equiv b\pmod Q$, $p\geq n_0$. Applying \eqref{e26} to each of $f(q_i)$, we can decompose $f(n)$ as a linear combination (with $O(1)$ coefficients) of $O(1)$ sums of the form 
\begin{align*}
\sum_{\substack{0\leq k\leq K\\0\leq l\leq C}}\frac{1}{k!\ell!}\sum_{\substack{n=mp_1\cdots p_kq_1\cdots q_{\ell}\\w<p_i\leq x^{1/C}\\q_i>x^{1/C}\\p'\mid m\implies p'\leq w}}f(p_1)\cdots f(p_k)\chi_1(q_1)\cdots \chi_{\ell}(q_{\ell})f(m)+O(d_{\kappa}(n)1_{\exists p>w:\,\, p^2\mid n})
\end{align*}
where $\chi_i$ are characters to modulus $Q$. We split each $q_i$ into short intervals of the form $[x(1-1/(\log x)^{A+1})^{j+1},x(1-1/(\log x)^{A})^j]$, where $j\geq 0$. Then  $f(n)$ becomes a linear combination of $O(1)$ sums of the form
\begin{align*}
\sum_{I_1,\ldots, I_{\ell}\in \mathcal{I}}\sum_{\substack{0\leq k\leq K\\0\leq l\leq C}}\frac{1}{k!\ell!}\sum_{\substack{n=mp_1\cdots p_kq_1\cdots q_{\ell}\\w<p_i\leq x^{1/C}\\q_i\in I_i\\p'\mid m\implies p'\leq w}}f(p_1)\cdots f(p_k)\chi_1(q_1)\cdots \chi_{\ell}(q_{\ell})f(m)+O(d_{\kappa}(n)1_{\exists p>w:\,\, p^2\mid n}),
\end{align*}
where $\mathcal{I}$ is the collection of intervals of the form $I=[x(1-1/(\log x)^{A+1})^{j+1},x(1-1/(\log x)^{A+1})^j]$, with $j\geq 0$ and $I\subset [x^{1/C},x]$.

We then add the von Mangoldt weight to each of the $q_i$ variables by writing, for $n\in I=[y,y(1+(\log x)^{-A-1})]$,
\begin{align*}
1_{\mathbb{P}}(n)&=\frac{\Lambda(n)}{\log n}+O(1_{n\,\, \textnormal{not squarefree}})=\frac{\Lambda(n)}{\log y}+O((\log x)^{-A})+O(1_{n\,\, \textnormal{not squarefree}}).
\end{align*}
Estimating the contribution of the two error terms trivially by the triangle inequality, the result is that $f(n)$ can be written as a linear combination (with $O(1)$ coefficients) of $O((\log x)^{(A+2)C})$ sums of the form 
\begin{align*}
 \sum_{\substack{0\leq k\leq K\\0\leq l\leq C}}\frac{1}{k!\ell!}\sum_{\substack{n=mp_1\cdots p_km_1\cdots m_{\ell}\\w<p_i\leq x^{1/C}\\q_i\in I_i\\p'\mid m\implies p'\leq w\\m\leq x^{\varepsilon}}}f(p_1)\cdots f(p_k)f(m)\prod_{j=1}^{\ell}\Lambda(m_j)\chi_j(m_j),   
\end{align*}
plus an error term
\begin{align*}
E(n)\ll d_{\kappa}(n)((\log x)^{O(1)}1_{\exists p>w:\,\, p^2\mid n}+(\log x)^{O(1)}1_{\exists m>x^{\varepsilon}:\,m\mid n,\, m\,\textnormal{is}\, w-\textnormal{smooth}})+(\log x)^{-A}),
\end{align*}
where the second summand in the upper bound for $E(n)$ arose from the restriction to $m\leq x^{\varepsilon}$. For the summands with $k\ll 1$, we could now use Heath--Brown's identity (\cite[Proposition 13.3]{iwaniec-kowalski} with $K=\lceil 1/\varepsilon\rceil$ there) to each of $\Lambda(m_i)$  to obtain a decomposition of the desired form. To make the argument work for large $k$, it suffices to show that
\begin{align*}
g(n):=\sum_{0\leq k\leq K}\frac{1}{k!}\sum_{\substack{n=p_1\cdots p_k\\w<p_i\leq x^{1/C}}}f(p_1)\cdots f(p_k)
\end{align*}
can be written as a linear combination of $O((\log x)^{O(1)})$ convolutions of the form $\beta_{1,i}*\cdots *\beta_{M,i}$ with $M\ll 1$ and $|\beta_{i,j}|\ll (\log x)^{O(1)}$ and with each $\beta_{i,j}$ supported in $[x^{1/C},x^{3/C}]$. But this follows from a simple grouping of $p_1\cdots p_k$ into subproducts of size $\in [x^{1/C},x^{3/C}]$, detailed in \cite[Section 5]{matomaki-teravainen}.
\end{proof}

We are now in a position to also conclude the proof of Theorem \ref{thm_divisor} that involves the functions $d_k$ and $1_S$.

\begin{proof}[Proof of Theorem \ref{thm_divisor}] Let $\varepsilon>0$ be small. As previously, it suffices to prove a dyadic version of Theorem \ref{thm_divisor}, where we sum over $n\in [x',2x']\subset [x^{1-\varepsilon/10},x]$ instead of $n\leq x$. Let $f(n)=d_k(n)$ with $k\in \mathbb{C}$ fixed, or $f(n)=1_S(n)$. By Lemma \ref{le_HB} (with $Q=4$ and $\kappa=|k|+1$), we can decompose 
$f$ as a sum of $\ll (\log x)^{O(1)}$ convolutions each of which is of the form $\alpha_{1}*\cdots *\alpha_R(n)$ with $\alpha_i$ as in Lemma \ref{le_HB}, plus an error term $E(n)$ that is of the form given in the lemma. 

We will show that each of the convolutions $\alpha_1*\cdots *\alpha_R(n)$ can be written as either a type I convolution $\alpha*\delta(n)$ or a type II convolution $\beta*\gamma(n)$, where
\begin{enumerate}[(i)]
    \item  $|\alpha(n)|\ll (\log x)^{O(1)}d(n)^{O(1)}$ and $\supp(\alpha)\subset [1,C(x')^{1/3}]$ for some $C\ll 1$ and $\delta(n)=\chi(n)$ or $\delta(n)=\chi(n)\log n$ with $\chi$ of modulus dividing $4$.
    
    \item $|\beta(n)|, |\gamma(n)|\ll (\log x)^{O(1)}d(n)^{O(1)}$  and $\supp(\beta)\subset [(x')^{1/3},(x')^{2/3}]$.;
\end{enumerate}

Let $\alpha_i$ be supported inside $[y_i,2y_i]$. Since $n$ is supported on $[x',2x']$, we must have $y_1\cdots y_R\asymp x'$. If $y_i \geq (x')^{2/3}$ for some $i$, then we can form a type I convolution by taking $\delta=\alpha_i$, $\alpha=\alpha_1*\cdots*\alpha_{i-1}*\alpha_{i+1}*\cdots*\alpha_R$ (or $\alpha(n)=1_{n=1}$ if $R=1$), and now we have written $\alpha_1*\cdots*\alpha_R$ in the form (1) above. If instead $y_i \in [(x')^{1/3},(x')^{2/3}]$ for some $i$, then we form a type II convolution of the form (2) above by defining $\beta=\alpha_i, \gamma=\alpha_1*\cdots*\alpha_{i-1}*\alpha_{i+1}*\cdots*\alpha_R$. If all $y_i \leq (x')^{1/3}$, then choose $j$ to be the  smallest index with $y_1\cdots y_j\geq (x')^{1/3}$.  We also must have $y_1\cdots y_j\leq (x')^{2/3}$ since $y_j \leq (x')^{1/3}$. Therefore, taking $\beta = \alpha_1*\cdots*\alpha_j$ and $\gamma=\alpha_{j+1}*\cdots *\alpha_{R}$  we have formed the desired type II convolution.

Now that each $\alpha_1*\cdots *\alpha_R$ has been written as a type I or II convolution satisfying (1) or (2) above, we can apply the same argument that was used to conclude the proofs of Theorems \ref{equidist-1/4}, \ref{equidist-1/3}, \ref{equidist-1/2} (noting that the $\chi(n)$ weight is harmless in our type I Bombieri--Vinogradov estimates by splitting $n$ into progressions $\pmod 4$) to see that the analogue of each of those theorems holds with $\alpha_1*\cdots *\alpha_R$ in place of $\Lambda(n)$.

To conclude the proof of Theorem \ref{thm_divisor}, it now suffices to show that
\begin{align}\label{eq30}
\sum_{d\leq x^{1/2-\varepsilon}}\max_{(c,d)=1}\sum_{\substack{n\leq x\\ n\equiv c\Mod d}}|E(n)|\ll_{B} x/(\log x)^{B},    
\end{align}

To prove \eqref{eq30}, we first note that by Cauchy--Schwarz and Shiu's bound \cite{shiu}, for any $d\leq x^{1/2-\varepsilon}$ and $c$ coprime to $d$ we have
\begin{align*}
\sum_{\substack{n\leq x\\ n\equiv c\pmod d,\\\exists w<p\leq x^{1/2}:\,\,p^2\mid n}}d_{|k|+1}(n)&\ll \left(\sum_{\substack{n\leq x\\ n\equiv c\pmod d}}d_{|k|+1}(n)^2\right)^{1/2}\left(\sum_{\substack{n\leq x\\ n\equiv c\pmod d,\\\exists w<p\leq x^{1/2}:\,\,p^2\mid n}}1\right)^{1/2}\\
&\ll (\log x)^{(|k|+1)^2/2}\left(\frac{x}{dw^{1/2}}+\frac{x^{3/4}}{d^{1/2}}\right).
\end{align*}
Hence, summing over $d\leq x^{1/2-\varepsilon}$, the term $(\log x)^{O(1)}1_{\exists p>w:\,\, p^2\mid n}d_{|k|+1}(n)$ in the definition of $E(n)$ gives an admissible contribution on the left of \eqref{eq30}. We also have by Shiu's bound, for any $d\leq x^{1/2-\varepsilon}$ and $c$ coprime to $d$, that
\begin{align*}
\sum_{\substack{n\leq x\\n\equiv c\pmod d}}d_{|k|+1}(n)\ll  \frac{x(\log x)^{|k|}}{d},   
\end{align*}
so the contribution of the term $(\log x)^{-A}d_{|k|+1}(n)$ in the definition of $E(n)$ is also acceptable, provided that $A$ is chosen large in terms of $B$. 

Lastly, we deal with the term $(\log x)^{O(1)}1_{\exists m>x^{\varepsilon}:\,m\mid n,\, m\,\textnormal{is}\, w-\textnormal{smooth}}\cdot d_{|k|+1}(n)$ in the definition of $E(n)$. Let $\rho_{w}(n)$ denote the indicator function of $w$-smooth numbers. It suffices to show that
\begin{align}\label{eq32}
\sum_{d\leq x^{1/2-\varepsilon}}\max_{(c,d)=1}\sum_{\substack{n_1n_2\leq x\\n_1n_2\equiv c\pmod d\\n_1> x^{\varepsilon}}}d_{|k|+1}(n_1)\rho_w(n_1)d_{|k|+1}(n_2)\ll_Bx/(\log x)^{B}.    
\end{align}
By a bilinear Bombieri--Vinogradov estimate \cite[Theorem 17.4]{iwaniec-kowalski} (with $\Delta=1$ there), for any $x^{\varepsilon}\leq N\leq x$ we have
\begin{align*}
&\sum_{d\leq x^{1/2-\varepsilon}}\max_{(c,d)=1}\sum_{\substack{n\leq x\\n\equiv c\Mod d}}\sum_{\substack{n=n_1n_2\\N\leq n_1\leq 2N}}\rho_{w}(n_1)d_{|k|+1}(n_1)d_{|k|+1}(n_2)\\
&\ll \left(\sum_{N\leq n_1\leq 2N}\rho_w(n_1)^2d_{|k|+1}(n_1)^2\right)^{1/2}\left(\sum_{n_2\leq x/N}d_{|k|+1}(n_2)^2\right)^{1/2}.
\end{align*}
Now, applying Cauchy--Schwarz to separate $d_{|k|+1}(n_1)^2$ and $\rho_{w}(n_1)^2$ and 
recalling that $w=x^{1/(\lceil 3/\varepsilon \rceil(\varepsilon\log \log x))}$ and applying a standard upper bound for smooth numbers \cite[Theorem 1.1]{hildebrand-tenenbaum}, this is bounded by $\ll x(\log x)^{O(1)}\exp(-(\log \log x)(\log \log \log x))\ll_{B}x/(\log x)^{B}$.

Summing dyadically over $N$, we conclude that  \eqref{eq32} holds, so \eqref{eq30} follows. 
\end{proof}

\section{Sieve lemmas}

The rest of the paper is devoted to the proofs of our applications (Theorems \ref{thm:bdd}, \ref{thm:chen} and \ref{thm:app3}). We first formulate weighted versions of the sieves of Maynard and Chen, which will be needed subsequently. 

\begin{definition}
We say that a $k$-tuple $(h_1,\ldots, h_k)$ of integers is \emph{admissible} if for every prime $p$ there exists $a\in \mathbb{Z}$ such that $p\nmid a+h_i$ for all $1\leq i\leq k$.
\end{definition}

\begin{proposition}[Maynard's sieve]\label{prop_maynard} For any $\theta\in (0,1)$, $C_0\geq 1$ and $k\in \mathbb{N}$, there exist $C=C(\theta)$ and $\sigma=\sigma(\theta,k)$ such that the following holds.

Let $(\omega_n)_{n\leq x} $ be any sequence of nonnegative real numbers with $x\geq x_0$, and let $(L_1,\ldots, L_k)$ be an admissible $k$-tuple of linear forms with $L_i(n)=a_in+b_i$ and $1\leq a_i,|b_i|\leq (\log x)^{1/100}$. Suppose that $(\omega_n)$ satisfies the following hypotheses:
\begin{enumerate}[(i)]
    \item  (Prime number theorem) For all $1\leq i\leq k$ and some $\delta > 0$ we have 
\begin{align*}
\frac{1}{k}\sum_{i=1}^k\frac{\varphi(a_i)}{a_i}\sum_{\substack{n\leq x\\L_i(n)\in \mathbb{P}}}\omega_n \geq \frac{\delta}{\log x}\sum_{n\leq x}\omega_n;
\end{align*}

\item  (Well-distribution in arithmetic progressions) We have
\begin{align*}
\sum_{r\leq x^{\theta}}\,\,\max_{c\Mod r}\Big|\sum_{\substack{n\leq x\\n\equiv c\Mod r}}\omega_n-\frac{1}{r}\sum_{n\leq x}\omega_n\Big|\ll \frac{\sum_{n\leq x}\omega_n}{(\log x)^{101k^2}}.    
\end{align*}

\item (Bombieri--Vinogradov) For all $1\leq i\leq k$  we have
\begin{align*}
\sum_{r\leq x^{\theta}}\max_{(L_i(c),r)=1}\Big|\sum_{\substack{n\leq x\\n\equiv c\Mod r\\L_i(n)\in \mathbb{P}}}\omega_n-\frac{1}{\varphi_{L_i}(r)}\sum_{\substack{n\leq x\\L_i(n)\in \mathbb{P}}}\omega_n\Big|\ll \frac{\sum_{n\leq x}\omega_n}{(\log x)^{101k^2}},    
\end{align*}
where for $L=\ell n+b$ we define $\varphi_{L}(n):=\varphi(|\ell|n)/\varphi(|\ell|)$.

\item (Brun--Titchmarsh) We have 
\begin{align*}
\max_{c\Mod r}\sum_{\substack{n\leq x\\n\equiv c\Mod r}}\omega_n\leq \frac{C_0}{r}\sum_{n\leq x}\omega_n, \end{align*}
for all $1\leq r\leq x^{\theta}$.
\end{enumerate}

Then, for $x\gg_{k,\theta,C_0}1$ we have
\begin{align*}
\sum_{\substack{n\leq x\\ |[L_1(n),\ldots, L_k(n)]\cap \mathbb{P}|\geq C^{-1}\delta \log k\\ p\mid \prod_{i=1}^k L_i(n)\Longrightarrow p>x^{\sigma}}}\omega_n\gg_{k,\theta,\delta} \frac{\mathfrak{S}(L_1,\ldots, L_k)}{(\log x)^k}\sum_{n\leq x}\omega_n,    
\end{align*}
where the singular series $\mathfrak{S}(L_1,\ldots, L_k)$ is given by 
\begin{align*}
\mathfrak{S}(L_1,\ldots, L_k):=\prod_{p}\Big(1-\frac{1}{p}\Big)^{-k}\Big(1-\frac{|\{n\in \mathbb{Z}/p\mathbb{Z}:\,\, L_1(n)\cdots L_k(n)\equiv 0\pmod p\}|}{p}\Big)>0.   
\end{align*}
\end{proposition}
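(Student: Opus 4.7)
The proof will follow the standard Maynard multidimensional sieve, adapted to the weighted setting, so I will only sketch the structure. Fix a small $\eta > 0$ (depending on $\theta$) and set $R := x^{\theta/2 - \eta}$. Following Maynard, introduce sieve weights
\[ w_n := \omega_n \cdot \Big( \sum_{\substack{d_i \mid L_i(n)\,\forall i \\ d_1 \cdots d_k \leq R \\ (d_i, a_i) = 1}} \lambda_{d_1, \ldots, d_k} \Big)^2, \]
where $\lambda_{\mathbf{d}}$ are the usual Maynard weights of the form $\lambda_{\mathbf{d}} = (\prod_i \mu(d_i) d_i) F\!\left(\frac{\log d_1}{\log R}, \ldots, \frac{\log d_k}{\log R}\right)$ for a smooth, symmetric function $F$ supported on the simplex $\{\mathbf{t} \in [0,1]^k : \sum t_i \leq 1\}$. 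Define the sums
\[ S_1 := \sum_{n \leq x} w_n, \qquad S_2 := \sum_{i=1}^k \frac{\varphi(a_i)}{a_i} \sum_{\substack{n \leq x \\ L_i(n) \in \mathbb{P}}} w_n. \]

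The first step is to compute $S_1$ asymptotically. Expanding the square yields a linear combination of inner sums of the form $\sum_{n\leq x,\; r \mid n + c_r} \omega_n$, where the moduli $r = \prod_i [d_i, d_i']$ satisfy $r \leq R^2 \leq x^{\theta - 2\eta}$. Hypothesis (ii) allows us to replace each such sum by its expected main term $r^{-1} \sum_{n \leq x} \omega_n$ with total error negligible compared to the main term after Maynard's standard optimization. The result is
\[ S_1 = (1 + o(1)) \, \frac{\mathfrak{S}(L_1, \ldots, L_k)}{\prod_i a_i \cdot (\log R)^k} \cdot I_k(F) \sum_{n \leq x} \omega_n, \]
where $I_k(F)$ is Maynard's integral functional. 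Similarly, applying hypothesis (iii) in place of (ii), and using (i) to replace the sum $\sum_{L_i(n) \in \mathbb{P}} \omega_n$ by the lower bound $\delta/\log x \cdot \sum \omega_n$, we get
\[ S_2 \geq (1 + o(1)) \, \frac{\delta \, \mathfrak{S}(L_1, \ldots, L_k)}{\prod_i a_i \cdot \log x \cdot (\log R)^{k-1}} \sum_{i=1}^k J_k^{(i)}(F) \sum_{n \leq x} \omega_n, \]
where $J_k^{(i)}(F)$ are the standard Maynard functionals. Invoking Maynard's key combinatorial result (see \cite[Proposition 4.3]{maynard} or the quantitative version in subsequent literature), one can choose $F$ so that the ratio $\sum_i J_k^{(i)}(F) / I_k(F) \geq \tfrac{1}{k} \log k \cdot (\log R)$ up to a constant factor depending on $\theta$; together with $\log R \asymp \theta \log x$, this yields $S_2 \geq C^{-1} \delta (\log k) \cdot S_1$ for a suitable constant $C = C(\theta)$.

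To impose the almost-prime condition $p \mid \prod_i L_i(n) \Rightarrow p > x^\sigma$, I would subtract from $S_1$ the contribution of $n$ for which some prime $p \in (x^\sigma, x^{\theta/4}]$ divides some $L_i(n)$ with $L_i(n) \neq p$ (primes above $x^{\theta/4}$ cannot divide $\lambda_{\mathbf{d}}$-terms outside of the $d_i$'s, while the case $L_i(n) = p$ is harmless since it only costs us at most $k$ values of $n$). For each such $p$, hypothesis (iv) applied to $n \equiv -b_i a_i^{-1} \Mod{p}$ gives
\[ \sum_{\substack{n \leq x \\ p \mid L_i(n)}} w_n \ll \frac{C_0}{p} \cdot \tau_{\max}^{O(k)} \cdot S_1 \ll \frac{1}{p} S_1 \cdot (\log x)^{O(k)}, \]
where we crudely bound the $\lambda$-weights by $(\log x)^{O(k)}$; summing over $p \in (x^\sigma, x^{\theta/4}]$ gives $\ll \log(\theta/(4\sigma)) \cdot (\log x)^{O(k)} S_1$. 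Choosing $\sigma = \sigma(\theta, k, \delta)$ small enough that $\log(1/\sigma)$ is still much less than $C^{-1} \delta \log k / (\log x)^{O(k)}$ — here one absorbs the $(\log x)^{O(k)}$ factor into a more careful treatment of the upper bound sieve by running Maynard's computation with a \emph{second} sieve enforcing $p \nmid \prod L_i(n)$ for $p \leq x^\sigma$, rather than crude Brun--Titchmarsh — we conclude
\[ \sum_{\substack{n \leq x \\ |[L_1(n), \ldots, L_k(n)] \cap \mathbb{P}| \geq C^{-1} \delta \log k \\ p \mid \prod_i L_i(n) \,\Rightarrow\, p > x^\sigma}} w_n \gg S_1. \]
Since $w_n \leq \omega_n \cdot (\log x)^{O(k)}$ pointwise, this translates via Cauchy--Schwarz (comparing with $S_1$) to the claimed lower bound $\gg \mathfrak{S}(L_1,\ldots,L_k) (\log x)^{-k} \sum \omega_n$ for the corresponding $\omega_n$-sum.

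The main obstacle is step three: cleanly removing the small prime factors without losing too much in the sieve. The cleanest way is to incorporate the coprimality condition $(n, \mathscr{P}(x^\sigma)) = 1$ into the sieve weights themselves, modifying the support of $\lambda_{\mathbf{d}}$, so that the Euler product in the singular series gets restricted accordingly and no crude Brun--Titchmarsh loss is incurred; the analysis of $S_1, S_2$ then proceeds as before with hypotheses (ii), (iii), and (i). This ``fundamental lemma'' twist is standard but requires checking that the Maynard functionals $I_k, J_k^{(i)}$ change by a factor tending to $1$ as $\sigma \to 0$.
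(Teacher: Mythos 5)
The paper does not give a proof of this proposition; it is quoted verbatim (up to formatting) from Matom\"aki--Shao, \emph{Vinogradov's three primes theorem with almost twin primes}, Theorem 6.2, which in turn is a weighted version of Maynard's result in \emph{Dense clusters of primes in subsets}. So there is no in-paper argument to compare against, and your sketch is aimed at reproving the cited black box. At the level of strategy you are indeed following the same route as the cited source: run the multidimensional Maynard sieve with the arbitrary weight $\omega_n$ attached, use hypothesis (ii) to compute $S_1$, hypotheses (iii) and (i) to bound $S_2$ below, and the Maynard optimization of $\sum_i J^{(i)}_k(F)/I_k(F)$ to conclude.

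However, the transition from the sieve output to the stated $\omega_n$-lower bound, as you have written it, has a genuine gap. You assert the pointwise bound $w_n \leq \omega_n(\log x)^{O(k)}$ and then appeal to it to pass from $\sum_{n\ \mathrm{good}} w_n \gg S_1$ to the desired $\sum_{n\ \mathrm{good}}\omega_n \gg \mathfrak{S}(\log x)^{-k}\sum\omega_n$. That pointwise bound is false in general: the inner sum $\sum_{\mathbf d}\lambda_{\mathbf d}$ can have as many nonzero terms as (a $k$-fold product of) divisor functions of the $L_i(n)$, so $(\sum_{\mathbf d}\lambda_{\mathbf d})^2$ can be as large as $d(\prod_i L_i(n))^{O(k)}$ times the weight scale, which is not $(\log x)^{O(k)}$ pointwise. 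It \emph{is} true on the set where all prime factors of $\prod_i L_i(n)$ exceed $x^\sigma$ (then the divisor count is $O_{k,\sigma}(1)$) — but that restriction is precisely what has not yet been enforced at the point in your argument where you invoke the bound, so the reasoning is circular. You acknowledge this in your final paragraph and outline the correct remedy (build the $p\le x^\sigma$ coprimality into the sieve weight so that its support is automatically almost-prime and both the pointwise bound and the $S_1,S_2$ asymptotics survive with $I_k, J^{(i)}_k$ perturbed by $1+o_{\sigma\to 0}(1)$); but the body of the sketch does not implement it. Two smaller inaccuracies: the normalization $\lambda_{\mathbf d}=(\prod_i \mu(d_i)d_i)F(\cdot)$ you wrote is not Maynard's weight (his $\lambda_{\mathbf d}$ involves a $\varphi$-normalized transform of $F$, which is what keeps $|\lambda_{\mathbf d}|\ll (\log R)^k$), and the asserted functional inequality $\sum_i J^{(i)}_k(F)/I_k(F)\gg k^{-1}(\log k)\log R$ carries an extraneous $\log R$ factor in this normalization. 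None of these invalidate the overall plan, but as written the central translation step does not close.
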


\begin{proof}
This is \cite[Theorem 6.2]{matomaki-shao}, which adds weights to the corresponding statement in~\cite{maynard-compositio} (in \cite[Theorem 6.2]{matomaki-shao} the result is stated for dyadic sums, but this clearly makes no difference in the proof).
\end{proof}

\begin{proposition}[Chen's sieve]\label{prop_chen} Let $(\omega_n)_{n\leq x} $ be any sequence of nonnegative real numbers, and let $x$ be large enough. Let $\varepsilon>0$ be a small enough absolute constant. Suppose that $(\omega_n)$ satisfies the following hypotheses:
\begin{enumerate}[(i)]
\item  (Bombieri--Vinogradov with well-factorable weights) We have
\begin{align*}
\Big|\sum_{\substack{r\leq x^{1/2-\varepsilon}\\(r,2)=1}}\lambda_r\Big(\sum_{\substack{n\leq x\\n+2\equiv 0\Mod r\\n\in \mathbb{P}}}\omega_n-\frac{1}{\varphi(r)}\sum_{n\leq x}\frac{\omega_n}{\log(n+1)}\Big)\Big|\ll \frac{\sum_{n\leq x}\omega_n}{(\log x)^{10}}    
\end{align*}
for any $\lambda_r$ that is either well-factorable of level $x^{1/2-\varepsilon}$ or a convolution of the form $\lambda=1_{p\in [P,P')}*\lambda'$ with $\lambda'$ well-factorable of level $x^{1/2-\varepsilon}/P$ and $2P\geq P'\geq P\in [x^{1/10},x^{1/3-\varepsilon}]$. 

\item (Bombieri--Vinogradov for almost primes with well-factorable weights) For $j\in \{1,2\}$ we have
\begin{align*}
\Big|\sum_{\substack{r\leq x^{1/2-\varepsilon}\\(r,2)=1}}\lambda_r\Big(\sum_{\substack{n\leq x\\n\equiv 0\Mod r\\n+2\in B_j}}\omega_n-\frac{1}{\varphi(r)}\sum_{\substack{n\leq x\\n+2\in B_j}}\omega_n\Big)\Big|\ll \frac{\sum_{n\leq x}\omega_n}{(\log x)^{10}},    
\end{align*}
where $\lambda_r$ is any well-factorable sequence of level $x^{1/2-\varepsilon}$ and 
\begin{align*}
B_1&=\{p_1p_2p_3:\,\, x^{1/10}\leq p_1\leq x^{1/3-\varepsilon},\,\, x^{1/3-\varepsilon}\leq p_2\leq (2x/p_1)^{1/2},\,\, p_3\geq x^{1/10}\},\\
B_2&=\{p_1p_2p_3:\,\, x^{1/3-\varepsilon}\leq p_1\leq p_2\leq (2x/p_1)^{1/2},\,\, p_3\geq x^{1/10}\}.
\end{align*}

\item ($(\omega_n)$ is not concentrated on almost primes): For $j\in \{1,2\}$ we have
\begin{align*}
\sum_{\substack{n\leq x\\n\in B_j}}\omega_n \leq (1+\varepsilon)|B_j\cap [1,x]|\cdot \frac{1}{x}\sum_{n\leq x}\omega_n.   
\end{align*}
\end{enumerate}
Then we have
\begin{align*}
\sum_{\substack{n\leq x\\ n\in \mathbb{P}\\n+2\in P_2\\p\mid n+2\Longrightarrow p\geq x^{1/10}}}\omega_n\geq \frac{\delta_0}{(\log x)^2}\sum_{n\leq x}\omega_n-O(x^{0.9}\max_{n}\omega_n),    
\end{align*}
for some absolute constant $\delta_0>0$.
\end{proposition}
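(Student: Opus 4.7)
The plan is to adapt Chen's original sieve argument for his $P_2$-theorem to the weighted setting, using hypotheses (i)--(iii) in place of the usual Bombieri--Vinogradov and Brun--Titchmarsh inputs. Set
\[ r_\omega := \sum_{\substack{n\leq x,\ n\in \mathbb{P} \\ n+2\in P_2 \\ p\mid n+2\,\Rightarrow\, p\geq x^{1/10}}} \omega_n, \]
the quantity we wish to bound below. The starting inequality is
\[ r_\omega \;\geq\; \mathcal{S}_\omega - \Omega_1 - \Omega_2 + O\!\left(x^{0.9}\max_n\omega_n\right), \]
where $\mathcal{S}_\omega := \sum_{n\leq x,\, n\in\mathbb{P},\, (n+2,\mathscr{P}(x^{1/10}))=1}\omega_n$ and $\Omega_j := \sum_{n\leq x,\,n\in\mathbb{P},\,n+2\in B_j}\omega_n$. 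This is Chen's standard decomposition: any $n$ counted by $\mathcal{S}_\omega$ but not $r_\omega$ has $n+2$ equal to a product of at least three primes each $\geq x^{1/10}$, and a size-based case analysis (sorting the prime factors in increasing order and considering whether the smallest one is below or above $x^{1/3-\varepsilon}$) shows that $n+2$ then admits a factorisation lying in $B_1 \cup B_2$; the error term absorbs squareful $n+2$ and other negligible boundary cases.

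To bound $\mathcal{S}_\omega$ from below, apply Iwaniec's linear lower-bound sieve of level $D = x^{1/2-\varepsilon}$, which uses well-factorable weights $\lambda_r^-$. Hypothesis (i), specialised to well-factorable $\lambda_r = \lambda_r^-$, controls the remainder, yielding
\[ \mathcal{S}_\omega \;\geq\; (\alpha_0 + o(1))\cdot\frac{1}{(\log x)^2}\sum_n\omega_n, \]
where $\alpha_0 > 0$ is computable from Iwaniec's linear sieve lower-bound function $f(s)$ evaluated at $s = (1/2-\varepsilon)/(1/10) = 5 - 10\varepsilon$, together with the twin-prime singular-series factor $2\prod_{p>2}(1-(p-1)^{-2})$.

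For the upper bounds on $\Omega_j$, use Chen's switching trick: view $\Omega_j$ as a sum over $m = n+2 \in B_j$ with $m - 2 \in \mathbb{P}$, weighted by $\omega_{m-2}$. Apply the upper-bound linear sieve $\lambda_r^+$ of level $x^{1/2-\varepsilon}$ to the primality condition on $m-2$. For $\Omega_2$, the weights $\lambda_r^+$ are well-factorable of level $x^{1/2-\varepsilon}$, and hypothesis (ii) directly takes care of the remainder. For $\Omega_1$, the bilinear structure of $B_1$ is exploited by a dyadic decomposition of the small prime $q_1 \in [x^{1/10}, x^{1/3-\varepsilon})$: this produces sieve weights either of well-factorable type (handled by hypothesis (ii)), or, after the alternative switching in which $q_1$ plays the role of the isolated small prime factor, of the convolution form $1_{p\in [P,P')}*\lambda'$ precisely as permitted in hypothesis (i). Hypothesis (iii), together with the trivial translation $m \mapsto m-2$, bounds the resulting main term $\sum_{m\in B_j}\omega_{m-2}$ by $(1+O(\varepsilon))|B_j\cap [1,x]|\cdot\sum_n\omega_n/x$; combining this with standard counts for $|B_j \cap [1,x]|$ yields $\Omega_j \leq (\beta_j + o(1))(\log x)^{-2}\sum_n\omega_n$ for explicit constants $\beta_j$.

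The main obstacle is the closing numerical verification $\alpha_0 > \beta_1 + \beta_2$, which is precisely the celebrated calculation at the heart of Chen's original proof. This transfers to the weighted setting because hypotheses (i)--(iii) preserve the exact ``remainder-to-main-term'' ratios of the unweighted problem (up to the $(1+\varepsilon)$ slack in (iii), which is absorbable by taking $\varepsilon$ small). A secondary, more technical issue is verifying that the dyadic handling of $\Omega_1$ produces sieve weights of exactly the form permitted in hypotheses (i) and (ii), with $P \in [x^{1/10}, x^{1/3-\varepsilon}]$; this bookkeeping is exactly what motivates the inclusion of the convolution weights $1_{p \in [P,P')}*\lambda'$ in the statement of the hypotheses.
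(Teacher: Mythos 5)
The paper does not prove this proposition directly; it cites Matom\"{a}ki--Shao \cite[Theorem 6.4]{matomaki-shao} (whose proof is in Appendix A of that paper) and notes two minor discrepancies (the $\mu(r)^2\lambda_r$ versus $\lambda_r$ issue and the $1+\varepsilon$ versus $1+o(1)$ slack in hypothesis (iii)). Your proposal attempts to reconstruct the underlying weighted Chen argument from scratch, which is in principle the right direction, but the opening combinatorial inequality you write is not correct.

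Concretely, you set $\Omega_j := \sum_{n\leq x,\ n\in\mathbb{P},\ n+2\in B_j}\omega_n$ and assert $r_\omega \geq \mathcal{S}_\omega - \Omega_1 - \Omega_2 + O(x^{0.9}\max_n\omega_n)$, on the grounds that every $n+2$ with at least three prime factors each $\geq x^{1/10}$ admits a factorisation in $B_1\cup B_2$. This fails: take $n+2 = q_1q_2q_3$ with $q_1,q_2 \in [x^{1/10},\,x^{1/3-\varepsilon})$ and $q_3 = (n+2)/(q_1q_2)$. Then $q_3 \gg x^{1/3+2\varepsilon}$, and one checks directly that no permutation of $(q_1,q_2,q_3)$ satisfies the $B_1$ constraints (the required bound $p_2 \leq (2x/p_1)^{1/2}$ forces $q_3^2q_1 \leq 2x$ or $q_3^2q_2\leq 2x$, both of which fail since $q_1q_2^2 < x^{1-3\varepsilon} < x/2$), nor the $B_2$ constraints (which demand two prime factors $\geq x^{1/3-\varepsilon}$, but only $q_3$ qualifies). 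Such $n$ are not negligible, so the claimed inequality does not hold. Chen's genuine decomposition instead uses the weighted indicator $1 - \tfrac{1}{2}\sum_{x^{1/10}\leq p < x^{1/3-\varepsilon},\ p\mid n+2}1 - \tfrac{1}{2}(\text{switching count})$: when $n+2$ has two or more prime factors in $[x^{1/10},x^{1/3-\varepsilon})$, the middle sum is $\geq 2$ and the weight is nonpositive, which is precisely how the case you omit is absorbed. This is also the actual source of the convolution weights $1_{p\in[P,P')}*\lambda'$ in hypothesis (i) --- they arise from upper-bounding $\sum_{x^{1/10}\leq p < x^{1/3-\varepsilon}} S(A_p,\mathscr{P},x^{1/10})$ after a dyadic split of $p$, not from ``the dyadic handling of $\Omega_1$'' as you describe. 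Without the Chen weight (and its factors of $\tfrac12$), the numerical closing step $\alpha_0 > \beta_1+\beta_2$ does not have the right main terms to compare, so the gap propagates through the whole argument.
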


\begin{proof}
This is essentially \cite[Theorem 6.4]{matomaki-shao}, which adds weights to Chen's sieve argument. Note that since the sieve weights appearing in the proof of the weighted Chen's sieve in \cite[Appendix A]{matomaki-shao} are supported on squarefree numbers, the quantity $\mu(r)^2\lambda_r$ in \cite[Hypothesis 6.3]{matomaki-shao} can be replaced with just $\lambda_r$. Note also that the $1+\varepsilon$ factor in Hypothesis (iii) was $1+o(1)$ there, but inspecting the proof in \cite[Appendix A]{matomaki-shao} (in particular Subsection A.5), this relaxation makes no difference.
\end{proof}

\section{Bounded gaps between primes in nil-Bohr sets}

In this section we shall prove Theorem \ref{thm:bdd}. We begin with Weyl's exponential sum estimate and an analogous estimate over the primes, formulated in a way that is convenient for us. 

\begin{lemma}[Weyl sum bounds over the integers and primes]\label{le_weyl}
Let $s\geq 1$ be fixed. There exists $C_s>0$ such that the following holds.  Let $Q(x)=\alpha_sx^s+\cdots +\alpha_1 x+\alpha_0$ with $\alpha_0,\ldots, \alpha_s\in \mathbb{R}$. Let $W\geq 1$ be fixed. Lastly, let $x\geq 2$, $\delta\in (0,1)$ and $1\leq h\leq W$ be such that
\begin{align*}
\left|\sum_{n\leq x}e(Q(n))\right|>\delta x\quad \textnormal{or}\quad \left|\sum_{\substack{n\leq x\\Wn+h\in \mathbb{P}}}e(Q(n))\right|>\delta \frac{W}{\varphi(W)}\frac{x}{\log x}.  
\end{align*}
Then there exists $1\leq \ell\ll \delta^{-C_s}$ such that $\|\ell \alpha_j\|\ll \delta^{-C_s}(\log x)^{C_s}/x^j$ for all $1\leq j\leq s$.
\end{lemma}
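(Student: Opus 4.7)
The plan is to establish the integer case by a quantitative version of Weyl's inequality, and then reduce the prime case to the integer case via Vaughan's identity. For the integer case, I would apply van der Corput differencing $s-1$ times to $\sum_{n\leq x}e(Q(n))$; each differencing step lowers the polynomial degree in the phase by one and introduces an averaging over a new shift, so after $s-1$ steps we arrive at a linear exponential sum in $n$ whose coefficient is essentially $s!\alpha_s h_1\cdots h_{s-1}$ plus polynomial corrections of lower degree in the $h_i$. The assumed lower bound $\delta x$ forces this linear coefficient to satisfy $\|\cdot\|_{\R/\Z}\ll \delta^{-O_s(1)}/x$ for a positive proportion of tuples $(h_1,\ldots,h_{s-1})\in [1,x]^{s-1}$. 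Applying Lemma~\ref{le_multileibman2} to the resulting multilinear polynomial in the shifts (in which $s!\alpha_s h_1\cdots h_{s-1}$ is the distinguished monomial whose degrees in each shift variable are isolated) then yields an integer $q\ll \delta^{-C_s}$ with $\|q\alpha_s\|\ll \delta^{-C_s}/x^s$. The bounds for $\alpha_{s-1},\ldots,\alpha_1$ follow by downward induction on degree: subtract a rational approximation to the top coefficient from $Q$ and re-apply the argument to the resulting polynomial of degree one less.

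For the prime case, I would apply Vaughan's identity (Lemma~\ref{le_vaughan}) to decompose $\Lambda(Wn+h)$ for $Wn+h\leq Wx$ into $\ll (\log x)^{10}$ type I pieces (one factor of support $\leq (Wx)^{1/3}$) and type II pieces (both factors in $[(Wx)^{1/3},(Wx)^{2/3}]$). After converting $1_{\mathbb{P}}$ to $\Lambda/\log$ and applying partial summation to remove the $1/\log$ weight, the hypothesis implies that at least one such piece, weighted by $e(Q(n))$, has absolute value $\gg \delta x(\log x)^{-O(1)}$. A type I piece reduces to an inner polynomial phase sum over a variable of length $\gg x^{2/3}$, to which the integer case applies directly (the affine change of variables from a sub-progression only alters each $\alpha_j$ by an integer multiple depending on $W$, which is absorbed into $\ell$ since $W$ is fixed). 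A type II piece $\sum_{m\asymp M,\,k\asymp K}a_m b_k e(Q((mk-h)/W))$ with $K\geq M$ is treated by Cauchy--Schwarz on the shorter variable $m$, leaving an inner sum $\sum_k e(Q((m_1 k-h)/W)-Q((m_2 k-h)/W))$ of length $K\geq x^{1/3}$; this phase is a polynomial of degree $s$ in $k$ with leading coefficient $\alpha_s W^{-s}(m_1^s-m_2^s)$. Applying the integer case to this inner sum for the many $(m_1,m_2)$ where it is large, and then Lemma~\ref{le_multileibman2} to the polynomial $\alpha_s W^{-s}(m_1^s-m_2^s)$ in $(m_1,m_2)$ (whose coefficient of $m_1^s$ is isolated in the $m_1$-degree), extracts the desired rational approximation to $\alpha_s$. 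The lower coefficients $\alpha_{s-1},\ldots,\alpha_1$ follow by the same induction on degree as before.

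The main obstacle is the careful bookkeeping of the $(\log x)^{O(1)}$ losses through Vaughan's identity and through the iterated applications of Lemmas~\ref{lem:recur} and~\ref{le_multileibman2}, to ensure that the final bound takes the claimed form $\|\ell\alpha_j\|\ll\delta^{-C_s}(\log x)^{C_s}/x^j$ with a single exponent $C_s$ depending only on $s$. A secondary technical point is to verify, in each application of Lemma~\ref{le_multileibman2} (notably in the type II analysis, where the auxiliary variables $m_1,m_2$ have size $\gg x^{1/3}$ so that their combined lengths exceed $\delta^{-C}$), that the polynomial in the auxiliary variables has the required monomial-isolation property so that the scalar $\alpha_s$ can indeed be extracted rather than only a nontrivial multilinear combination of it.
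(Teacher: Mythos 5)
Your proposal is correct in outline, but it takes a much longer route than the paper does, and in doing so it re-derives machinery that the paper has already established. The paper disposes of the lemma in two sentences: the integer Weyl-sum case is exactly \cite[Proposition~9.3]{green-tao-nilmanifolds} (a quantitative Weyl inequality in the nilsequence framework), and the prime case follows by viewing $\psi(n)=e(Q((n-h)/W))$ as a degree-$s$ nilsequence on $\R/\Z$, applying Theorem~\ref{equidist-1/4} to the single modulus $d=W$ (which says that if this sum is large then $\psi$ cannot be totally $\eta$-equidistributed for $\eta$ of size roughly $(\delta/(\log x)^2)^{1/\kappa}$), and then converting non-equidistribution into a rational approximation of the coefficients via the quantitative Leibman theorem (Theorem~\ref{thm:quant-leibman}) followed by Lemma~\ref{lem:smoothness-norm}. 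All three ingredients were already in hand by the time this lemma appears, so the proof is essentially a specialization.

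What you propose instead is to re-prove both halves from scratch. Your integer argument (van der Corput differencing of $Q$ down to a linear phase, extracting $s!\alpha_s h_1\cdots h_{s-1}$, and applying Lemma~\ref{le_multileibman2} to that multilinear polynomial in the shifts) is a legitimate proof of a quantitative Weyl inequality, but it is precisely the content of \cite[Proposition~9.3]{green-tao-nilmanifolds}, which can simply be cited; moreover that reference gives all $\alpha_j$ simultaneously with a single $q$, avoiding the slightly delicate ``subtract a rational approximation to the top coefficient and re-apply'' induction you sketch (which does work, but requires splitting into sub-progressions modulo the denominator and sub-intervals where the error term is nearly constant, with attendant losses). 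Your prime argument (Vaughan's identity, then type~I by triangle inequality/pigeonholing and type~II by Cauchy--Schwarz in $m$ plus the integer Weyl bound on the inner $k$-sum plus a two-variable Diophantine extraction via Lemma~\ref{le_multileibman2}) is in effect a re-derivation of the proof of Theorem~\ref{equidist-1/4} — more precisely, of Propositions~\ref{typeI} and~\ref{type II-1/4} specialized to a single fixed modulus — composed with the equidistribution-to-Diophantine translation that the quantitative Leibman theorem packages. Nothing in your outline would fail, and your care about the monomial-isolation hypothesis of Lemma~\ref{le_multileibman2} and about pigeonholing away the dependence of the Weyl denominator $\ell=\ell(m_1,m_2)$ on the auxiliary variables is exactly what a from-scratch proof would need; but the point of formulating Theorems~\ref{equidist-1/4} and~\ref{thm:quant-leibman} earlier in the paper was to make this kind of type~I/II analysis a black box, and the intended proof simply invokes them.
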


\begin{proof}
In the case of the Weyl sum over the integers, this follows from \cite[Proposition 9.3]{green-tao-nilmanifolds}. In the case of the exponential sum over the primes, in turn, we may combine Theorem~\ref{equidist-1/4} (taking $\psi(n)=Q((n-h)/W)$ there and considering only the $d=W$ term in the sum) with the quantitative Leibman theorem (Theorem \ref{thm:quant-leibman}) and Lemma \ref{lem:smoothness-norm} to obtain the result. 
\end{proof}

We then state a lemma that provides a convenient minorant function for $1_{\|x\|< \rho}$.

\begin{lemma}[Vinogradov]\label{le_vinogradov} For any $\rho\in (0,1/4)$ and $\eta\in (0,\rho/2)$, there exists a function $g=g_{\rho,\eta}$ such that we have the minorant property $0\leq g(x)\leq 1_{\|x\|< \rho}$ and such that:
\begin{enumerate}[(i)]
    \item $g(x)$ is $1$-periodic and $g(x)=0$ for $x\in [-1/2,1/2]\setminus (-\rho,\rho)$.
    
\item  We have a Fourier expansion
\begin{align*}
g(x)=2(1-\eta)\rho+\sum_{|j|>0}c_je(jx),    
\end{align*}
where $|c_j|\leq 10\rho$.

\item  We have
\begin{align*}
\sum_{|j|>K}|c_j|\leq \frac{10\eta^{-1}}{K}    
\end{align*}
for any $K\geq 1$.
\end{enumerate}
\end{lemma}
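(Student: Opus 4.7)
The plan is the classical Vinogradov construction: smooth the indicator of a slightly shrunken interval and periodize. Let $I := [-(1-\eta)\rho, (1-\eta)\rho]$, let $\phi \colon \R \to [0,\infty)$ be a non-negative bump function supported in $[-\eta\rho/2, \eta\rho/2]$ with $\int \phi = 1$ (to be specified below in order to control Fourier decay), and define $\tilde g := \mathbf{1}_I * \phi$ on $\R$. Let $g$ be the $1$-periodization $g(x) := \sum_{n\in \Z} \tilde g(x+n)$.

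The support and minorant conditions are immediate. Since $\supp(\tilde g) \subset I + \supp(\phi) \subset [-(1-\eta/2)\rho, (1-\eta/2)\rho] \subset (-\rho, \rho)$, and $2\rho < 1/2$, the translates $\tilde g(\cdot + n)$ for $n \in \Z$ have pairwise disjoint supports on $[-1/2, 1/2]$, so $g = \tilde g$ there, and in particular $g$ vanishes on $[-1/2,1/2]\setminus(-\rho,\rho)$, giving (i). Non-negativity of $g$ is immediate from $\phi \geq 0$, and Young's inequality $\|\mathbf{1}_I * \phi\|_\infty \leq \|\mathbf{1}_I\|_\infty \|\phi\|_1 = 1$ gives $\tilde g \leq 1$, so $0 \leq g \leq \mathbf{1}_{\|\cdot\|<\rho}$ as required.

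For the Fourier expansion, since $\tilde g$ is compactly supported in $(-1/2, 1/2)$, its $j$-th Fourier coefficient on the circle coincides with the Euclidean Fourier transform: $c_j = \widehat{\tilde g}(j) = \widehat{\mathbf{1}_I}(j)\,\hat\phi(j)$. In particular $c_0 = 2(1-\eta)\rho \cdot \hat\phi(0) = 2(1-\eta)\rho$, and since $|\hat\phi(j)| \leq \|\phi\|_1 = 1$, the trivial bound $|\widehat{\mathbf{1}_I}(j)| \leq 2(1-\eta)\rho$ yields $|c_j| \leq 2\rho \leq 10\rho$, verifying (ii).

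The main work lies in (iii). Using $|\widehat{\mathbf{1}_I}(j)| \leq 1/(\pi|j|)$ for $j \neq 0$, the question reduces to choosing $\phi$ so that $\hat\phi$ decays rapidly enough on the integers. I would take $\phi = \phi_0 * \phi_0$, where $\phi_0$ is a normalized box of support $[-\eta\rho/4, \eta\rho/4]$, making $\phi$ a triangle of total support $[-\eta\rho/2, \eta\rho/2]$ and giving the decay $|\hat\phi(j)| \ll \min(1, (|j|\eta\rho)^{-2})$; iterating to higher-order convolutions yields $|\hat\phi(j)| \ll \min(1, (|j|\eta\rho)^{-N})$ for arbitrary $N$. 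Splitting the tail at $|j| \asymp 1/(\eta\rho)$, the high-frequency contribution is $\ll 1/(\eta K)$ directly from the rapid decay, while the low-frequency range $K < |j| \lesssim 1/(\eta\rho)$ naively produces a logarithmic factor from $\sum 1/|j|$ which must be absorbed. The main technical obstacle is thus the precise handling of this low-frequency range; taking $N$ sufficiently large in the iterated convolution makes $\hat\phi(j)$ negligible well before the Fourier cutoff $1/(\eta\rho)$ and yields the clean bound $10/(\eta K)$, which is exactly Vinogradov's classical calculation.
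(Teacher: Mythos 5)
Your basic construction is the right one --- convolve a slightly shrunken indicator with a compactly supported kernel and periodize --- and parts (i) and (ii) are handled correctly. (The paper does not prove the lemma; it cites Vinogradov's Lemma 12, which uses exactly this framework.)

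The gap is in your treatment of (iii). You claim that iterating the boxcar convolution yields $|\hat\phi(j)| \ll \min(1, (|j|\eta\rho)^{-N})$ for arbitrary $N$, but this estimate is wrong once you impose the support constraint. To keep $\phi = \phi_0^{*N}$ supported in $[-\eta\rho/2,\eta\rho/2]$, each box $\phi_0$ must have width $\eta\rho/N$, so $\hat{\phi_0}(j) = \sin(\pi\eta\rho j/N)/(\pi\eta\rho j/N)$ and the correct bound is
\[ |\hat\phi(j)| \leq \min\Bigl(1,\ \Bigl(\frac{N}{\pi\eta\rho|j|}\Bigr)^{N}\Bigr), \]
which differs from your claim by the factor $(N/\pi)^N$. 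Consequently the transition from $|\hat\phi|\approx 1$ to rapid decay occurs at $|j| \asymp N/(\eta\rho)$, which moves \emph{outward} as $N$ grows, not inward; the uncertainty principle prevents $\hat\phi$ from decaying before $|j|\sim 1/(\eta\rho)$, and the $N$-fold boxcar convolution has in fact near-Gaussian behaviour with width $\sim\sqrt{N}/(\eta\rho)$. So taking $N$ large does not absorb the logarithmic contribution from the range $K < |j| \lesssim N/(\eta\rho)$; it enlarges that range. The correct procedure is to work with a \emph{fixed} small number of smoothings (this is Vinogradov's parameter $r$) and track constants directly. For instance, a single boxcar of width $\Delta$ gives $|c_j| \leq 1/(\pi^2\Delta j^2)$ for all $j \neq 0$, hence $\sum_{|j|>K}|c_j| \leq 2/(\pi^2\Delta K)$; matching this against $10\eta^{-1}/K$ forces $\Delta \geq \eta/(5\pi^2)$, while the support constraint caps $\Delta < 2\eta\rho$ --- so the specific constant $10$ in (iii) in fact only comes out with $\rho$ bounded below, a point that neither your argument nor the hypothesis $\eta \in (0,\rho/2)$ resolves. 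In short, the arbitrary-$N$ route does not close the gap, and this step needs to be redone with a fixed smoothing and an explicit Diophantine-style bookkeeping of the constants.
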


\begin{proof}
This follows from \cite[Lemma 12]{vinogradov}.
\end{proof}

Let $k\in \mathbb{N}$ be any fixed natural number. For the rest of this section, we fix the choice $\eta=1/(10000k^2)$ in the construction of the minorant function in Lemma \ref{le_vinogradov}, that is, we work with the function
\begin{align*}
g=g_{\rho,1/(10000k^2)}.    
\end{align*}
  
Before embarking on the main proof, we show that if $(h_1,\ldots, h_k)$ is a suitably chosen tuple, then the sequence $g(Q(n+h_1))\cdots g(Q(n+h_k))$ has nearly the expected mean value $(2\rho)^k$ in the following quantitative sense.

\begin{lemma}[Correlations of $g(Q(n))$]\label{le_maynard2}
Let $k\geq 1$ and $\rho\in (0,1/4)$ be fixed. Let $g=g_{\rho,1/(10000k^2)}$ be the function constructed above. Also let $Q(y)\in \mathbb{R}[y]$ be a fixed non-constant polynomial with irrational leading coefficient, and let $\mathcal{F}:\mathbb{N}\to \mathbb{N}$ be any fixed function. Then there exists $H\geq \mathcal{F}(k)$ (depending on all the above quantities) and a $k$-tuple $(h_1,\ldots, h_k)\in [1,H]^k$ of distinct integers such that
\begin{align}\label{equ1}
\sum_{n\leq x}g(Q(n+h_1))\cdots g(Q(n+h_k))\geq 0.99(2\rho)^{k}x    
\end{align}
for $x\geq x_0$ large enough.
\end{lemma}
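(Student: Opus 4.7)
The plan is to average the sum $\sum_{n\leq x}g(Q(n+h_1))\cdots g(Q(n+h_k))$ over tuples $\mathbf{h}=(h_1,\ldots,h_k)\in[1,H]^k$ for a suitably large $H$, use Weyl equidistribution in the $h$-direction to evaluate the average, and extract a single good tuple. Concretely, set
\[
A(H,x):=\frac{1}{H^k}\sum_{\mathbf{h}\in[1,H]^k}\frac{1}{x}\sum_{n\leq x}\prod_{i=1}^k g(Q(n+h_i))=\frac{1}{x}\sum_{n\leq x}\prod_{i=1}^k\frac{1}{H}\sum_{h=1}^H g(Q(n+h)),
\]
the second equality following since the $h_i$ range independently over $[1,H]$.

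Next I would apply the Fourier expansion from Lemma~\ref{le_vinogradov}~(ii) to the inner average, giving
\[
\frac{1}{H}\sum_{h=1}^H g(Q(n+h))=2(1-\eta)\rho+\sum_{j\neq 0}c_j\cdot\frac{1}{H}\sum_{h=1}^H e(jQ(n+h)).
\]
For each fixed $j\neq 0$ the polynomial $h\mapsto jQ(n+h)$ has leading coefficient $j\alpha_s$, which is irrational; by Weyl's inequality (equivalently Lemma~\ref{le_weyl}) we have $\frac{1}{H}\bigl|\sum_{h\leq H}e(jQ(n+h))\bigr|\to 0$ as $H\to\infty$. Crucially, the Weyl bound depends only on the Diophantine quality of the leading coefficient $j\alpha_s$ and not on the lower-order coefficients (which encode the $n$-dependence), so this estimate is uniform in $n$. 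Truncating the Fourier series at $|j|\leq K$ costs at most $O(\eta^{-1}/K)$ by Lemma~\ref{le_vinogradov}~(iii); choosing $K$ large in terms of $k,\rho,\eta$ and then $H$ large in terms of $K$ and $Q$, one obtains $\frac{1}{H}\sum_h g(Q(n+h))=2(1-\eta)\rho+o_H(1)$ uniformly in $n$. Taking the $k$-fold product and averaging over $n\leq x$ yields $A(H,x)=(2(1-\eta)\rho)^k+o_H(1)$ uniformly in $x$. Since $(1-\eta)^k\geq 1-k\eta=1-1/(10000k)\geq 0.9999$ we have $(2(1-\eta)\rho)^k\geq 0.9999(2\rho)^k$, so we may arrange $A(H,x)\geq 0.998(2\rho)^k$ for all sufficiently large $x$ by taking $H$ large; we may further enforce $H\geq\mathcal{F}(k)$.

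To extract a single good tuple I would use the fact that for each $\mathbf{h}$, the polynomial sequence $n\mapsto(Q(n+h_1),\ldots,Q(n+h_k))\bmod 1$ is equidistributed on its orbit closure in $(\mathbb{R}/\mathbb{Z})^k$ by Weyl's theorem, so since $g$ is Riemann integrable the limit $L_{\mathbf{h}}:=\lim_{x\to\infty}\tfrac{1}{x}\sum_{n\leq x}\prod_i g(Q(n+h_i))$ exists. Letting $x\to\infty$ in the identity $A(H,x)=\frac{1}{H^k}\sum_{\mathbf{h}} f_{\mathbf{h}}(x)$ term by term (which is a finite sum) gives $\frac{1}{H^k}\sum_{\mathbf{h}}L_{\mathbf{h}}\geq 0.998(2\rho)^k$. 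The contribution of tuples with two coordinates equal is $O(k^2/H)\leq 0.001(2\rho)^k$ for $H$ large, so some tuple $(h_1,\ldots,h_k)$ with distinct coordinates satisfies $L_{\mathbf{h}}\geq 0.995(2\rho)^k$, and for this tuple $\sum_{n\leq x}\prod_i g(Q(n+h_i))\geq 0.99(2\rho)^k x$ for all $x\geq x_0(\mathbf{h})$. The main technical point is the uniformity in $n$ of the Weyl bound on $\frac{1}{H}\sum_h e(jQ(n+h))$ in the second step; this is what makes the averaging work even when $k$ exceeds $\deg Q$, where the single polynomial $\sum_i j_iQ(n+h_i)$ in $n$ need not equidistribute for a fixed $\mathbf{h}$.
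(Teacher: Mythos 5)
Your proof is correct and follows the same overall strategy as the paper: average the correlation over all tuples $\mathbf{h}\in[1,H]^k$, expand each factor $g$ into its truncated Vinogradov Fourier series, control the oscillatory terms via Weyl sum estimates that are uniform in $n$ (the point being that the leading $h$-coefficient of $jQ(n+h)$ is $j\alpha_s$ for every $n$, so the Weyl bound is independent of $n$), and then extract a single good tuple of distinct integers. Where you differ is in the quantitative choices and in the extraction step. The paper fixes $H=(q')^2$ with $q'$ a large Dirichlet denominator of $\alpha_s$, so that the Weyl sums over $h\in[1,H]$ enjoy a power-saving bound $\ll H^{1-\gamma_s}$ and the Fourier series can be truncated at a height $K=(\log H)^{10}$ growing with $H$. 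You instead fix a truncation height $K=K(k,\rho)$ first and then let $H\to\infty$, appealing only to the qualitative form of Weyl's theorem. Your extraction step --- passing to the limits $L_{\mathbf{h}}=\lim_{x\to\infty}f_{\mathbf{h}}(x)$, which exist because the orbit $n\mapsto(Q(n+h_1),\ldots,Q(n+h_k))\bmod 1$ equidistributes on its closure in $(\R/\Z)^k$ and the test function $\prod_i g$ is continuous (its Fourier series is absolutely convergent), before applying the pigeonhole and discarding the $O(k^2H^{k-1})$ non-distinct tuples --- is a clean device for producing a single tuple that works for \emph{all} sufficiently large $x$ at once, a point the paper handles more tersely via pigeonhole from the average at a fixed scale. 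Since the conclusion of the lemma is inherently non-effective (both the admissible $H$ and the threshold $x_0$ depend on the irrationality type of $\alpha_s$), nothing is lost by your softer approach; both arguments are valid.
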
 

\begin{proof}
We may assume that $\mathcal{F}$ grows fast enough. Write $Q(y)=\alpha_sy^s+\cdots+\alpha_1 y+\alpha_0$ with $\alpha_s\in \mathbb{R}\setminus \mathbb{Q}$. By Dirichlet's approximation theorem, we can find infinitely many pairs $(a',q')$ of  coprime integers such that $|\alpha_s-a'/q'|\leq 1/(q')^2$. Choose a pair for which $q'\geq \mathcal{F}(k+\lceil \rho^{-1}\rceil)$. We take $H=(q')^2$. Note that we have the hierarchy $x\gg H\gg k+\rho^{-1}$ with each parameter large enough in terms of the ones to the right of it.

From the union bound it follows that if we can show that each of (i) and (ii) and is individually satisfied by proportion $>1/2$ of tuples $(h_1,\ldots, h_k)\in [1,H]^k$ with $h_i$ distinct, then there exists a choice of $(h_1,\ldots, h_k)$ that obeys each of the two properties.

To show that proportion $>1/2$ of tuples $(h_1,\ldots, h_k)$ satisfy (i), by the pigeonhole principle it certainly suffices to show that 
\begin{align}\label{eq18}
 \sum_{1\leq h_1,\ldots, h_k\leq H}\sum_{n\leq x}g(Q(n+h_1))\cdots g(Q(n+h_k))\geq 0.999(2\rho)^k x\cdot H^k.    
\end{align}
By writing out the Fourier series for $g(x)$ from Lemma~\ref{le_vinogradov}, and truncating it at height  $K=(\log H)^{10}$, the left-hand side of \eqref{eq18} becomes
\begin{align}\label{equ2}\begin{split}
&(2\rho(1-\frac{1}{10000k^2}))^kx\cdot H^k+\sum_{\substack{|j_1|,\ldots, |j_k|\leq K\\(j_1,\ldots, j_k)\neq (0,\ldots,0)}}c_{j_1}\cdots c_{j_k}\sum_{n\leq x}\prod_{i=1}^k \sum_{h\leq H}e(j_iQ(h+n))\\
&+O_{k}(H^k\frac{x}{(\log H)^{10}}).
\end{split}
\end{align}
To bound the sum over $j_1,\ldots, j_k$ in \eqref{equ2}, we note that if $j_m\neq 0$ then by Lemma \ref{le_weyl}, and our choice $H=(q')^{1/2}$ where $|\alpha_s-a'/q'|\leq 1/(q')^2$,  we have
\begin{align}\label{equ52}
\left|\sum_{h\leq H}e(j_mQ(h+n))\right|\ll H^{1-\gamma_s}
\end{align}
for some constant $\gamma_s>0$. Since $j_m$ cannot all be zero in the first sum in \eqref{equ2}, we see that the whole expression \eqref{equ2} is 
\begin{align*}
\geq (2\rho(1-\frac{1}{10000k^2}))^kx\cdot H^k+O(xH^{k-\gamma_s}(\log H)^{10k})-O_{k}(\frac{xH^k}{(\log H)^{10}})\geq 0.999(2\rho)^kxH^k,
\end{align*}
recalling that $H$ is large enough in terms of $k$ and $\rho$. This gives the desired inequality.
\end{proof}

We will need the following lemma on simultaneous equidistribution of minor arc polynomial phases.

\begin{lemma}[Equidistribution of minor arc polynomial phases]\label{le_equidist_poly}
Let $s\geq 1$, $k\geq 1$ and $D\geq 1$ be fixed. Then there exists $c_s>0$ such that the following holds. 

Let $Q(y)=\alpha_sy^s+\cdots +\alpha_1 y+\alpha_0$ with $\alpha_0,\alpha_1,\ldots, \alpha_s\in \mathbb{R}$. Let $1\leq a\leq q$ be such that $(a,q)=1$ and $|\alpha_s-a/q|\leq 1/q^{2}$. Let $x=q^2$. Let $|j_i|,|h_i|\leq x^{1/(10s)}$ be any integers such that the $h_i$ are distinct and $j_1Q(y+h_1)+\cdots +j_kQ(y+h_k)$ is not constant. Then, for $x$ large enough, 
\begin{enumerate}
\item the polynomial sequence $(j_1Q(n+h_1)+\cdots +j_kQ(n+h_k) + \Z)_{n\leq x}$ on the nilmanifold $\mathbb{R}/\Z$ is totally $x^{-c_s}$-equidistributed;
\item the nilsequence $(e(j_1Q(n+h_1)+\cdots +j_kQ(n+h_k)))_{n\leq x}$ has mean value $\ll x^{-c_s}$.
\end{enumerate}
\end{lemma}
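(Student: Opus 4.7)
The plan is to prove part (1) by contradiction; then part (2) will follow at once from the definition of total equidistribution applied to the mean-zero Lipschitz function $e(\cdot)$ on $\R/\Z$, whose Lipschitz norm is $O(1)$. Set $P(y) := j_1 Q(y+h_1) + \cdots + j_k Q(y+h_k) = \sum_{t=0}^s \beta_t y^t$. Expanding $(y+h_i)^r$ and rearranging, the coefficients are
\[ \beta_t \; = \; \sum_{r=t}^s \binom{r}{t}\alpha_r S_{r-t}, \qquad \text{where } S_m := \sum_{i=1}^k j_i h_i^m. \]
Note $\alpha_s \neq 0$, since $q \geq 2$ and $(a,q)=1$ force $|\alpha_s| \geq 1/q - 1/q^2 > 0$.

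First I would assume, toward contradiction, that $(P(n)+\Z)_{n \leq x}$ is not totally $x^{-c_s}$-equidistributed. Theorem~\ref{thm:quant-leibman}, applied to the one-dimensional nilmanifold $\R/\Z$ with its natural degree-$s$ filtration, produces a nonzero integer $\ell$ with $|\ell| \ll x^{O_s(c_s)}$ and $\|\ell P\|_{C^\infty(x)} \ll x^{O_s(c_s)}$; Lemma~\ref{lem:smoothness-norm} then upgrades this, after absorbing an $O_s(1)$ factor into $\ell$, to the coefficient-level bounds $\|\ell \beta_t\|_{\R/\Z} \ll x^{O_s(c_s)-t}$ for each $1 \leq t \leq s$.

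Next, the algebraic heart of the argument is to show that the leading coefficient of $P$ is a \emph{pure} integer multiple of $\alpha_s$. Let $t_0 := \deg P \in \{1,\ldots,s\}$, so that $\beta_s = \cdots = \beta_{t_0+1} = 0$. Because $\alpha_s \neq 0$ and the formula for $\beta_t$ above is upper-triangular in $(S_m)$, a descending induction on $t$ forces $S_0 = S_1 = \cdots = S_{s-t_0-1} = 0$; substituting these vanishings back into the formula collapses $\beta_{t_0}$ to the single term
\[ \beta_{t_0} \; = \; \binom{s}{t_0}\, S_{s-t_0}\, \alpha_s \; =: \; c\,\alpha_s, \qquad c \in \Z\setminus\{0\}, \quad |c| \leq \binom{s}{t_0} k \, x^{(s-t_0+1)/(10s)} \ll x^{1/10}. \]

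The hard part will be the closing Diophantine step, which is where the hypothesis $|\alpha_s - a/q| \leq 1/q^2$ with $q = x^{1/2}$ enters crucially. Writing $\alpha_s = a/q + \varepsilon$ with $|\varepsilon| \leq 1/x$, the triangle inequality gives
\[ \left\| \frac{\ell c \, a}{q} \right\|_{\R/\Z} \; \leq \; \|\ell \beta_{t_0}\|_{\R/\Z} + |\ell c \, \varepsilon| \; \ll \; x^{O_s(c_s) - 1} + x^{O_s(c_s)+1/10}\cdot x^{-1} \; \ll \; x^{O_s(c_s) - 9/10}. \]
But $\ell c \, a/q$ is a rational with denominator dividing $q = x^{1/2}$, so either $q \mid \ell c a$, whence $(a,q)=1$ forces $q \mid \ell c$, impossible since $0 \neq |\ell c| \ll x^{O_s(c_s)+1/10} < x^{1/2} = q$ once $c_s$ is small; or else $\|\ell c\, a/q\|_{\R/\Z} \geq 1/q = x^{-1/2}$, incompatible with the displayed bound once $c_s$ is chosen so that $O_s(c_s) - 9/10 < -1/2$. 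Both branches yield the required contradiction, completing the plan for (1).
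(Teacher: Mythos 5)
Your proof is correct, and it reaches the same contradiction by a route that is structurally the same (quantitative Leibman theorem on $\R/\Z$, then the smoothness-norm lemma, then a rational-approximation argument exploiting $|\alpha_s-a/q|\leq 1/q^2$ with $q=x^{1/2}$) but organizes the algebraic-Diophantine step differently. The paper works its way down from degree $s$ to degree $1$, applying the Diophantine bound $\|\ell'\beta_d\|\ll x^{1/10-d}$ at \emph{every} degree $d$ in tandem with the identity $J_{d,m}=\binom{m}{d}/\binom{m+1}{d+1}\cdot J_{d+1,m+1}$, eventually forcing all $J_{d,m}$ to vanish and hence $g$ to be constant (the contradiction). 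You instead observe that since $\deg P = t_0$, the coefficients $\beta_{t_0+1},\ldots,\beta_s$ vanish \emph{exactly}; the upper-triangular relation $\beta_t=\sum_{r\geq t}\binom{r}{t}\alpha_r S_{r-t}$ then forces $S_0=\cdots=S_{s-t_0-1}=0$ purely algebraically (using $\alpha_s\neq 0$), collapsing the leading coefficient to $\beta_{t_0}=\binom{s}{t_0}S_{s-t_0}\alpha_s=c\alpha_s$ with $c\in\Z\setminus\{0\}$, $|c|\ll x^{1/10}$. This isolates $\alpha_s$ in one stroke, so you only need to run the Diophantine comparison against $a/q$ once, at degree $t_0$, rather than at every degree. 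Both proofs are correct; yours trades the paper's uniform Diophantine descent for a single exact algebraic reduction followed by one Diophantine inequality, which is somewhat cleaner. Two small remarks: you should also note (as you implicitly do) that $c\neq 0$ because $\beta_{t_0}\neq 0$ and $\alpha_s\neq 0$; and the constant hidden in $|c|\ll x^{1/10}$ depends on $k$, which is fine since $k$ is fixed and the cutoff ``$x$ large enough'' can absorb it.
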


\begin{proof}
Let 
\begin{align}\label{eqqn1}
g(y):=j_1Q(y+h_1)+\cdots +j_kQ(y+h_k)=\sum_{0\leq d\leq s}\beta_dy^d    
\end{align}
for some $\beta_d\in \mathbb{R}$. Then $e(g(y))$ is a nilsequence on $\mathbb{R}/\mathbb{Z}$ of degree $\leq s$, dimension $1$ and Lipschitz constant $O(1)$. Note that $\int_{\mathbb{R}/\mathbb{Z}}e(z)\, dz=0$. Hence, by the quantitative Leibman theorem (Theorem \ref{thm:quant-leibman}), the sequence $(g(n)+ \mathbb{Z})_{n\leq x}$  is totally $x^{-c_s}$-equidistributed with mean value $\ll x^{-c_s}$ for some $c_s>0$, provided that there does not exist any integer $1\leq \ell\leq x^{1/10}$ (say) satisfying $\|\ell g\|_{C^{\infty}(x)}\leq x^{1/10}$. Suppose for the sake of contradiction that such an integer $\ell$ exists. By Lemma \ref{lem:smoothness-norm}, the existence of $\ell$ implies the existence of another integer $1\leq \ell'\ll_s x^{1/10}$ such that 
\begin{align}\label{eqqn3}
\|\ell'\beta_d\|\ll_sx^{1/10-d}\quad \textnormal{for all}\quad 1\leq d\leq s.     
\end{align}

By the binomial formula, we see that
\begin{align}\label{eqqn2}
\beta_d=\sum_{d\leq m\leq s}J_{d,m}\alpha_{m},\quad \textnormal{where}\quad J_{d,m}:=\binom{m}{d}\sum_{1\leq r\leq k}j_rh_r^{m-d}.
\end{align}
We have the crude upper bound 
\begin{align}\label{eqqn4}
 |J_{d,m}|\leq  2^s\cdot k \cdot x^{1/(10s)+1/10}< x^{1/4}/2 
\end{align}
if $x$ is large enough.

Now, using $|\alpha_s-a/q|\leq 1/q^2$, \eqref{eqqn3} and \eqref{eqqn4}, we have
\begin{align*}
x^{-9/10}\gg_s \|\ell'\beta_s\|= \|\ell'J_{s,s}\alpha_s\|\geq \Big\|\ell'J_{s,s}\frac{a}{q}\Big\|-\frac{\ell'|J_{s,s}|}{q^2}\geq \Big\|\ell'J_{s,s}\frac{a}{q}\Big\|-\frac{1}{2q}.
\end{align*}
Since $1/(2q)=1/(2x^{1/2})$, and $\ell' |J_{s,s}|<q$, this is a contradiction unless $J_{s,s}=0$. We will show by backwards induction on $d$ that $J_{d,m}=0$ for all $d\leq m\leq s$. The case $d=s$ has been handled. Suppose that $J_{d+1,m}=0$ for all $d+1\leq m\leq s$. Note that then $J_{d,m}=\binom{m}{d}/\binom{m+1}{d+1}\cdot J_{d+1,m+1}=0$ for $d\leq m\leq s-1$. Therefore, by \eqref{eqqn3} and \eqref{eqqn4}, we have
\begin{align*}
x^{-9/10}\gg_s \|\ell'\beta_d\|= \Big\|\ell'\sum_{d\leq m\leq s}J_{d,m}\alpha_m\Big\|=\|\ell'J_{d,s}\alpha_s\|\geq \Big\|\ell'J_{s,s}\frac{a}{q}\Big\|-\frac{\ell'|J_{d,s}|}{q^2}\geq \Big\|\ell'J_{d,s}\frac{a}{q}\Big\|-\frac{1}{2q}.    
\end{align*}
Again since $1/(2q)=1/(2x^{1/2})$ and $\ell' |J_{d,s}|<q$, this is a contradiction unless $J_{d,s}=0$. 

We have now shown that $J_{d,m}=0$ for all $1\leq d\leq m\leq s$. But then, by \eqref{eqqn1} and \eqref{eqqn2}, the polynomial $g(y)$ is constant, contrary to assumption. 
\end{proof}

\begin{proof}[Proof of Theorem \ref{thm:bdd}] Let $s\geq 1$, $k\in \mathbb{N}$ and $\rho>0$ be fixed, with $k$ large enough in terms of $s$ and $\rho$ small enough in terms of $s,k$.  Let $k<w\ll_{Q}1$ be such that, denoting $W=\prod_{p\leq w}p$, all the rational coefficients of the polynomial $Q(Wy+1)$ (if there are any) are integers. Now, by restricting to the smaller nil-Bohr set $B'=B\cap (W\mathbb{Z}+1)$, and denoting $Q_1(y)=Q(Wy+1)$, it suffices to prove that the set
\begin{align*}
\{Wn+1\in \mathbb{P}:\,\, \|Q_1(n)\|< \rho\}    
\end{align*}
has bounded gaps, with $Q_1$ now having the form
\begin{align}\label{eq17}
Q_1(y)=\alpha_sy^{s}+\cdots+\alpha_1 y+\alpha_0,\quad \alpha_1,\ldots, \alpha_s\in (\mathbb{R}\setminus{\mathbb{Q}})\cup\{0\},\quad \alpha_s\neq 0.
\end{align}
In particular, the leading coefficient $\alpha_s$ of $Q_1$ is irrational. We shall rename $Q_1$ as $Q$ for simplicity. 

Let $H$ be an integer that satisfies the conclusion of Lemma \ref{le_maynard2} (for some fast-growing but fixed function $\mathcal{F}$). Note that $H$ depends only on $k,\rho, Q$.

Since $\alpha_s$ is irrational, we may find an infinite set $\mathcal{Q}$ of integers $q\geq 1$ such that for some $a$ coprime to $q$ we have $|\alpha_s-a/q|\leq 1/q^{2}$. Set $x=q^2$ with $q$ ranging over $\mathcal{Q}$, and note that $x$ tends to infinity along an infinite subsequence of the integers. We are going to show that $\{Wn+1\in \mathbb{N}:\,\, \|Q(n)\|<\rho\}$ contains infinitely many pairs of primes differing by $\leq WH$.

To prove this we shall apply the weighted version of Maynard's sieve given by Proposition~\ref{prop_maynard}. Let $g=g_{\rho,1/(10000k^2)}$ be the minorant for $1_{\|x\|<\rho}$ constructed before. We set our weight $\omega_n$ to be
\begin{align*}
\omega_n=g(Q(n+h_1))\cdots g(Q(n+h_k)),    
\end{align*}
where $(h_1,\ldots, h_k)\in [1,H]^k$ is any $k$-tuple of distinct elements satisfying the conditions of Lemma \ref{le_maynard2}.

Note that $\omega_n$ is a nonnegative minorant for the indicator function of $\{n:\,\,\|Q(n+j)\|<\rho\,\, \forall\,\, j\leq k\}$. By Lemma \ref{le_maynard2}, we have
\begin{align}\label{eq44}
\sum_{n\leq x}\omega_n\geq 0.99(2\rho)^{k} x.
\end{align}

We will show that the weight $\omega_n$ satisfies the hypotheses of Proposition \ref{prop_maynard}, taking $\theta=c_s/10, \delta=1/10,C_0=10$ there, where $c_s$ is the constant in Lemma \ref{le_equidist_poly}. After that from Proposition \ref{prop_maynard} we obtain the lower bound
\begin{align*}
\sum_{\substack{n\leq x\\|\{W(n+h_1)+1,\ldots, W(n+h_k)+1\}\cap \mathbb{P}|\geq (1/(10C_s)) \log k}}\omega_n\gg_{k,s, \rho} \frac{1}{(\log x)^k}\sum_{n\leq x}\omega_n\gg_{k,\rho} \frac{x}{(\log x)^k},    
\end{align*}
for some constant $C_s\geq 1$. By choosing $k=\lceil e^{20C_s}\rceil$, this then proves that gaps of length at most $W\max_{1\leq i\leq j\leq k}|h_i-h_j|\leq WH$ occur infinitely often in our Bohr set\footnote{In fact,  if one lets $k\to \infty$ one sees that for any $m$ there are infinitely many intervals of bounded length containing $m$ primes from our nil-Bohr set.}.

We shall now inspect hypotheses (i)--(iv) of Proposition \ref{prop_maynard} for $x$ large enough in terms of $k, s, \rho, H$. They take the forms 

(i) Prime number theorem: For $1\leq h\leq H$,
\begin{align*}
\sum_{\substack{n\leq x\\W(n+h)+1\in \mathbb{P}}}\omega_n\geq \frac{1}{10}\frac{W}{\varphi(W)}\sum_{n\leq x}\omega_n;    
\end{align*}

(ii) Level of distribution $c_s/10$ for $\omega_n$:
\begin{align*}
\sum_{r\leq x^{c_s/10}}\max_{c\Mod r}\Big|\sum_{\substack{n\leq x\\n\equiv c\Mod r}}\omega_n-\frac{1}{r}\sum_{n\leq x}\omega_n\Big|\ll \frac{x}{(\log x)^{101k^2}};    
\end{align*}

(iii) Level of distribution $c_s/10$ for $\omega_n1_{W(n+h_i)+1\in \mathbb{P}}$ for $1\leq i\leq k$:
\begin{align*}
 \sum_{r\leq x^{c_s/10}}\max_{(L_i(c),r)=1}\Big|\sum_{\substack{n\leq x\\n\equiv c\Mod r\\W(n+h_i)+1\in \mathbb{P}}}\omega_n-\frac{\varphi(W)}{\varphi(Wr)}\sum_{\substack{n\leq x\\W(n+h_i)+1\in \mathbb{P}}}\omega_n\Big|\ll \frac{x}{(\log x)^{101k^2}};     
\end{align*}

(iv) A Brun--Titchmarsh type bound for $\omega_n$:
\begin{align*}
\max_{c\Mod r}\sum_{\substack{n\leq x\\n\equiv c\Mod r}}\omega_n\leq  \frac{10}{r}\sum_{n\leq x}\omega_n, \end{align*}
uniformly for $1\leq r\leq x^{c_s/10}$.

Note that any nonnegative constant sequence satisfies hypotheses (i)--(iv) by the prime number theorem, the Bombieri--Vinogradov theorem, and the Brun--Titchmarsh inequality.
By Lemma \ref{le_vinogradov}, we may expand $\omega_n$ as a Fourier series:
\begin{align*}
\omega_n=(2\rho(1-1/(10000k^2)))^k+\sum_{\substack{j_1,\ldots, j_k\\(j_1,\ldots, j_k)\neq (0,\ldots, 0)}}c_{j_1}\cdots c_{j_k}e(j_1Q(n+1)+\cdots +j_kQ(n+k)),
\end{align*}
where the coefficients $c_j$ have the properties stated in Lemma \ref{le_vinogradov}.
 Then truncate the Fourier expansion of $\omega_n$ to $|j_i|\leq M:=(\log x)^{200k^2}$. Recalling properties (ii) and (iii) of the Fourier expansion of $g(x)$ as well as \eqref{eq44}, it suffices to verify that
\begin{align*}
\omega_n':=e(j_1Q(n+1)+\cdots +j_kQ(n+k))    
\end{align*}
for any integer tuple $(j_1,\ldots, j_k)\in [-M,M]^k\setminus\{0\}$  for which $j_1Q(y+h_1)+\cdots +j_kQ(y+h_k)$ is non-constant, satisfies the following hypotheses:

(i')
\begin{align*}
\Big|\sum_{\substack{n\leq x\\W(n+h)+1\in \mathbb{P}}}\omega_n'\Big|+\Big|\sum_{n\leq x}\omega_n'\Big|\ll \frac{x}{(\log x)^{200k^3}};  
\end{align*}

(ii') 
\begin{align*}
\sum_{r\leq x^{c_s/10}}\max_{c\Mod r}\Big|\sum_{\substack{n\leq x\\n\equiv c\Mod r}}\omega_n'-\frac{1}{r}\sum_{n\leq x}\omega_n'\Big|\ll \frac{x}{(\log x)^{200k^3}};    
\end{align*}

(iii') 
\begin{align*}
 \sum_{r\leq x^{c_s/10}}\max_{(W(c+h_i)+1,r)=1}\Big|\sum_{\substack{n\leq x\\n\equiv c\Mod r\\W(n+h_i)+1\in \mathbb{P}}}\omega_n'-\frac{\varphi(W)}{\varphi(Wr)}\sum_{\substack{n\leq x\\W(n+h_i)+1\in \mathbb{P}}}\omega_n'\Big|\ll \frac{x}{(\log x)^{200k^3}};     
\end{align*}

(iv')
\begin{align}\label{eq24}
\max_{c\Mod r}\Big|\sum_{\substack{n\leq x\\n\equiv c\Mod r}}\omega_n'\Big|=o\Big(\frac{x}{r(\log x)^{200k^3}}\Big)    
\end{align}
for $1\leq r\leq x^{c_s/10}$.

Note that by Lemma \ref{le_equidist_poly} (and the assumption $|\alpha_s-a/q|\leq 1/q^2$ with $q=x^{1/2}$) the sequence $\omega_n'$ is totally $x^{-c_s}$-equidistributed as a nilsequence and has mean $\ll x^{-c_s}$ over $[1,x]$. Hypothesis (i') then follows immediately by combining Lemma \ref{le_weyl} with the converse to the quantitative Leibman theorem (Theorem \ref{lem:converse-leibman}).

Then, to deal with (ii') and (iv'), note that as $\omega_n'$ is a degree $\leq s$ nilsequence that is totally $x^{-c_s}$-equidistributed and has mean value $\ll x^{-c_s}$,  also for any $1\leq c\leq r\leq x^{c_s/2}$ the sequence $(\omega_{rm+c}')_{m\leq x/r}$ is a degree $\leq s$ nilsequence that is  $x^{-c_s/2}$-equidistributed on $[x/r,2x/r]$ with mean value $\ll x^{-c_s/2}$, so \eqref{eq24} automatically holds, in fact in a stronger form where the right-hand side is replaced with $o((x/r)^{1-c_s/9})$. This stronger form immediately implies hypothesis (ii') as well.

Lastly, we deal with (iii'). Note that for any $c$ coprime to $d$ we have
\begin{align*}
\sum_{\substack{n\leq x\\n\equiv c\Mod r\\W(n+h_i)+1\in \mathbb{P}}}\omega_n'&=\sum_{\substack{m\leq Wx\\m\equiv 1\pmod W\\m\equiv W(c+h_i)+1\pmod d}}1_{\mathbb{P}}(m)\omega'_{(m-1)/W-h_i}+O(1)\\
&\ll_W \max_{\substack{(c',dW)=1\\c'\equiv 1\pmod W}}\left|\sum_{\substack{m\leq Wx\\m\equiv c'\pmod{dW}}}1_{\mathbb{P}}(m)\omega'_{(m-1)/W-h_i}\right|+1.   
\end{align*}
Since $\omega_n'$ is totally $x^{-c_s}$-equidistributed with mean value $\ll x^{-c_s}$, also $\omega_{(n'-1)/W-h_i}$ is totally $x^{-c_s/2}$-equidistributed with mean value $\ll x^{-c_s/2}$. Thus, hypothesis (iii') follows from our Bombieri--Vinogradov theorem for equidistributed nilsequences (Theorem \ref{equidist-1/3}) after applying partial summation to replace $1_{\mathbb{P}}(m)$ with $\Lambda(m)$.

Now hypotheses (i')--(iv') have been verified, so Theorem~\ref{thm:bdd} follows.
\end{proof}

\section{Chen primes in nil-Bohr sets}

Our task in this section is to prove Theorem \ref{thm:chen} on Chen primes in nil-Bohr sets.

\begin{proof}[Proof of Theorem \ref{thm:chen}]

Let $Q(x)=\alpha_sx^s+\cdots+\alpha_1 x+\alpha_0$. By assumption, we can find at least one $1\leq j\leq s$ such that $\alpha_j$ is irrational. 
By Dirichlet's approximation theorem, we can then find infinitely many pairs $(a,q)$ of coprime integers with $|\alpha_j-a/q|\leq 1/q^2$. Restrict to those $x$ that can be written as $x=q^2$ for some such $q$; this is a sequence of integers that tends to infinity.

Let $\theta_s>0$ be a small enough constant. Let $g=g_{\rho,1/2}$ be the minorant function arising from Lemma \ref{le_vinogradov} with $\rho=x^{-\theta_s}$. Then we can estimate
\begin{align}\label{eq19}
\sum_{\substack{p\leq x\\p\in \mathcal{P}_{\textnormal{Chen}}}}1_{\|Q(p)\|<p^{-\theta_s}}\geq \sum_{\substack{p\leq x\\p\in \mathcal{P}_{\textnormal{Chen}}}}g(Q(p)).    
\end{align}
Define $\omega_n:=g(Q(n))$. By applying Chen's sieve in the form of Proposition \ref{prop_chen} to $(\omega_n)_{n\leq x}$, we see that the right-hand side of \eqref{eq19} is
\begin{align*}
\gg \frac{1}{(\log x)^2}\sum_{n\leq x}g(Q(n))-O(x^{0.9}),    
\end{align*}
provided that hypotheses (i)--(iii) of Proposition \ref{prop_chen} are satisfied. 

Expanding out the Fourier series of $g(x)$ given by Lemma \ref{le_vinogradov} and truncating it from height $M:=x^{3\theta_s}$, we can estimate
\begin{align}\label{eq20}\begin{split}
\frac{1}{(\log x)^2}\sum_{n\leq x}g(Q(n))&= \frac{1}{2}x^{-\theta_s}\frac{x}{(\log x)^2}+\frac{1}{(\log x)^2}\sum_{0<|j|\leq M}c_j\sum_{n\leq x}e(jQ(n))+O\left(x^{1-2\theta_s}\right)\\
&=\left(\frac{1}{2}+O(x^{-c_s})\right)\frac{x^{1-\theta_s}}{(\log x)^2}  
\end{split}
\end{align}
for $x$ large enough and some constant $c_s>0$, where for the last inequality we used Lemma \ref{le_weyl} and the assumption that $x=q^2$ where $q$ satisfies $|\alpha_j-a/q|\leq 1/q^2$ (so $\|\ell \alpha_j\|\geq \frac{1}{2q}$ for $1\leq \ell\leq q/2$).

Therefore, to conclude the proof of Theorem \ref{thm:chen}, it suffices to verify hypotheses (i)--(iii) of Proposition \ref{prop_chen} for $\omega_n=g(Q(n))$. Since
\begin{align*}
\omega_n=\frac{1}{2}x^{-\theta_s}+\sum_{0<|j|\leq M}c_je(jQ(n))+O(x^{1-2\theta_s})   
\end{align*}
with $|c_j|\ll x^{-\theta_s}$, and since any nonnegative constant sequence satisfies hypotheses (i)--(iii), it suffices to verify that (using the notation of Proposition \ref{prop_chen}) for $0<|j|\leq M=x^{3\theta_s}$ the sequence $\omega_n':=e(jQ(n))$ satisfies\\

(i') 
\begin{align}\label{eq45}
 \Big|\sum_{\substack{r\leq x^{1/2-\varepsilon}\\(r,2)=1}}\lambda_r'\sum_{\substack{n\leq x\\n+2\equiv 0\pmod r\\n\in \mathbb{P}}}\omega_n'\Big| \ll x^{1-4\theta_s};  
\end{align}

(ii')
\begin{align}\label{eq46}
 \Big|\sum_{\substack{r\leq x^{1/2-\varepsilon}\\(r,2)=1}}\lambda_r\sum_{\substack{n\leq x\\n\equiv 0\pmod r\\n+2\in B_i}}\omega_n'\Big| \ll x^{1-4\theta_s};  
\end{align}

(iii') 
\begin{align}\label{eq47}
 \Big|\sum_{\substack{n\leq x\\n\in B_i}}\omega_n'\Big|+\Big|\sum_{\substack{n\leq x\\n+2\in B_i}}\omega_n'\Big|\ll x^{1-4\theta_s},  
\end{align}

where $\lambda_r$ is well-factorable of level $x^{1/2-\varepsilon}$, $\lambda_r'$ is either well-factorable of level $x^{1/2-\varepsilon}$ or $\lambda'_r=1_{p\in [P,P']}*\lambda''_r$ with $\lambda''_r$ well-factorable of level $x^{1/2-\varepsilon}/P$ and $2P\geq P'\geq P\in [x^{1/10},x^{1/3-\varepsilon}]$.

Note that $\omega_n'$ is a degree $s$ nilsequence and that, by the quantitative Leibman theorem and the fact that $\|\ell \alpha_j\|\geq \frac{1}{2q}$ for $1\leq \ell\leq q/2$, it is totally $x^{-c_s}$-equidistributed for some small constant $c_s>0$, Moreover, by Lemma \ref{le_weyl}, $\omega_n'$ has mean value $\ll x^{-c_s}$ over $[1,x]$. 

We first verify hypothesis (i'). By applying partial summation, it suffices to prove (i') with the condition $n\in \mathbb{P}$ replaced by the weight $\Lambda(n)$. Then, by Vaughan's identity, it suffices to prove that
\begin{align*}
 \Big|\sum_{\substack{r\leq x^{1/2-\varepsilon}\\(r,2)=1}}\lambda_r'\sum_{\substack{mn\leq x\\mn\equiv -2\pmod r\\M\leq m\leq 2M}}a_mb_n\omega_{mn}'\Big| \ll x^{1-5\theta_s},  
\end{align*}
where either $M\in [x^{1/2},x^{2/3+o(1)}]$ and $|a_m|,|b_m|\ll d(n)^{O(1)}$ (type II case), or $M\leq x^{1/3+o(1)}$ and $b_n\in \{1,\log n\}$ (type I case).  Note that $\omega_n'$ is a degree $s$ nilsequence which is totally $x^{-c_s}$-equidistributed and has mean $\ll x^{-c_s}$ on $[1,x]$. Now, by applying Proposition~\ref{typeI} in the type I case and Proposition~\ref{typeII-lambda-general} in the type II case (and making use of the ``partial well-factorability'' of $\lambda_r'$ to write it as a convolution of two factors supported on $[1,x^{1-\varepsilon/2}/M]$ and $[1,Mx^{-1/2}]$), we obtain for the left-hand side of \eqref{eq45} a bound of $\ll x^{1-c_{s,\varepsilon}}$, where 
$c_{s,\varepsilon}$ is small enough. If we require that $\theta_s<c_{s,\varepsilon}/5$, the bound obtained is good enough.

For verifying hypothesis (ii'), we note that 
\begin{align}\label{eq47b}
 1_{B_1}(n)=\sum_{\substack{n=mp\\p\geq x^{1/10}}}a_m,\quad 1_{B_2}(n)= \sum_{\substack{n=mp\\p\geq x^{1/10}}}a_m',  
\end{align}
where $a_m$ is the indicator of $m$ being of the form $p_1p_2$ with $x^{1/10}\leq p_1\leq x^{1/3-\varepsilon}$ and $x^{1/3-\varepsilon}\leq p_2\leq ((2x+2)/p_1)^{1/2}$ and $a_m'$ is the indicator of $m$ being of the form $p_1p_2$ with $x^{1/3-\varepsilon}\leq p_1\leq p_2\leq ((2x+2)/p_1)^{1/2}$. Thus $1_{B_1}(n),1_{B_2}(n)$ are both type II convolutions with the $m$ variable supported in $[x^{1/3},x^{2/3+o(1)}]$. Hence, from Proposition \ref{typeII-lambda}, we obtain for the left-hand side of \eqref{eq46} a bound of $\ll x^{1-c_{s,\varepsilon}'}$ for some constant $c_{s,\varepsilon}'>0$. If we require that $\theta_s<c_{s,\varepsilon}'/4$, the bound is good enough.

Lastly, to handle hypothesis (iii'), we appeal to \eqref{eq47b} again to reduce matters to type II sums, and then apply Proposition \ref{type II-1/3} (taking only the $d=1$ term in the sum there).  We obtain for the left-hand side of \eqref{eq47} a bound of $\ll x^{1-c_{s,\varepsilon}''}$ for some $c_{s,\varepsilon}''>0$. If we require that  $\theta_s<c_{s,\varepsilon}''/4$, the bound obtained is strong enough.

The proof is now complete.
\end{proof}

\section{Linear equations in primes in arithmetic progressions}

We now turn to the proof of Theorem~\ref{thm:app3}, which also includes the proof of Theorem~\ref{thm:app3'}.

\begin{proof}[Proof of Theorem~\ref{thm:app3}]
We adapt some arguments from~\cite{green-tao-linear}. Let $w = w(x)$ be a positive integer that tends to $+\infty$ slowly enough. By dividing the variables $\ve{n}$ into residue classes modulo $\mathscr{P}(w)$, it suffices to show that the result holds when $\mathscr{P}(w)\mid q$, with $O_A(Q(\log x)^{-A})$ exceptions. The details of this reduction is similar to the argument in~\cite[Section 5]{green-tao-linear}.

Henceforth assume that $\mathscr{P}(w)\mid q$. By the generalized von Neumann theorem \cite[Proposition 7.1]{green-tao-linear} and the relative version of the inverse theorem for the Gowers norms~\cite[Proposition 10.1]{green-tao-linear} (see also \cite[Theorem 1.3]{gtz}), we have \eqref{eq21} for $q$ outside an exceptional set of size $\ll_A Q(\log x)^{-A}$, provided that the following two conditions hold.\\

(i) For all but $\ll_{A} Q(\log x)^{-A}$ choices of $q\leq Q$ the following holds. For each invertible residue class $a\Mod q$, the function $\Lambda_{a,q}(n) := \frac{\varphi(q)}{q}\Lambda(qn+a)$ is majorized by $C_0\nu(n)$, where $\nu$ is $D$-pseudorandom with $D$ large enough in terms of $t,d,M$ (see \cite[Section 6]{green-tao-linear} for the definition of the pseudorandomness conditions).\\

(ii) For all but $\ll_{A} Q(\log x)^{-A}$ choices of $q\leq Q$ we have the Gowers norm bound $\|\Lambda_{a,q}-1\|_{U^k[x]}=o_{w\to \infty}(1)+o_{w;x\to \infty}(1)$ for all $k\geq 1$. (This is precisely the content of Theorem~\ref{thm:app3'}.) \\

\subsubsection*{Verifying (i):}

For showing condition (i), we follow \cite[Appendix D]{green-tao-linear} that establishes the analogous claim for $q=1$, indicating the necessary modifications (see also \cite[Proposition 6.1]{bienvenu} that handles the case $q\ll_{A}(\log x)^{A}$). Let 
\begin{align*}
\mathcal{Q}:=\{q\leq Q:\,\, \Omega(q)\leq C\log \log x\},    
\end{align*}
where $C$ is a large enough constant and $\Omega(q)$ denotes the number of prime factors of $q$ with multiplicities. We have
\begin{align*}
|[1,Q]\setminus\mathcal{Q}|\ll 2^{-C\log \log x}\sum_{q\leq Q}2^{\Omega(q)}\ll Q (\log x)^{-C/2},   
\end{align*}
so it suffices to prove condition (i) for $q\in \mathcal{Q}$. 

As in \cite[Appendix D]{green-tao-linear}, we define 
\begin{align*}
\nu(n):&=\frac{1}{2}+\frac{1}{2}\wt{\nu}(n),\\ \textnormal{with}\quad \wt{\nu}(n)&=\frac{\varphi(q)}{q}\Lambda_{\chi,R,2}(qn+a):= \frac{\varphi(q)}{q}(\log R)\Big(\sum_{\substack{d\mid qn+a\\d\leq R}}\mu(d)\chi\Big(\frac{\log d}{\log R}\Big)\Big)^2,   
\end{align*}
where $\chi$ is a smooth function compactly supported in $[-1,1]$ and $\chi(0)=1$, with $0\leq \chi(y)\leq 1$ everywhere. 
Note that since $\mathscr{P}(w)\mid q$, the $W$-trick is already incorporated in the definition of $\Lambda_{\chi,R,2}$.

Let $N$ be a prime of size $(C_D+o(1)) x$ for large enough $C_D>0$, and extend $\nu(n)$ to $[1,N]$ by defining it to be $=1$ elsewhere. Embed $\nu(n)$ into the cyclic group $\mathbb{Z}/N\mathbb{Z}$ in the obvious way. Then the function $\nu$ is our choice of a pseudorandom measure.  

We will inspect that the Goldston--Y{\i}ld{\i}r{\i}m estimate of \cite[Theorem D.3]{green-tao-linear} (with $N=x$) holds for the family of linear forms $\ve{n} \mapsto (L_1(q\ve{n}+a), \ldots, L_t(q\ve{n}+a))$. The case $q=1$ is \cite[Theorem D.3]{green-tao-linear}. We will then deduce the linear forms and correlation conditions for $\tilde{\nu}(n)$ by following \cite[pp. 75--77]{green-tao-linear}. Both of these arguments (\cite[Theorem D.3]{green-tao-linear} and \cite[pp. 75--77]{green-tao-linear}) go through in our setting with the following minor modifications: 
\begin{itemize}
    \item The factor $e^{O(X)}(\log R)^{-1/2}$ in \cite[Theorem D.3]{green-tao-linear} is $o(1)$ and therefore harmless, since
    \begin{align*}
    X\ll_{t,d,L} 1+\sum_{p\mid q}p^{-1/2}\ll \log \log \log N     
    \end{align*}
    by the assumption $q\in \mathcal{Q}$ and the fact that the ``exceptional'' primes $p$ in \cite[Theorem D.3]{green-tao-linear} are either $O_{d,t,L}(1)$ or divide $q$.
    
    \item In the proof of \cite[Theorem D.3]{green-tao-linear} there are a few conditions (such as $\alpha(p,B)=1/p$ and $\beta_p=1+O(1/p)$) that only hold for the primes $p\nmid q$ in our setting (as opposed to all large enough primes). This makes little difference in the argument, since we can separate the contribution of the primes $p\mid q$ from the rest and the contribution of the rest of the primes gives the correct local factors, whereas for $p\mid q$ quantities such as $E_{p,\xi}$ in \cite[Lemma D.5]{green-tao-linear} are easy to compute, which ultimately leads to the value $\beta_p = p/\varphi(p)$ for $p\mid q,$ as desired.
    
    \item When verifying the linear forms and correlation conditions in \cite[pp. 75--77]{green-tao-linear}, one should replace $W$ with $q$ and conditions such as $p\leq w$ with $p\mid q$ and $p>w$ with $p\nmid q$. The estimates in \cite[p. 77]{green-tao-linear} go through verbatim with these modifications. 
\end{itemize}

Thus condition (i) has been verified.

\subsubsection*{Verifying (ii):}

We utilize Theorem \ref{thm:1/4}, which implies that for any $A \geq 2$ we have
\begin{align}\label{eq31}
\max_{(a,q)=1}\sup_{\psi\in \Psi_k(\Delta,\log x)} \frac{q}{\varphi(q)} \Big|\sum_{n \leq x} \Lambda_{a,q}(n) \psi(n) - \frac{\varphi(q)V}{\varphi(qV)}\sum_{\substack{n \leq x \\ (qn+a,V)=1}} \psi(n) \Big| \ll_{k,A,\Delta,\varepsilon} \frac{x}{(\log x)^A}, 
\end{align}
for all but $O(Q(\log x)^{-A})$ values of $q \leq Q$, where $V = \mathscr{P}((\log x)^B)$ for some sufficiently large constant $B = B(A,k,\Delta) > 0$.
Let 
\[ \wt{\Lambda}_{a,q}(n) = \frac{\varphi(q)V}{\varphi(qV)} \cdot 1_{(qn+a,V)=1}\]
be the function appearing in the second sum in \eqref{eq31}. We claim that this function (embedded to a cyclic group) is $D$-pseudorandom for any fixed $D$. To see this, let $\chi$ be a smooth function supported on $[-1,1]$ with $\chi(y)=1$ for $|y|\leq 1/2$ and $0\leq \chi(y)\leq 1$ everywhere. Let
\begin{align*}
\Lambda_{\chi,R,2}^{(V)}(n):=(\log R)\Big(\sum_{\substack{d\mid n\\d\mid V\\d\leq R}}\mu(d)\chi\Big(\frac{\log d}{\log R}\Big)\Big)^2,
\end{align*}
where $R=N^{\gamma}$ for small enough $\gamma>0$. Then we have
\begin{align*}
\wt{\Lambda}_{a,q}(n) =(1+o(1)) \frac{\varphi(q)V}{\varphi(qV)(\log R)}\Lambda_{\chi,R,2}^{(V)}(qn+a)+O(\frac{\varphi(q)V}{\varphi(qV)}E(qn+a)),    
\end{align*}
where 
\begin{align*}
E(n)=d(n)^2 1_{\exists\, m\mid n, m\geq R^{1/2},m\,\,\textnormal{is}\,\, V-\textnormal{smooth}}.
\end{align*}
Similarly as in the proof of Theorem~\ref{thm_divisor}, the term $E(qn+a)$ (which is bounded by a divisor-type function) is negligible in the linear forms and correlation conditions for $(a,q)=1$ and all but $\ll_{A}Q/(\log x)^{A}$ values of $q\leq Q$. The function $\frac{\varphi(q)V}{\varphi(qV)(\log R)}\Lambda_{\chi,R,2}^{(V)}(qn+a)$ (embedded into a cyclic group) in turn is a pseudorandom measure for $(a,q)=1$ and $q\leq Q$, outside an exceptional set of moduli $q$ of size $\ll_{A} Q/(\log x)^{A}$, by the same argument that was used to verify condition (i). Thus $\wt{\Lambda}_{a,q}(n)$ itself is a pseudorandom measure.

Now by \cite[Theorem 1.3]{gtz}, \cite[Proposition 10.1]{green-tao-linear} and condition (i), \eqref{eq31} implies
\[ \|\Lambda_{a,q} -  \wt{\Lambda}_{a,q}\|_{U^k[x]} = o(1) \]
for almost all $q$. Thus it remains to establish that $\|\wt{\Lambda}_{a,q}-1\|_{U^k[x]} = o(1)$. But this follows from the the pseudorandomness of $\wt{\Lambda}_{a,q}$ together with \cite[Lemma 5.2]{green-tao-APs}.
\end{proof}

Lastly, in view of Remark \ref{rem:mobius}, we note that as a consequence of the proof method (in fact with some simplifications in the sense that we do not need pseudorandom majorants), we obtain the following result for the M\"obius function.

\begin{corollary}\label{app3-mobius}
 Let $\varepsilon>0$ and $A,t,d,M\geq 1$ be given. Let $x\geq 10$ and $Q\leq x^{1/3-\varepsilon}$. Then for all but $\ll_{\varepsilon,A,t,d,M}Q/(\log x)^{A}$ choices of $1\leq q\leq Q$ the following holds. For every $\ve{a}\in (\Z/q\Z)^d$ and every finite complexity tuple $\Psi=(L_1(\mathbf{n}),\ldots, L_t(\mathbf{n}))$ of non-constant affine-linear forms in $d$ variables of size $\|\Psi\|\leq M$ we have
\begin{align}
\sum_{\mathbf{n}\in [1,x]^d} \mu(L_1(q\mathbf{n}+\ve{a}))\cdots \mu(L_t(\mathbf{n}+\ve{a}))= o_{t,d,M}(x^{d}).     
\end{align}
\end{corollary}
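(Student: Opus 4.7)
The plan is to follow the transference argument used in the proof of Theorem~\ref{thm:app3}, but with substantial simplifications afforded by the bound $|\mu(n)|\leq 1$. Because of this boundedness I would use the trivial majorant $\nu\equiv 1$ in place of the delicate Goldston--Y{\i}ld{\i}r{\i}m-type pseudorandom measure constructed there; this function is $D$-pseudorandom for every $D$, and its use removes the need for the $W$-trick, since $\mu$ does not exhibit the residue-class bias modulo small primes that $\Lambda$ does.

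With this choice, applying the generalized von Neumann theorem of Green--Tao (\cite[Proposition~7.1]{green-tao-linear}) reduces the desired estimate
\[
\sum_{\ve{n}\in[1,x]^d}\mu(L_1(q\ve{n}+\ve{a}))\cdots\mu(L_t(q\ve{n}+\ve{a}))=o_{t,d,M}(x^{d})
\]
to a Gowers-norm bound $\|\mu_{a,q}\|_{U^{s+1}[x]}=o(1)$ (uniformly in $(a,q)=1$ for almost all $q\le Q$), where $\mu_{a,q}(n):=\mu(qn+a)$ and $s=s(t,d)$ is the Gowers complexity of the system $\Psi$. The inverse theorem for the Gowers norms (\cite[Theorem~1.3]{gtz} in the quantitative form of \cite[Proposition~10.1]{green-tao-linear}) in turn reduces this Gowers-norm bound to showing that for all but $\ll_{A}Q/(\log x)^A$ moduli $q\leq Q$,
\[
\sup_{\psi\in\Psi_s(\Delta,\log x)}\Big|\sum_{n\leq x}\mu(qn+a)\,\psi(n)\Big|=o(x/q)
\]
for every $(a,q)=1$, with $s,\Delta$ chosen appropriately in terms of $t,d,M$.

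This latter statement is essentially the content of the M\"obius analogue of Theorem~\ref{thm:1/4} recorded in Remark~\ref{rem:mobius}. Performing the change of variables $n\mapsto qn+a$, the underlying integers lie in $[1,qx]$, and since $q\leq x^{1/3-\varepsilon}\leq (qx)^{1/4-\varepsilon'}$ for some $\varepsilon'=\varepsilon'(\varepsilon)>0$, Remark~\ref{rem:mobius} delivers
\[
\sum_{q\leq Q}\max_{(a,q)=1}\sup_{\psi\in\Psi_s(\Delta,\log x)}\Big|\sum_{n\leq x}\mu(qn+a)\,\psi(n)\Big|\ll_{s,A,\Delta,\varepsilon}\frac{x}{(\log x)^{A}}.
\]
A pigeonholing step, discarding moduli for which the inner supremum exceeds $x/(q(\log x)^{A'})$ for a suitable $A'=A'(A)$, then supplies the required Gowers-norm control outside an exceptional set of the claimed size.

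The main obstacle will be parameter bookkeeping: ensuring that the complexity $s$ and the Lipschitz/dimension constants $\Delta$ demanded by the inverse theorem lie within the regime handled by Remark~\ref{rem:mobius}, and confirming that the range $q\leq x^{1/3-\varepsilon}$ corresponds, under the change of variable, to a modulus of size $\leq (qx)^{1/4-\varepsilon'}$. These are routine calculations. No new Bombieri--Vinogradov input beyond what is already established earlier in the paper is required, so the proof proceeds entirely within the framework of Sections~\ref{sec:equidistribution}--\ref{sec:combine}.
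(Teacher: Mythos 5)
Your proposal is correct and follows essentially the same route the paper takes: the paper proves Corollary~\ref{app3-mobius} by the same transference framework used for Theorem~\ref{thm:app3}, invoking Remark~\ref{rem:mobius} in place of Theorem~\ref{thm:1/4} and noting that pseudorandom majorants (and the $W$-trick) become unnecessary because $|\mu|\leq 1$ permits the trivial majorant $\nu\equiv 1$. The only cosmetic slip is the displayed chain $q\leq x^{1/3-\varepsilon}\leq(qx)^{1/4-\varepsilon'}$, whose middle inequality need not hold; what you actually use (and correctly rely on) is the implication $q\leq x^{1/3-\varepsilon}\Rightarrow q\leq(qx)^{1/4-\varepsilon'}$.
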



\section*{Acknowledgments} 
 We thank Ben Green, James Maynard and Terence Tao for helpful comments. We are grateful to the anonymous referee for a careful reading of the paper and for comments that improved the quality of this paper.

\bibliographystyle{amsplain}



\begin{dajauthors}
\begin{authorinfo}[XS]
  Xuancheng Shao\\
  Department of Mathematics\\
  University of Kentucky \\
  Lexington, KY, 40506, USA\\
  xuancheng.shao\imageat{}uky\imagedot{}edu \\
\end{authorinfo}
\begin{authorinfo}[JT]
  Joni Ter\"{a}v\"{a}inen\\
  Mathematical Institute\\
  University of Oxford\\
  Radcliffe Observatory Quarter\\
  Woodstock Rd\\
  Oxford OX2 6GG, UK\\
  joni.teravainen\imageat{}maths\imagedot{}ox\imagedot{}ac\imagedot{}uk \\
\end{authorinfo}
\end{dajauthors}

\end{document}